\def\pf{\begin{proof}}
 \newtheorem{thm}{Theorem}[section]
 \newtheorem{prop}[thm]{Proposition}
 \newtheorem{lem}[thm]{Lemma}
 \newtheorem{cor}[thm]{Corollary}
\theoremstyle{definition}
 \newtheorem{defn}[thm]{Definition}
 \newtheorem{rem}[thm]{Remark}
 \numberwithin{equation}{section}
	\newcommand{\bprop} {\begin{proposition}}
		\newcommand{\eprop} {\end{proposition}}
	\newcommand{\btheo} {\begin{theorem}}
		\newcommand{\etheo} {\end{theorem}}
	\newcommand{\blem} {\begin{lemma}}
		\newcommand{\elem} {\end{lemma}}
	\newcommand{\bcor} {\begin{corollary}}
		\newcommand{\ecor} {\end{corollary}}
	\newcounter{rea}
	\newcounter{red}
	\newcommand{\Be}{\begin{equation}}
	\newcommand{\Ee}{\end{equation}}
	\newcommand{\Bea}{\begin{eqnarray}}
	\newcommand{\Eea}{\end{eqnarray}}
	\newcommand{\Bes}{\begin{equation*}}
	\newcommand{\Ees}{\end{equation*}}
	\newcommand{\Beas}{\begin{eqnarray*}}
		\newcommand{\Eeas}{\end{eqnarray*}}
	\newcommand{\Ba}{\begin{array}}
		\newcommand{\Ea}{\end{array}}
	\title[ Toeplitz Operators  on $\ell^2-$Valued Bergman Spaces.] {Analysis of Toeplitz Operators with $BMO^1_{\alpha}$ operator-valued symbols on $\ell^2-$Valued Bergman Spaces.  
	}
	\subjclass[2020]{32A36; 32A25; 46B50; 46E40; 46E50; 47B35; 47C15}  
\keywords{Compact operator, Toeplitz operator, Toeplitz algebra, sufficiently localized operator, BMO symbol, $\ell^2-$valued Bergman space}
\begin{document}

		\begin{abstract}
As a class of compact operators on the $\ell^2-$valued Bergman space $A^2_\alpha (\mathbb B_n, \ell^2)$ on the unit ball $\mathbb B_n,$ we study Toeplitz operators with $BMO^1_\alpha (\mathbb B_n, \mathcal L(\ell^2))$ operator-valued symbols. First, we describe a method of restriction to a finite dimension which allows us to apply earlier results of Rahm and Wick; then we exhibit an explicit example of a compact Toeplitz operator on $A^2_\alpha (\mathbb B_n, \ell^2).$ Secondly, we apply two sufficient conditions for compactness established by Rahm in infinite dimension. The first condition is in terms of the Toeplitz algebra $\mathcal T_{L^\infty_{fin}},$ the second one is in terms of sufficiently localized operators and is implied by the first condition. To get the second condition, we additionally assume that the symbol and its adjoint belong to $BMO^1_\alpha (\mathbb B_n, \mathcal L(\ell^2, \ell^1)).$ Finally, inspired by Xia and Sadeghi-Zorboska, we ask the question of the validity of the reverse implication.
		\end{abstract}

			\author[D. B\'ekoll\`e]{David B\'ekoll\`e}
			\address{Department of Mathematics, Faculty of Science, University of Yaound\'e I, P.O. Box 812, Yaound\'e, Cameroon }
			\email{{\tt david.bekolle@univ-yaounde1.cm \& dbekolle@gmail.com}}
			\author[H. O. Defo]{Hugues Olivier Defo}
			\address{Department of Mathematics, Faculty of Science, University of Yaound\'e I, P.O. Box 812, Yaound\'e, Cameroon }
			\email{{\tt hugues.defo@facsciences-uy1.cm \& deffohugues@gmail.com}}
			\author[E. L. Tchoundja]{Edgar L. Tchoundja}
			\address{Department of Mathematics, Faculty of Science, University of Yaound\'e I, P.O. Box 812, Yaound\'e, Cameroon }
			\email{\tt edgartchoundja@gmail.com}
		
		\maketitle

		\section{Introduction}\label{sec1}
Throughout this paper, we fix a nonnegative integer $n$ and let $\mathbb{C}^n = \mathbb{C} \times \cdots \times \mathbb{C}$ denote the $n-$dimensional complex Euclidean space. For $z = (z_{1},\cdots,z_{n})$ and $w = (w_{1},\cdots,w_{n})$ in $\mathbb{C}^{n},$ we define the inner product of $z$ and $w$ by $ \langle z,w \rangle = z_{1}\overline{w_{1}} + \cdots + z_{n}\overline{w_{n}},$ where $\overline{w_{k}}$ is the complex conjugate of $w_{k}.$ The resulting norm is then $ |z| = \sqrt{\langle z,z \rangle} = \sqrt{|z_{1}|^2 + \cdots + |z_{n}|^2 }.$ 
The open unit ball in $\mathbb{C}^n$ is the set $\mathbb{B}_n = \lbrace z \in \mathbb{C}^{n}: |z| < 1 \rbrace.$
When $\alpha >-1,$ the weighted Lebesgue measure $\mathrm{d}\nu_{\alpha}$ in $\mathbb{B}_n$ is defined by $\mathrm{d}\nu_{\alpha}(z) = c_{\alpha}(1-|z|^2)^{\alpha}\mathrm{d}\nu(z), $ where $z \in \mathbb{B}_{n},$ $\mathrm{d}\nu$ is the Lebesgue measure in $\mathbb{C}^n$ and  $c_{\alpha} = \frac{\Gamma(n+\alpha+1)}{n!\Gamma(\alpha+1)}$ is the normalizing constant so that $\mathrm{d}\nu_{\alpha}$ becomes a probability measure on $\mathbb{B}_n.$  Throughout this paper, we fix $\alpha \textgreater -1.$ \\
Let $E$ be a complex Banach space. A $E-$valued function on the unit ball ${\mathbb B}_n$ of $\mathbb C^n$ is a function $f: \mathbb B_n \rightarrow E.$ The integral of a vector-valued function that we are going to use has been introduced by Bochner in \cite{B1933}. The Bochner integral is defined similarly as the Lebesgue integral (see \cite{D2001} for more information). We recall the following theorem.
\begin{thm}[Hille's Theorem]\label{thm11}
Let $E$ and $F$ be two complex Banach spaces and let $T: E\rightarrow F$ be a continuous linear operator. Let $\mu$ be a positive $\sigma-$finite Borel measure on ${\mathbb B}_n.$ If $f: {\mathbb B}_n \rightarrow E$ is $\mu-$Bochner integrable, then $T(f)$ is also $\mu-$Bochner integrable and we have
$$T\left (\int_{{\mathbb B}_n} f(z)d\mu (z)\right)=\int_{{\mathbb B}_n} T(f)(z)d\mu (z).$$
\end{thm}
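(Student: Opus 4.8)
The plan is to follow the standard two-step pattern for identities involving the Bochner integral: first verify the assertion for simple functions, then pass to the limit using the defining approximation property of Bochner integrability, exploiting at each stage that $T$ is linear and continuous.

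Recall that $f$ being $\mu$-Bochner integrable means $f$ is strongly $\mu$-measurable and $\int_{\mathbb{B}_n}\|f(z)\|_E\,d\mu(z)<\infty$; equivalently, there is a sequence $(f_m)$ of $E$-valued simple functions, each of the form $\sum_{k} \chi_{A_k} x_k$ with $\mu(A_k)<\infty$ and $x_k\in E$, such that $f_m\to f$ $\mu$-a.e. and $\int_{\mathbb{B}_n}\|f_m-f\|_E\,d\mu\to 0$, and then $\int_{\mathbb{B}_n} f\,d\mu:=\lim_m\int_{\mathbb{B}_n} f_m\,d\mu$. For such a simple $f_m$, linearity of $T$ gives at once $T\!\left(\int_{\mathbb{B}_n} f_m\,d\mu\right)=T\!\left(\sum_k \mu(A_k)x_k\right)=\sum_k \mu(A_k)\,T(x_k)=\int_{\mathbb{B}_n} T\circ f_m\,d\mu$, and $T\circ f_m$ is again simple, hence Bochner integrable. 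This settles the base case.

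Next I would check that $T\circ f$ is itself Bochner integrable. Since $T$ is continuous and $f_m\to f$ $\mu$-a.e., we get $T\circ f_m\to T\circ f$ $\mu$-a.e., so $T\circ f$ is strongly $\mu$-measurable as an a.e.\ limit of the simple functions $T\circ f_m$; and from $\|T\circ f_m(z)-T\circ f(z)\|_F\le\|T\|\,\|f_m(z)-f(z)\|_E$ we obtain $\int_{\mathbb{B}_n}\|T\circ f_m-T\circ f\|_F\,d\mu\le\|T\|\int_{\mathbb{B}_n}\|f_m-f\|_E\,d\mu\to 0$. In particular $\int_{\mathbb{B}_n}\|T\circ f\|_F\,d\mu<\infty$, so $T\circ f$ is Bochner integrable, and moreover $\int_{\mathbb{B}_n} T\circ f_m\,d\mu\to\int_{\mathbb{B}_n} T\circ f\,d\mu$ in $F$.

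To conclude, combine the three observations: by the definition of the Bochner integral, $\int_{\mathbb{B}_n} f_m\,d\mu\to\int_{\mathbb{B}_n} f\,d\mu$ in $E$, hence by continuity of $T$ we have $T\!\left(\int_{\mathbb{B}_n} f_m\,d\mu\right)\to T\!\left(\int_{\mathbb{B}_n} f\,d\mu\right)$; on the other hand, by the base case and the preceding paragraph, $T\!\left(\int_{\mathbb{B}_n} f_m\,d\mu\right)=\int_{\mathbb{B}_n} T\circ f_m\,d\mu\to\int_{\mathbb{B}_n} T\circ f\,d\mu$. By uniqueness of limits in $F$, $T\!\left(\int_{\mathbb{B}_n} f(z)\,d\mu(z)\right)=\int_{\mathbb{B}_n} T(f)(z)\,d\mu(z)$, as claimed. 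I do not expect a serious obstacle here; the only points demanding care are the strong measurability of $T\circ f$ (handled via the a.e.\ limit of the $T\circ f_m$) and the choice of approximating simple functions compatibly with the possibly only $\sigma$-finite measure $\mu$, which is where $\sigma$-finiteness is used. The statement is classical and due to Hille; see \cite{D2001} for a textbook treatment.
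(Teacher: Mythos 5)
Your proof is correct. The paper itself does not prove this statement: it is recalled as a classical result (Hille's theorem for the Bochner integral) with a pointer to the reference \cite{D2001}, so there is no in-paper argument to compare against. Your two-step argument --- verify the identity for simple functions by linearity, then pass to the limit using the defining $L^1$-approximation of Bochner integrability together with the bound $\Vert T(f_m(z))-T(f(z))\Vert_F\leq \Vert T\Vert\,\Vert f_m(z)-f(z)\Vert_E$ and the continuity of $T$ --- is exactly the standard textbook proof for a \emph{continuous} linear $T$ (the subtlety in the general Hille theorem, where $T$ is only closed, does not arise here), and your handling of the strong measurability of $T\circ f$ and of the $\sigma$-finiteness is sound.
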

Let 
$0 < p < \infty.$ The Bochner-Lebesgue space $L^{p}_{\alpha}(\mathbb{B}_{n},E)$ consists of all vector-valued measurable functions $f:\mathbb{B}_{n} \rightarrow E$ such that $$\displaystyle \|f\|^p_{L^{p}_{\alpha}(\mathbb{B}_{n},E)} = \int_{\mathbb{B}_n}\|f(z)\|^p_{E}\mathrm{d}\nu_{\alpha}(z) < \infty.$$ 
The Bochner-Lebesgue space $L^{\infty}_{\alpha}(\mathbb{B}_{n},E)$ consists of all vector-valued measurable functions $f:\mathbb{B}_{n} \rightarrow E$ such that $$\displaystyle \|f\|_{L^{\infty}_{\alpha}(\mathbb{B}_{n},E)} := esssup\lbrace \|f(z)\|_E : z\in \mathbb B_n \rbrace < \infty.$$

The vector-valued Bergman space $A^{p}_{\alpha}(\mathbb{B}_{n},E)$ is the subspace of $L^{p}_{\alpha}(\mathbb{B}_{n},E)$ consisting of $E-$valued holomorphic functions. When $1 \leq p < \infty,$ the vector-valued Bergman space $A^{p}_{\alpha}(\mathbb{B}_{n},E)$ is a Banach space with the $L^{p}_{\alpha}(\mathbb{B}_{n},E)$ norm. 
We denote by $H^\infty (\mathbb{B}_{n},E)$ the space of bounded $E-$valued holomorphic functions on ${\mathbb B}_n.$ The proof of the following density result can be found in \cite{DT2024} and \cite{D2024}.
\begin{prop}\label{pro12}
For all $1 \leq p < \infty,$ the subspace $H^\infty (\mathbb{B}_{n},E)$ is dense in the vector-valued Bergman space $A^{p}_{\alpha}(\mathbb{B}_{n},E).$
\end{prop}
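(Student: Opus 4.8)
The plan is to prove the density by dilation. For $f\in A^p_\alpha(\mathbb{B}_n,E)$ and $0<r<1$ set $f_r(z)=f(rz)$, $z\in\mathbb{B}_n$. The first step is to observe that $f_r\in H^\infty(\mathbb{B}_n,E)$: indeed $f_r$ is holomorphic on the ball $r^{-1}\mathbb{B}_n$, which strictly contains $\overline{\mathbb{B}_n}$, so $f_r$ is continuous on the compact set $\overline{\mathbb{B}_n}$ and therefore bounded; since $\nu_\alpha$ is a probability measure, $f_r$ also lies in $A^p_\alpha(\mathbb{B}_n,E)$. It then suffices to show that $\|f_r-f\|_{A^p_\alpha(\mathbb{B}_n,E)}\to 0$ as $r\to 1^-$, which exhibits $f$ as a limit of elements of $H^\infty(\mathbb{B}_n,E)$.

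The engine of the argument is the subharmonicity of $z\mapsto\|f(z)\|_E^p$ on $\mathbb{B}_n$. By the Hahn--Banach theorem, $\|f(z)\|_E=\sup\{|\langle\phi,f(z)\rangle|:\phi\in E^*,\ \|\phi\|\le 1\}$, a supremum of moduli of scalar holomorphic functions; being a continuous supremum of plurisubharmonic functions, $\|f(\cdot)\|_E$ is plurisubharmonic, and composing with the convex increasing function $t\mapsto t^p$ ($p\ge 1$) keeps it plurisubharmonic, hence subharmonic on $\mathbb{B}_n\subset\mathbb{R}^{2n}$. Consequently the spherical means $\Phi(s):=\int_{\partial\mathbb{B}_n}\|f(s\zeta)\|_E^p\,d\sigma(\zeta)$ form a non-decreasing function of $s\in[0,1)$, where $d\sigma$ denotes surface measure on $\partial\mathbb{B}_n$.

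Next I would integrate in polar coordinates: $\|g\|_{A^p_\alpha(\mathbb{B}_n,E)}^p=C_\alpha\int_0^1 s^{2n-1}(1-s^2)^\alpha\Big(\int_{\partial\mathbb{B}_n}\|g(s\zeta)\|_E^p\,d\sigma(\zeta)\Big)\,ds$ for a positive constant $C_\alpha$. Taking $g=f$ shows $\int_0^1\Phi(s)(1-s^2)^\alpha s^{2n-1}\,ds<\infty$; taking $g=f_r-f$ gives $\|f_r-f\|_{A^p_\alpha(\mathbb{B}_n,E)}^p=C_\alpha\int_0^1\Psi_r(s)(1-s^2)^\alpha s^{2n-1}\,ds$ with $\Psi_r(s)=\int_{\partial\mathbb{B}_n}\|f(rs\zeta)-f(s\zeta)\|_E^p\,d\sigma(\zeta)$. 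For each fixed $s\in(0,1)$ the closed ball $\{|w|\le s\}$ is a compact subset of $\mathbb{B}_n$ on which $f$ is uniformly continuous, so $\sup_{\zeta\in\partial\mathbb{B}_n}\|f(rs\zeta)-f(s\zeta)\|_E\to 0$ as $r\to 1^-$ and hence $\Psi_r(s)\to 0$. Moreover $\Psi_r(s)\le 2^{p-1}\big(\Phi(rs)+\Phi(s)\big)\le 2^p\Phi(s)$ by the monotonicity of $\Phi$, and $2^p\Phi(s)(1-s^2)^\alpha s^{2n-1}$ is integrable on $(0,1)$; the dominated convergence theorem in the single variable $s$ therefore yields $\|f_r-f\|_{A^p_\alpha(\mathbb{B}_n,E)}\to 0$, which finishes the proof (and, as a byproduct, shows the $E$-valued polynomials are dense).

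I expect the crux to be the uniform control of the dilations: there is no integrable pointwise majorant for the functions $z\mapsto\|f(rz)\|_E^p$ on $\mathbb{B}_n$ as $r\to 1^-$, so dominated convergence cannot be applied directly on the ball; the device that rescues the argument is to integrate first over spheres and use that the spherical means of the subharmonic function $\|f\|_E^p$ are monotone in the radius, reducing everything to a one-dimensional dominated convergence. The auxiliary points---plurisubharmonicity of $\|f(\cdot)\|_E^p$ for an $E$-valued holomorphic $f$, the monotonicity theorem for spherical means of subharmonic functions, and the (routine) Bochner-measurability bookkeeping behind the polar-coordinate identity---are standard, the first being immediate from the Hahn--Banach representation above and the second classical for subharmonic functions on $\mathbb{R}^{2n}$.
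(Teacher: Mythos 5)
Your argument is correct and complete: the dilation $f_r(z)=f(rz)$, the plurisubharmonicity of $\Vert f(\cdot)\Vert_E^p$ via the Hahn--Banach representation (valid since $p\geq 1$ makes $t\mapsto t^p$ convex increasing), the monotonicity of the spherical means, and the one-dimensional dominated convergence in the radial variable fit together without gaps. Note that the paper does not prove Proposition \ref{pro12} itself but defers to \cite{DT2024} and \cite{D2024}; the dilation-plus-monotone-means argument you give is the standard route for radial weights $(1-|z|^2)^\alpha$ and is essentially what those references carry out, so there is nothing further to reconcile.
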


For $E = \mathbb{C}$ and $p = 2,$ it is well-known that the scalar-valued Bergman space $A^{2}_{\alpha}(\mathbb{B}_{n},\mathbb{C})$ is a reproducing kernel Hilbert space. Let $w \in \mathbb{B}_{n};$ the point evaluation reproducing kernel $K^{\alpha}_{w}$ of $A^{2}_{\alpha}(\mathbb{B}_{n},\mathbb{C})$ is defined by:
$$K_{w}^{\alpha}(z):= \dfrac{1}{(1 - \langle z,w \rangle)^{n+1+\alpha}},\hspace{1cm} z \in \mathbb{B}_{n}.$$ The associated normalized  kernel $k^{\alpha}_{w}$ of $A^{2}_{\alpha}(\mathbb{B}_{n},\mathbb{C})$ is given by
$$k^{\alpha}_{w}(z) = \dfrac{(1 - |w|^2)^{\frac{n+1+\alpha}{2}}}{(1 - \langle z,w \rangle)^{n+1+\alpha}},\hspace{1cm} z \in \mathbb{B}_{n}.$$ 
The orthogonal projector $P_{\alpha}$ of $L^{2}_{\alpha}(\mathbb{B}_{n},\mathbb{C})$ onto $A^{2}_{\alpha}(\mathbb{B}_{n},\mathbb{C})$ is called the Bergman projector. It is well known that for $f \in L^{2}_{\alpha}(\mathbb{B}_{n},\mathbb{C})$ and $z \in \mathbb{B}_{n},$ we have 
$$P_{\alpha}f(z): = \int_{\mathbb{B}_n}\overline{K^{\alpha}_{z}(w)}f(w)d\nu_{\alpha}(w).$$
This definition of $P_\alpha$ extends naturally to $L^{2}_{\alpha}(\mathbb{B}_{n}, E)$ and one has the following reproducing property:
$$P_\alpha f=f \quad (f\in A^{2}_{\alpha}(\mathbb{B}_{n}, E)).$$

For $a \in \mathbb{B}_n\setminus \{0\},$ let $\varphi_{a}$ be the involutive automorphism of $\mathbb{B}_{n}$ defined by
$$ 
\varphi_{a}(z) = \dfrac{a - P_{a}(z) - s_{a}Q_{a}(z)}{1 - \langle z,a \rangle}, 
\hspace{1cm} z \in \mathbb{B}_{n},$$
where $P_{a}(z) = \dfrac{\langle z,a \rangle}{|a|^2}a$ is the orthogonal projector from $\mathbb C^n$ onto the subspace of $\mathbb C^n$ spanned by $a,$ $Q_{a} = I - P_{a}$ and $s_{a} = \sqrt{1 - |a|^2}.$ When $a=0,$ we simply define $\varphi_{a} (z)=-z.$ We recall the following definition.
\begin{defn}\label{def13}
Let $E$ be a complex Banach space and $b\in L^1_\alpha ({\mathbb B}_n, E).$
\begin{enumerate}
\item
The Berezin transform of $b$ is the mapping from ${\mathbb B}_n$ to $E$ defined by
$$\widetilde b(z)=\int_{{\mathbb B}_n} \left \vert k_z^\alpha (w)\right \vert^2 b(w){d\nu_{\alpha} (w)} \quad (z\in {\mathbb B}_n).$$
\item
The function $b$ belongs to the space $BMO^1_{\alpha} ({\mathbb B}_n, E)$ if
$$\left \Vert b\right \Vert_{BMO^1_{\alpha} ({\mathbb B}_n, E)}:=\sup \limits_{z\in {\mathbb B}_n} \left \Vert b\circ \varphi_z-\widetilde b(z)\right \Vert_{L^1_\alpha ({\mathbb B}_n, E)}\textless \infty.$$
\end{enumerate}
\end{defn}


We recall the definition of a Toeplitz operator on a $E-$valued Bergman space.

\begin{defn}\label{def14}
Let $E$ and $F$ be two complex Banach spaces. Let $b\in L^1_\alpha ({\mathbb B}_n, \mathcal L(E, F)).$ The Toeplitz operator $T_b$ is densely defined on $A^p_\alpha \left (\mathbb B_n, E\right )$ by
$$T_b f(z)=\int_{{\mathbb B}_n} b(w)(f(w))\overline{K_z^\alpha(w)}d\nu_\alpha (w)$$
for every $f\in H^\infty (\mathbb B_n, E).$
\end{defn}

The following theorem is  a result from \cite{D2024} and \cite{DT2024}.
\begin{thm}\label{thm15}
Let $E$ and $F$ be two complex Banach spaces. Let $b\in BMO^1_{\alpha} ({\mathbb B}_n, \mathcal L(E, F)).$ The following two assertions are equivalent:
\begin{enumerate}
\item
The Toeplitz operator $T_b$ with symbol $b$ extends to a continuous operator from the Bergman space $A^2_\alpha ({\mathbb B}_n, E)$ to the Bergman space $A^2_\alpha ({\mathbb B}_n, F);$
\item
The Berezin transform $\widetilde b$ of $b$ belongs to $L^\infty ({\mathbb B}_n, \mathcal L(E, F)),$ i.e. 
$$\sup \limits_{z\in {\mathbb B}_n} \left \Vert \widetilde b(z)\right \Vert_{\mathcal L(E, F)}<\infty.$$
\end{enumerate}
Moreover, we have 
$$\Vert \widetilde b \Vert_{L^\infty ({\mathbb B}_n, \mathcal L(E, F))}\lesssim \left \Vert T_b\right \Vert_{\mathcal L(A^2_\alpha (\mathbb B_n, E), A^2_\alpha (\mathbb B_n, F))} \lesssim \Vert \widetilde b \Vert_{L^\infty ({\mathbb B}_n, \mathcal L(E, F))}+ \Vert b\Vert_{BMO^1_{\alpha} ({\mathbb B}_n, \mathcal L(E, F))}.$$
\end{thm}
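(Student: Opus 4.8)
I would prove the equivalence by establishing the two implications separately, each of which also yields one of the two displayed norm inequalities; $(1)\Rightarrow(2)$ with the lower estimate is elementary, while $(2)\Rightarrow(1)$ with the upper estimate carries the difficulty. For $(1)\Rightarrow(2)$, test $T_b$ on the functions $k_z^\alpha\otimes e\colon w\mapsto k_z^\alpha(w)\,e$, $z\in\mathbb B_n$, $e\in E$ with $\|e\|_E\le1$; they belong to $H^\infty(\mathbb B_n,E)$ and satisfy $\|k_z^\alpha\otimes e\|_{A^2_\alpha(\mathbb B_n,E)}=\|e\|_E\le1$ because $\|k_z^\alpha\|_{A^2_\alpha(\mathbb B_n,\mathbb C)}=1$. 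Using $k_z^\alpha(w)=(1-|z|^2)^{(n+1+\alpha)/2}K_z^\alpha(w)$, the linearity of $b(w)$ on $E$, and Hille's Theorem~\ref{thm11} for the evaluation $\mathcal L(E,F)\to F,\ T\mapsto T(e)$, one obtains the reproducing-type identity
$$T_b(k_z^\alpha\otimes e)(z)=(1-|z|^2)^{-(n+1+\alpha)/2}\,\widetilde b(z)(e).$$
Since $T_b(k_z^\alpha\otimes e)\in A^2_\alpha(\mathbb B_n,F)$ under hypothesis $(1)$, the scalar point-evaluation estimate $|h(z)|\le(1-|z|^2)^{-(n+1+\alpha)/2}\|h\|_{A^2_\alpha(\mathbb B_n,\mathbb C)}$ applied to $h=\langle T_b(k_z^\alpha\otimes e)(\cdot),\xi\rangle$, $\xi\in F^*$, $\|\xi\|\le1$, and then taking suprema over $\xi,e,z$, gives $\|\widetilde b\|_{L^\infty(\mathbb B_n,\mathcal L(E,F))}\le\|T_b\|_{\mathcal L(A^2_\alpha(\mathbb B_n,E),A^2_\alpha(\mathbb B_n,F))}$, in particular $(2)$.

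For $(2)\Rightarrow(1)$, set $M=\|\widetilde b\|_{L^\infty(\mathbb B_n,\mathcal L(E,F))}<\infty$. By Proposition~\ref{pro12} it suffices to bound $\|T_bf\|_{A^2_\alpha(\mathbb B_n,F)}$ by a constant times $(M+\|b\|_{BMO^1_\alpha})\|f\|_{A^2_\alpha(\mathbb B_n,E)}$ for $f\in H^\infty(\mathbb B_n,E)$ (note $BMO^1_\alpha\subset L^1_\alpha$, so $T_bf$ is well defined). The plan is to localize $b$ on Bergman balls: fix $r>0$, a separated lattice $\{a_j\}\subset\mathbb B_n$ whose balls $D_j=D(a_j,r)$ (in the Bergman metric) cover $\mathbb B_n$ with overlap bounded by some $N=N(r,n)$, and a subordinate partition of unity $\{\psi_j\}$, and split $b=b_0+b_1$ with $b_0=\sum_j\psi_j\,\widetilde b(a_j)$ and $b_1=\sum_j\psi_j\bigl(b-\widetilde b(a_j)\bigr)$, so that $T_bf=T_{b_0}f+T_{b_1}f$. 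For the bounded piece, $\|b_0(w)\|_{\mathcal L(E,F)}\le M\sum_j\psi_j(w)=M$ gives $\|T_{b_0}f(z)\|_F\le M\,P^+_\alpha\bigl(\|f(\cdot)\|_E\bigr)(z)$, where $P^+_\alpha h(z):=\int_{\mathbb B_n}|K_z^\alpha(w)|\,h(w)\,d\nu_\alpha(w)$; since $P^+_\alpha$ is bounded on $L^2_\alpha(\mathbb B_n,\mathbb C)$ for $\alpha>-1$ (Schur test with weight $(1-|w|^2)^{-s}$, $0<s<\alpha+1$, and the Forelli--Rudin estimates), this yields $\|T_{b_0}f\|_{A^2_\alpha(\mathbb B_n,F)}\lesssim M\|f\|_{A^2_\alpha(\mathbb B_n,E)}$.

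The oscillation piece is where $\|b\|_{BMO^1_\alpha}$ is used. I would estimate $\|T_{b_1}f(z)\|_F\le\sum_j\int_{D_j}\psi_j(w)\,\|b(w)-\widetilde b(a_j)\|_{\mathcal L(E,F)}\,\|f(w)\|_E\,|K_z^\alpha(w)|\,d\nu_\alpha(w)$ and invoke three standard facts on Bergman balls: (i) $|1-\langle z,w\rangle|\asymp_r|1-\langle z,a_j\rangle|$ and $\nu_\alpha(D_j)\asymp_r(1-|a_j|^2)^{n+1+\alpha}$ for $w\in D_j$; (ii) the sub-mean-value property of the plurisubharmonic function $\|f(\cdot)\|_E^2$, giving $\|f(w)\|_E\lesssim_r c_j:=\bigl(\nu_\alpha(\widehat D_j)^{-1}\int_{\widehat D_j}\|f\|_E^2\,d\nu_\alpha\bigr)^{1/2}$ for $w\in D_j$, where $\widehat D_j=D(a_j,2r)$; and (iii) the change of variables $w=\varphi_{a_j}(u)$, under which $\varphi_{a_j}(D_j)=D(0,r)$, $d\nu_\alpha(w)=|k_{a_j}^\alpha(u)|^2\,d\nu_\alpha(u)$ and $\sup_{u\in D(0,r)}|k_{a_j}^\alpha(u)|^2\lesssim_r(1-|a_j|^2)^{n+1+\alpha}$, so that by Definition~\ref{def13}(2)
$$\int_{D_j}\|b(w)-\widetilde b(a_j)\|_{\mathcal L(E,F)}\,d\nu_\alpha(w)\ \lesssim_r\ (1-|a_j|^2)^{n+1+\alpha}\,\bigl\|b\circ\varphi_{a_j}-\widetilde b(a_j)\bigr\|_{L^1_\alpha}\ \lesssim_r\ \nu_\alpha(D_j)\,\|b\|_{BMO^1_\alpha}.$$
Combining (i)--(iii) gives $\|T_{b_1}f(z)\|_F\lesssim_r\|b\|_{BMO^1_\alpha}\sum_j\lambda_j\,|k_{a_j}^\alpha(z)|$ with $\lambda_j=c_j(1-|a_j|^2)^{(n+1+\alpha)/2}$. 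Because $\{a_j\}$ is separated, the synthesis bound $\bigl\|\sum_j\lambda_j|k_{a_j}^\alpha|\bigr\|_{L^2_\alpha(\mathbb B_n,\mathbb C)}\lesssim\bigl(\sum_j|\lambda_j|^2\bigr)^{1/2}$ holds — one checks the off-diagonal decay $\langle|k_{a_i}^\alpha|,|k_{a_j}^\alpha|\rangle_{L^2_\alpha}\lesssim(1+\beta(a_i,a_j))\,e^{-\frac{n+1+\alpha}{2}\beta(a_i,a_j)}$, $\beta$ the Bergman distance, and runs a Schur test over the lattice — while $\sum_j|\lambda_j|^2\asymp\sum_j c_j^2\,\nu_\alpha(D_j)\le N\|f\|_{A^2_\alpha(\mathbb B_n,E)}^2$ by (i), (ii) and bounded overlap. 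Hence $\|T_{b_1}f\|_{A^2_\alpha(\mathbb B_n,F)}\lesssim\|b\|_{BMO^1_\alpha}\|f\|_{A^2_\alpha(\mathbb B_n,E)}$; adding the two pieces and invoking density completes the proof.

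The main obstacle is precisely this oscillation piece: since $b$ lies only in $L^1_\alpha$ and not in $L^\infty$, one cannot simply push $T_b$ through boundedness of the Bergman projection, and must instead peel off the local averages $\widetilde b(a_j)$ (controlled by $\|\widetilde b\|_\infty$), exploit the holomorphy of $f$ through sub-mean-value estimates to absorb the residual $L^1$-oscillation of $b$ — which is exactly what Definition~\ref{def13}(2) measures — and then reassemble the localized contributions via an $\ell^2$ Forelli--Rudin/atomic synthesis estimate, the delicate points being the bookkeeping of the overlap constant, the off-diagonal decay of the kernels, and the exact power $(n+1+\alpha)$ in their normalization.
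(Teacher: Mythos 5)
The paper does not actually prove Theorem~\ref{thm15}: it is imported from \cite{D2024} and \cite{DT2024}, so there is no in-text proof to compare against. Judged on its own terms, your proposal is a correct and essentially standard argument. The easy direction and the lower bound via testing on $k_z^\alpha\otimes e$ are fine (the identity $T_b(k_z^\alpha\otimes e)(z)=(1-|z|^2)^{-(n+1+\alpha)/2}\,\widetilde b(z)(e)$ checks out, using Hille's Theorem~\ref{thm11} for the evaluation map $T\mapsto T(e)$, and Hahn--Banach recovers the $\mathcal L(E,F)$-norm from the functionals $\xi\in F^*$). The hard direction via the splitting $b=b_0+b_1$ into local Berezin averages plus oscillation is the natural vector-valued extension of the scalar argument of Zorboska and Zhang--Liu--Lu: $P^+_\alpha$ handles $b_0$, and the sub-mean-value property of $\|f(\cdot)\|_E^2$ (plurisubharmonic, being the square of a supremum of moduli of holomorphic scalar functions) together with the M\"obius change of variables converts the local $L^1$ oscillation of $b$ into $\|b\|_{BMO^1_\alpha}$ exactly as you describe.

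One step deserves a caveat. The synthesis bound $\bigl\|\sum_j\lambda_j|k_{a_j}^\alpha|\bigr\|_{L^2_\alpha}\lesssim\bigl(\sum_j|\lambda_j|^2\bigr)^{1/2}$ is true for separated sequences, but the route you sketch --- off-diagonal decay of $\langle|k_{a_i}^\alpha|,|k_{a_j}^\alpha|\rangle$ plus a Schur test over the lattice --- does not close for all $\alpha>-1$: the number of lattice points at Bergman distance about $m$ from a fixed point grows like $e^{2nm}$, so a decay of order $e^{-\frac{n+1+\alpha}{2}\beta}$ (or even the rate $e^{-(n+1+\alpha-\epsilon)\beta}$ that the Forelli--Rudin estimates actually yield) beats the counting function only when $\alpha$ is large relative to $n$. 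The clean repair is duality: for $g\ge0$ one has $\langle|k_{a_j}^\alpha|,g\rangle_{L^2_\alpha}=(1-|a_j|^2)^{(n+1+\alpha)/2}P^+_\alpha g(a_j)$, the function $P^+_\alpha g$ is comparable to a constant on each Bergman ball $D_j$, and bounded overlap together with the $L^2_\alpha$-boundedness of $P^+_\alpha$ (which you already invoked for the $b_0$ piece) give the bound. With that substitution the argument is complete.
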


\begin{rem}\label{rem16}
We endow the Cartesian product $BMO^1_{\alpha} ({\mathbb B}_n, \mathcal L(E, F))\times \mathcal L(A^2_\alpha (\mathbb B_n, E), A^2_\alpha (\mathbb B_n, F))$ with the norm
$$\left \Vert (b, T)\right \Vert_{BMO^1_{\alpha} ({\mathbb B}_n, \mathcal L(E, F))\times \mathcal L(A^2_\alpha (\mathbb B_n, E), A^2_\alpha (\mathbb B_n, F))} = \left \Vert b\right \Vert_{BMO^1_{\alpha} ({\mathbb B}_n, \mathcal L(E, F))}+\left \Vert T\right \Vert_{\mathcal L(A^2_\alpha (\mathbb B_n, E), A^2_\alpha (\mathbb B_n, F))}.$$
It is easy to show that the norms
$$\left \Vert (b, T_b)\right \Vert_{BMO^1_{\alpha} ({\mathbb B}_n, \mathcal L(E, F))\times \mathcal L(A^2_\alpha (\mathbb B_n, E), A^2_\alpha (\mathbb B_n, F))} \quad \rm {and} \quad \left \Vert \widetilde {\left \Vert b(\cdot)\right \Vert}_{\mathcal L (E, F)}\right \Vert_{L^\infty (\mathbb B_n, \mathbb C)}$$ are equivalent.
\end{rem}




We call $E^\star$ the topological dual space of the Banach space $E.$ It was proved by Arragui and Blasco \cite{AB2003} (cf. also \cite[Remark 2.2.23]{D2024}) that the topological dual space of $A^2_\alpha (\mathbb B_n, E)$ is identified with the space $A^2_\alpha (\mathbb B_n, E^\star)$ under the pairing
$$\langle f, g\rangle_{\alpha, E}=\int_{\mathbb B_n} \langle f(z), g(z)\rangle_{E, E^\star}d\nu_\alpha (z).$$
We next define the Berezin transform of an operator $S\in \mathcal L(A^2_\alpha (\mathbb B_n, E)).$

\begin{defn}\label{def17}
Let $E, F$ be two Banach spaces and let $S\in \mathcal L(A^2_\alpha (\mathbb B_n, E), A^2_\alpha (\mathbb B_n, F)).$ The Berezin transform $\widetilde S$ of $S$ is the mapping from $\mathbb B_n$ into $\mathcal L(E, F)$ defined by
$$\langle \widetilde S (z) (x), y^\star\rangle_{F, F^\star}:=\langle S(k_z^\alpha x), k_z^\alpha y^\star\rangle_{\alpha, F},$$
for all $x\in E$ and $y^\star \in F^\star.$
\end{defn}

It is easy to show that  (cf. e.g. \cite[Remark 5.4.3]{D2024}) that for every $b\in L^1_\alpha (\mathbb B_n, \mathcal L(E))$ such that $\widetilde {\left \Vert b(\cdot)\right \Vert_{\mathcal L(E)}}\in L^\infty ({\mathbb B}_n, \mathbb C),$ one has the identity $\widetilde{T_b}=\widetilde b.$

We next recall the following definition. 

\begin{defn}\label{def18}
Let $E$ be a Banach space. For every $z\in \mathbb B_n,$ a unitary operator $U_z$ is defined on $A^2_\alpha (\mathbb B_n, E)$ by 
$$\left (U_z f\right )(w)=\left (f\circ \phi_z\right )(w)k_z^\alpha (w) \quad (f\in A^2_\alpha (\mathbb B_n, E), w\in \mathbb B_n).$$
For every operator $S\in \mathcal L(A^2_\alpha (\mathbb B_n, E)),$ we define the operator $S^z \in \mathcal L(A^2_\alpha (\mathbb B_n, E))$ by
$$S^z:=U_zSU_z.$$ 
\end{defn}

In the sequel, on the one hand, we take $E=F=\ell^2$ and we fix  an orthonormal basis $\{e_j\}_{j=1}^\infty$ of the separable Hilbert space $\ell^2.$ We may also take $E=F=\mathbb C,$ in which case, $\mathcal L(\mathbb C)\simeq \mathbb C.$ On the other hand, we shall denote by $d$ a positive integer and we may also take $E=F=\mathbb C^d.$ We mean by dimension the dimension of $E=F:$ in particular, the dimension is one for $\mathbb C, \hskip 2 truemm d$ for $\mathbb C^d$ and infinity for $\ell^2.$

We next record our main results. For the first one, we state the following definition.

\begin{defn}\label{def19}
The Toeplitz algebra $\mathcal T^{d}_\alpha$ is the operator-norm closure in $\mathcal L(A^2_\alpha (\mathbb B_n, \mathbb C^d))$ of the set of finite sums of finite products of Toeplitz operators with $L^\infty \left (\mathbb B_n, \mathcal L(\mathbb C^d)\right )$ symbols.
\end{defn}

Our first result is a generalization of Corollary 3.6 of Sadeghi-Zorboska \cite{SZ2020}.

\begin{thm}\label{thm110}
Let $\beta \in BMO^1_{\alpha} (\mathbb B_n, \mathcal L(\mathbb C^d))$ be such that $\widetilde \beta \in L^\infty (\mathbb B_n, \mathcal L(\mathbb C^d)).$ Then the Toeplitz operator $T_{\beta}$ belongs to the $d-$dimensional Toeplitz algebra $\mathcal T^{d}_\alpha.$ 
\end{thm}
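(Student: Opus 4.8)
The plan is to reduce the operator-valued statement to the scalar-valued case treated by Sadeghi--Zorboska \cite{SZ2020} by exploiting the finite-dimensionality of $\mathbb C^d$. Fix the standard basis $\{\epsilon_1,\dots,\epsilon_d\}$ of $\mathbb C^d$, and for $1\le i,j\le d$ write $\beta_{ij}(z):=\langle \beta(z)\epsilon_j,\epsilon_i\rangle_{\mathbb C^d}$ for the matrix entries of the symbol. First I would check that each $\beta_{ij}$ is a scalar $BMO^1_\alpha(\mathbb B_n,\mathbb C)$ function with $\widetilde{\beta_{ij}}\in L^\infty(\mathbb B_n,\mathbb C)$; this is immediate because coordinate projection $\mathbb C^d\to\mathbb C$ (and the rank-one map $\mathcal L(\mathbb C^d)\to\mathbb C$, $S\mapsto\langle S\epsilon_j,\epsilon_i\rangle$) is a bounded linear map, so it commutes with the Berezin transform by Hille's Theorem (Theorem \ref{thm11}) and does not increase the relevant norms, using the definition of $BMO^1_\alpha$ in Definition \ref{def13}. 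By the scalar case of the result (Corollary 3.6 of \cite{SZ2020}, i.e.\ Theorem \ref{thm110} with $d=1$), each Toeplitz operator $T_{\beta_{ij}}$ on $A^2_\alpha(\mathbb B_n,\mathbb C)$ lies in the scalar Toeplitz algebra $\mathcal T^1_\alpha$.

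Next I would set up the standard identification $A^2_\alpha(\mathbb B_n,\mathbb C^d)\cong \bigoplus_{k=1}^d A^2_\alpha(\mathbb B_n,\mathbb C)$, under which an operator on the left is written as a $d\times d$ matrix of operators on $A^2_\alpha(\mathbb B_n,\mathbb C)$. Writing $E_{ij}\in\mathcal L(\mathbb C^d)$ for the matrix units, one has the block decomposition $\beta(z)=\sum_{i,j} \beta_{ij}(z)E_{ij}$ in $\mathcal L(\mathbb C^d)$, and by linearity of the Toeplitz construction (Definition \ref{def14}), together with the fact that $T_{\gamma E_{ij}}$ acts as the scalar operator $T_\gamma$ from the $j$-th summand into the $i$-th summand and zero elsewhere, we get
\[
T_\beta=\sum_{i,j=1}^d T_{\beta_{ij}}\otimes E_{ij},
\]
interpreted in the matrix picture. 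Here the constant-matrix Toeplitz operators $T_{E_{ij}}$ (which are bounded since $E_{ij}\in L^\infty(\mathbb B_n,\mathcal L(\mathbb C^d))$) serve as the "transfer" operators between summands, and $T_{E_{ii}}$ is the orthogonal projection onto the $i$-th block. So it suffices to observe that each summand $T_{\beta_{ij}}\otimes E_{ij}$ can be realized as $T_{E_{i1}}\,(T_{\beta_{ij}}\otimes E_{11})\,T_{E_{1j}}$ — a product of Toeplitz operators — once we know $T_{\beta_{ij}}\otimes E_{11}\in\mathcal T^d_\alpha$.

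That last point is the crux, and the main obstacle is purely bookkeeping: one must argue that the image of the scalar Toeplitz algebra $\mathcal T^1_\alpha$ under the embedding $S\mapsto S\otimes E_{11}$ lands inside the $d$-dimensional algebra $\mathcal T^d_\alpha$ of Definition \ref{def19}. For a scalar symbol $\gamma\in L^\infty(\mathbb B_n,\mathbb C)$ one has $T_\gamma\otimes E_{11}=T_{\gamma E_{11}}$ with $\gamma E_{11}\in L^\infty(\mathbb B_n,\mathcal L(\mathbb C^d))$, so generators go to generators; the map $S\mapsto S\otimes E_{11}$ is an isometry (onto its range) and an algebra homomorphism on the corner, hence it carries finite sums of finite products of scalar Toeplitz operators to elements of $\mathcal T^d_\alpha$, and by operator-norm continuity it carries the closure $\mathcal T^1_\alpha$ into $\mathcal T^d_\alpha$. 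Combining: $T_{\beta_{ij}}\otimes E_{11}\in\mathcal T^d_\alpha$, hence each $T_{E_{i1}}(T_{\beta_{ij}}\otimes E_{11})T_{E_{1j}}\in\mathcal T^d_\alpha$, and therefore $T_\beta$, being their finite sum, lies in $\mathcal T^d_\alpha$. I expect no genuine analytic difficulty here beyond the scalar input from \cite{SZ2020}; the only care needed is to make sure the boundedness of $T_\beta$ and of all the $T_{\beta_{ij}}$ is in force — which it is, by Theorem \ref{thm15} applied in dimensions $d$ and $1$ respectively, given the hypotheses $\beta\in BMO^1_\alpha$ and $\widetilde\beta\in L^\infty$.
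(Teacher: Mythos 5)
Your proposal is correct and follows essentially the same route as the paper: reduce to the matrix entries $\beta_{ij}$, check via Hille's theorem that each lies in $BMO^1_\alpha(\mathbb B_n,\mathbb C)$ with bounded Berezin transform (this is the paper's Lemma \ref{lem21}), invoke the scalar result of Sadeghi--Zorboska, and then lift finite sums of finite products of bounded-symbol scalar Toeplitz operators into $\mathcal T^d_\alpha$. The only difference is organizational: where the paper constructs, for each factor of each scalar product, an explicit $\mathcal L(\mathbb C^d)$-valued symbol (the $A^1_j$, $A^2_{k,j}$, etc.) reproducing the composition in the correct matrix slot, you package the same lifting as the isometric corner embedding $S\mapsto S\otimes E_{11}$ (a continuous algebra homomorphism sending generators to generators) followed by conjugation with the constant-symbol operators $T_{E_{i1}},T_{E_{1j}}$ --- a tidier bookkeeping of the identical idea.
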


We rely  on the following theorem due to \cite{R2014,RW2015}.

\begin{thm}\label{thm111}
Let $S\in \mathcal L(A^2_\alpha (\mathbb B_n, \mathbb C^d))$ be a member of $\mathcal T^{d}_\alpha.$ The following three assertions are equivalent.
\begin{enumerate}
\item
$S$ is compact;
\item For every $e\in \mathbb C^d,$
$\left \Vert S^z\left (e \right )\right \Vert_{A^2_\alpha({\mathbb B}_n, \mathbb C^d)}\rightarrow 0$ as $z\rightarrow \partial {\mathbb B}_n;$
\item
$\lim \limits_{|z|\rightarrow 1} \left \Vert\widetilde S(z)\right \Vert_{\mathcal L\left (\mathbb C^d\right )}=0.$ 
\end{enumerate}
\end{thm}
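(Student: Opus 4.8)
The plan is to prove the cycle $(1)\Rightarrow(2)\Rightarrow(3)\Rightarrow(1)$; the first two implications are soft, and the third --- where the hypothesis $S\in\mathcal T^d_\alpha$ is essential --- is the substantive one, which I would derive from the essential-norm machinery of \cite{R2014,RW2015}. The common thread is the identity $S^z(e)=U_zSU_z(e)=U_z\bigl(S(e\,k_z^\alpha)\bigr)$: indeed $k_0^\alpha\equiv1$, so the constant function $e\in\mathbb C^d$ equals $e\,k_0^\alpha$, and Definition \ref{def18} gives $U_z(e\,k_0^\alpha)=e\,k_z^\alpha$; since $U_z$ is unitary, $\|S^z(e)\|_{A^2_\alpha(\mathbb B_n,\mathbb C^d)}=\|S(e\,k_z^\alpha)\|_{A^2_\alpha(\mathbb B_n,\mathbb C^d)}$. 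For $(1)\Rightarrow(2)$ I would use that $e\,k_z^\alpha\to0$ weakly as $|z|\to1$: testing against any $G$ in the dual $A^2_\alpha(\mathbb B_n,(\mathbb C^d)^\star)$, the mean value property of holomorphic functions gives $\langle e\,k_z^\alpha,G\rangle_\alpha=(1-|z|^2)^{\frac{n+1+\alpha}{2}}\,G(0)(e)\to0$. A compact operator carries weakly null sequences to norm null sequences, so along any $z_m\to\partial\mathbb B_n$ one obtains $\|S^{z_m}(e)\|=\|S(e\,k_{z_m}^\alpha)\|\to0$, which is $(2)$.

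For $(2)\Rightarrow(3)$, Definition \ref{def17} reads $\langle\widetilde S(z)e,y^\star\rangle=\langle S(e\,k_z^\alpha),k_z^\alpha y^\star\rangle_\alpha$, so Cauchy--Schwarz together with $\|k_z^\alpha y^\star\|=\|y^\star\|$ yields $|\langle\widetilde S(z)e,y^\star\rangle|\le\|S^z(e)\|\,\|y^\star\|$. Expanding $e=\sum_{j=1}^d c_je_j$ in the standard basis $\{e_j\}_{j=1}^d$ of $\mathbb C^d$ and using linearity, $\sup_{|e|\le1}\|S^z(e)\|\le\sum_{j=1}^d\|S^z(e_j)\|$; hence, taking the supremum over the unit balls of $\mathbb C^d$ and $(\mathbb C^d)^\star$ (compact, since $\dim\mathbb C^d<\infty$), $\|\widetilde S(z)\|_{\mathcal L(\mathbb C^d)}\le\sum_{j=1}^d\|S^z(e_j)\|\to0$. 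Finite-dimensionality is exactly what promotes the entrywise decay furnished by $(2)$ to decay of the operator norm.

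The remaining implication $(3)\Rightarrow(1)$ is the core, and here $S\in\mathcal T^d_\alpha$ cannot be dropped (the conclusion is false for general bounded $S$). The target is the essential-norm estimate $\|S\|_{\mathrm{ess}}\lesssim\limsup_{|z|\to1}\|\widetilde S(z)\|_{\mathcal L(\mathbb C^d)}$, which makes $(3)$ force $\|S\|_{\mathrm{ess}}=0$, i.e.\ compactness. Following \cite{R2014,RW2015}, I would first reduce by density to the case where $S$ is a finite sum of finite products of Toeplitz operators with $L^\infty(\mathbb B_n,\mathcal L(\mathbb C^d))$ symbols, both sides of the estimate being continuous in operator norm. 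For such $S$ one invokes the sufficiently-localized-operator calculus: a Bergman-metric lattice decomposition of $\mathbb B_n$, the near-diagonal decay of the Bergman kernel in the Bergman metric, and a reconstruction of $S$ modulo compacts as a superposition of the translates $S^z$. Because $d$ is finite, the scalar estimates of Rahm--Wick are applied entrywise against the basis $\{e_j\}_{j=1}^d$, the off-diagonal entries being dominated by the same kernel bounds; equivalently, one cites the $\mathbb C^d$-valued form of their theorem directly.

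The main obstacle is precisely this essential-norm estimate: showing that the localized pieces $S^z$ asymptotically recover $S$ modulo compacts demands the full localization calculus --- uniform off-diagonal integrability of the kernel in the Bergman metric, and stability of the sufficiently-localized class under the sums and products that generate $\mathcal T^d_\alpha$ --- with all constants kept uniform in $z$. In the passage from scalars to $\mathbb C^d$ I expect the only genuine work to be the bookkeeping of the matrix entries while preserving these uniform bounds; since $d<\infty$ no new analytic phenomenon intervenes, which is why the statement is quoted from \cite{R2014,RW2015} rather than reproved here.
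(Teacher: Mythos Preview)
Your proposal is correct and follows essentially the same route as the paper's own proof in Section~\ref{ssec72}: the cycle $(1)\Rightarrow(2)\Rightarrow(3)\Rightarrow(1)$, with weak convergence of $e\,k_z^\alpha$ plus compactness for the first step, Cauchy--Schwarz for the second, and an appeal to \cite{R2014,RW2015} for the substantive third implication. You supply somewhat more detail than the paper (the explicit mean-value computation for weak convergence, and a sketch of the essential-norm machinery behind $(3)\Rightarrow(1)$), but the structure and key ideas coincide.
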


Combining Theorem \ref{thm110} and Theorem \ref{thm111}, it is easy to deduce the following corollary.

\begin{cor}\label{cor112}
Let $d$ be a positive integer. Let $\beta \in BMO^1_{\alpha} ({\mathbb B}_n, \mathcal L(\mathbb C^d))$ be such that $\widetilde \beta \in L^\infty ({\mathbb B}_n, \mathbb C^d)).$ Then the following three assertions are equivalent.
\begin{enumerate}
\item
The Toeplitz operator $T_\beta$ is compact on $A^2_\alpha (\mathbb B_n, \mathbb C^d);$
\item For every $e\in \mathbb C^d,$
$\left \Vert T_{\beta}\left (k_z^\alpha e \right )\right \Vert_{A^2_\alpha({\mathbb B}_n, \mathbb C^d)}\rightarrow 0$ as $z\rightarrow \partial {\mathbb B}_n;$
\item 
$\lim \limits_{|z|\rightarrow 1} \left \Vert\widetilde{\beta} (z)\right \Vert_{\mathcal L\left (\mathbb C^d\right )}=0.$ 
\end{enumerate}
\end{cor}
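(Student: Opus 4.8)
The plan is to deduce Corollary~\ref{cor112} by feeding Theorem~\ref{thm110} into Theorem~\ref{thm111}, so the real content is only to translate the three equivalent assertions of the latter, stated for a general $S\in\mathcal T^d_\alpha$, into the three assertions about $T_\beta$ in the former. First I would take $\beta\in BMO^1_\alpha(\mathbb B_n,\mathcal L(\mathbb C^d))$ with $\widetilde\beta\in L^\infty(\mathbb B_n,\mathcal L(\mathbb C^d))$; by Theorem~\ref{thm15} the operator $T_\beta$ extends to a bounded operator on $A^2_\alpha(\mathbb B_n,\mathbb C^d)$, and by Theorem~\ref{thm110} we have $T_\beta\in\mathcal T^d_\alpha$. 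Hence Theorem~\ref{thm111} applies with $S=T_\beta$, and it remains to identify each of its three conditions with the corresponding one in the corollary.

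The equivalence of (1) in both statements is immediate: it is literally the assertion that $T_\beta$ is compact. For (3), I would invoke the identity $\widetilde{T_b}=\widetilde b$ recorded just after Definition~\ref{def17}, which applies here because the hypothesis $\widetilde\beta\in L^\infty$ is exactly $\widetilde{\Vert\beta(\cdot)\Vert_{\mathcal L(\mathbb C^d)}}\in L^\infty(\mathbb B_n,\mathbb C)$ up to the norm equivalence of Remark~\ref{rem16}; thus $\widetilde{T_\beta}=\widetilde\beta$, and condition (3) of Theorem~\ref{thm111}, namely $\lim_{|z|\to1}\Vert\widetilde{T_\beta}(z)\Vert_{\mathcal L(\mathbb C^d)}=0$, becomes $\lim_{|z|\to1}\Vert\widetilde\beta(z)\Vert_{\mathcal L(\mathbb C^d)}=0$. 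For (2), the point is to recognize $T_\beta^z$ applied to a constant vector as $U_z T_\beta U_z e$; since $U_z$ is unitary, $\Vert T_\beta^z(e)\Vert_{A^2_\alpha}=\Vert U_z T_\beta U_z e\Vert=\Vert T_\beta U_z e\Vert$, and because $U_z e=(e\circ\phi_z)\,k_z^\alpha=k_z^\alpha\,e$ (the constant function $e$ composed with any automorphism is still $e$), this equals $\Vert T_\beta(k_z^\alpha e)\Vert_{A^2_\alpha(\mathbb B_n,\mathbb C^d)}$. So condition (2) of Theorem~\ref{thm111}, ``for every $e$, $\Vert S^z(e)\Vert\to0$ as $z\to\partial\mathbb B_n$'', is verbatim condition (2) of the corollary.

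With these three identifications in place, the three equivalences of Theorem~\ref{thm111} transfer directly to give the three equivalences claimed. I do not anticipate a genuine obstacle here; the only point requiring a little care is the bookkeeping in the translation of condition (2), specifically the verification that $U_z$ sends the constant $\mathbb C^d$-valued function $e$ to $k_z^\alpha e$ (which uses that $\phi_z$ is a self-map of $\mathbb B_n$, so $e\circ\phi_z\equiv e$), and the invocation of $\widetilde{T_\beta}=\widetilde\beta$ in condition (3), which must be justified through the hypothesis on $\widetilde\beta$ and the norm equivalence of Remark~\ref{rem16}. Everything else is a formal citation of the two theorems already proved.
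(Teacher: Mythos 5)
Your proposal is correct and follows essentially the same route the paper intends: Theorem \ref{thm110} puts $T_\beta$ in $\mathcal T^d_\alpha$, Theorem \ref{thm111} is applied with $S=T_\beta$, and the two translation identities you verify by hand ($\Vert (T_\beta)^z(e)\Vert = \Vert T_\beta(k_z^\alpha e)\Vert$ and $\widetilde{T_\beta}=\widetilde\beta$) are exactly assertions (2) and (3) of Lemma \ref{lem33} (restated as Lemma \ref{lem74} in the finite-dimensional case), which the paper relies on for this deduction.
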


We also develop a method of reduction to a finite dimension. Let $d$ be a positive integer. We call $M_{I_{(d)}}$ the linear operator defined on $\ell^2$ by
$$M_{I_{(d)}} (e)=\sum_{j=d+1}^\infty \langle e, e_j\rangle_{\ell^2} e_j.$$
This operator extends to the linear operator defined on $A^2_\alpha (\mathbb B_n,  \ell^2)$ as
$$M_{I_{(d)}} (f)(z)=\sum_{j=d+1}^\infty \langle f(z), e_j\rangle_{\ell^2} e_j.$$
We have:
$$T_b M_{I_{(d)}}f(z)=T_{b_{(d)}},$$ 
where the symbol $b_{(d)}: \mathbb B_n\rightarrow \mathcal L(\ell^2)$ is defined as follows:
$$
b_{(d)}(w)(e_i)=\left \{
\begin{array}{clcr}
0&\rm if&1\leq i\leq d\\
b(w)(e_i)&\rm if&i\geq d+1 
\end{array}
\right.
.
$$
Our method of reduction allows us to approximate $T_b$ by the operator $M_{I^{(d)}}T_bM_{I^{(d)}},$ where  $M_{I^{(d)}}$ is the linear operator defined on $\ell^2$ by
$$M_{I^{(d)}} (e)=\sum_{j=1}^d \langle e, e_j\rangle_{\ell^2} e_j,$$
which extends to the linear operator defined on $A^2_\alpha (\mathbb B_n,  \ell^2)$ as
$$M_{I^{(d)}} (f)(z)=\sum_{j=1}^d \langle f(z), e_j\rangle_{\ell^2} e_j.$$
We easily point out that the sum $M_{I^{(d)}}+M_{I_{(d)}}$ is the identity operator on $\ell^2$ and on $A^2_\alpha (\mathbb B_n,  \ell^2).$
The operator $M_{I^{(d)}}T_bM_{I^{(d)}}$ is easily identified with an operator $T_{b^{\llcorner (d)\lrcorner}}$ of $\mathcal L(A_\alpha^2 (\mathbb B_n, \mathbb C^d))$ so that we can apply to it Corollary \ref{cor112}. This leads us to the following theorem.

\begin{thm}\label{thm113}
Let $b\in L^1_\alpha (\mathbb B_n, \mathcal L(\ell^2)).$ The associated Toeplitz operator $T_b$ is compact on $A^2_\alpha (\mathbb B_n,  \ell^2)$ if the following four conditions are satisfied.
\begin{enumerate}
\item[(i)]
$b\in BMO^1_{\alpha} ({\mathbb B}_n, \mathcal L(\ell^2))$ and $\widetilde b\in L^\infty ({\mathbb B}_n, \mathcal L(\ell^2)).$
\item[(ii)]
We have
$$\lim \limits_{d\rightarrow \infty} \left \Vert \widetilde{\left \Vert b_{(d)}(\cdot)\right \Vert_{\mathcal L(\ell^2)}}\right \Vert_{L^\infty ({\mathbb B}_n, \mathbb C)}=0;$$

\item[(iii)] 
We have
$$\lim \limits_{d\rightarrow \infty} \left \Vert \widetilde{\left \Vert {\left ((b(\cdot))^\star_{\mathcal L(\ell^2)}\right )_{(d)}}\right \Vert_{\mathcal L(\ell^2)}}\right \Vert_{L^\infty ({\mathbb B}_n, \mathbb C)}=0;$$
\item[(iv)]
For every positive integer $d,$  the following estimate holds.
$$
\sup \limits_{e\in \mathbb C^d, \left \Vert e\right \Vert_{\mathbb C^d}=1} \lim \limits_{z\rightarrow \partial {\mathbb B}_n} \left \Vert T_{b^{\llcorner (d)\lrcorner}}\left (k_z^\alpha e \right )\right \Vert_{A^2_\alpha ({\mathbb B}_n, \mathbb C^d)}=0.
$$
\end{enumerate}
\end{thm}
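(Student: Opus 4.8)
The plan is to combine the reduction-to-finite-dimension machinery set up above with the fact that the compact operators form a closed ideal in $\mathcal L(A^2_\alpha(\mathbb B_n,\ell^2))$. Under hypothesis (i), Theorem \ref{thm15} guarantees that $T_b$ extends to a bounded operator on $A^2_\alpha(\mathbb B_n,\ell^2)$, so using $M_{I^{(d)}}+M_{I_{(d)}}=I$ we may split, for each positive integer $d$,
$$
T_b=M_{I^{(d)}}T_bM_{I^{(d)}}+M_{I^{(d)}}T_bM_{I_{(d)}}+M_{I_{(d)}}T_bM_{I^{(d)}}+M_{I_{(d)}}T_bM_{I_{(d)}}.
$$
I would then prove two things: first, that the ``diagonal'' block $M_{I^{(d)}}T_bM_{I^{(d)}}$ is compact for every $d$; second, that the three remaining blocks together have operator norm tending to $0$ as $d\to\infty$. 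Since $M_{I^{(d)}}$ and $M_{I_{(d)}}$ are orthogonal projections, hence contractions, these two facts exhibit $T_b$ as the operator-norm limit of the compact operators $M_{I^{(d)}}T_bM_{I^{(d)}}$, so $T_b$ is compact.

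For the diagonal block, I would use the isometric identification $A^2_\alpha(\mathbb B_n,\mathbb C^d)\cong M_{I^{(d)}}A^2_\alpha(\mathbb B_n,\ell^2)$, under which $M_{I^{(d)}}T_bM_{I^{(d)}}$ factors as $J\,T_{b^{\llcorner(d)\lrcorner}}\,\Pi$, where $J$ is the isometric inclusion induced by $\iota:\mathbb C^d\hookrightarrow\ell^2$ onto $\mathrm{span}\{e_1,\dots,e_d\}$, $\Pi=J^\star$, and $b^{\llcorner(d)\lrcorner}(w)=\pi\,b(w)\,\iota$ with $\pi=\iota^\star$; conversely $T_{b^{\llcorner(d)\lrcorner}}=\Pi\,M_{I^{(d)}}T_bM_{I^{(d)}}\,J$, so the two operators are compact simultaneously. (This operator identity is checked first on $H^\infty(\mathbb B_n,\mathbb C^d)$, using Hille's Theorem \ref{thm11} to move $\pi$ and $\iota$ past the Bochner integral, and then extended by Proposition \ref{pro12}.) Since $S\mapsto\pi S\iota$ is contractive and commutes with the Berezin transform, (i) gives $b^{\llcorner(d)\lrcorner}\in BMO^1_\alpha(\mathbb B_n,\mathcal L(\mathbb C^d))$ with $\widetilde{b^{\llcorner(d)\lrcorner}}\in L^\infty(\mathbb B_n,\mathcal L(\mathbb C^d))$, so Corollary \ref{cor112} applies to $\beta=b^{\llcorner(d)\lrcorner}$. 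Condition (iv) is precisely assertion (2) of that corollary for this $\beta$ (a supremum of nonnegative boundary limits vanishes if and only if each does, and homogeneity in $e$ passes from unit vectors to all of $\mathbb C^d$), whence $T_{b^{\llcorner(d)\lrcorner}}$, and therefore $M_{I^{(d)}}T_bM_{I^{(d)}}$, is compact.

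For the three remaining blocks, I would use the identity $T_bM_{I_{(d)}}=T_{b_{(d)}}$ (again first on $H^\infty(\mathbb B_n,\ell^2)$, then by density): this turns $M_{I^{(d)}}T_bM_{I_{(d)}}$ and $M_{I_{(d)}}T_bM_{I_{(d)}}$ into left multiples of $T_{b_{(d)}}$, each of norm at most $\|T_{b_{(d)}}\|$. As $b_{(d)}(w)=b(w)\circ M_{I_{(d)}}$ we have $\|b_{(d)}\|_{BMO^1_\alpha}\le\|b\|_{BMO^1_\alpha}$, so Remark \ref{rem16} yields $\|T_{b_{(d)}}\|\lesssim\big\|\widetilde{\|b_{(d)}(\cdot)\|_{\mathcal L(\ell^2)}}\big\|_{L^\infty(\mathbb B_n,\mathbb C)}\to0$ by (ii). For the corner block $M_{I_{(d)}}T_bM_{I^{(d)}}$ I would pass to the Hilbert-space adjoint: since $(T_b)^\star=T_{b^\star}$ with $b^\star(w)=(b(w))^\star_{\mathcal L(\ell^2)}$ and $M_{I^{(d)}},M_{I_{(d)}}$ are self-adjoint, $(M_{I_{(d)}}T_bM_{I^{(d)}})^\star=M_{I^{(d)}}T_{b^\star}M_{I_{(d)}}=M_{I^{(d)}}T_{(b^\star)_{(d)}}$. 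Taking adjoints is isometric on $\mathcal L(\ell^2)$ and commutes with the Berezin transform, so $\|b^\star\|_{BMO^1_\alpha}=\|b\|_{BMO^1_\alpha}$, hence $(b^\star)_{(d)}\in BMO^1_\alpha$ and Remark \ref{rem16} gives $\|M_{I_{(d)}}T_bM_{I^{(d)}}\|=\|M_{I^{(d)}}T_{(b^\star)_{(d)}}\|\lesssim\big\|\widetilde{\|(b^\star)_{(d)}(\cdot)\|_{\mathcal L(\ell^2)}}\big\|_{L^\infty(\mathbb B_n,\mathbb C)}\to0$ by (iii). Summing the three bounds gives $\|T_b-M_{I^{(d)}}T_bM_{I^{(d)}}\|\to0$.

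The argument is conceptually light once the scaffolding is in place; the main obstacle is the careful verification of that scaffolding — the two operator identities $T_bM_{I_{(d)}}=T_{b_{(d)}}$ and $M_{I^{(d)}}T_bM_{I^{(d)}}=J\,T_{b^{\llcorner(d)\lrcorner}}\,\Pi$, the adjoint formula $(T_b)^\star=T_{b^\star}$ on $A^2_\alpha(\mathbb B_n,\ell^2)$, and the fact that the $BMO^1_\alpha$-norm and the Berezin transform only shrink (or at worst are preserved) under the compressions $b\mapsto b_{(d)}$, $b\mapsto(b^\star)_{(d)}$, $b\mapsto b^{\llcorner(d)\lrcorner}$ — since all of these are needed to make Theorem \ref{thm15}/Remark \ref{rem16} and Corollary \ref{cor112} genuinely applicable at each step. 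The only point where the precise shape of the hypotheses enters, rather than just soft estimates, is matching condition (iv) with assertion (2) of Corollary \ref{cor112}; conditions (ii) and (iii) then supply all the required boundary decay.
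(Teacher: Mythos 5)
Your proposal is correct and follows essentially the same route as the paper: the splitting of $T_b$ via $M_{I^{(d)}}+M_{I_{(d)}}=I$ is the paper's Remark \ref{rem41}/Proposition \ref{pro42}, the identities $T_bM_{I_{(d)}}=T_{b_{(d)}}$, $(T_b)^\star=T_{(b(\cdot))^\star_{\mathcal L(\ell^2)}}$ and the control of the off-diagonal blocks by (ii)--(iii) via Remark \ref{rem16} are Propositions \ref{pro43}, \ref{thm45}, \ref{thm48} and Lemmas \ref{lem44}, \ref{lem47}, and the identification of $M_{I^{(d)}}T_bM_{I^{(d)}}$ with $T_{b^{\llcorner(d)\lrcorner}}$ followed by Corollary \ref{cor112} applied through condition (iv) is exactly Lemmas \ref{lem53}, \ref{lem55}, Proposition \ref{pro56} and Corollary \ref{cor75}. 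The only cosmetic difference is that you present the conclusion as a norm limit of compact operators rather than through the essential-norm inequality of Remark \ref{rem41}; the content is the same.
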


To state the fourth result, we introduce the following definition drawn from Rahm \cite{R2016}.
\begin{defn}\label{def114}
\begin{enumerate}
\item
A function $u:\mathbb B_n \rightarrow \mathcal L(\ell^2)$ is in $L^\infty_{fin}$ if $\langle u(\cdot)e_j, e_k\rangle_{\ell^2}$ is non-zero for only finitely many indices $(j, k),$ and each function $z\mapsto \langle u(z)e_j, e_k\rangle_{\ell^2}$ is a bounded function on $\mathbb B_n.$
\item
The Toeplitz algebra $\mathcal T_{L^\infty_{fin}}$ is defined as
$$\mathcal T_{L^\infty_{fin}}:=clos_{\mathcal L(A^2_\alpha (\mathbb B_n, \ell^2))}\left \{\sum_{l=1}^L \prod_{j=1}^J T_{u_{j, l}}: u_{j, l}\in L^\infty_{fin}, J, L\textless \infty\right \},$$
where the closure is taken in the operator norm topology on $\mathcal L(A^2_\alpha (\mathbb B_n, \ell^2)).$
\end{enumerate}
\end{defn}

The following result was proved in \cite[Corollary 3.5]{R2016}: 
\begin{thm}\label{thm115}
Let the linear operator $S\in \mathcal L(A^2_\alpha (\mathbb B_n, \ell^2))$ be in the Toeplitz algebra $\mathcal T_{L^\infty_{fin}}.$ If $
\sup \limits_{e\in \mathbb C^d, \left \Vert e\right \Vert_{\mathbb C^d}=1} \lim \limits_{z\rightarrow \partial {\mathbb B}_n} \left \Vert S\left (k_z^\alpha e \right )\right \Vert_{A^2_\alpha({\mathbb B}_n, \ell^2)}=0$ for every positive integer $d,$ then $S$ is compact.
\end{thm}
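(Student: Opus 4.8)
The plan is to deduce the infinite-dimensional statement from the finite-dimensional Theorem~\ref{thm111} by compressing $S$ to finitely many coordinates and letting the number of coordinates tend to infinity. I would begin by organizing $\mathcal T_{L^\infty_{fin}}$ through a finite-support filtration. For a positive integer $d$, call $u\in L^\infty_{fin}$ \emph{supported on the first $d$ coordinates} if $\langle u(\cdot)e_k,e_l\rangle_{\ell^2}\equiv 0$ whenever $\max\{k,l\}>d$, equivalently $u(w)=M_{I^{(d)}}u(w)M_{I^{(d)}}$ for all $w\in\mathbb B_n$; pulling $M_{I^{(d)}}$ through the Bochner integral in Definition~\ref{def14} via Hille's Theorem~\ref{thm11} then gives $T_u=M_{I^{(d)}}T_uM_{I^{(d)}}$ for such symbols. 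Let $\mathcal A_d$ be the (not-yet-closed) algebra of finite sums of finite products of Toeplitz operators with symbols supported on the first $d$ coordinates; then $\mathcal A_d=M_{I^{(d)}}\mathcal A_dM_{I^{(d)}}$, and under the canonical isometric identification of $M_{I^{(d)}}A^2_\alpha(\mathbb B_n,\ell^2)$ with $A^2_\alpha(\mathbb B_n,\mathbb C^d)$ its operator-norm closure $\overline{\mathcal A_d}$ is exactly $\mathcal T^d_\alpha$ (the admissible symbols restrict to all of $L^\infty(\mathbb B_n,\mathcal L(\mathbb C^d))$). Since every element of the generating set of $\mathcal T_{L^\infty_{fin}}$ uses only finitely many symbols, each with finitely many non-zero entries, it already lies in some $\mathcal A_d$; hence $\mathcal T_{L^\infty_{fin}}=\overline{\bigcup_{d\ge1}\mathcal A_d}$.

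Next I would show that the compressions $M_{I^{(d)}}SM_{I^{(d)}}$ converge to $S$ in operator norm. Given $\varepsilon>0$, choose $d_0$ and $S_0\in\mathcal A_{d_0}$ with $\|S-S_0\|<\varepsilon$. For every $d\ge d_0$ one has $M_{I^{(d)}}M_{I^{(d_0)}}=M_{I^{(d_0)}}$, so $M_{I^{(d)}}S_0M_{I^{(d)}}=S_0$, whence $\|S-M_{I^{(d)}}SM_{I^{(d)}}\|\le\|S-S_0\|+\|M_{I^{(d)}}(S_0-S)M_{I^{(d)}}\|\le 2\varepsilon$. Thus $M_{I^{(d)}}SM_{I^{(d)}}\to S$ in norm. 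This step genuinely uses $S\in\mathcal T_{L^\infty_{fin}}$: for $S=\mathrm{Id}$ one would get $\|\mathrm{Id}-M_{I^{(d)}}\|=1$ for every $d$.

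The third step is that each $M_{I^{(d)}}SM_{I^{(d)}}$ is compact; by the second step this makes $S$ a norm limit of compact operators, hence compact. Writing $S=\lim_m S_m$ with $S_m\in\mathcal A_{e_m}$, the operator $M_{I^{(d)}}S_mM_{I^{(d)}}$ is the compression to the first $d$ coordinates of a member of $\overline{\mathcal A_{e_m}}\cong\mathcal T^{e_m}_\alpha$. Since $\mathcal T^e_\alpha$ is canonically the matrix algebra $M_e(\mathcal T^1_\alpha)$ over the scalar Toeplitz algebra (the generators $T_u=\sum_{i,j}T_{u_{ij}}\otimes E_{ij}$ have scalar-Toeplitz entries, and each $E_{ij}$-valued constant is itself a Toeplitz symbol), such a compression is a top-left $d\times d$ sub-block and therefore lies in $M_d(\mathcal T^1_\alpha)\cong\mathcal T^d_\alpha$; letting $m\to\infty$ gives $M_{I^{(d)}}SM_{I^{(d)}}\in\mathcal T^d_\alpha$. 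I would then apply Theorem~\ref{thm111} to this operator on $A^2_\alpha(\mathbb B_n,\mathbb C^d)$: for $e\in\mathbb C^d$ we have $M_{I^{(d)}}k_z^\alpha e=k_z^\alpha e$, so by unitarity of $U_z$ (as in Corollary~\ref{cor112})
$$\big\|(M_{I^{(d)}}SM_{I^{(d)}})^z(e)\big\|_{A^2_\alpha(\mathbb B_n,\mathbb C^d)}=\big\|M_{I^{(d)}}S\,k_z^\alpha e\big\|_{A^2_\alpha(\mathbb B_n,\ell^2)}\le\big\|S\,k_z^\alpha e\big\|_{A^2_\alpha(\mathbb B_n,\ell^2)}\longrightarrow 0$$
as $z\to\partial\mathbb B_n$, by the hypothesis applied with this $d$. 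Condition~(2) of Theorem~\ref{thm111} thus holds, so $M_{I^{(d)}}SM_{I^{(d)}}$ is compact, and we conclude.

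The main obstacle I anticipate is in the third step: one must be sure that compressing $S$ to the first $d$ coordinates keeps the result inside the Toeplitz algebra $\mathcal T^d_\alpha$, not merely inside $\mathcal L(A^2_\alpha(\mathbb B_n,\mathbb C^d))$, so that Theorem~\ref{thm111} legitimately applies. Concretely this rests on the multiplier identities $M_{I^{(d)}}T_u=T_{M_{I^{(d)}}u(\cdot)}$ and $T_uM_{I^{(d)}}=T_{u(\cdot)M_{I^{(d)}}}$ (again instances of Hille's theorem) together with the observation that, when one expands a product $T_{u_1}\cdots T_{u_J}$ by inserting $M_{I^{(d)}}+M_{I_{(d)}}=\mathrm{Id}$ at each juncture, its compression becomes a finite sum of products of Toeplitz operators with $\mathcal L(\mathbb C^d)$-valued symbols — i.e. the $\mathcal T^e_\alpha\cong M_e(\mathcal T^1_\alpha)$ bookkeeping made explicit. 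The remaining ingredients, namely the norm estimates of Steps~2 and~3 and the final passage to the limit, are routine.
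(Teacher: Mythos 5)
Your argument is correct, but it follows a genuinely different route from the one the paper relies on. The paper imports Theorem \ref{thm115} from Rahm's Corollary 3.5 in \cite{R2016}, whose proof (reproduced in spirit in the chain $(2)\Rightarrow(3)\Rightarrow(4)$ of the corollary in Section \ref{sec12}) approximates $S$ in operator norm by finite sums of products of Toeplitz operators with $L^\infty_{fin}$ symbols, observes that each approximant $S_m$ is sufficiently localized and satisfies $\lim_{d\to\infty}\|S_mM_{I_{(d)}}\|=0$, transfers the reproducing-kernel decay from $S$ to $S_m$ up to $\epsilon$, and then invokes the essential-norm formula of Theorem \ref{thm118}(a) (whose proof needs the atomic decomposition, Theorem \ref{thm125}) to conclude $\|S_m\|_e\lesssim\epsilon$ and hence $\|S\|_e=0$. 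You instead exploit the inductive-limit structure $\mathcal T_{L^\infty_{fin}}=\overline{\bigcup_d\mathcal A_d}$ to prove that the corners $M_{I^{(d)}}SM_{I^{(d)}}$ converge to $S$ in operator norm, identify each corner with an element of the finite-dimensional algebra $\mathcal T^d_\alpha$, verify condition (2) of Theorem \ref{thm111} for it directly from the hypothesis, and conclude that $S$ is a norm limit of compact operators. Your route trades the sufficiently-localized machinery (the exponent restriction $p>\frac{n+2+2\alpha}{1+\alpha}$, the atomic decomposition, the essential-norm comparison) for the full strength of the finite-dimensional Rahm--Wick theorem plus the algebraic bookkeeping that a $d\times d$ corner of an element of $\mathcal T^e_\alpha$ lies in $\mathcal T^d_\alpha$; the paper's route is quantitative (it controls $\|S\|_e$) and applies to localized operators outside the algebra. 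The only place in your write-up that genuinely needs more than a sentence is the claim that corner-compression preserves membership in the Toeplitz algebras, via $\mathcal T^e_\alpha\simeq M_e(\mathcal T^1_\alpha)$; but the matrix-unit computations of Subsection \ref{ssec71} (the symbols $A^1_j$ and $A^2_{k,j}$) and of Subsection \ref{ssec81} supply exactly the required identities, so this is a matter of assembling results already in the paper rather than a gap.
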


We prove the following proposition.
\begin{prop}\label{pro116}
Let $b\in BMO^1_{\alpha} ({\mathbb B}_n, \mathcal L(\ell^2))$ be such that $\widetilde b\in L^\infty ({\mathbb B}_n, \mathcal L(\ell^2)).$ For every positive integer $d,$ the operator $M_{I^{(d)}}T_b M_{I^{(d)}}$ belongs to $\mathcal T_{L^\infty_{fin}}.$ Furthermore, if 
$$\lim \limits_{d\rightarrow \infty} \left \Vert T_bM_{I_{(d)}} \right \Vert_{\mathcal L(A^2_\alpha (\mathbb B_n,  \ell^2))}=\lim \limits_{d\rightarrow \infty} \left \Vert M_{I_{(d)}}T_b \right \Vert_{\mathcal L(A^2_\alpha (\mathbb B_n, \ell^2))}=0,$$
then the operator $T_b$ belongs to $\mathcal T_{L^\infty_{fin}}.$
\end{prop}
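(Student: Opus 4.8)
The plan is to prove the two assertions of the proposition in turn, using the reduction operators $M_{I^{(d)}}$ and $M_{I_{(d)}}$ as the main technical device.

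For the first assertion, I would start from the identity $M_{I^{(d)}}T_bM_{I^{(d)}}=T_{b^{\llcorner(d)\lrcorner}}$ established just before the statement of Theorem~\ref{thm113}, which identifies the compression with a Toeplitz operator on the finite-dimensional Bergman space $A^2_\alpha(\mathbb B_n,\mathbb C^d)$. Since $b\in BMO^1_\alpha(\mathbb B_n,\mathcal L(\ell^2))$ with $\widetilde b\in L^\infty$, one checks that the truncated symbol $b^{\llcorner(d)\lrcorner}$ inherits the hypotheses $b^{\llcorner(d)\lrcorner}\in BMO^1_\alpha(\mathbb B_n,\mathcal L(\mathbb C^d))$ and $\widetilde{b^{\llcorner(d)\lrcorner}}\in L^\infty$ (the entries $\langle b(\cdot)e_j,e_k\rangle_{\ell^2}$ for $1\le j,k\le d$ are a finite subfamily of a $BMO^1_\alpha$ matrix, hence each is in $BMO^1_\alpha(\mathbb B_n,\mathbb C)$, and the Berezin transform of a finite matrix is bounded iff each entry is). Then Theorem~\ref{thm110} applies to $b^{\llcorner(d)\lrcorner}$ and gives $T_{b^{\llcorner(d)\lrcorner}}\in\mathcal T^d_\alpha$, i.e. $T_{b^{\llcorner(d)\lrcorner}}$ is an operator-norm limit of finite sums of finite products of Toeplitz operators with $L^\infty(\mathbb B_n,\mathcal L(\mathbb C^d))$ symbols. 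Finally I would identify $\mathcal T^d_\alpha$ with a subalgebra of $\mathcal T_{L^\infty_{fin}}$: a Toeplitz operator on $A^2_\alpha(\mathbb B_n,\mathbb C^d)$ with an $\mathcal L(\mathbb C^d)$-valued $L^\infty$ symbol, viewed on $A^2_\alpha(\mathbb B_n,\ell^2)$ via the embedding $\mathbb C^d\hookrightarrow\ell^2$, is exactly a Toeplitz operator $T_u$ with $u\in L^\infty_{fin}$ (its matrix is supported on $\{1,\dots,d\}^2$ with bounded entries); products and sums are preserved under this identification, and the closure of $\mathcal T^d_\alpha$ maps into the closure defining $\mathcal T_{L^\infty_{fin}}$. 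Hence $M_{I^{(d)}}T_bM_{I^{(d)}}=T_{b^{\llcorner(d)\lrcorner}}\in\mathcal T_{L^\infty_{fin}}$.

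For the second assertion, I would write the telescoping decomposition
$$T_b=M_{I^{(d)}}T_bM_{I^{(d)}}+M_{I^{(d)}}T_bM_{I_{(d)}}+M_{I_{(d)}}T_bM_{I^{(d)}}+M_{I_{(d)}}T_bM_{I_{(d)}},$$
using $M_{I^{(d)}}+M_{I_{(d)}}=I$ on $A^2_\alpha(\mathbb B_n,\ell^2)$. The operators $M_{I^{(d)}}$ and $M_{I_{(d)}}$ are contractions (orthogonal projections fiberwise), so each of the last three terms is bounded in operator norm by $\|T_bM_{I_{(d)}}\|$ or $\|M_{I_{(d)}}T_b\|$, whichever is relevant; by the hypothesis these bounds tend to $0$ as $d\to\infty$. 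Therefore $M_{I^{(d)}}T_bM_{I^{(d)}}\to T_b$ in $\mathcal L(A^2_\alpha(\mathbb B_n,\ell^2))$. Since each $M_{I^{(d)}}T_bM_{I^{(d)}}$ lies in $\mathcal T_{L^\infty_{fin}}$ by the first part, and $\mathcal T_{L^\infty_{fin}}$ is norm-closed by definition, the limit $T_b$ belongs to $\mathcal T_{L^\infty_{fin}}$ as well.

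The main obstacle is the first assertion, specifically the step verifying that the truncated symbol $b^{\llcorner(d)\lrcorner}$ really satisfies the hypotheses of Theorem~\ref{thm110} and that the finite-dimensional Toeplitz algebra $\mathcal T^d_\alpha$ embeds faithfully into $\mathcal T_{L^\infty_{fin}}$ as an algebra of operators on $A^2_\alpha(\mathbb B_n,\ell^2)$; one must be careful that the identification $M_{I^{(d)}}T_bM_{I^{(d)}}\cong T_{b^{\llcorner(d)\lrcorner}}$ is compatible with composition, so that products of Toeplitz operators on $\mathbb C^d$ correspond to products of the associated $L^\infty_{fin}$-Toeplitz operators on $\ell^2$ (this uses $M_{I^{(d)}}^2=M_{I^{(d)}}$ and that $M_{I^{(d)}}$ commutes with the range restriction). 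The second assertion, by contrast, is a routine telescoping-plus-closedness argument once the first is in place.
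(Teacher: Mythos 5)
Your proposal is correct and follows essentially the same route as the paper: the first assertion is proved by identifying $M_{I^{(d)}}T_bM_{I^{(d)}}$ with $T_{b^{\llcorner(d)\lrcorner}}$, invoking Lemma~\ref{lem53} and Theorem~\ref{thm110} to approximate $T_{b^{\llcorner(d)\lrcorner}}$ by finite sums of products of Toeplitz operators with $L^\infty(\mathbb B_n,\mathcal L(\mathbb C^d))$ symbols, and then transporting these back to $L^\infty_{fin}$ symbols on $\ell^2$ while checking compatibility with products (the paper does this by the iterated identity $\mathcal J^{(d)}T_{A_{1,l}}\cdots T_{A_{J,l}}(M_{I^{(d)}}f)^{(d)}=T_{B_{1,l}}\cdots T_{B_{J,l}}f$, which is exactly the point you flag as the main obstacle). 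Your telescoping-plus-norm-closedness argument for the second assertion is likewise the intended one, matching the norm estimate of Remark~\ref{rem41}.
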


To state our last results, we need the following definition  drawn from \cite{R2016}. 

\begin{defn}\label{def117}
An operator $S$ on $A^2_\alpha (\mathbb B_n,  \ell^2)$ is said to be {\it sufficiently localized} if it satisfies the following two conditions
\begin{equation}\label{3.2}
\sup \limits_{j=1, 2, \cdots} \hskip 1truemm \sup \limits_{z\in {\mathbb B}_n} \left \{\int_{{\mathbb B}_n} \left (\sum_{i=1}^\infty \left \vert \langle (S^z e_j)(u), e_i\rangle_{\ell^2}\right \vert \right )^p{d\nu}_\alpha (u)   \right \}^{\frac 1p}<\infty
\end{equation}
and
\begin{equation}\label{3.1}
\sup \limits_{j=1, 2, \cdots} \hskip 1truemm \sup \limits_{z\in {\mathbb B}_n} \left \{\int_{{\mathbb B}_n} \left (\sum_{i=1}^\infty \left \vert \langle (\left (S^\star\right )^z e_j)(u), e_i\rangle_{\ell^2}\right \vert \right )^p{d\nu}_\alpha (u)   \right \}^{\frac 1p}<\infty
\end{equation}
for some real number $p\textgreater \frac {n+2+2\alpha}{1+\alpha}.$ In (\ref{3.1}), $S^\star$ denotes the adjoint operator of $S.$
\end{defn}

Rahm \cite[Theorem 3.4]{R2016} established the following theorem. The notation $\left \Vert S\right \Vert_e$ is for the essential norm of a bounded linear operator $S.$

\begin{thm}\label{thm118}
Let $S:A^2_\alpha (\mathbb B_n,  \ell^2)\rightarrow A^2_\alpha (\mathbb B_n,  \ell^2)$ be a sufficiently localized linear operator. Suppose further that \hskip 1truemm $\limsup \limits_{d\rightarrow \infty} \left \Vert SM_{I_{(d)}}\right \Vert_{\mathcal L\left (A^2_\alpha (\mathbb B_n,  \ell^2)\right )}=0.$ Then:
\begin{enumerate}
	\item[(a)]
	$$\left \Vert S\right \Vert_e\simeq \sup \limits_{f\in A^2_\alpha (\mathbb B_n,  \ell^2): \left \Vert f\right \Vert_{A^2_\alpha (\mathbb B_n,  \ell^2)}=1} \limsup \limits_{z\rightarrow \partial \mathbb B_n} \left \Vert S^z f\right \Vert_{A^2_\alpha (\mathbb B_n,  \ell^2)};$$
	\item[(b)]
	If $\sup \limits_{e\in \mathbb C^d, \left \Vert e\right \Vert_{\mathbb C^d}=1} \lim \limits_{z\rightarrow \partial {\mathbb B}_n} \left \Vert S^z e\right \Vert_{A^2_\alpha (\mathbb B_n,  \ell^2)}=0$ for every positive integer $d,$ then $S$ must be compact.
\end{enumerate} 
\end{thm}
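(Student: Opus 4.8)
\textbf{Proof proposal for Theorem \ref{thm118}.}

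The plan is to mirror the strategy that Rahm developed in the scalar-valued setting, exploiting the sufficiently localized hypothesis to obtain quantitative control on the reproducing-kernel estimates, and treating the $\ell^2$-valued case by the localization operators $M_{I^{(d)}}$ and $M_{I_{(d)}}$ introduced above. First I would record the lower bound in part (a), which is the routine direction: for any compact $K$ and any unit vector $f$, one has $\Vert (S-K)f\Vert \geq \Vert Sf\Vert - \Vert Kf\Vert$, and since $U_z$ is unitary with $U_z f \to 0$ weakly as $z\to\partial\mathbb B_n$ (so $\Vert K U_z f\Vert \to 0$ for compact $K$), applying $S$ to the transported vectors $U_z f$ and using $\Vert S^z f\Vert = \Vert U_z S U_z f\Vert = \Vert S U_z f\Vert$ shows that $\Vert S-K\Vert_{\mathcal L} \gtrsim \limsup_{z\to\partial\mathbb B_n}\Vert S^z f\Vert$ for every unit $f$. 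Taking the supremum over $f$ and then the infimum over compact $K$ gives $\Vert S\Vert_e \gtrsim \sup_f \limsup_z \Vert S^z f\Vert$.

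For the harder upper bound in (a), the idea is to produce, for each $d$, a compact approximant to $S$ and to show that the approximation error is controlled by the right-hand quantity. The hypothesis $\limsup_{d\to\infty}\Vert S M_{I_{(d)}}\Vert = 0$ lets me replace $S$ by $S M_{I^{(d)}}$ up to an error that vanishes as $d\to\infty$ (since $M_{I^{(d)}} + M_{I_{(d)}} = I$). The sufficiently localized condition \eqref{3.2}--\eqref{3.1} then supplies the key estimate: the integral kernels of $S^z$ decay off the diagonal uniformly in $z$, with the summability exponent $p > \frac{n+2+2\alpha}{1+\alpha}$ chosen precisely so that the Bergman-kernel integrals converge. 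Using this, one decomposes $\mathbb B_n$ into a compact part $\{|z|\leq r\}$, on which $S$ composed with the spatial cutoff gives a compact operator via a Schatten/Schur-type estimate, and a boundary collar $\{|z|>r\}$, on which the localization estimate bounds the operator norm by $\sup_f \limsup_{z\to\partial\mathbb B_n}\Vert S^z f\Vert$ plus a quantity tending to $0$ as $r\to 1$. Letting $r\to 1$ and $d\to\infty$ and taking the infimum over the compact approximants yields the matching upper bound.

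Part (b) then follows almost immediately from (a). If $\sup_{e\in\mathbb C^d,\,\Vert e\Vert=1}\lim_{z\to\partial\mathbb B_n}\Vert S^z e\Vert = 0$ for every positive integer $d$, I must upgrade this finite-dimensional vanishing to the statement that the supremum over all unit $f\in A^2_\alpha(\mathbb B_n,\ell^2)$ of $\limsup_{z\to\partial\mathbb B_n}\Vert S^z f\Vert$ is zero. Here I again use $f = M_{I^{(d)}} f + M_{I_{(d)}} f$: the first piece takes values in the $d$-dimensional subspace $\mathbb C^d$ and is handled by the hypothesis, while the contribution of the tail $M_{I_{(d)}} f$ is dominated using $\Vert S M_{I_{(d)}}\Vert \to 0$ together with the unitarity of $U_z$, so that this contribution is uniformly small once $d$ is large. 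Combining these, the right-hand side of (a) vanishes, hence $\Vert S\Vert_e = 0$, i.e.\ $S$ is compact.

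The main obstacle is the upper bound in part (a): one must convert the pointwise off-diagonal kernel decay furnished by sufficient localization into a genuine operator-norm estimate on the boundary collar, uniformly in $z$, and simultaneously verify that the diagonal-block truncation $M_{I^{(d)}} S M_{I^{(d)}}$ restricted to a compact spatial region is actually compact. This is exactly the technical heart of Rahm's argument, and the role of the exponent $p > \frac{n+2+2\alpha}{1+\alpha}$ is to make the relevant Bergman-kernel integrals $\int_{\mathbb B_n}\vert k_z^\alpha\vert^{\,\cdots}\,d\nu_\alpha$ converge; carrying this through in the vector-valued setting requires checking that the summation over the $\ell^2$ basis indices in \eqref{3.2}--\eqref{3.1} interacts correctly with the Bochner integral, which is where Hille's Theorem \ref{thm11} and the pairing $\langle\cdot,\cdot\rangle_{\alpha,\ell^2}$ enter.
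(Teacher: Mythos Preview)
The paper does not prove Theorem \ref{thm118}; it is quoted verbatim as Rahm's result \cite[Theorem 3.4]{R2016}, so there is no in-paper argument to compare your sketch against. That said, your outline matches the architecture of Rahm's proof: the lower bound via weak convergence of $U_z f$ and the upper bound via a compact-core/boundary-collar decomposition driven by the exponent $p>\frac{n+2+2\alpha}{1+\alpha}$ are exactly the ingredients, though you correctly flag that the upper bound is where all the work lives and you have not actually carried it out.

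There is, however, a genuine gap in your reduction for part (b). After splitting $f=M_{I^{(d)}}f+M_{I_{(d)}}f$, you assert that $M_{I^{(d)}}f$ ``takes values in the $d$-dimensional subspace $\mathbb C^d$ and is handled by the hypothesis''. But the hypothesis of (b) concerns only \emph{constant} vectors $e\in\mathbb C^d$ (so that $S^z e=U_z S(k_z^\alpha e)$), not arbitrary $A^2_\alpha$-functions with range in $\mathbb C^d$. Knowing $\lim_{z\to\partial\mathbb B_n}\|S(k_z^\alpha e)\|=0$ for each fixed $e$ does not, by itself, give $\limsup_{z\to\partial\mathbb B_n}\|S^z(M_{I^{(d)}}f)\|=0$. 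The missing step is an atomic decomposition: one writes $f$ (or $M_{I^{(d)}}f$) as a finite sum $\sum_{i,j}\lambda_{i,j}k_{a_j}^\alpha e_i$ plus a small remainder, uses the identity $U_z k_a^\alpha=\gamma_{z,a}\,k_{\varphi_z(a)}^\alpha$ with $|\gamma_{z,a}|=1$, and then applies the hypothesis to each atom since $\varphi_z(a_j)\to\partial\mathbb B_n$ as $z\to\partial\mathbb B_n$. This is precisely how the paper argues in Section \ref{sec12} (the implication $(3)\Rightarrow(4)$ of the final corollary), invoking the vector-valued atomic decomposition Theorem \ref{thm125}. Without that step, your derivation of (b) from (a) does not close.
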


Our last but one result is the following theorem.

\begin{thm}\label{thm119}
Let $b \in BMO^1_\alpha ({\mathbb B}_n, \mathcal L(\ell^2, \ell^1\cap \ell^2))$ be such that $\widetilde b \in L^\infty ({\mathbb B}_n, \mathcal L(\ell^2))$ and $\left (b(\cdot)\right )^\star_{\mathcal L(\ell^2)}\in BMO^1_\alpha ({\mathbb B}_n, \mathcal L(\ell^2, \ell^1)).$ Then the Toeplitz operator $T_b$ is sufficiently localized.
\end{thm}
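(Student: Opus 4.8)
The plan is to estimate the two quantities appearing in \eqref{3.2} and \eqref{3.1} by unwinding the definition of $S^z = U_z T_b U_z$ and reducing everything to $BMO^1_\alpha$-type integrals that can be controlled using the hypotheses. First I would compute $S^z e_j$ explicitly: for $S = T_b$, a direct change of variables using the unitarity of $U_z$ and the transformation rule for the reproducing kernel under $\varphi_z$ shows that $U_z T_b U_z = T_{b\circ \varphi_z}$, up to the standard Jacobian/kernel factors $\left\vert k_z^\alpha\right\vert^2$ that get absorbed into the measure. Concretely, $\left(S^z e_j\right)(u) = \left(T_{b\circ\varphi_z} e_j\right)(u) = \int_{\mathbb B_n} (b\circ\varphi_z)(w)(e_j)\,\overline{K_u^\alpha(w)}\,d\nu_\alpha(w)$, and then $\langle (S^z e_j)(u), e_i\rangle_{\ell^2} = \int_{\mathbb B_n} \langle (b\circ\varphi_z)(w)(e_j), e_i\rangle_{\ell^2}\,\overline{K_u^\alpha(w)}\,d\nu_\alpha(w)$.

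Next I would take the inner sum over $i$. The key observation is that $\sum_{i=1}^\infty \left\vert \langle c(e_j), e_i\rangle_{\ell^2}\right\vert = \left\Vert c(e_j)\right\Vert_{\ell^1}$ for $c \in \mathcal L(\ell^2, \ell^1)$, so the hypothesis $b \in BMO^1_\alpha(\mathbb B_n, \mathcal L(\ell^2, \ell^1\cap\ell^2))$ is exactly what is needed to make the $\ell^1$-norm of $b(w)(e_j)$ a well-behaved scalar function. After pulling the sum inside (justified by Tonelli/triangle inequality for Bochner integrals, i.e. Theorem~\ref{thm11} applied to the coordinate functionals) one is led to bound
$$\int_{\mathbb B_n}\left(\int_{\mathbb B_n}\left\Vert (b\circ\varphi_z)(w)(e_j)\right\Vert_{\ell^1}\left\vert K_u^\alpha(w)\right\vert\,d\nu_\alpha(w)\right)^p d\nu_\alpha(u),$$
uniformly in $j$ and $z$. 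This is a scalar integral operator estimate: the inner integral is the scalar Toeplitz/Bergman-type operator with positive kernel $\left\vert K_u^\alpha(w)\right\vert$ applied to the $L^1_\alpha$ function $g_{z,j}(w) := \left\Vert (b\circ\varphi_z)(w)(e_j)\right\Vert_{\ell^1}$. The condition $p > \frac{n+2+2\alpha}{1+\alpha}$ is precisely the range in which the operator $f \mapsto \int_{\mathbb B_n} \left\vert K_u^\alpha(w)\right\vert f(w)\,d\nu_\alpha(w)$ is bounded from $L^1_\alpha$ to $L^p_\alpha$ (this is a classical Forelli–Rudin / fractional integration estimate on the ball), so the whole expression is controlled by a constant times $\left\Vert g_{z,j}\right\Vert_{L^1_\alpha}$. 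Finally, $\left\Vert g_{z,j}\right\Vert_{L^1_\alpha} = \left\Vert (b\circ\varphi_z)(\cdot)(e_j)\right\Vert_{L^1_\alpha(\mathbb B_n,\ell^1)}$, and adding and subtracting the Berezin transform $\widetilde b(z)$ one splits this as $\left\Vert (b\circ\varphi_z - \widetilde b(z))(\cdot)(e_j)\right\Vert_{L^1_\alpha(\mathbb B_n,\ell^1)} + \left\Vert \widetilde b(z)(e_j)\right\Vert_{\ell^1}$; the first term is at most $\left\Vert b\right\Vert_{BMO^1_\alpha(\mathbb B_n,\mathcal L(\ell^2,\ell^1))}$ uniformly (since $\left\Vert e_j\right\Vert_{\ell^2}=1$), and the second is bounded provided $\widetilde b$ takes values in $\mathcal L(\ell^2,\ell^1)$ with uniformly bounded norm. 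Here one needs a small lemma that $b \in BMO^1_\alpha(\mathbb B_n,\mathcal L(\ell^2,\ell^1))$ together with $\widetilde b \in L^\infty(\mathbb B_n,\mathcal L(\ell^2))$ forces $\sup_z \left\Vert\widetilde b(z)\right\Vert_{\mathcal L(\ell^2,\ell^1)} < \infty$ — this follows from the integral defining $\widetilde b$ and the fact that $\widetilde{b\circ\varphi_z}(0)$ differs from $\widetilde b(z)$ by a controlled amount, analogous to Remark~\ref{rem16}. This handles \eqref{3.2}.

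For \eqref{3.1} I would repeat the argument with $S^\star = (T_b)^\star$ in place of $S$. The adjoint of $T_b$ on $A^2_\alpha(\mathbb B_n,\ell^2)$ is $T_{b^\star}$ where $b^\star(w) = \left(b(w)\right)^\star_{\mathcal L(\ell^2)}$ (the pointwise Hilbert-space adjoint), so $\left(S^\star\right)^z = T_{b^\star\circ\varphi_z}$ up to the same kernel factors, and the entire estimate above goes through verbatim with $b$ replaced by $b^\star$. This is exactly why the hypothesis includes $\left(b(\cdot)\right)^\star_{\mathcal L(\ell^2)} \in BMO^1_\alpha(\mathbb B_n,\mathcal L(\ell^2,\ell^1))$. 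One subtlety: to run the adjoint computation one should check that $T_{b^\star}$ is genuinely the Hilbert-space adjoint of $T_b$ on the $\ell^2$-valued space; this uses the self-duality pairing recalled before Definition~\ref{def17} and the fact that $\ell^2$ is a Hilbert space, so $(\ell^2)^\star \simeq \ell^2$ and the $\mathcal L(\ell^2)$-adjoint coincides with the Banach-space transpose under this identification — a routine verification.

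I expect the main obstacle to be the rigorous justification of the interchange of the infinite sum over $i$ with the Bochner integrals and the change-of-variables identity $U_z T_b U_z = T_{b\circ\varphi_z}$ at the level of $\mathcal L(\ell^2, \ell^1)$-valued symbols rather than just $\mathcal L(\ell^2)$-valued ones: one must make sure that $b\circ\varphi_z$ still lands in the right space and that the Jacobian factor $\left\vert k_z^\alpha\right\vert^2$ transforms the measure correctly without destroying the $\ell^1$-integrability — all of which ultimately rests on the invariance properties of $d\nu_\alpha$ under the $\varphi_z$ twisted by $\left\vert k_z^\alpha\right\vert^2$, together with Theorem~\ref{thm11} (Hille's theorem) applied to the bounded functionals $c \mapsto \langle c(e_j), e_i\rangle_{\ell^2}$. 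The scalar Forelli–Rudin estimate is standard and the $BMO^1_\alpha$ bookkeeping is, as in Remark~\ref{rem16}, essentially formal; the care is all in the vector-valued measurability and Fubini-type steps.
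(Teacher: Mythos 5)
Your setup matches the paper's: you reduce condition (\ref{3.2}) to a uniform bound on $\left\Vert T_{b\circ\varphi_z}(e_j)\right\Vert_{L^p_\alpha(\mathbb B_n,\ell^1)}$ via $(T_b)^z = T_{b\circ\varphi_z}$ and the identification of the inner sum over $i$ with the $\ell^1$-norm, and you handle (\ref{3.1}) by passing to $T_b^\star = T_{(b(\cdot))^\star_{\mathcal L(\ell^2)}}$ exactly as the paper does. The gap is in the one genuinely analytic step. You bound $\left\Vert (T_{b\circ\varphi_z}e_j)(u)\right\Vert_{\ell^1}$ by the positive integral $\int_{\mathbb B_n} g_{z,j}(w)\left\vert K_u^\alpha(w)\right\vert \,d\nu_\alpha(w)$ with $g_{z,j}(w)=\left\Vert (b\circ\varphi_z)(w)(e_j)\right\Vert_{\ell^1}$, and then invoke boundedness of $f\mapsto \int_{\mathbb B_n}\left\vert K_u^\alpha(w)\right\vert f(w)\,d\nu_\alpha(w)$ from $L^1_\alpha$ to $L^p_\alpha$ for $p>\frac{n+2+2\alpha}{1+\alpha}$. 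That operator is not bounded from $L^1_\alpha$ to $L^p_\alpha$ for any $p>1$: testing on normalized indicators of small balls centered at $w$ with $\vert w\vert\to 1$ produces images comparable to $\left\vert K_\cdot^\alpha(w)\right\vert$, whose $L^p_\alpha$-norm is $\simeq (1-\vert w\vert^2)^{-(1-\frac 1p)(n+1+\alpha)}\to\infty$ by the Forelli--Rudin estimates. Increasing $p$ only makes this worse, so the threshold $p>\frac{n+2+2\alpha}{1+\alpha}$ plays no such role (it enters Rahm's definition for entirely different reasons). Moreover, once you replace $\overline{K_u^\alpha(w)}$ by $\left\vert K_u^\alpha(w)\right\vert$ you have discarded the cancellation that makes the statement true, so no bookkeeping on $g_{z,j}$ can repair this route: $g_{z,j}$ is merely an $L^1_\alpha$ function of uniformly bounded norm, and the desired bound genuinely fails for such data.

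The paper's proof keeps the holomorphic kernel and exploits the $BMO$ structure of the symbol itself, not just of its $L^1_\alpha$-norm: it writes $T_{b\circ\varphi_z}(e_j)=P_\alpha\bigl((b\circ\varphi_z)(e_j)\bigr)$, shows that $(b\circ\varphi_z)(e_j)\in BMO^1_\alpha(\mathbb B_n,\ell^1)$ with norm at most $\left\Vert b\right\Vert_{BMO^1_\alpha(\mathbb B_n,\mathcal L(\ell^2,\ell^1))}$ uniformly in $z$ and $j$ (Lemma \ref{lem102}, using $\varphi_z\circ\varphi_w=\varphi_{\varphi_z(w)}\circ U$ and the M\"obius covariance of the Berezin transform), and then applies the vector-valued Li--Luecking theorem: $P_\alpha$ maps $BMO^1_\alpha(\mathbb B_n,\ell^1)$ boundedly into the Bloch space $\mathfrak B(\mathbb B_n,\ell^1)$, which in turn embeds continuously into $L^p_\alpha(\mathbb B_n,\ell^1)$ for every $p>0$ (Theorem \ref{thm104}). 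That mapping property of the Bergman projection is the missing ingredient; with it the estimate holds for all $p>0$, not only in the stated range.
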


Next, it is easy to deduce the following corollary from Theorem \ref{thm118} and Theorem \ref{thm119}.

\begin{cor}\label{cor120}
Let $b \in BMO^1_\alpha ({\mathbb B}_n, \mathcal L(\ell^2, \ell^1\cap \ell^2))$ be such that $\widetilde b \in L^\infty ({\mathbb B}_n, \mathcal L(\ell^2))$ and $\left (b(\cdot)\right )^\star_{\mathcal L(\ell^2)}\in BMO^1_\alpha ({\mathbb B}_n, \mathcal L(\ell^2, \ell^1)).$ Then the following assertions hold.
\begin{enumerate}
\item
$T_b$ is a bounded operator on $A^2_\alpha (\mathbb B_n,  \ell^2).$
\item
Assume further that 
\begin{equation}\label{infty}
\lim \limits_{d\rightarrow \infty} \left \Vert \widetilde{\left \Vert b_{(d)}(\cdot)\right \Vert_{\mathcal L(\ell^2)}}\right \Vert_{L^\infty ({\mathbb B}_n, \mathbb C)}=0.
\end{equation} 
If $\sup \limits_{e\in \mathbb C^d, \left \Vert e\right \Vert_{\mathbb C^d}=1} \lim \limits_{z\rightarrow \partial {\mathbb B}_n} \left \Vert T_b\left (k_z^\alpha e \right )\right \Vert_{A^2_\alpha (\mathbb B_n,  \ell^2)}=0$ for every positive integer $d$ and for every $e\in \ell^2,$  then $T_b$ must be compact. 
\end{enumerate}
\end{cor}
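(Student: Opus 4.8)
The plan is to obtain Corollary \ref{cor120} as a more or less formal consequence of the two results it cites, namely Theorem \ref{thm118} (Rahm's sufficiently-localized criterion) and Theorem \ref{thm119} (which says the present hypotheses make $T_b$ sufficiently localized), together with the method of reduction to finite dimension that was set up just before Theorem \ref{thm113}. For part (1) I would simply observe that sufficient localization, in particular condition \eqref{3.2} with $z=0$, forces the kernel of $T_b$ to act boundedly on $A^2_\alpha(\mathbb B_n,\ell^2)$ by a Schur-type test on the off-diagonal decay (this is exactly what \cite{R2016} uses); alternatively, under the stated $BMO^1_\alpha$ hypotheses boundedness is already guaranteed by Theorem \ref{thm15} since $\widetilde b\in L^\infty({\mathbb B}_n,\mathcal L(\ell^2))$. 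So part (1) is immediate.

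For part (2), the key point is to verify the two hypotheses of Theorem \ref{thm118}: that $T_b$ is sufficiently localized, and that $\limsup_{d\to\infty}\|T_bM_{I_{(d)}}\|_{\mathcal L(A^2_\alpha(\mathbb B_n,\ell^2))}=0$. The first is precisely the conclusion of Theorem \ref{thm119}, which applies verbatim because the hypotheses of Corollary \ref{cor120} are identical to those of Theorem \ref{thm119}. For the second, I would use the identity $T_bM_{I_{(d)}}=T_{b_{(d)}}$ recorded in the excerpt together with the norm-equivalence of Remark \ref{rem16}: by Theorem \ref{thm15} applied to the symbol $b_{(d)}\in BMO^1_\alpha$,
\[
\|T_{b_{(d)}}\|_{\mathcal L(A^2_\alpha(\mathbb B_n,\ell^2))}\lesssim \|\widetilde{b_{(d)}}\|_{L^\infty({\mathbb B}_n,\mathcal L(\ell^2))}+\|b_{(d)}\|_{BMO^1_\alpha({\mathbb B}_n,\mathcal L(\ell^2))}\lesssim \left\|\widetilde{\|b_{(d)}(\cdot)\|_{\mathcal L(\ell^2)}}\right\|_{L^\infty({\mathbb B}_n,\mathbb C)},
\]
where the last inequality is the equivalence from Remark \ref{rem16}. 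Hypothesis \eqref{infty} then says exactly that the right-hand side tends to $0$ as $d\to\infty$, so $\limsup_{d\to\infty}\|T_bM_{I_{(d)}}\|=0$. (One must also check that $b_{(d)}$ inherits the $BMO^1_\alpha$ membership from $b$ and that $\widetilde{b_{(d)}}$ is controlled by $\widetilde{\|b_{(d)}(\cdot)\|}$; the first is clear since $b_{(d)}$ is obtained from $b$ by composition with the fixed projection $M_{I_{(d)}}$, which is a contraction, and the second is the standard pointwise domination of the operator Berezin transform by the scalar Berezin transform of the norm.)

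With both hypotheses of Theorem \ref{thm118} in hand, conclusion (b) of that theorem applies: if $\sup_{e\in\mathbb C^d,\|e\|=1}\lim_{z\to\partial\mathbb B_n}\|T_b^z e\|_{A^2_\alpha(\mathbb B_n,\ell^2)}=0$ for every positive integer $d$, then $T_b$ is compact. The only remaining gap is cosmetic: the statement of Corollary \ref{cor120}(2) is phrased in terms of $\|T_b(k_z^\alpha e)\|$ whereas Theorem \ref{thm118}(b) is phrased in terms of $\|T_b^z e\|=\|U_zT_bU_z e\|$. I would reconcile these using Definition \ref{def18}: since $U_z$ is unitary and $U_z e=k_z^\alpha\,(e\circ\phi_z)$, one has $U_z(e)=k_z^\alpha e$ when $e$ is identified with the constant function (indeed $\phi_z$ maps into $\mathbb B_n$ and a constant composed with anything is the same constant), hence $\|T_b^z e\|=\|U_zT_b(k_z^\alpha e)\|=\|T_b(k_z^\alpha e)\|$, and the limit condition in the corollary is literally the limit condition in Theorem \ref{thm118}(b). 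Therefore $T_b$ is compact, which is part (2).

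The main obstacle I anticipate is not conceptual but bookkeeping: making sure that the finite-rank truncation symbol $b_{(d)}$ genuinely lies in $BMO^1_\alpha({\mathbb B}_n,\mathcal L(\ell^2))$ with norm controlled by that of $b$ (uniformly in $d$), so that Theorem \ref{thm15} and Remark \ref{rem16} can be invoked for $b_{(d)}$; this requires checking that left/right composition with the coordinate projections $M_{I^{(d)}},M_{I_{(d)}}$ interacts correctly with the Berezin transform and the $\varphi_z$-translation built into the $BMO^1_\alpha$ seminorm. Once that uniform control is established, the rest is a direct assembly of Theorem \ref{thm118}, Theorem \ref{thm119}, and Remark \ref{rem16}, with hypothesis \eqref{infty} supplying exactly the decay needed to kill $\|T_bM_{I_{(d)}}\|$.
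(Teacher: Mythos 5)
Your proposal is correct and follows essentially the route the paper intends: Theorem \ref{thm119} supplies sufficient localization, hypothesis (\ref{infty}) together with the identity $T_bM_{I_{(d)}}=T_{b_{(d)}}$ (Proposition \ref{pro43}), Lemma \ref{lem44} and the norm equivalence of Remark \ref{rem16} gives $\lim_{d\to\infty}\|T_bM_{I_{(d)}}\|=0$ (this is exactly Proposition \ref{thm45}), and then Theorem \ref{thm118}(b) applies after identifying $\|\left(T_b\right)^z e\|$ with $\|T_b(k_z^\alpha e)\|$ as in Lemma \ref{lem33}(2). Part (1) via Theorem \ref{thm15} (using Lemma \ref{lem101} to pass from $\mathcal L(\ell^2,\ell^1\cap\ell^2)$ to $\mathcal L(\ell^2)$) is also the intended argument, so no gaps remain.
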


The plan of this paper is as follows. In Section \ref{sec2}, we review some elementary properties of the space $BMO^1_\alpha ({\mathbb B}_n, \mathcal L(\ell^2))$ and next, we study in detail the class of non compact Toeplitz operators on $A^2_\alpha (\mathbb B_n,  \ell^2)$ introduced in \cite{R2016}. In Section \ref{sec3}, we state three preliminary lemmas. In Section \ref{sec4}, we study the boundedness and the essential norm of Toeplitz operators with symbols in $BMO^1_\alpha ({\mathbb B}_n, \mathcal L(\ell^2)).$ In Section \ref{sec5}, we describe our method of reduction to the finite dimension. In Section \ref{sec6}, we exhibit a class of examples of  compact Toeplitz operators on $A^2_\alpha (\mathbb B_n, \ell^2).$ In Section \ref{sec7}, we prove Theorem \ref{thm110} and Theorem \ref{thm113}. In between, we go over again the proof of Theorem \ref{thm111}. In Section \ref{sec8}, we prove Proposition \ref{pro116}. In Section \ref{sec9}, we show that Theorem \ref{thm110} does not extend without extra conditions to the infinite dimension. In Section \ref{sec10}, we prove Theorem \ref{thm119}. Section \ref{sec11} revisits the classes of examples of Toeplitz operators studied in Subsection \ref{ssec22} and Section \ref{sec5}. In Section \ref{sec12}, we state a corollary which should have been stated by Rahm at the end of his paper \cite{R2016}. Finally, Section \ref{sec13} just features an open question inspired by \cite{SZ2020} and \cite{X2015}.

		\section{The Space $BMO^1_{\alpha} ({\mathbb B}_n, \mathcal L(\ell^2))$} \label{sec2}
\subsection{Preliminary results}\label{ssec21}
Let $b(\cdot)$ be a matrix function defined on ${\mathbb B}_n,$ with values in $\mathcal L(\ell^2).$ In the sequel, for two positive integers $i$ and $j,$ we shall denote $b_{ij}$ the (scalar) entry function $\langle b(\cdot)(e_i), e_j \rangle_{\ell^2}.$ Moreover, if $b_{ij}\in L^1_\alpha ({\mathbb B}_n, \mathbb C),$ we shall denote by $\widetilde {b_{ij}}$ the (scalar) Berezin transform of $b_{ij}.$
 \begin{lem}\label{lem21}
\begin{enumerate}
\item[1.]
Let $b\in L^1_\alpha ({\mathbb B}_n, \mathcal L(\ell^2)).$ Then the Berezin transform $\widetilde {b_{ij}}$ of the scalar function $b_{ij}$ is given by
\begin{equation}\label{0}
\widetilde {b_{ij}}(z)= \left (\widetilde {b} (z)\right )_{ij}\quad (z\in {\mathbb B}_n).
\end{equation}
\item[2.]
Let $b\in BMO^1_{\alpha} ({\mathbb B}_n, \mathcal L(\ell^2)).$ Then for all positive integers $i, j,$ the scalar function $b_{ij}$  belongs to $BMO^1_{\alpha} ({\mathbb B}_n, \mathbb C).$ Moreover, if $\widetilde b\in L^\infty \left (\mathbb B_n, \mathcal L (\ell^2)\right ),$  then the Berezin transform $\widetilde{b_{ij}}$ of $b_{ij}$ is a bounded scalar function on ${\mathbb B}_n.$ More precisely, we have

\begin{equation}\label{1}
\left \Vert b_{ij}\right \Vert_{BMO^1_{\alpha} ({\mathbb B}_n, \mathbb C)}\leq \left \Vert b\right \Vert_{BMO^1_{\alpha} ({\mathbb B}_n, \mathcal L(\ell^2))};
\end{equation}
and
\begin{equation}\label{3}
\left \Vert \widetilde {b_{ij}}\right \Vert_{L^\infty ({\mathbb B}_n, \mathbb C)}\leq \left \Vert \widetilde b\right \Vert_{L^\infty ({\mathbb B}_n, \mathcal L (\ell^2))}. 
\end{equation}
\end{enumerate}
\end{lem}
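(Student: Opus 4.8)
The plan is to unwind everything down to the scalar Berezin transform. For part (1), I would start from the definition of the (vector-valued) Berezin transform $\widetilde b(z)=\int_{\mathbb B_n}|k_z^\alpha(w)|^2 b(w)\,d\nu_\alpha(w)$, which is a Bochner integral of an $\mathcal L(\ell^2)$-valued function, and apply it to the fixed basis vector $e_i$ to get a Bochner integral of an $\ell^2$-valued function, namely $\widetilde b(z)(e_i)=\int_{\mathbb B_n}|k_z^\alpha(w)|^2 b(w)(e_i)\,d\nu_\alpha(w)$ (this interchange is legitimate because $e\mapsto T(e)$ for $T\in\mathcal L(\ell^2)$ evaluated at $e_i$ is a bounded linear operator $\mathcal L(\ell^2)\to\ell^2$, so Hille's Theorem \ref{thm11} applies). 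Then I would pair with $e_j$: the functional $x\mapsto\langle x,e_j\rangle_{\ell^2}$ is a bounded linear functional on $\ell^2$, so Hille's Theorem again lets me pull it inside the integral, giving $\langle\widetilde b(z)(e_i),e_j\rangle_{\ell^2}=\int_{\mathbb B_n}|k_z^\alpha(w)|^2\langle b(w)(e_i),e_j\rangle_{\ell^2}\,d\nu_\alpha(w)=\int_{\mathbb B_n}|k_z^\alpha(w)|^2 b_{ij}(w)\,d\nu_\alpha(w)=\widetilde{b_{ij}}(z)$, which is exactly \eqref{0}. I should first note that $b_{ij}\in L^1_\alpha(\mathbb B_n,\mathbb C)$ so that $\widetilde{b_{ij}}$ makes sense: this follows because $|b_{ij}(w)|=|\langle b(w)(e_i),e_j\rangle_{\ell^2}|\le\|b(w)\|_{\mathcal L(\ell^2)}$ pointwise.

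For part (2), the starting observation is the same pointwise bound $|b_{ij}(w)|\le\|b(w)\|_{\mathcal L(\ell^2)}$, applied to $b\circ\varphi_z$ in place of $b$: for each fixed $z\in\mathbb B_n$,
$$|(b\circ\varphi_z)_{ij}(w)-\widetilde{b_{ij}}(z)|=|(b\circ\varphi_z)_{ij}(w)-(\widetilde b(z))_{ij}|\le\|(b\circ\varphi_z)(w)-\widetilde b(z)\|_{\mathcal L(\ell^2)},$$
where I used part (1) to rewrite $\widetilde{b_{ij}}(z)$ as the $(i,j)$ entry of $\widetilde b(z)$, and the fact that $(b\circ\varphi_z)_{ij}=\langle(b\circ\varphi_z)(\cdot)(e_i),e_j\rangle_{\ell^2}$ together with the matrix-entry bound. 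Integrating in $w$ against $d\nu_\alpha$ and taking the supremum over $z$ gives $\|b_{ij}\|_{BMO^1_\alpha(\mathbb B_n,\mathbb C)}\le\|b\|_{BMO^1_\alpha(\mathbb B_n,\mathcal L(\ell^2))}$, which is \eqref{1}; in particular $b_{ij}\in BMO^1_\alpha(\mathbb B_n,\mathbb C)$. For \eqref{3}, from part (1) we have $|\widetilde{b_{ij}}(z)|=|(\widetilde b(z))_{ij}|=|\langle\widetilde b(z)(e_i),e_j\rangle_{\ell^2}|\le\|\widetilde b(z)\|_{\mathcal L(\ell^2)}\le\|\widetilde b\|_{L^\infty(\mathbb B_n,\mathcal L(\ell^2))}$, so taking the sup over $z$ gives the result.

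I do not anticipate a serious obstacle here; this lemma is essentially a bookkeeping exercise. The only point requiring minor care is the justification of the interchanges of the Bochner integral with the evaluation map $T\mapsto T(e_i)$ and with the functional $x\mapsto\langle x,e_j\rangle_{\ell^2}$ — both are handled uniformly by Hille's Theorem \ref{thm11}, since both are bounded linear maps between the relevant Banach spaces ($\mathcal L(\ell^2)\to\ell^2$ and $\ell^2\to\mathbb C$ respectively). One should also confirm measurability of $w\mapsto b_{ij}(w)$, which is immediate since it is the composition of the (strongly, hence weakly) measurable map $b$ with a fixed bounded functional on $\mathcal L(\ell^2)$. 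Everything else is the pointwise inequality $|\langle Te_i,e_j\rangle_{\ell^2}|\le\|T\|_{\mathcal L(\ell^2)}$ applied at the right place.
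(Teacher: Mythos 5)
Your proposal is correct and follows essentially the same route as the paper: part (1) is the same Hille's-Theorem interchange (the paper pulls the pairing $\langle\cdot(e_i),e_j\rangle_{\ell^2}$ outside the Bochner integral, you push it inside — the identical computation read in the opposite direction), and part (2) is the same pointwise bound $|\langle T e_i,e_j\rangle_{\ell^2}|\le\|T\|_{\mathcal L(\ell^2)}$ applied to $b\circ\varphi_z(w)-\widetilde b(z)$ and to $\widetilde b(z)$, combined with \eqref{0}. Your added remarks on $b_{ij}\in L^1_\alpha(\mathbb B_n,\mathbb C)$ and on measurability are harmless bookkeeping the paper leaves implicit.
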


\begin{proof}
\begin{enumerate}
\item[1.]
In view of the definition, for every $z\in {\mathbb B}_n,$ we have:
\begin{eqnarray*}
\widetilde {b_{ij}}(z)
&=&\int_{{\mathbb B}_n} \left \vert k_z^\alpha (w)\right \vert^2 b_{ij} (w)d\nu_{\alpha}(w)\\
&=&\int_{{\mathbb B}_n} \left \vert k_z^\alpha (w)\right \vert^2 \langle b(w)(e_i), e_j\rangle_{\ell^2}d\nu_{\alpha}(w)\\
&=&\left\langle \left (\int_{{\mathbb B}_n} \left \vert k_z^\alpha (w)\right \vert^2 b(w)d\nu_{\alpha}(w)\right )(e_i), e_j\right\rangle_{\ell^2}\\
&=&\langle \widetilde b(z)(e_i), e_j \rangle_{\ell^2}\\
&=&\left (\widetilde {b} (z)\right )_{ij}.
\end{eqnarray*}

For the third equality, we used Hille's Theorem (Theorem \ref{thm11}).

\item[2.]
We recall that
$$ \left \Vert b\right \Vert_{BMO^1_{\alpha} ({\mathbb B}_n, \mathcal L(\ell^2))}=\sup \limits_{z\in {\mathbb B}_n} \left \Vert b\circ \varphi_z-\widetilde b(z)\right \Vert_{L^1_\alpha ({\mathbb B}_n, \mathcal L(\ell^2))}$$
and in view of 1.,
\begin{eqnarray*}
\left \Vert b_{ij}\right \Vert_{BMO^1_{\alpha} ({\mathbb B}_n, \mathbb C)}
&=&\sup \limits_{z\in {\mathbb B}_n} \int_{{\mathbb B}_n} \left \vert \left (b_{ij}\circ \varphi_z\right ) (w)-\widetilde {b_{ij}}(z)\right \vert d\nu_{\alpha}(w)\\
&=&\sup \limits_{z\in {\mathbb B}_n} \int_{{\mathbb B}_n} \left \vert \langle \left (b\circ \varphi_z(w)-\widetilde b(z) \right )\left (e_i\right ), e_j
\rangle_{\ell^2}\right \vert d\nu_{\alpha}(w)\\
&=&\sup \limits_{z\in {\mathbb B}_n} \int_{{\mathbb B}_n} \left \vert \left (b\circ \varphi_z(w)-\widetilde b(z)\right )_{ij}\right \vert d\nu_{\alpha}(w).
\end{eqnarray*}

We have

\begin{eqnarray*}
\left \Vert b\circ \varphi_z-\widetilde b(z)\right \Vert_{L^1_\alpha ({\mathbb B}_n, \mathcal L(\ell^2))}
&=&\int_{{\mathbb B}_n} \sup \limits_{\left \Vert e\right \Vert_{\ell^2}=1} \left \Vert \left (b\circ \varphi_z (w)-\widetilde b(z)\right )(e)\right \Vert_{\ell^2}d\nu_{\alpha} (w)\\
&=&\int_{{\mathbb B}_n}\sup \limits_{\left \Vert e\right \Vert_{\ell^2}=1} \left \{\sum_{j=1}^\infty \left \vert \left (\langle b\circ \varphi_z (w)-\widetilde b(z)\right )(e), e_j\rangle_{\ell^2}\right \vert^2\right \}^{\frac 12}d\nu_{\alpha} (w)\\
&\geq& \int_{{\mathbb B}_n} \left \vert \left (\langle b\circ \varphi_z (w)-\widetilde b(z)\right )(e_i), e_j\rangle_{\ell^2}\right \vert d\nu_{\alpha}(w)\\
&=&\int_{{\mathbb B}_n} \left \vert \left (b\circ \varphi_z (w)-\widetilde b(z)\right )_{ij}\right \vert d\nu_{\alpha} (w).
\end{eqnarray*}

Taking the supremum over $z\in {\mathbb B}_n,$ we obtain the estimate (\ref{1}).\\
Let us next show (\ref{3}). 
We have for every $z\in {\mathbb B}_n:$
\begin{eqnarray*}
\left \Vert \widetilde b(z)\right \Vert_{\mathcal L(\ell^2)}
&=&\sup \limits_{\left \Vert e\right \Vert_{\ell^2}=1} \left \Vert \widetilde b(z)(e)\right \Vert_{\ell^2}\\
&=&\sup \limits_{\left \Vert e\right \Vert_{\ell^2}=1} \left \{\sum_{j=1}^\infty \left \vert \langle \widetilde b(z)(e), e_j\rangle_{\ell^2}\right \vert^2\right \}^{\frac 12}\\
&\geq &\left \vert \langle \widetilde b(z)(e_i), e_j\rangle_{\ell^2}\right \vert=\left \vert \left (\widetilde {b}(z)\right )_{ij}\right \vert.
\end{eqnarray*}

Combining with (\ref{0}) and taking the supremum of both sides with respect to $z\in \mathbb B_n,$ the announced conclusion follows. 
\end{enumerate}
\end{proof}


\subsection{A class of examples of non compact Toeplitz operators with $BMO^1_\alpha (\mathbb B_n, \mathcal L(\ell^2))$ operator-valued symbols on $A^2_\alpha (\mathbb B_n, \ell^2)$}\label{ssec22}
The following class of examples was mentioned in the introduction of \cite{R2016}. Let $\tau \in BMO^1_\alpha (\mathbb B_n, \mathbb C)$ be such that $\widetilde \tau \in L^\infty (\mathbb B_n, \mathbb C).$ We call $I$ the identity on $\mathcal L(\ell^2)$ and we write $b:=\tau I.$ 
We first prove the following lemma.

\begin{lem}\label{lem22}
Under the assumptions on $\tau,$ one has $\tau I \in BMO^1_{\alpha} ({\mathbb B}_n, \mathcal L(\ell^2))$ and $\widetilde {\tau I} \in L^\infty ({\mathbb B}_n, \mathcal L(\ell^2)).$ Moreover, for every $z\in {\mathbb B}_n,$ one has $\widetilde {\tau I} (z)=\widetilde \tau (z)I.$
\end{lem}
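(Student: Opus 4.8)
The plan is to verify the three claims of Lemma \ref{lem22} in order, reducing everything to the corresponding scalar facts about $\tau$ via Hille's Theorem (Theorem \ref{thm11}) and the trivial observation that for a scalar $\lambda$ and the identity $I$ on $\ell^2$ one has $\|\lambda I\|_{\mathcal L(\ell^2)} = |\lambda|$.

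\textbf{Step 1: The Berezin transform of $\tau I$.} First I would compute $\widetilde{\tau I}(z)$ directly from Definition \ref{def13}(1). For fixed $z\in{\mathbb B}_n$, the integrand is $|k_z^\alpha(w)|^2 \tau(w) I$, an $\mathcal L(\ell^2)$-valued function. Applying Theorem \ref{thm11} with the bounded linear map $T:\mathbb C\to\mathcal L(\ell^2)$, $T(\lambda)=\lambda I$, to the scalar Bochner-integrable function $w\mapsto |k_z^\alpha(w)|^2\tau(w)$ (integrable since $\tau\in L^1_\alpha$ and $|k_z^\alpha|^2$ is bounded for fixed $z$), we get
$$\widetilde{\tau I}(z) = \int_{{\mathbb B}_n} |k_z^\alpha(w)|^2 \tau(w) I \, d\nu_\alpha(w) = \left(\int_{{\mathbb B}_n} |k_z^\alpha(w)|^2 \tau(w)\, d\nu_\alpha(w)\right) I = \widetilde\tau(z) I.$$
This simultaneously proves the last assertion and shows $\|\widetilde{\tau I}(z)\|_{\mathcal L(\ell^2)} = |\widetilde\tau(z)|$, whence $\widetilde{\tau I}\in L^\infty({\mathbb B}_n,\mathcal L(\ell^2))$ with the same sup as $\widetilde\tau\in L^\infty({\mathbb B}_n,\mathbb C)$. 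So $\widetilde{\tau I}\in L^\infty({\mathbb B}_n,\mathcal L(\ell^2))$ follows from the hypothesis on $\widetilde\tau$.

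\textbf{Step 2: $\tau I\in BMO^1_\alpha$.} Using Step 1, for each $z\in{\mathbb B}_n$ and $w\in{\mathbb B}_n$ the integrand in Definition \ref{def13}(2) is $(\tau\circ\varphi_z)(w) I - \widetilde\tau(z) I = \big((\tau\circ\varphi_z)(w)-\widetilde\tau(z)\big) I$, whose $\mathcal L(\ell^2)$-norm equals $\big|(\tau\circ\varphi_z)(w)-\widetilde\tau(z)\big|$. Integrating against $d\nu_\alpha$ and taking the supremum over $z$ gives
$$\|\tau I\|_{BMO^1_\alpha({\mathbb B}_n,\mathcal L(\ell^2))} = \sup_{z\in{\mathbb B}_n}\int_{{\mathbb B}_n}\big|(\tau\circ\varphi_z)(w)-\widetilde\tau(z)\big|\, d\nu_\alpha(w) = \|\tau\|_{BMO^1_\alpha({\mathbb B}_n,\mathbb C)} < \infty,$$
using the hypothesis $\tau\in BMO^1_\alpha({\mathbb B}_n,\mathbb C)$. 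This gives $\tau I\in BMO^1_\alpha({\mathbb B}_n,\mathcal L(\ell^2))$.

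\textbf{Main obstacle.} Honestly there is no deep obstacle here; the lemma is a routine "constant matrix times scalar" computation. The only point requiring a line of care is the justification of pulling $T(\lambda)=\lambda I$ through the Bochner integral, i.e. checking the Bochner-integrability hypothesis of Hille's Theorem: one must note that $w\mapsto|k_z^\alpha(w)|^2\tau(w)$ is $\nu_\alpha$-integrable because $\tau\in L^1_\alpha({\mathbb B}_n,\mathbb C)$ and, for each \emph{fixed} $z$, the kernel factor $|k_z^\alpha(w)|^2 = (1-|z|^2)^{n+1+\alpha}|1-\langle w,z\rangle|^{-2(n+1+\alpha)}$ is bounded on ${\mathbb B}_n$. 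With that in hand, Steps 1 and 2 close the proof, and since both inequalities above are in fact equalities one even gets the (unstated but harmless) bonus that the $BMO^1_\alpha$ and $L^\infty$ Berezin norms of $\tau I$ coincide with those of $\tau$.
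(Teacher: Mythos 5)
Your proposal is correct and follows essentially the same route as the paper: compute $\widetilde{\tau I}(z)=\widetilde\tau(z)I$ via Hille's Theorem, read off the $L^\infty$ bound from $\Vert\widetilde\tau(z)I\Vert_{\mathcal L(\ell^2)}=|\widetilde\tau(z)|$, and reduce the $BMO^1_\alpha$ condition to the scalar one through the identity $\Vert(\lambda I)\Vert_{\mathcal L(\ell^2)}=|\lambda|$. The only cosmetic difference is that you apply Hille's Theorem with the map $\lambda\mapsto\lambda I$ into $\mathcal L(\ell^2)$, whereas the paper applies it pointwise to each vector $e\in\ell^2$; both are valid and yield the same computation.
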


\begin{proof}
We first show that for every $z\in {\mathbb B}_n,$ one has $\widetilde {\tau I} (z)=\widetilde \tau (z)I.$ For every $e\in \ell^2,$ we have:
\begin{eqnarray}\label{counter}
\widetilde {\tau I} (z)(e)
&= &\int_{{\mathbb B}_n}\left \vert  k_z^\alpha (w)\right \vert^2 (\tau I)(w)(e){d\nu_{\alpha} (w)}\nonumber\\
&=&\int_{{\mathbb B}_n}\left \vert  k_z^\alpha (w)\right  \vert^2 \tau (w)e{d\nu_{\alpha} (w)}\nonumber\\
&=&\left (\int_{{\mathbb B}_n}\left \vert k_z^\alpha (w)\right \vert^2 \tau (w){d\nu_{\alpha}(w)} \right )e \nonumber\\
&=&\widetilde \tau (z)I(e).
\end{eqnarray}
We next show that $\widetilde {\tau I} \in L^\infty ({\mathbb B}_n, \mathcal L(\ell^2)).$  It follows from (\ref{counter}) that
\begin{eqnarray*}
\left \Vert \widetilde {\tau I}\right \Vert_{L^\infty ({\mathbb B}_n, \mathcal L(\ell^2))}
&=&\sup \limits_{z\in {\mathbb B}_n} \sup \limits_{\left \Vert e\right \Vert_{\ell^2}=1} \left \Vert \widetilde {\tau I} (z)(e)\right \Vert_{\ell^2}\\
&=&\sup \limits_{z\in {\mathbb B}_n} \left \vert \widetilde \tau(z)\right \vert\\
&=&\left \Vert \widetilde \tau\right \Vert_{L^\infty ({\mathbb B}_n, \mathbb C)}<\infty.
\end{eqnarray*}

We finally show that $\tau I \in BMO^1_{\alpha} ({\mathbb B}_n, \mathcal L(\ell^2)).$ For every $e\in \ell^2$ such that $\left \Vert e\right \Vert_{\ell^2}=1,$ we have:
$$\left ((\tau I)\circ \varphi_z (w) - \widetilde {\tau I}(z)\right )(e)=\left (\tau\circ \varphi_z (w)-\widetilde \tau (z)\right )e.$$
So 

\begin{eqnarray*}
\left \Vert \tau I\circ \varphi_z (w) - \widetilde {\tau I}(z)\right \Vert_{\mathcal L(\ell^2)}
&=&\sup \limits_{\left \Vert e\right \Vert_\ell^2=1} \left \Vert \left ((\tau I)\circ \varphi_z (w) - \widetilde {\tau I}(z)\right )(e)\right \Vert_{\ell^2}\\
&=&\left \vert \tau\circ \varphi_z (w)-\widetilde \tau (z)\right \vert.
\end{eqnarray*}

The hypothesis $\tau \in BMO^1_{\alpha} ({\mathbb B}_n, \mathbb C)$ implies the announced conclusion.
\end{proof}

From Theorem \ref{thm15} ($E=F=\ell^2$) and Lemma \ref{lem22}, we deduce the following corollary.

\begin{cor}\label{cor23}
Let $\tau \in BMO^1_{\alpha} ({\mathbb B}_n, \mathbb C)$ be such that $\widetilde \tau \in L^\infty ({\mathbb B}_n, \mathbb C).$ Then the Toeplitz operator $T_{\tau I}$ is bounded on $A^2_\alpha (\mathbb B_n,  \ell^2).$
\end{cor}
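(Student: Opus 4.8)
The plan is to reduce the vector-valued boundedness statement for $T_{\tau I}$ to the already-established scalar criterion, Theorem~\ref{thm15}, applied with $E=F=\ell^2$ and symbol $b=\tau I$. First I would invoke Lemma~\ref{lem22}, which tells us two things: that $\tau I\in BMO^1_\alpha(\mathbb B_n,\mathcal L(\ell^2))$, and that $\widetilde{\tau I}(z)=\widetilde\tau(z)\,I$ for every $z\in\mathbb B_n$. These are precisely the hypotheses needed to enter Theorem~\ref{thm15}.

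Next I would verify condition (2) of Theorem~\ref{thm15}, namely that $\widetilde{\tau I}\in L^\infty(\mathbb B_n,\mathcal L(\ell^2))$. But Lemma~\ref{lem22} already records this: since $\widetilde{\tau I}(z)=\widetilde\tau(z)I$ and $\|I\|_{\mathcal L(\ell^2)}=1$, we get
\[
\sup_{z\in\mathbb B_n}\bigl\|\widetilde{\tau I}(z)\bigr\|_{\mathcal L(\ell^2)}=\sup_{z\in\mathbb B_n}\bigl|\widetilde\tau(z)\bigr|=\|\widetilde\tau\|_{L^\infty(\mathbb B_n,\mathbb C)}<\infty,
\]
the last inequality being the standing assumption on $\tau$. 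Hence assertion (2) of Theorem~\ref{thm15} holds.

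Applying the equivalence (2)$\Rightarrow$(1) of Theorem~\ref{thm15} with $E=F=\ell^2$, we conclude that the Toeplitz operator $T_{\tau I}$ extends to a bounded operator on $A^2_\alpha(\mathbb B_n,\ell^2)$; moreover the quantitative part of that theorem even gives the norm bound $\|T_{\tau I}\|\lesssim \|\widetilde\tau\|_{L^\infty}+\|\tau I\|_{BMO^1_\alpha}$, which by Lemma~\ref{lem22} (and estimate~(\ref{1})-type reasoning, or simply the identification $\|\tau I\|_{BMO^1_\alpha(\mathbb B_n,\mathcal L(\ell^2))}=\|\tau\|_{BMO^1_\alpha(\mathbb B_n,\mathbb C)}$ shown in the proof of Lemma~\ref{lem22}) is controlled by scalar quantities attached to $\tau$. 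This finishes the proof.

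Honestly, there is no real obstacle here: the corollary is a direct specialization, and essentially all the work has been front-loaded into Lemma~\ref{lem22}. The only thing to be careful about is bookkeeping — making sure the hypotheses of Theorem~\ref{thm15} are literally the two conclusions of Lemma~\ref{lem22} plus the membership $\tau I\in BMO^1_\alpha$, so that no additional verification is needed. If one wanted to be maximally self-contained one could also note the alternative route via Remark~\ref{rem16}, identifying boundedness with $\widetilde{\|(\tau I)(\cdot)\|_{\mathcal L(\ell^2)}}=\widetilde{|\tau(\cdot)|}\in L^\infty$, but the Theorem~\ref{thm15} route is the shortest.
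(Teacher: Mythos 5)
Your proposal is correct and follows exactly the route the paper takes: the corollary is deduced by combining Lemma~\ref{lem22} (which supplies $\tau I\in BMO^1_{\alpha}({\mathbb B}_n,\mathcal L(\ell^2))$ and $\widetilde{\tau I}=\widetilde\tau\, I\in L^\infty({\mathbb B}_n,\mathcal L(\ell^2))$) with the implication $(2)\Rightarrow(1)$ of Theorem~\ref{thm15} for $E=F=\ell^2$. Nothing is missing.
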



We next prove the following result.

\begin{prop}\label{pro24}
For the symbol $b=\tau I$ just defined, the Toeplitz operator $T_b$ is not a compact operator on $A^2_\alpha (\mathbb B_n, \ell^2)$ unless the scalar symbol $\tau$ vanishes identically.
\end{prop}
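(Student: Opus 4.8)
The plan is to show that if $T_{\tau I}$ is compact on $A^2_\alpha(\mathbb B_n,\ell^2)$ and $\tau$ is not identically zero, we reach a contradiction by exhibiting an orthonormal sequence on which $T_{\tau I}$ fails to converge to zero in norm. The natural candidates are the ``tensor'' functions $k_z^\alpha e_j$: for a fixed $z$, the family $\{k_z^\alpha e_j\}_{j=1}^\infty$ is orthonormal in $A^2_\alpha(\mathbb B_n,\ell^2)$ (since the $e_j$ are orthonormal in $\ell^2$ and $\|k_z^\alpha\|_{A^2_\alpha(\mathbb B_n,\mathbb C)}=1$), so it converges weakly to $0$ as $j\to\infty$. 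Compactness of $T_{\tau I}$ would then force $\|T_{\tau I}(k_z^\alpha e_j)\|_{A^2_\alpha(\mathbb B_n,\ell^2)}\to 0$ as $j\to\infty$. The key observation is that $T_{\tau I}(k_z^\alpha e_j) = (T_\tau k_z^\alpha)\,e_j$, because the operator $\tau(w)I$ acts on the vector part by scalar multiplication; hence $\|T_{\tau I}(k_z^\alpha e_j)\|_{A^2_\alpha(\mathbb B_n,\ell^2)} = \|T_\tau k_z^\alpha\|_{A^2_\alpha(\mathbb B_n,\mathbb C)}$, a quantity that does \emph{not} depend on $j$ at all. So if this scalar quantity is nonzero for even one $z$, we have our contradiction.

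So the remaining task is: if $\tau\not\equiv 0$, produce a point $z\in\mathbb B_n$ with $\|T_\tau k_z^\alpha\|_{A^2_\alpha(\mathbb B_n,\mathbb C)}>0$, equivalently $T_\tau k_z^\alpha \ne 0$ in $A^2_\alpha(\mathbb B_n,\mathbb C)$. I would argue by contradiction here as well: suppose $T_\tau k_z^\alpha = 0$ for every $z\in\mathbb B_n$. Since $\{k_z^\alpha : z\in\mathbb B_n\}$ has dense linear span in $A^2_\alpha(\mathbb B_n,\mathbb C)$ (the reproducing kernels always do in a reproducing kernel Hilbert space), and $T_\tau$ is bounded (by Corollary \ref{cor23}, or directly by Theorem \ref{thm15} in the scalar case using $\widetilde\tau\in L^\infty$), this would give $T_\tau = 0$ on $A^2_\alpha(\mathbb B_n,\mathbb C)$. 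But then the scalar Berezin transform $\widetilde{T_\tau} = \widetilde\tau$ vanishes identically, and it is standard that the Berezin transform is injective on $L^1_\alpha$ functions (the Berezin transform $\widetilde{\tau}\equiv 0$ forces $\tau\equiv 0$ a.e., via the fact that $\widetilde{\tau}$ determines $\tau$ — this is the injectivity of the Berezin transform on $\mathbb B_n$). This contradicts $\tau\not\equiv 0$. Alternatively, and perhaps more cleanly within the paper's framework, one uses the identity $\widetilde{T_b}=\widetilde b$ recorded after Definition \ref{def17} together with the fact that a compact operator's Berezin transform decays at the boundary (Theorem \ref{thm111} in the scalar case $d=1$, since any bounded Toeplitz operator with $BMO^1_\alpha$ symbol lies in $\mathcal T^1_\alpha$ by Theorem \ref{thm110}): compactness of $T_{\tau I}$ would propagate, but that route is more circuitous, so I would stick with the weak-convergence argument above.

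To write this out: first I note $k_z^\alpha e_j \rightharpoonup 0$ weakly in $A^2_\alpha(\mathbb B_n,\ell^2)$ as $j\to\infty$ for fixed $z$ — this follows because for any $g\in A^2_\alpha(\mathbb B_n,(\ell^2)^\star)$ the pairing $\langle k_z^\alpha e_j, g\rangle_{\alpha,\ell^2}$ equals, by the reproducing property applied coordinatewise, something like $\langle (1-|z|^2)^{(n+1+\alpha)/2} g(z), e_j\rangle$ which tends to $0$ by Bessel/Parseval since $g(z)\in\ell^2$. Then compactness of $T_{\tau I}$ gives $\|T_{\tau I}(k_z^\alpha e_j)\|\to 0$. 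Next I compute $T_{\tau I}(k_z^\alpha e_j)(w)=\int_{\mathbb B_n}\tau(u)k_z^\alpha(u)e_j\,\overline{K_w^\alpha(u)}\,d\nu_\alpha(u)=\big(T_\tau k_z^\alpha\big)(w)\,e_j$, so the norm is the $j$-independent constant $\|T_\tau k_z^\alpha\|_{A^2_\alpha(\mathbb B_n,\mathbb C)}$; letting $j\to\infty$ forces this to be $0$ for every $z$, hence $T_\tau=0$ by density of reproducing kernels, hence $\widetilde\tau=\widetilde{T_\tau}\equiv 0$, hence $\tau\equiv 0$. The main obstacle is not any single hard estimate but making sure the elementary pieces are stated correctly — in particular justifying the weak convergence $k_z^\alpha e_j\rightharpoonup 0$ cleanly in the vector-valued setting and justifying the interchange that gives $T_{\tau I}(k_z^\alpha e_j)=(T_\tau k_z^\alpha)e_j$ (which is just Hille's Theorem, Theorem \ref{thm11}, applied to the bounded functional ``$j$-th coordinate'', or a direct scalar computation). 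Everything else is bookkeeping.
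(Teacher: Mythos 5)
Your proposal is correct and follows essentially the same route as the paper: both exploit that $\{h\,e_j\}_{j}$ is weakly null in $A^2_\alpha(\mathbb B_n,\ell^2)$ for a fixed scalar $h$, that $T_{\tau I}(h e_j)=(T^{\mathbb C}_\tau h)e_j$ has $j$-independent norm, and then conclude $T^{\mathbb C}_\tau=0$, $\widetilde\tau\equiv 0$, $\tau\equiv 0$ via injectivity of the Berezin transform. The only (harmless) difference is your choice of $h=k_z^\alpha$ with the orthonormality shortcut for weak nullity, where the paper takes $h=f\in H^\infty(\mathbb B_n,\mathbb C)$ and verifies weak convergence by a direct Cauchy--Schwarz estimate.
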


\begin{proof}
We assume that $T_b$ is a compact operator on $A^2_\alpha (\mathbb B_n, \ell^2).$ We recall that for every $g\in A^2_\alpha (\mathbb B_n, \ell^2),$ we have
$$\left \Vert g\right \Vert_{A^2_\alpha (\mathbb B_n, \ell^2)}^2:=\sum_{j=1}^\infty \int_{\mathbb B_n} \left \vert \langle g(z), e_j\rangle_{\ell^2}\right \vert^2d\nu_\alpha (z)\textless \infty.$$

Then $\lim \limits_{j\rightarrow \infty} \int_{\mathbb B_n} \left \vert \langle g(z), e_j\rangle_{\ell^2}\right \vert^2d\nu_\alpha (z)=0.$ \\
Next, for every scalar function $f\in H^\infty (\mathbb B_n, \mathbb C)$  and every positive integer $j,$ we get

\begin{eqnarray*}
\left \vert \langle fe_j, g\rangle_{A^2_\alpha (\mathbb B_n, \ell^2)}\right \vert
&=&\left \vert \int_{\mathbb B_n} \langle f(z)e_j, g(z)\rangle_{\ell^2}d\nu_\alpha (z)\right \vert\\
&=&\left \vert \int_{\mathbb B_n} f(z)\langle e_j, g(z)\rangle_{\ell^2}d\nu_\alpha (z)\right \vert\\
&\leq &\left \Vert f\right \Vert_{H^\infty (\mathbb B_n, \mathbb C)} \int_{\mathbb B_n} \left \vert \langle e_j, g(z)\rangle_{\ell^2}\right \vert d\nu_\alpha (z)\\
&\leq& \left \Vert f\right \Vert_{H^\infty (\mathbb B_n, \mathbb C)} \left (\int_{\mathbb B_n} \left \vert \langle e_j, g(z)\rangle_{\ell^2}\right \vert^2 d\nu_\alpha (z)\right )^{\frac 12},
\end{eqnarray*}
where we used H\"older's inequality in the last line. 
 We conclude that $\lim \limits_{j\rightarrow \infty} \langle fe_j, g\rangle_{A^2_\alpha (\mathbb B_n, \ell^2)}=0.$ In other words, the sequence $\{fe_j\}_{j=1}^\infty$ viewed as a sequence of functions of $A^2_\alpha (\mathbb B_n, \ell^2)$ converges weakly to zero on $A^2_\alpha (\mathbb B_n, \ell^2).$\\
Since $T_b$ is compact, we must have
\begin{equation}\label{comp}
\lim \limits_{j\rightarrow \infty} \left \Vert T_b (fe_j)\right \Vert_{A^2_\alpha (\mathbb B_n, \ell^2)}=0.
\end{equation}
But for every $z\in \mathbb B_n,$ we have
\begin{eqnarray}\label{eq:26}
T_b (fe_j) (z)
&=&\int_{\mathbb B_n} b(w)(f(w)e_j)\overline{K_z^\alpha (w)}d\nu_\alpha (w)\nonumber\\
&=&\left (\int_{\mathbb B_n} \tau (w)f(w)\overline{K_z^\alpha (w)}d\nu_\alpha (w)  \right )e_j \nonumber\\
&=&\left (T_\tau^{\mathbb C} f(z)\right )e_j,
\end{eqnarray}

where $T_\tau^{\mathbb C}$ denotes the scalar Toeplitz operator with scalar symbol $\tau,$
so that 
$$\left \Vert T_b (fe_j)\right \Vert_{A^2_\alpha (\mathbb B_n, \ell^2)}=\left \Vert T_\tau^{\mathbb C} f\right \Vert_{A^2_\alpha (\mathbb B_n, \mathbb C)}$$ for every positive integer $j.$ The equality (\ref{comp}) then gives that $T_\tau^{\mathbb C}=0$ on $A^2_\alpha (\mathbb B_n, \mathbb C)$ because of the density of $H^\infty (\mathbb B_n, \mathbb C)$ in $A^2_\alpha (\mathbb B_n, \mathbb C).$ 
Finally, since $\tau \in BMO^1_\alpha (\mathbb B_n, \mathbb C)$ and  $\widetilde \tau \in L^\infty (\mathbb B_n, \mathbb C),$ in view of Theorem \ref{thm15} with $E=F=\mathbb C,$ we get $\left \Vert \widetilde {\tau (\cdot)}\right \Vert_{L^\infty (\mathbb B_n, \mathbb C)}=0,$ which easily implies that $\tau \equiv 0$ a.e. on $\mathbb B_n$ \cite[Proposition 2.6]{HKZ00}.
\end{proof}

For the particular case $\tau=\chi_{r\mathbb B_n}, 0\leq r \textless 1,$ we deduce the following corollary which differs completely from the one-dimensional case; cf. e.g. \cite{Z2003}.

\begin{cor}\label{cor25}
The Toeplitz operator $T_{\chi_{r\mathbb B_n} I}$ is not compact unless $r=0,$ i.e. its symbol vanishes identically.
\end{cor}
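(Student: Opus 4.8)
The plan is to deduce Corollary \ref{cor25} directly from Proposition \ref{pro24} by checking that the scalar symbol $\tau=\chi_{r\mathbb B_n}$ satisfies the hypotheses of that proposition and does not vanish identically when $r>0$. So the first step is to verify that $\chi_{r\mathbb B_n}\in BMO^1_\alpha(\mathbb B_n,\mathbb C)$ and that its Berezin transform is in $L^\infty(\mathbb B_n,\mathbb C)$. In fact $\chi_{r\mathbb B_n}$ is a bounded measurable function (it takes only the values $0$ and $1$), and every bounded measurable scalar function lies in $BMO^1_\alpha(\mathbb B_n,\mathbb C)$ with $\|\chi_{r\mathbb B_n}\|_{BMO^1_\alpha}\le 2\|\chi_{r\mathbb B_n}\|_{L^\infty}=2$, because $\|\chi_{r\mathbb B_n}\circ\varphi_z-\widetilde{\chi_{r\mathbb B_n}}(z)\|_{L^1_\alpha}\le \|\chi_{r\mathbb B_n}\circ\varphi_z\|_{L^1_\alpha}+|\widetilde{\chi_{r\mathbb B_n}}(z)|\cdot\|1\|_{L^1_\alpha}\le 1+1$. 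Likewise $|\widetilde{\chi_{r\mathbb B_n}}(z)|\le \int_{\mathbb B_n}|k_z^\alpha(w)|^2\,d\nu_\alpha(w)=1$, so $\widetilde{\chi_{r\mathbb B_n}}\in L^\infty(\mathbb B_n,\mathbb C)$.

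Next I would invoke Proposition \ref{pro24} with $\tau=\chi_{r\mathbb B_n}$: it asserts that the Toeplitz operator $T_{\chi_{r\mathbb B_n}I}$ on $A^2_\alpha(\mathbb B_n,\ell^2)$ is not compact unless $\chi_{r\mathbb B_n}$ vanishes identically (a.e. on $\mathbb B_n$). Since $\nu_\alpha(r\mathbb B_n)>0$ whenever $r>0$, the function $\chi_{r\mathbb B_n}$ is not zero a.e. for such $r$, and hence $T_{\chi_{r\mathbb B_n}I}$ is not compact. Conversely, when $r=0$, $\chi_{0\cdot\mathbb B_n}=\chi_{\{0\}}$ is zero $\nu_\alpha$-a.e., so $T_{\chi_{r\mathbb B_n}I}=0$ is trivially compact. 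This gives precisely the stated dichotomy.

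The argument is essentially a one-line specialization once Proposition \ref{pro24} is in hand, so there is no substantial obstacle; the only point requiring a small verification is the elementary fact that $L^\infty$ scalar functions belong to $BMO^1_\alpha$ with controlled norm, and that the normalized Bergman kernel is an $L^2_\alpha$-probability density so that $|\widetilde{h}(z)|\le\|h\|_{L^\infty}$ for scalar $h$. I would state these two facts briefly (they are standard; cf. the references already cited in the paper) and then conclude. The contrast with the one-dimensional scalar case mentioned in the corollary — where $T_{\chi_{r\mathbb B_n}}$ \emph{is} compact for every $r<1$ — is worth a sentence of commentary: here the failure of compactness comes from the infinite multiplicity built into the operator $\tau I$ on $\ell^2$, which forces $T_\tau^{\mathbb C}=0$ as in the proof of Proposition \ref{pro24}, and $T_{\chi_{r\mathbb B_n}}^{\mathbb C}$ is plainly nonzero for $r>0$.
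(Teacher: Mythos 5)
Your proposal is correct and follows exactly the route the paper intends: Corollary \ref{cor25} is obtained by specializing Proposition \ref{pro24} to $\tau=\chi_{r\mathbb B_n}$, and your verification that $\chi_{r\mathbb B_n}\in BMO^1_\alpha(\mathbb B_n,\mathbb C)$ with bounded Berezin transform (via the triangle inequality and the fact that $\nu_\alpha$ and $|k_z^\alpha|^2\,d\nu_\alpha$ are probability measures) correctly fills in the only hypothesis check the paper leaves implicit. The observation that $r=0$ gives a symbol vanishing $\nu_\alpha$-a.e.\ completes the dichotomy as stated.
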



\section{Three preliminary lemmas}\label{sec3}
In this section, we recall two preliminary lemmas proved in \cite{D2024} and \cite{DT2024}. Let $E$ and $F$ be two complex Banach spaces.

\begin{lem}\label{lem31}
Let $b \in L^1 \left (\mathbb B_n, \mathcal L(E, F)\right)$ be such that $\widetilde {\left \Vert b(\cdot)\right \Vert_{\mathcal L(E, F)}}\in L^\infty ({\mathbb B}_n, \mathbb C).$ Then the following estimate holds.
$$\int_{{\mathbb B}_n} \log \left (\frac e{1-\vert w\vert^2}\right )\left \Vert b(w)\right \Vert_{\mathcal L(E, F)}{d\nu}_\alpha (w)<\infty.$$
\end{lem}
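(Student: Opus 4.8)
The plan is to reduce the claimed logarithmic integrability to the known boundedness of the Berezin transform of $\|b(\cdot)\|_{\mathcal L(E,F)}$ via a Fubini/Tonelli argument. First I would set $g(w):=\|b(w)\|_{\mathcal L(E,F)}$, a nonnegative function in $L^1_\alpha(\mathbb B_n,\mathbb C)$ whose Berezin transform $\widetilde g$ is, by hypothesis, in $L^\infty(\mathbb B_n,\mathbb C)$; say $\|\widetilde g\|_\infty=:M<\infty$. The key observation is that the logarithmic weight $w\mapsto \log\!\big(\tfrac{e}{1-|w|^2}\big)$ can itself be written (up to an additive/multiplicative constant) as a Berezin-type average of a bounded kernel, or more simply, that after integrating $\widetilde g$ against a suitable nonnegative integrable weight one recovers $\int g\cdot(\text{log weight})$.

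Concretely, I would compute $\int_{\mathbb B_n}\widetilde g(z)\,h(z)\,d\nu_\alpha(z)$ for a cleverly chosen nonnegative $h\in L^1_\alpha$, interchange the order of integration (Tonelli, everything nonnegative), and get $\int_{\mathbb B_n} g(w)\Big(\int_{\mathbb B_n}|k_z^\alpha(w)|^2 h(z)\,d\nu_\alpha(z)\Big) d\nu_\alpha(w)$. The inner integral is the Berezin transform of $h$ evaluated at $w$ (by symmetry of $|k_z^\alpha(w)|^2=|k_w^\alpha(z)|^2$), so the whole thing equals $\int g(w)\,\widetilde h(w)\,d\nu_\alpha(w)$. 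Thus it suffices to exhibit one nonnegative $h\in L^1_\alpha(\mathbb B_n)$ whose Berezin transform $\widetilde h$ dominates $c\,\log\!\big(\tfrac e{1-|w|^2}\big)$ for some $c>0$; then $\int g\cdot\log(\cdots)\le \tfrac1c\int g\,\widetilde h=\tfrac1c\int\widetilde g\cdot h\le \tfrac Mc\|h\|_{L^1_\alpha}<\infty$. The natural candidate is $h$ itself proportional to a power of $\log$, or even simpler, one uses the classical estimate (Forelli–Rudin type) that $\int_{\mathbb B_n}\frac{(1-|z|^2)^\alpha}{|1-\langle w,z\rangle|^{n+1+\alpha}}\,d\nu(z)\asymp \log\frac{1}{1-|w|^2}$ as $|w|\to1$; rescaling shows $\widetilde{\mathbf 1}\equiv 1$ is too weak, so instead one takes $h(z)=(1-|z|^2)^{-\varepsilon}$ with $0<\varepsilon<1$ small, or better, directly invoke that the Berezin transform of the constant function $1$ against the measure with an extra $(1-|z|^2)^{n+1}/|1-\langle w,z\rangle|^{2(n+1+\alpha)}$ factor already produces the logarithm. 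Alternatively — and this is probably the cleanest — observe that $\log\frac{e}{1-|w|^2}\lesssim \int_{\mathbb B_n}|k_w^\alpha(z)|^2\,\psi(z)\,d\nu_\alpha(z)=\widetilde\psi(w)$ where $\psi(z)=(1-|z|^2)^{-1/2}$ (which is in $L^1_\alpha$ since $\alpha>-1$ forces $\alpha-1/2>-1$... need $\alpha>-1/2$; so instead pick the exponent $\alpha/2$-ish carefully, or $\psi(z)=(1-|z|^2)^{\gamma}$ with $-1<\gamma+\alpha$, i.e. $\gamma>-1-\alpha$, and $\gamma<0$ so that $\widetilde\psi$ blows up logarithmically — the borderline $\gamma\to 0^-$ is what gives the $\log$).

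So the key computational lemma I would isolate and prove is: there exists $\psi\in L^1_\alpha(\mathbb B_n,[0,\infty))$ and a constant $c>0$ with $\widetilde\psi(w)\ge c\,\log\!\big(\tfrac{e}{1-|w|^2}\big)$ for all $w\in\mathbb B_n$. Given this, the main statement follows in three lines by Tonelli plus the symmetry $|k_z^\alpha(w)|^2=|k_w^\alpha(z)|^2$ plus the hypothesis $\|\widetilde g\|_\infty<\infty$, exactly as above. The proof of the key lemma rests on the standard integral asymptotics (e.g.\ Zhu, \emph{Spaces of Holomorphic Functions in the Unit Ball}, or \cite{HKZ00}): for $t$ real, $\int_{\mathbb B_n}\frac{(1-|z|^2)^{t}}{|1-\langle w,z\rangle|^{n+1+\alpha+t+s}}\,d\nu(z)$ behaves like a power of $(1-|w|^2)$ when the exponent balance is off, and like $\log\frac1{1-|w|^2}$ precisely at the critical exponent; choosing $s=n+1+\alpha$ and $t$ so that the total power in the denominator is $2(n+1+\alpha)$ and the numerator power meets the critical threshold yields the logarithm. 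The main obstacle is purely bookkeeping: picking the weight $\psi$ (equivalently the exponent) so that simultaneously (a) $\psi\in L^1_\alpha$, which needs its exponent $>-1-\alpha$, and (b) $\widetilde\psi$ is bounded below by the logarithm, which forces the exponent to sit exactly at the critical value; one must check these two constraints are compatible for every $\alpha>-1$ — they are, because the critical exponent is $0$ (i.e.\ $\psi\equiv 1$ gives $\widetilde\psi\equiv 1$, not enough), so in fact one needs the slightly subtler two-parameter family or, more elegantly, to note $\log\frac{e}{1-|w|^2}=\int_0^1\frac{dt}{t(1-|w|^2)+(1-t)}$-type integral representation and push the Berezin transform through. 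I would present the Forelli–Rudin route as primary, flagging that the only real work is verifying the exponent arithmetic, and everything else (Tonelli, Hille's theorem is not even needed here, just scalar positivity) is routine.
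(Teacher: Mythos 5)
Your high-level strategy --- pair $\widetilde g$ (with $g(w):=\left\Vert b(w)\right\Vert_{\mathcal L(E,F)}$) against a weight, swap the order of integration by Tonelli, and let a Forelli--Rudin estimate produce the logarithm --- is indeed the standard route to this lemma (the paper itself only cites \cite{D2024,DT2024} for the proof). But the one concrete identity you commit to is false, and it is exactly this error that drives the contortions at the end of your proposal. Since
$$\left\vert k_z^\alpha(w)\right\vert^2=\frac{(1-|z|^2)^{n+1+\alpha}}{\left\vert 1-\langle w,z\rangle\right\vert^{2(n+1+\alpha)}},$$
one has $\left\vert k_z^\alpha(w)\right\vert^2\neq\left\vert k_w^\alpha(z)\right\vert^2$: the denominators agree, but the numerators carry $(1-|z|^2)^{n+1+\alpha}$ versus $(1-|w|^2)^{n+1+\alpha}$. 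Hence the Berezin transform is \emph{not} self-adjoint with respect to $d\nu_\alpha$, and $\int\widetilde g\,h\,d\nu_\alpha\neq\int g\,\widetilde h\,d\nu_\alpha$ in general. After Tonelli, the inner integral you obtain, $\int_{\mathbb B_n}\left\vert k_z^\alpha(w)\right\vert^2h(z)\,d\nu_\alpha(z)$, is the \emph{adjoint} transform of $h$ evaluated at $w$, not $\widetilde h(w)$. Your subsequent worry that the critical exponent is $0$ and that ``$\psi\equiv 1$ gives $\widetilde\psi\equiv 1$, not enough'' is an artifact of this mix-up: for the adjoint transform the constant function sits exactly at the critical exponent and already produces the logarithm. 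As written, the key lemma you isolate is never actually established (you leave both the choice of weight and the verification of the domination open), so the argument is incomplete even apart from the false symmetry.

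The repair is a one-line computation with $h\equiv 1$. By Tonelli,
$$\int_{\mathbb B_n}\widetilde g(z)\,d\nu_\alpha(z)=\int_{\mathbb B_n}g(w)\left(c_\alpha\int_{\mathbb B_n}\frac{(1-|z|^2)^{n+1+2\alpha}}{\left\vert 1-\langle w,z\rangle\right\vert^{2(n+1+\alpha)}}\,d\nu(z)\right)d\nu_\alpha(w),$$
and the inner integral is precisely the critical case of the Forelli--Rudin estimate (\cite[Theorem 1.12]{Z2005}, with $t=n+1+2\alpha>-1$ and $n+1+t-2(n+1+\alpha)=0$), hence comparable to $\log\frac{e}{1-|w|^2}$ uniformly on $\mathbb B_n$. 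Since the left-hand side is at most $\Vert\widetilde g\Vert_{L^\infty(\mathbb B_n,\mathbb C)}\,\nu_\alpha(\mathbb B_n)<\infty$, the stated integrability follows. So your idea is the right one, but the step on which everything hinges rests on an incorrect symmetry and was not carried through; writing the kernel correctly removes the difficulty entirely.
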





We next state the second lemma. 
\begin{lem}\label{lem32}
Let $b:{\mathbb B}_n\rightarrow \mathcal L(E, F)$ be such that $\widetilde {\left \Vert b(\cdot)\right \Vert_{\mathcal L(E, F)}}\in L^\infty ({\mathbb B}_n, \mathbb C).$ Then for all $f$ in the dense space $H^\infty ({\mathbb B}_n, E)$ of $A^2_\alpha (\mathbb B_n, E)$ and $g$ in the dense space $H^\infty ({\mathbb B}_n, F^\star)$ of $A^2_\alpha (\mathbb B_n, F^\star)$ we have:
$$\langle T_b f, g\rangle_{\alpha, F}=\int_{{\mathbb B}_n} \langle b(w)(f(w)), g(w)\rangle_{F, F^\star}d\nu_{\alpha}(w).$$ 
\end{lem}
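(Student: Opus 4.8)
The plan is to run the standard Fubini-plus-reproducing-kernel computation, with Lemma~\ref{lem31} furnishing precisely the integrability that makes the interchange of integrals legitimate. First I would note that, under the hypothesis on $\widetilde{\left\Vert b(\cdot)\right\Vert_{\mathcal L(E,F)}}$ and for $f\in H^\infty({\mathbb B}_n,E)$, the function $T_bf$ is at least in $L^1_\alpha({\mathbb B}_n,F)$: using $\left\Vert b(w)(f(w))\right\Vert_F\le\left\Vert b(w)\right\Vert_{\mathcal L(E,F)}\left\Vert f\right\Vert_{H^\infty({\mathbb B}_n,E)}$, the elementary Bergman-kernel estimate $\int_{{\mathbb B}_n}\left\vert K_z^\alpha(w)\right\vert d\nu_\alpha(z)=\int_{{\mathbb B}_n}\left\vert1-\langle z,w\rangle\right\vert^{-(n+1+\alpha)}d\nu_\alpha(z)\lesssim\log\frac{e}{1-\vert w\vert^2}$, and Lemma~\ref{lem31}, one obtains $\int_{{\mathbb B}_n}\left\Vert T_bf(z)\right\Vert_F d\nu_\alpha(z)<\infty$; hence $\langle T_bf,g\rangle_{\alpha,F}=\int_{{\mathbb B}_n}\langle T_bf(z),g(z)\rangle_{F,F^\star}d\nu_\alpha(z)$ is an absolutely convergent integral.

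Next I would expand this via Definition~\ref{def14}:
$$\langle T_bf,g\rangle_{\alpha,F}=\int_{{\mathbb B}_n}\left\langle\int_{{\mathbb B}_n}b(w)(f(w))\,\overline{K_z^\alpha(w)}\,d\nu_\alpha(w),\;g(z)\right\rangle_{F,F^\star}d\nu_\alpha(z),$$
and apply Hille's Theorem (Theorem~\ref{thm11}) to the bounded linear functional $x\mapsto\langle x,g(z)\rangle_{F,F^\star}$ on $F$ to pull it through the inner Bochner integral, rewriting the left-hand side as the iterated scalar integral
$$\int_{{\mathbb B}_n}\int_{{\mathbb B}_n}\overline{K_z^\alpha(w)}\,\langle b(w)(f(w)),g(z)\rangle_{F,F^\star}\,d\nu_\alpha(w)\,d\nu_\alpha(z).$$
Bounding the integrand in modulus by $\left\vert K_z^\alpha(w)\right\vert\left\Vert b(w)\right\Vert_{\mathcal L(E,F)}\left\Vert f\right\Vert_{H^\infty({\mathbb B}_n,E)}\left\Vert g\right\Vert_{H^\infty({\mathbb B}_n,F^\star)}$, integrating first in $z$ with the kernel estimate above, and then invoking Lemma~\ref{lem31}, I would conclude the double integral is absolutely convergent, so that Fubini's theorem allows swapping the order of integration.

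After the swap I would fix $w$ and evaluate the inner $z$-integral. Pulling the fixed vector $b(w)(f(w))$ and the integral $\int_{{\mathbb B}_n}(\cdot)\,d\nu_\alpha(z)$ through the pairing — here the scalar $\overline{K_z^\alpha(w)}$ enters the $F^\star$-slot and, after the conjugation carried by $\langle\cdot,\cdot\rangle_{F,F^\star}$ together with the identity $\overline{K_z^\alpha(w)}=K_w^\alpha(z)$, appears there as $\overline{K_w^\alpha(z)}$ — I would recognize
$$\int_{{\mathbb B}_n}\overline{K_z^\alpha(w)}\,\langle b(w)(f(w)),g(z)\rangle_{F,F^\star}\,d\nu_\alpha(z)=\left\langle b(w)(f(w)),\int_{{\mathbb B}_n}\overline{K_w^\alpha(z)}\,g(z)\,d\nu_\alpha(z)\right\rangle_{F,F^\star}=\langle b(w)(f(w)),g(w)\rangle_{F,F^\star},$$
the last equality being the reproducing identity $P_\alpha g=g$ applied to the $F^\star$-valued holomorphic function $g$. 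Integrating in $w$ would then yield the claimed formula.

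I expect the main obstacle to be the justification of Fubini's theorem rather than any of the formal steps: since $T_b$ is not yet known to be bounded, one needs a genuine integrable majorant for the double integral, and this is exactly what Lemma~\ref{lem31} supplies (together with the Forelli--Rudin-type bound for $\int_{{\mathbb B}_n}\left\vert1-\langle z,w\rangle\right\vert^{-(n+1+\alpha)}d\nu_\alpha(z)$). A secondary, purely bookkeeping point is keeping the conjugations straight when $\overline{K_z^\alpha(w)}$ is passed through $\langle\cdot,\cdot\rangle_{F,F^\star}$ and matched with the reproducing kernel $\overline{K_w^\alpha(z)}$; specializing to $F={\mathbb C}$ is a convenient check that this conjugation is precisely what makes the right-hand side come out as $\int_{{\mathbb B}_n}\langle b(w)(f(w)),g(w)\rangle_{F,F^\star}d\nu_\alpha(w)$.
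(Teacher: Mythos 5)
Your argument is correct. The paper itself states Lemma \ref{lem32} without proof, simply citing \cite{D2024} and \cite{DT2024}, so there is no in-text proof to compare against; your route (integrability of $T_bf$ via the Forelli--Rudin estimate $\int_{\mathbb B_n}\vert 1-\langle z,w\rangle\vert^{-(n+1+\alpha)}d\nu_\alpha(z)\lesssim\log\frac{e}{1-\vert w\vert^2}$ combined with Lemma \ref{lem31}, Hille's theorem to scalarize the Bochner integral, Fubini, and the reproducing property $P_\alpha g=g$) is exactly the standard argument these references use, and all the delicate points --- the integrable majorant needed because $T_b$ is not yet known to be bounded, and the conjugation bookkeeping coming from the sesquilinear pairing $\langle\cdot,\cdot\rangle_{F,F^\star}$ together with $K_z^\alpha(w)=\overline{K_w^\alpha(z)}$ --- are correctly identified and handled.
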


We next state our third lemma. Its assertion (3) follows from the first assertion of Definition \ref{def13} and Lemma \ref{lem32} and its proof can be found in \cite{R2016}.
\begin{lem}\label{lem33}
Let $b \in BMO^1_{\alpha} ({\mathbb B}_n, \mathcal L(E, F))$ be such that $\widetilde b \in L^\infty ({\mathbb B}_n, \mathcal L(E, F)).$ For every $z\in \mathbb B_n,$ the following three identities hold.
\begin{enumerate}
\item
$\left (T_b\right )^z=T_{b\circ \phi_z};$
\item
$\left \Vert \left (T_b\right )^z\left (e \right )\right \Vert_{A^2_\alpha({\mathbb B}_n, F)}=\left \Vert T_b \left (k_z^\alpha e \right )\right \Vert_{A^2_\alpha({\mathbb B}_n, F)};$
\item
$\widetilde{T_b}=\widetilde b.$
\end{enumerate}
\end{lem}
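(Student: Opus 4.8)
\textbf{Proof plan for Lemma \ref{lem33}.}
The plan is to establish the three identities in order, each essentially by unwinding the definitions and using the boundedness of $T_b$ (which is guaranteed by Theorem \ref{thm15}, since $b\in BMO^1_\alpha$ with $\widetilde b\in L^\infty$) together with the reproducing property of $P_\alpha$ and the density of $H^\infty(\mathbb B_n, E)$ in $A^2_\alpha(\mathbb B_n, E)$ (Proposition \ref{pro12}). For assertion (1), I would first check the identity $(T_b)^z = U_z T_b U_z$ on the dense set $H^\infty(\mathbb B_n, E)$. Using Definition \ref{def18}, for $f\in H^\infty(\mathbb B_n, E)$ one has $(U_z f)(w) = f(\phi_z(w)) k_z^\alpha(w)$, so $T_b U_z f$ is given by an integral against $\overline{K_w^\alpha}$; then applying $U_z$ again and performing the change of variables $w\mapsto \phi_z(w)$ in the Bergman integral, using the well-known transformation rule for the weighted measure $d\nu_\alpha$ under $\phi_z$ and the Möbius identities for $k_z^\alpha$ and the reproducing kernel (in particular $k_z^\alpha(u)\overline{k_z^\alpha(w)} K^\alpha_{\phi_z(u)}(\phi_z(w)) = K^\alpha_u(w)$ up to the Jacobian factor), the two unitaries collapse and one is left precisely with the Toeplitz integral for the symbol $b\circ\phi_z$. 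This is the computational heart of the lemma; the bookkeeping with the change of variables and the kernel identities is where care is needed, but it is the standard argument and appears in \cite{R2016}.

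For assertion (2), I would simply note that $U_z$ is unitary on $A^2_\alpha(\mathbb B_n, F)$ (Definition \ref{def18}), hence for $e\in F$,
$$\left\Vert (T_b)^z(e)\right\Vert_{A^2_\alpha(\mathbb B_n, F)} = \left\Vert U_z T_b U_z e\right\Vert_{A^2_\alpha(\mathbb B_n, F)} = \left\Vert T_b U_z e\right\Vert_{A^2_\alpha(\mathbb B_n, F)},$$
and observe that $U_z$ applied to the constant function $e$ gives $(U_z e)(w) = e\cdot k_z^\alpha(w) = k_z^\alpha(w)\,e$, i.e. $U_z e = k_z^\alpha e$ as an element of $A^2_\alpha(\mathbb B_n, F)$. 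Substituting this gives exactly $\left\Vert T_b(k_z^\alpha e)\right\Vert_{A^2_\alpha(\mathbb B_n, F)}$, which is the claimed identity. (Here one uses that $e\in F$ while the symbol maps into $\mathcal L(E,F)$; strictly one takes $e\in E$ for $U_z e$ to make sense in $A^2_\alpha(\mathbb B_n, E)$, matching the statement where the argument of $(T_b)^z$ lies in $E$.)

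For assertion (3), I would use Definition \ref{def17} of the Berezin transform of an operator and Lemma \ref{lem32}. For $x\in E$ and $y^\star\in F^\star$,
$$\langle \widetilde{T_b}(z)(x), y^\star\rangle_{F, F^\star} = \langle T_b(k_z^\alpha x), k_z^\alpha y^\star\rangle_{\alpha, F} = \int_{\mathbb B_n} \langle b(w)(k_z^\alpha(w) x), k_z^\alpha(w) y^\star\rangle_{F, F^\star}\, d\nu_\alpha(w),$$
where the second equality is Lemma \ref{lem32} applied with $f = k_z^\alpha x \in H^\infty(\mathbb B_n, E)$ and $g = k_z^\alpha y^\star \in H^\infty(\mathbb B_n, F^\star)$. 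Pulling the scalar factor $|k_z^\alpha(w)|^2$ out and using Hille's Theorem (Theorem \ref{thm11}) to move the bounded functional $\langle \cdot\, x, y^\star\rangle$ inside the Bochner integral, the right-hand side becomes $\langle \widetilde b(z)(x), y^\star\rangle_{F, F^\star}$, which is the definition of the Berezin transform of the symbol $b$ (Definition \ref{def13}). Since $x$ and $y^\star$ are arbitrary, $\widetilde{T_b}(z) = \widetilde b(z)$ for every $z\in\mathbb B_n$. The main obstacle throughout is assertion (1): verifying the unitary-conjugation formula for $T_b$ requires the precise Möbius change-of-variables identities, and one must also justify that all manipulations are legitimate on the dense subspace $H^\infty$ and extend by continuity, using the boundedness of $T_b$ from Theorem \ref{thm15}.
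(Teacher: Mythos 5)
Your plan is correct and follows essentially the same route the paper takes: the paper does not write out a proof but refers to \cite{R2016} for assertions (1)--(2) (the unitary-conjugation and change-of-variables argument you describe) and notes that assertion (3) follows from Definition \ref{def13} together with Lemma \ref{lem32}, exactly as in your computation via Hille's Theorem. Your added care about which space each copy of $U_z$ acts on and about extending from $H^\infty$ by density is consistent with the paper's setup and introduces no gap.
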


\section{Toeplitz operators with symbols in  $BMO^1_{\alpha} ({\mathbb B}_n, \mathcal L(\ell^2)).$} \label{sec4} 
Througout this section, $d$ denotes a positive integer. The linear operators $M_{I^{(d)}}$ and $M_{I_{(d)}}$ on $\ell^2$ and on $A^2_\alpha (\mathbb B_n,  \ell^2)$ were defined in the introduction before Theorem \ref{thm113}.

\subsection{A preliminary remark}\label{ssec41}
\begin{rem}\label{rem41}
For every bounded operator $A$ on $A^2_\alpha (\mathbb B_n, \ell^2),$ we denote by $\left \Vert A\right \Vert_e$ the essential norm of $A$ on $A^2_\alpha (\mathbb B_n, \ell^2).$ In particular, for every $b\in BMO^1_{\alpha} \left ({\mathbb B}_n, \mathcal L(\ell^2)\right )$ such that $\widetilde b\in L^\infty ({\mathbb B}_n, \mathcal L(\ell^2)),$ we have the following inequalities:
\begin{eqnarray*}
\left \Vert T_b \right \Vert_e
&\leq & \left \Vert T_b -T_bM_{I^{(d)}}\right \Vert_{\mathcal L (A^2_\alpha (\mathbb B_n, \ell^2))}+\left \Vert T_bM_{I^{(d)}}\right \Vert_e\\
&= & \left \Vert T_b \left (I-M_{I^{(d)}}\right )\right \Vert_{\mathcal L (A^2_\alpha (\mathbb B_n, \ell^2))}+\left \Vert T_bM_{I^{(d)}}\right \Vert_e\\
&\leq & \left \Vert T_bM_{I_{(d)}}\right \Vert_{\mathcal L (A^2_\alpha (\mathbb B_n, \ell^2))}+\left \Vert M_{I^{(d)}}T_bM_{I^{(d)}}\right \Vert_e+\left \Vert T_bM_{I^{(d)}} -M_{I^{(d)}}T_bM_{I^{(d)}}\right \Vert_{\mathcal L(A^2_\alpha (\mathbb B_n, \ell^2))}\\
&\leq & \left \Vert T_bM_{I_{(d)}}\right \Vert_{\mathcal L(A^2_\alpha (\mathbb B_n, \ell^2))}+\left \Vert M_{I^{(d)}}T_bM_{I^{(d)}}\right \Vert_e+\left \Vert M_{I_{(d)}}T_bM_{I^{(d)}}\right \Vert_{\mathcal L(A^2_\alpha (\mathbb B_n, \ell^2))}\\
&\leq & \left \Vert T_bM_{I_{(d)}}\right \Vert_{\mathcal L(A^2_\alpha (\mathbb B_n, \ell^2))}+\left \Vert M_{I^{(d)}}T_bM_{I^{(d)}}\right \Vert_e+\left \Vert M_{I_{(d)}}T_b\right \Vert_{\mathcal L(A^2_\alpha (\mathbb B_n, \ell^2))},
\end{eqnarray*}

since $\left \Vert M_{I^{(d)}}\right \Vert_{\mathcal L(A^2_\alpha (\mathbb B_n, \ell^2))}\leq 1.$ For the second inequality, we deduced from the identity $M_{I^{(d)}}+M_{I_{(d)}}=I,$ where $I$ is the identity on $\ell^2$ and on $A^2_\alpha (\mathbb B_n, \ell^2),$ that
$$ T_b -T_bM_{I^{(d)}}=T_b \left (I-M_{I^{(d)}} \right )=T_bM_{I_{(d)}}.$$
\end{rem}

We deduce the following proposition.

\begin{prop}\label{pro42}
Let $b\in BMO^1_{\alpha} \left ({\mathbb B}_n, \mathcal L(\ell^2)\right )$ be such that $\widetilde b\in L^\infty ({\mathbb B}_n, \mathcal L(\ell^2)).$ Suppose further that
$$\lim \limits_{d\rightarrow \infty} \left \Vert T_bM_{I_{(d)}} \right \Vert_{\mathcal L(A^2_\alpha (\mathbb B_n, \ell^2))}=\lim \limits_{d\rightarrow \infty} \left \Vert M_{I_{(d)}}T_b \right \Vert_{\mathcal L(A^2_\alpha (\mathbb B_n, \ell^2))}=0.$$
The operator $T_b$ is compact on $A^2_\alpha (\mathbb B_n, \ell^2)$ if the operator $M_{I^{(d)}}T_bM_{I^{(d)}}$ is compact on $A^2_\alpha (\mathbb B_n, \ell^2)$ for $d$ large.
\end{prop}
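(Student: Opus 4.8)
The plan is to prove Proposition \ref{pro42} as a direct consequence of the chain of inequalities established in Remark \ref{rem41}, combined with the standard fact that the essential norm of an operator vanishes if and only if the operator is compact. Recall that $\|T\|_e = \mathrm{dist}(T, \mathcal K)$ where $\mathcal K$ is the ideal of compact operators on $A^2_\alpha(\mathbb B_n, \ell^2)$; in particular $\|T\|_e = 0$ precisely when $T$ is compact.

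First I would fix the data: let $b \in BMO^1_\alpha(\mathbb B_n, \mathcal L(\ell^2))$ with $\widetilde b \in L^\infty(\mathbb B_n, \mathcal L(\ell^2))$ and assume the two limits
\[
\lim_{d\to\infty} \|T_b M_{I_{(d)}}\|_{\mathcal L(A^2_\alpha(\mathbb B_n, \ell^2))} = \lim_{d\to\infty} \|M_{I_{(d)}} T_b\|_{\mathcal L(A^2_\alpha(\mathbb B_n, \ell^2))} = 0
\]
hold. Note that $T_b$ is a bounded operator on $A^2_\alpha(\mathbb B_n, \ell^2)$ by Theorem \ref{thm15} (applied with $E = F = \ell^2$), so its essential norm is well defined. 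Suppose $d_0$ is a positive integer such that $M_{I^{(d)}} T_b M_{I^{(d)}}$ is compact on $A^2_\alpha(\mathbb B_n, \ell^2)$ for all $d \geq d_0$; then $\|M_{I^{(d)}} T_b M_{I^{(d)}}\|_e = 0$ for such $d$. Plugging this into the final inequality of Remark \ref{rem41}, we obtain, for every $d \geq d_0$,
\[
\|T_b\|_e \leq \|T_b M_{I_{(d)}}\|_{\mathcal L(A^2_\alpha(\mathbb B_n, \ell^2))} + \|M_{I_{(d)}} T_b\|_{\mathcal L(A^2_\alpha(\mathbb B_n, \ell^2))}.
\]
Letting $d \to \infty$ and invoking the hypothesis that both terms on the right tend to $0$, we conclude $\|T_b\|_e = 0$, i.e. $T_b$ is compact on $A^2_\alpha(\mathbb B_n, \ell^2)$.

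There is essentially no obstacle here: the whole argument is an assembly of already-stated ingredients. The only point that deserves a word of justification is that the inequalities in Remark \ref{rem41} are valid, which is precisely why that remark was stated; one should simply observe that the derivation there used only $\|M_{I^{(d)}}\|_{\mathcal L(A^2_\alpha(\mathbb B_n,\ell^2))} \leq 1$, the identity $M_{I^{(d)}} + M_{I_{(d)}} = I$, submultiplicativity and subadditivity of the essential norm, and the dominance $\|M_{I_{(d)}} T_b M_{I^{(d)}}\|_{\mathcal L} \leq \|M_{I_{(d)}} T_b\|_{\mathcal L}$ (again using $\|M_{I^{(d)}}\| \leq 1$). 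Thus the proof is complete once the chain of inequalities is recalled and the two limiting hypotheses are applied.
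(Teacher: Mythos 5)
Your proposal is correct and follows exactly the route the paper intends: Proposition \ref{pro42} is deduced directly from the final inequality of Remark \ref{rem41}, by observing that compactness of $M_{I^{(d)}}T_bM_{I^{(d)}}$ for large $d$ kills the middle essential-norm term and the two hypothesised limits kill the remaining operator-norm terms, forcing $\left\Vert T_b\right\Vert_e=0$. Your added observations (boundedness of $T_b$ via Theorem \ref{thm15} so that the essential norm is defined, and the characterisation of compactness as vanishing essential norm) are exactly the implicit ingredients of the paper's deduction.
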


\subsection{A  sufficient condition on the symbol $b$ for the estimate \\ $\lim \limits_{d\rightarrow \infty} \left \Vert T_b M_{I_{(d)}}\right \Vert_{\mathcal L(A^2_\alpha (\mathbb B_n,  \mathcal L(\ell^2)))}=0$}\label{ssec42}
We first prove the following proposition.

\begin{prop}\label{pro43}
Let $b\in L^1_\alpha \left (\mathbb B_n, \mathcal L(\ell^2) \right ).$ Then $T_b M_{I_{(d)}}=T_{b_{(d)}},$ where the symbol $b_{(d)}$ is defined as follows:
\begin{equation}\label{defbd}
b_{(d)}(w)(e_i)=\left \{
\begin{array}{clcr}
0&\rm if&1\leq i\leq d\\
b(w)(e_i)&\rm if&i\geq d+1 
\end{array}
\right..
\end{equation}
\end{prop}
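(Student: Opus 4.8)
The plan is to unwind the definitions of the Toeplitz operator and of $M_{I_{(d)}}$, and check that they compose into the Toeplitz operator with symbol $b_{(d)}$. First I would fix $f\in H^\infty(\mathbb B_n,\ell^2)$, which is dense in $A^2_\alpha(\mathbb B_n,\ell^2)$, and write its pointwise expansion $f(w)=\sum_{i=1}^\infty \langle f(w),e_i\rangle_{\ell^2}e_i$. Applying $M_{I_{(d)}}$ kills the first $d$ coordinates, so $M_{I_{(d)}}f(w)=\sum_{i=d+1}^\infty \langle f(w),e_i\rangle_{\ell^2}e_i$. Plugging this into Definition \ref{def14},
\[
T_bM_{I_{(d)}}f(z)=\int_{\mathbb B_n} b(w)\Bigl(\sum_{i=d+1}^\infty \langle f(w),e_i\rangle_{\ell^2}e_i\Bigr)\overline{K_z^\alpha(w)}\,d\nu_\alpha(w).
\]

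Next I would justify pulling the (bounded) operator $b(w)$ through the series, using continuity and linearity of $b(w)\in\mathcal L(\ell^2)$ together with the convergence of $\sum_{i=d+1}^\infty \langle f(w),e_i\rangle_{\ell^2}e_i$ in $\ell^2$; this gives $b(w)\bigl(\sum_{i=d+1}^\infty \langle f(w),e_i\rangle_{\ell^2}e_i\bigr)=\sum_{i=d+1}^\infty \langle f(w),e_i\rangle_{\ell^2}\,b(w)(e_i)$. Comparing with the definition \eqref{defbd} of $b_{(d)}$, one has $b_{(d)}(w)(e_i)=0$ for $1\le i\le d$ and $b_{(d)}(w)(e_i)=b(w)(e_i)$ for $i\ge d+1$, and by linearity $b_{(d)}(w)(f(w))=\sum_{i=d+1}^\infty \langle f(w),e_i\rangle_{\ell^2}\,b(w)(e_i)$, which is exactly the integrand above. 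Hence $T_bM_{I_{(d)}}f(z)=\int_{\mathbb B_n} b_{(d)}(w)(f(w))\overline{K_z^\alpha(w)}\,d\nu_\alpha(w)=T_{b_{(d)}}f(z)$ for all $f\in H^\infty(\mathbb B_n,\ell^2)$ and all $z\in\mathbb B_n$.

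It remains to check two bookkeeping points. First, $b_{(d)}\in L^1_\alpha(\mathbb B_n,\mathcal L(\ell^2))$, so that $T_{b_{(d)}}$ is well defined: this follows from $\|b_{(d)}(w)\|_{\mathcal L(\ell^2)}\le \|b(w)\|_{\mathcal L(\ell^2)}$ (since $b_{(d)}(w)=b(w)\circ M_{I_{(d)}}$ and $\|M_{I_{(d)}}\|_{\mathcal L(\ell^2)}\le1$) together with the hypothesis $b\in L^1_\alpha(\mathbb B_n,\mathcal L(\ell^2))$. Second, since $H^\infty(\mathbb B_n,\ell^2)$ is dense in $A^2_\alpha(\mathbb B_n,\ell^2)$ (Proposition \ref{pro12}) and Toeplitz operators are by Definition \ref{def14} only densely defined on that subspace, the identity $T_bM_{I_{(d)}}=T_{b_{(d)}}$ as densely defined operators is established once it holds on $H^\infty(\mathbb B_n,\ell^2)$; one should note $M_{I_{(d)}}$ maps $H^\infty(\mathbb B_n,\ell^2)$ into itself (truncation preserves boundedness and holomorphy), so both sides are legitimately compared on this common domain. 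The only mild subtlety — the main point requiring care rather than a genuine obstacle — is the interchange of $b(w)$ with the infinite sum inside the Bochner integral, which is handled by Hille's Theorem (Theorem \ref{thm11}) applied to the continuous operator $b(w)$ and the $\ell^2$-convergent partial sums, or simply by the continuity of $b(w)$ on the convergent series.
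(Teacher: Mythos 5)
Your proposal is correct and follows essentially the same route as the paper's proof: expand $f\in H^\infty(\mathbb B_n,\ell^2)$, apply $M_{I_{(d)}}$, substitute into the definition of $T_b$, pull $b(w)$ through the truncated series, and identify the result as $T_{b_{(d)}}f(z)$. The additional bookkeeping you supply (that $b_{(d)}\in L^1_\alpha$, that $M_{I_{(d)}}$ preserves $H^\infty$, and the justification for interchanging $b(w)$ with the sum) is left implicit in the paper but is welcome detail rather than a departure.
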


\begin{proof}
For every $f\in H^\infty (\mathbb B_n, \ell^2)$ and every positive integer $d,$ we have:
\begin{eqnarray}\label{bd}
T_b M_{I_{(d)}}f(z)&=&T_b \left (\sum_{i=d+1}^\infty f_ie_i\right )(z) \nonumber\\
&=&\int_{{\mathbb B}_n} b(w)\left (\sum_{i=d+1}^\infty f_i(w)e_i\right )\overline{K_z^\alpha (w)}d\nu_{\alpha}(w) \nonumber\\
&=&\int_{{\mathbb B}_n} \sum_{i=d+1}^\infty f_i(w)b(w)(e_i)\overline{K_z^\alpha (w)} d\nu_{\alpha}(w) \nonumber\\
&=&T_{b_{(d)}} f(z).
\end{eqnarray}
\end{proof}

We next prove the following lemma.

\begin{lem}\label{lem44}
Let $b\in BMO^1_{\alpha} \left ({\mathbb B}_n, \mathcal L(\ell^2)\right )$ be such that $\widetilde b\in L^\infty ({\mathbb B}_n, \mathcal L(\ell^2)).$
The following two assertions hold.
\begin{enumerate}
\item
$b_{(d)}\in BMO^1_{\alpha} ({\mathbb B}_n, \mathcal L(\ell^2));$
\item
$\widetilde {b_{(d)}}\in L^\infty ({\mathbb B}_n, \mathcal L(\ell^2)).$
\end{enumerate}
\end{lem}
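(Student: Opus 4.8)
The plan is to reduce both statements about $b_{(d)}$ to the corresponding properties of $b$ by exploiting the fact that $b_{(d)}(w) = b(w)\circ M_{I_{(d)}}$ as operators on $\ell^2$, i.e. $b_{(d)}(w) = b(w) P_{(d)}$ where $P_{(d)}$ is the orthogonal projection onto $\overline{\mathrm{span}}\{e_j : j \geq d+1\}$. This is a single fixed contraction, independent of $w$, so composing with it on the right is a bounded linear operation on $\mathcal{L}(\ell^2)$ of norm $\leq 1$; the point is that it commutes with all the integral operations (Berezin transform, composition with $\varphi_z$) defining the $BMO^1_\alpha$ norm.

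First I would record that for every $z \in \mathbb{B}_n$ one has $\widetilde{b_{(d)}}(z) = \widetilde{b}(z)\, P_{(d)}$. This follows from Hille's Theorem (Theorem \ref{thm11}) applied to the bounded linear map $T \mapsto T P_{(d)}$ on $\mathcal{L}(\ell^2)$, exactly as in the proof of Lemma \ref{lem21}(1): pull the constant contraction $P_{(d)}$ out of the Bochner integral $\int_{\mathbb{B}_n} |k_z^\alpha(w)|^2 b(w)\,d\nu_\alpha(w)$. Assertion (2) is then immediate, since $\|\widetilde{b_{(d)}}(z)\|_{\mathcal{L}(\ell^2)} = \|\widetilde{b}(z) P_{(d)}\|_{\mathcal{L}(\ell^2)} \leq \|\widetilde{b}(z)\|_{\mathcal{L}(\ell^2)}$, so $\|\widetilde{b_{(d)}}\|_{L^\infty(\mathbb{B}_n, \mathcal{L}(\ell^2))} \leq \|\widetilde{b}\|_{L^\infty(\mathbb{B}_n, \mathcal{L}(\ell^2))} < \infty$.

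For assertion (1), I would compute, for each $z$,
\[
(b_{(d)}\circ\varphi_z)(w) - \widetilde{b_{(d)}}(z) = \big((b\circ\varphi_z)(w) - \widetilde b(z)\big) P_{(d)},
\]
using $\widetilde{b_{(d)}}(z) = \widetilde b(z) P_{(d)}$ and the pointwise identity $b_{(d)}(\varphi_z(w)) = b(\varphi_z(w)) P_{(d)}$. Taking $\mathcal{L}(\ell^2)$-norms and using $\|AP_{(d)}\|_{\mathcal{L}(\ell^2)} \leq \|A\|_{\mathcal{L}(\ell^2)}$ gives
\[
\big\| (b_{(d)}\circ\varphi_z)(w) - \widetilde{b_{(d)}}(z)\big\|_{\mathcal{L}(\ell^2)} \leq \big\| (b\circ\varphi_z)(w) - \widetilde b(z)\big\|_{\mathcal{L}(\ell^2)};
\]
integrating in $w$ against $d\nu_\alpha$ and taking the supremum over $z\in\mathbb{B}_n$ yields $\|b_{(d)}\|_{BMO^1_\alpha(\mathbb{B}_n,\mathcal{L}(\ell^2))} \leq \|b\|_{BMO^1_\alpha(\mathbb{B}_n,\mathcal{L}(\ell^2))} < \infty$. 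I should also note in passing that $b_{(d)} \in L^1_\alpha(\mathbb{B}_n, \mathcal{L}(\ell^2))$ (so that $\widetilde{b_{(d)}}$ and the $BMO^1_\alpha$ seminorm make sense), again because $\|b_{(d)}(w)\|_{\mathcal{L}(\ell^2)} \leq \|b(w)\|_{\mathcal{L}(\ell^2)}$ pointwise.

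I do not expect a serious obstacle here: the whole argument is the observation that right-composition with the fixed contraction $P_{(d)}$ is compatible with Bochner integration (via Hille) and with the pointwise operations in the definitions, and is norm-nonincreasing. The only mild care needed is to justify measurability and $L^1_\alpha$-membership of $b_{(d)}$ before invoking the Berezin transform, and to make sure the identity $b_{(d)}(w) = b(w)P_{(d)}$ is read correctly off of \eqref{defbd} (the definition prescribes $b_{(d)}(w)$ on the basis vectors $e_i$, and extending linearly and continuously gives precisely $b(w)P_{(d)}$).
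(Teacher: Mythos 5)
Your proof is correct and takes essentially the same route as the paper: the paper establishes the identity $\widetilde{b_{(d)}}(z)(e_i)=\widetilde b(z)(e_i)$ for $i\geq d+1$ (and $=0$ otherwise) and then restricts the supremum defining the $\mathcal L(\ell^2)$-norm to the subspace $\ell^2_{(d)}$ of sequences vanishing in the first $d$ coordinates, which is exactly your packaging $b_{(d)}(w)=b(w)P_{(d)}$, $\widetilde{b_{(d)}}(z)=\widetilde b(z)P_{(d)}$ together with $\Vert AP_{(d)}\Vert_{\mathcal L(\ell^2)}\leq \Vert A\Vert_{\mathcal L(\ell^2)}$. No gap; your explicit use of Hille's theorem and the remark on $L^1_\alpha$-membership only make precise what the paper leaves implicit.
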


\begin{proof}
\begin{enumerate}
\item
For every $z\in {\mathbb B}_n,$ we have:
$$\int_{{\mathbb B}_n} \left \Vert b_{(d)}\circ \varphi_z (w)-\widetilde{b_{(d)}}(z)\right \Vert_{\mathcal L(\ell^2)}d\nu_{\alpha} (w)=
\int_{{\mathbb B}_n} \sup \limits_{\left \Vert e\right \Vert_{\ell^2}=1}\left \Vert \left (b_{(d)}\circ \varphi_z (w)-\widetilde{b_{(d)}}(z)\right )(e)\right \Vert_{\ell^2}d\nu_{\alpha} (w).$$
Notice that
\begin{eqnarray}\label{wt}
\widetilde {b_{(d)}}(z)(e_i)&=&\int_{{\mathbb B}_n} b_{(d)}(w)(e_i)\left \vert k_z^\alpha (w)\right \vert^2d\nu_{\alpha} (w)\nonumber\\
&=&\left \{
\begin{array}{clcr}
0&\rm if &  1\leq i\leq d\\
\int_{{\mathbb B}_n} b(w)(e_i)\left \vert k_z^\alpha (w)\right \vert^2d\nu_{\alpha}(w)&\rm if&i\geq d+1 
\end{array}
\right.
\nonumber\\
&=& \left \{
\begin{array}{clcr}
0&\rm if&1\leq i\leq d\\
\widetilde b(z)(e_i) &\rm if&i\geq d+1 
\end{array}
\right.
\end{eqnarray}
Then, if we denote by $\ell^2_{(d)}$ the subspace of $\ell^2$ consisting of complex sequences whose first $d$ terms are zero, we obtain:

\begin{eqnarray*}
\int_{{\mathbb B}_n} \left \Vert b_{(d)}\circ \varphi_z (w)-\widetilde{b_{(d)}}(z)\right \Vert_{\mathcal L(\ell^2)}d\nu_{\alpha}(w) 
	&= &
\int_{{\mathbb B}_n} \sup \limits_{e\in \ell^2_{(d)}, \left \Vert e\right \Vert_{\ell^2}=1}\left \Vert \left (b\circ \varphi_z (w)-\widetilde{b}(z)\right )(e)\right \Vert_{\ell^2}d\nu_{\alpha}(w)\\
&\leq& \int_{{\mathbb B}_n} \sup \limits_{\left \Vert e\right \Vert_{\ell^2}=1}\left \Vert \left (b\circ \varphi_z (w)-\widetilde{b}(z)\right )(e)\right \Vert_{\ell^2}d\nu_{\alpha}(w)\\
&=&\int_{{\mathbb B}_n} \left \Vert b\circ \varphi_z (w)-\widetilde{b}(z)\right \Vert_{\mathcal L(\ell^2)}d\nu_{\alpha}(w)\\
&\leq& \left \Vert b\right \Vert_{BMO^1_{\alpha} ({\mathbb B}_n, \mathcal L(\ell^2))}<\infty.
\end{eqnarray*}

\item
We also have:

\begin{eqnarray*}
\left \Vert \widetilde {b_{(d)}}(z)\right \Vert_{\mathcal L(\ell^2)}
&= & \sup \limits_{\left \Vert e\right \Vert_{\ell^2}=1}\left \Vert \widetilde {b_{(d)}}(z)(e)\right \Vert_{\ell^2}= \sup \limits_{\left \Vert e\right \Vert_{\ell^2_{(d)}}=1}\left \Vert \widetilde b(z)(e)\right \Vert_{\ell^2}\\
&\leq & \sup \limits_{\left \Vert e\right \Vert_{\ell^2}=1}\left \Vert \widetilde b(z)(e)\right \Vert_{\ell^2}= \left \Vert \widetilde {b}(z)\right \Vert_{\mathcal L(\ell^2)}\\
&\leq &\left \Vert \widetilde {b}\right \Vert_{L^\infty ({\mathbb B}_n, \mathcal L(\ell^2))}<\infty.
\end{eqnarray*}

For the second equality, we used the equation (\ref{wt}).
\end{enumerate}
\end{proof}

In view of (\ref{bd}), Remark \ref{rem16}  and the previous lemma, we reach to the following proposition.

\begin{prop}\label{thm45}
	Let $b\in BMO^1_{\alpha} ({\mathbb B}_n, \mathcal L(\ell^2))$ be such that \hskip 1truemm $\widetilde b\in L^\infty ({\mathbb B}_n, \mathcal L(\ell^2)).$ We assume that
	$$\lim \limits_{d\rightarrow \infty} \left \Vert \widetilde{\left \Vert b_{(d)}(\cdot)\right \Vert}_{\mathcal L(\ell^2)}\right \Vert_{L^\infty ({\mathbb B}_n, \mathbb C)}=0.$$ Then 
$$\lim \limits_{d\rightarrow \infty} \left \Vert T_b M_{I_{(d)}}\right \Vert_{\mathcal L(A^2_\alpha (\mathbb B_n, \ell^2))}=0.$$
\end{prop}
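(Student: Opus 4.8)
The plan is to combine Proposition \ref{pro43}, which identifies $T_b M_{I_{(d)}}$ with the Toeplitz operator $T_{b_{(d)}}$, with the quantitative two-sided estimate of Theorem \ref{thm15} (in the form summarized in Remark \ref{rem16}). By Lemma \ref{lem44}, the hypotheses on $b$ guarantee that each $b_{(d)}$ lies in $BMO^1_\alpha({\mathbb B}_n, \mathcal L(\ell^2))$ with $\widetilde{b_{(d)}}\in L^\infty({\mathbb B}_n, \mathcal L(\ell^2))$, so $T_{b_{(d)}}$ is a bounded operator on $A^2_\alpha(\mathbb B_n,\ell^2)$ and Theorem \ref{thm15} applies to it. The upper bound there reads
$$\left \Vert T_{b_{(d)}}\right \Vert_{\mathcal L(A^2_\alpha (\mathbb B_n, \ell^2))}\lesssim \left \Vert \widetilde{b_{(d)}}\right \Vert_{L^\infty ({\mathbb B}_n, \mathcal L(\ell^2))}+\left \Vert b_{(d)}\right \Vert_{BMO^1_{\alpha} ({\mathbb B}_n, \mathcal L(\ell^2))},$$
so it would suffice to show that both terms on the right tend to $0$ as $d\to\infty$. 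The assumed hypothesis handles (essentially) the Berezin-transform term, but there is no a priori control on $\left \Vert b_{(d)}\right \Vert_{BMO^1_{\alpha}}$; indeed the crude bound from Lemma \ref{lem44} only gives $\left \Vert b_{(d)}\right \Vert_{BMO^1_{\alpha}}\le \left \Vert b\right \Vert_{BMO^1_{\alpha}}$, which does not go to zero. This is the main obstacle, and resolving it forces us to use the sharper equivalence of Remark \ref{rem16}.

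The resolution is to bypass the $BMO^1_\alpha$ seminorm entirely and instead use the equivalence of norms stated in Remark \ref{rem16}: the quantity $\left \Vert (b_{(d)}, T_{b_{(d)}})\right \Vert_{BMO^1_{\alpha}\times \mathcal L}$ is comparable (with constants independent of $d$, since they come only from Theorem \ref{thm15} applied with $E=F=\ell^2$) to $\left \Vert \widetilde{\left \Vert b_{(d)}(\cdot)\right \Vert_{\mathcal L(\ell^2)}}\right \Vert_{L^\infty(\mathbb B_n,\mathbb C)}$. In particular,
$$\left \Vert T_{b_{(d)}}\right \Vert_{\mathcal L(A^2_\alpha (\mathbb B_n, \ell^2))}\le \left \Vert (b_{(d)}, T_{b_{(d)}})\right \Vert_{BMO^1_{\alpha}({\mathbb B}_n, \mathcal L(\ell^2))\times \mathcal L(A^2_\alpha (\mathbb B_n, \ell^2))}\lesssim \left \Vert \widetilde{\left \Vert b_{(d)}(\cdot)\right \Vert_{\mathcal L(\ell^2)}}\right \Vert_{L^\infty(\mathbb B_n,\mathbb C)}.$$
The right-hand side tends to $0$ by the standing assumption, so $\left \Vert T_{b_{(d)}}\right \Vert_{\mathcal L(A^2_\alpha (\mathbb B_n, \ell^2))}\to 0$. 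Combining with the identity $T_b M_{I_{(d)}}=T_{b_{(d)}}$ from Proposition \ref{pro43} yields $\lim_{d\to\infty}\left \Vert T_b M_{I_{(d)}}\right \Vert_{\mathcal L(A^2_\alpha (\mathbb B_n, \ell^2))}=0$, which is the claim.

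A small technical point worth checking before invoking Remark \ref{rem16} is that its equivalence of norms was stated for a fixed symbol, so I would note explicitly that the implied constants depend only on the data of Theorem \ref{thm15} with $E=F=\ell^2$ (hence only on $n$ and $\alpha$), and not on the particular symbol $b_{(d)}$; this uniformity in $d$ is exactly what makes the argument work. With that observed, the proof is just the chain of inequalities above together with Proposition \ref{pro43} and Lemma \ref{lem44}, and no further estimation is required.
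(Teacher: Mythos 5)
Your proposal is correct and follows exactly the route the paper intends: the paper's entire justification is the one-line remark that the result follows from the identity $T_bM_{I_{(d)}}=T_{b_{(d)}}$ (Proposition \ref{pro43}), Lemma \ref{lem44}, and the norm equivalence of Remark \ref{rem16}, which is precisely the chain of inequalities you spell out. Your added observation that the crude bound $\Vert b_{(d)}\Vert_{BMO^1_\alpha}\le\Vert b\Vert_{BMO^1_\alpha}$ is insufficient and that one must use the full equivalence with $\Vert\widetilde{\Vert b_{(d)}(\cdot)\Vert_{\mathcal L(\ell^2)}}\Vert_{L^\infty}$, with constants depending only on $n$ and $\alpha$, is exactly the point the paper leaves implicit.
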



The following lemma will be useful.
\begin{lem}\label{lem46}
For every $z\in \mathbb B_n,$ we have the equality:
\begin{multline*}
	\widetilde{\left \Vert b_{(d)}(\cdot)\right \Vert_{\mathcal L(\ell^2)}}(z)=  \\
	\int_{\mathbb B_n}\left \vert k_z^\alpha (w)\right \vert^2 \sup_{\sum_{i=1}^\infty |\lambda_i|^2=1} \left (\sum_{j=1}^\infty \left \vert \sum_{i=d+1}^\infty \lambda_i\langle b(w)(e_i), e_j\rangle_{\ell^2}\right \vert^2 \right )^{\frac 12}d\nu_\alpha (w).
\end{multline*}
\end{lem}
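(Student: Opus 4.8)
The plan is to unwind the definition of the Berezin transform of the scalar function $\left\Vert b_{(d)}(\cdot)\right\Vert_{\mathcal L(\ell^2)}$ and then simply rewrite the operator norm $\left\Vert b_{(d)}(w)\right\Vert_{\mathcal L(\ell^2)}$ in coordinates with respect to the fixed orthonormal basis $\{e_j\}_{j=1}^\infty$. By the first assertion of Definition \ref{def13} applied to the scalar-valued function $w\mapsto \left\Vert b_{(d)}(w)\right\Vert_{\mathcal L(\ell^2)}$ (which lies in $L^1_\alpha(\mathbb B_n,\mathbb C)$ under the standing hypotheses, by Lemma \ref{lem21} together with Lemma \ref{lem44}), we have
$$\widetilde{\left \Vert b_{(d)}(\cdot)\right \Vert_{\mathcal L(\ell^2)}}(z)=\int_{\mathbb B_n}\left\vert k_z^\alpha(w)\right\vert^2\left\Vert b_{(d)}(w)\right\Vert_{\mathcal L(\ell^2)}\,d\nu_\alpha(w).$$
So the whole statement reduces to the pointwise identity, for a.e.\ $w\in\mathbb B_n$,
$$\left\Vert b_{(d)}(w)\right\Vert_{\mathcal L(\ell^2)}=\sup_{\sum_{i=1}^\infty|\lambda_i|^2=1}\left(\sum_{j=1}^\infty\left\vert\sum_{i=d+1}^\infty\lambda_i\langle b(w)(e_i),e_j\rangle_{\ell^2}\right\vert^2\right)^{\frac12}.$$

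To prove this pointwise identity I would fix $w$ and write $A:=b_{(d)}(w)\in\mathcal L(\ell^2)$. By definition of the operator norm, $\left\Vert A\right\Vert_{\mathcal L(\ell^2)}=\sup\{\left\Vert A(e)\right\Vert_{\ell^2}:\left\Vert e\right\Vert_{\ell^2}=1\}$. Expanding a general unit vector $e=\sum_{i=1}^\infty\lambda_i e_i$ with $\sum_i|\lambda_i|^2=1$, one has by linearity and continuity of $A$ that $A(e)=\sum_{i=1}^\infty\lambda_i A(e_i)$. Now invoke the definition of $b_{(d)}$ in \eqref{defbd}: $A(e_i)=b_{(d)}(w)(e_i)=0$ for $1\le i\le d$ and $A(e_i)=b(w)(e_i)$ for $i\ge d+1$, so $A(e)=\sum_{i=d+1}^\infty\lambda_i b(w)(e_i)$. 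Expanding this vector of $\ell^2$ in the basis $\{e_j\}$ gives $\langle A(e),e_j\rangle_{\ell^2}=\sum_{i=d+1}^\infty\lambda_i\langle b(w)(e_i),e_j\rangle_{\ell^2}$, hence $\left\Vert A(e)\right\Vert_{\ell^2}^2=\sum_{j=1}^\infty\left\vert\sum_{i=d+1}^\infty\lambda_i\langle b(w)(e_i),e_j\rangle_{\ell^2}\right\vert^2$. Taking the supremum over all such $e$ yields exactly the claimed expression. Finally, substituting this identity back into the integral representation of the Berezin transform gives the displayed formula; since $w\mapsto\left\Vert b_{(d)}(w)\right\Vert_{\mathcal L(\ell^2)}$ and $w\mapsto\left\vert k_z^\alpha(w)\right\vert^2$ agree pointwise with the integrand in the statement, the two integrals coincide.

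The only genuinely delicate points are measurability and interchange of limits: one must make sure the countable sum $\sum_{i=d+1}^\infty\lambda_i b(w)(e_i)$ converges in $\ell^2$ (it does, being $A(e)$ with $A$ a bounded operator and $e\in\ell^2$), and that the double supremum / infinite sums in the final display define a measurable function of $w$ so that the integral makes sense — this follows because $\left\Vert b_{(d)}(\cdot)\right\Vert_{\mathcal L(\ell^2)}$ is measurable as part of the hypothesis $b\in L^1_\alpha$, and we have just shown it equals that double supremum. I expect the main (though modest) obstacle to be bookkeeping: keeping straight that the inner supremum ranges only over the "tail" coefficients $\lambda_i$, $i\ge d+1$ — equivalently over unit vectors in the subspace $\ell^2_{(d)}$ defined in the proof of Lemma \ref{lem44} — which is automatic here because the coefficients $\lambda_1,\dots,\lambda_d$ are annihilated by $b_{(d)}(w)$ and therefore contribute nothing; writing the supremum over all of $\ell^2$ as in the statement is harmless. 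No deep input beyond the definitions and Hille's theorem (already used in Lemma \ref{lem21}) is needed.
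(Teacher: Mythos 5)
Your proposal is correct and follows essentially the same route as the paper: the paper's proof likewise reduces the statement to the pointwise identity $\left\Vert b_{(d)}(w)\right\Vert_{\mathcal L(\ell^2)}=\sup_{\sum_i|\lambda_i|^2=1}\bigl(\sum_j\vert\sum_{i\geq d+1}\lambda_i\langle b(w)(e_i),e_j\rangle_{\ell^2}\vert^2\bigr)^{1/2}$ by expanding a unit vector in the basis and invoking the definition (\ref{defbd}) of $b_{(d)}$, with the Berezin-transform integral representation taken from Definition \ref{def13}. Your extra remarks on measurability and convergence are harmless additions not present in (and not needed by) the paper's argument.
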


\begin{proof}
For every $w\in \mathbb B_n,$ we have:
\begin{eqnarray*}
\left \Vert b_{(d)}(w) \right \Vert_{\mathcal L(\ell^2)}
&= &\sup \limits_{\left \Vert e\right \Vert_{\ell^2}=1} \left \Vert b_{(d)}(w)(e)\right \Vert_{\ell^2}\\
&=&\sup \limits_{\left \Vert e\right \Vert_{\ell^2}=1} \left (\sum_{j=1}^\infty \left \vert \langle b_{(d)}(w)(e), e_j\rangle_{\ell^2}\right \vert^2 \right )^{\frac 12}\\
&=&\sup \limits_{\sum_{i=1}^\infty |\lambda_i|^2=1} \left (\sum_{j=1}^\infty \left \vert \sum_{i=1}^\infty \lambda_i\langle b_{(d)}(w)(e_i), e_j\rangle_{\ell^2}\right \vert^2 \right )^{\frac 12}\\
&=&\sup \limits_{\sum_{i=1}^\infty |\lambda_i|^2=1} \left (\sum_{j=1}^\infty \left \vert \sum_{i=d+1}^\infty \lambda_i\langle b(w)(e_i), e_j\rangle_{\ell^2}\right \vert^2 \right )^{\frac 12}.
\end{eqnarray*}

For the latter equality, we used the definition of $b_{(d)}$ given in (\ref{defbd}).
\end{proof}

\subsection{A sufficient condition  on the symbol $b$ for the estimate \\
	$\lim \limits_{d\rightarrow \infty} \left \Vert M_{I_{(d)}}T_b \right \Vert_{\mathcal L(A^2_\alpha (\mathbb B_n, \ell^2))}=0$}\label{ssec43}


It is easy to check that the operator $M_{I_{(d)}}$ is self-adjoint on $\ell^2,$ so it is also self-adjoint as an operator on $A^2_\alpha (\mathbb B_n, \ell^2).$ We assume that $b\in BMO^1_{\alpha} ({\mathbb B}_n, \mathcal L(\ell^2))$ and is such that $\widetilde b\in L^\infty ({\mathbb B}_n, \mathcal L(\ell^2)).$ Hence the adjoint operator of $M_{I_{(d)}}T_b$ on $A^2_\alpha (\mathbb B_n,  \ell^2)$ is $\left (T_b\right )^\star M_{I_{(d)}},$ where $\left (T_b\right )^\star$ denotes the adjoint operator of $T_b.$ For every $w\in {\mathbb B}_n,$ we denote $(b(w))^\star_{\mathcal L(\ell^2)}$ the adjoint operator of $b(w)$ in $\mathcal L(\ell^2).$ 
We first prove the following lemma.
\begin{lem}\label{lem47}
Let $b\in BMO^1_{\alpha} ({\mathbb B}_n, \mathcal L(\ell^2))$ be such that $\widetilde b\in L^\infty ({\mathbb B}_n, \mathcal L(\ell^2)).$ Then
$$\left (T_b\right )^\star=T_{(b(\cdot))^\star_{\mathcal L(\ell^2)}}.$$
Moreover,
for all $e, h\in \ell^2,$ we have
\begin{equation}\label{eqq}
\langle \widetilde {(b(\cdot))^\star_{\mathcal L(\ell^2)}}(z)(e), h\rangle_{\ell^2}=\langle e, \widetilde b(z)(h)\rangle_{\ell^2}
\end{equation}
and $(b(\cdot))^\star_{\mathcal L(\ell^2)}$ belongs to $BMO^1_{\alpha} ({\mathbb B}_n, \mathcal L(\ell^2))$ with the same norm as $b.$
\end{lem}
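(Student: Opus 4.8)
The plan is to establish the three assertions of Lemma~\ref{lem47} in order, each time reducing the statement about operators on $A^2_\alpha(\mathbb B_n,\ell^2)$ to the pointwise behaviour of the symbol $b(w)$ and then invoking the scalar/vector-valued machinery already at our disposal.

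\textbf{Step 1: the identity $\left(T_b\right)^\star=T_{(b(\cdot))^\star_{\mathcal L(\ell^2)}}$.} First I would fix $f\in H^\infty(\mathbb B_n,\ell^2)$ and $g\in H^\infty(\mathbb B_n,\ell^2)$, which are dense in $A^2_\alpha(\mathbb B_n,\ell^2)$ by Proposition~\ref{pro12}, and compute $\langle T_b f,g\rangle_{\alpha,\ell^2}$ using Lemma~\ref{lem32} (valid because $b\in BMO^1_\alpha$ with $\widetilde b\in L^\infty$ forces $\widetilde{\|b(\cdot)\|_{\mathcal L(\ell^2)}}\in L^\infty$ via Remark~\ref{rem16}). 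This gives $\int_{\mathbb B_n}\langle b(w)(f(w)),g(w)\rangle_{\ell^2}\,d\nu_\alpha(w)$. Since $\ell^2$ is a Hilbert space identified with its dual, I would move the adjoint across inside the integral: $\langle b(w)(f(w)),g(w)\rangle_{\ell^2}=\langle f(w),(b(w))^\star_{\mathcal L(\ell^2)}(g(w))\rangle_{\ell^2}$. Here I should check that $(b(\cdot))^\star_{\mathcal L(\ell^2)}$ again has $\widetilde{\|(b(\cdot))^\star\|_{\mathcal L(\ell^2)}}\in L^\infty$ — this follows because $\|(b(w))^\star_{\mathcal L(\ell^2)}\|_{\mathcal L(\ell^2)}=\|b(w)\|_{\mathcal L(\ell^2)}$ pointwise, so the Berezin transform is literally unchanged — hence Lemma~\ref{lem32} applies in the other direction and the last integral equals $\langle f,T_{(b(\cdot))^\star_{\mathcal L(\ell^2)}}g\rangle_{\alpha,\ell^2}$. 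By density this shows $\langle T_b f,g\rangle=\langle f,T_{(b(\cdot))^\star}g\rangle$ for all $f,g$ in the Hilbert space, i.e. $\left(T_b\right)^\star=T_{(b(\cdot))^\star_{\mathcal L(\ell^2)}}$, provided we also know $T_b$ is bounded, which is Theorem~\ref{thm15}.

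\textbf{Step 2: the Berezin transform identity \eqref{eqq}.} Using the adjoint operator Berezin transform of Definition~\ref{def17} together with assertion (3) of Lemma~\ref{lem33} (namely $\widetilde{T_b}=\widetilde b$), I would write, for $e,h\in\ell^2$,
\[
\langle \widetilde{(b(\cdot))^\star_{\mathcal L(\ell^2)}}(z)(e),h\rangle_{\ell^2}=\langle\widetilde{T_{(b(\cdot))^\star}}(z)(e),h\rangle_{\ell^2}=\langle (T_{(b(\cdot))^\star})(k_z^\alpha e),k_z^\alpha h\rangle_{\alpha,\ell^2}.
\]
By Step~1 this equals $\langle (T_b)^\star(k_z^\alpha e),k_z^\alpha h\rangle_{\alpha,\ell^2}=\langle k_z^\alpha e, T_b(k_z^\alpha h)\rangle_{\alpha,\ell^2}=\overline{\langle \widetilde{T_b}(z)(h),e\rangle_{\ell^2}}=\langle e,\widetilde b(z)(h)\rangle_{\ell^2}$, again using $\widetilde{T_b}=\widetilde b$. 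Alternatively one can bypass operators entirely: directly from Definition~\ref{def13}, $\widetilde{(b(\cdot))^\star}(z)=\int_{\mathbb B_n}|k_z^\alpha(w)|^2(b(w))^\star\,d\nu_\alpha(w)$, and since taking the $\mathcal L(\ell^2)$-adjoint is conjugate-linear and continuous, Hille's Theorem (Theorem~\ref{thm11}, applied to the real-linear continuous map of conjugating) gives $\widetilde{(b(\cdot))^\star}(z)=\left(\widetilde b(z)\right)^\star_{\mathcal L(\ell^2)}$, which unwinds to exactly \eqref{eqq}. I would likely present the second, shorter route.

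\textbf{Step 3: $(b(\cdot))^\star_{\mathcal L(\ell^2)}\in BMO^1_\alpha$ with the same norm.} This is where I expect the only mild subtlety. Starting from Definition~\ref{def13}, for each fixed $z$ I need $\|(b\circ\varphi_z)^\star - \widetilde{(b(\cdot))^\star}(z)\|_{L^1_\alpha(\mathbb B_n,\mathcal L(\ell^2))}=\|b\circ\varphi_z - \widetilde b(z)\|_{L^1_\alpha(\mathbb B_n,\mathcal L(\ell^2))}$. Using Step~2 (in the form $\widetilde{(b(\cdot))^\star}(z)=(\widetilde b(z))^\star$), the integrand on the left is $\|(b\circ\varphi_z(w))^\star-(\widetilde b(z))^\star\|_{\mathcal L(\ell^2)}=\|(b\circ\varphi_z(w)-\widetilde b(z))^\star\|_{\mathcal L(\ell^2)}$, and the isometry $\|A^\star\|_{\mathcal L(\ell^2)}=\|A\|_{\mathcal L(\ell^2)}$ for Hilbert-space operators finishes it pointwise in $w$; integrating and taking the supremum over $z$ gives equality of the $BMO^1_\alpha$ norms. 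The main obstacle throughout is bookkeeping around measurability and the hypotheses of Lemma~\ref{lem32}/Hille's Theorem — specifically making sure that the maps "take $\mathcal L(\ell^2)$-adjoint" are applied to genuinely Bochner-integrable $\mathcal L(\ell^2)$-valued functions and that the relevant Berezin transforms are finite — but all of this is controlled by the single observation that adjunction is a (conjugate-linear) isometry of $\mathcal L(\ell^2)$, so it preserves every norm and integrability condition in sight.
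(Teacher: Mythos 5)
Your proposal is correct and follows essentially the same route as the paper: duality on the dense subspace $H^\infty(\mathbb B_n,\ell^2)$ via Lemma \ref{lem32} for the adjoint identity, Hille's Theorem for \eqref{eqq}, and the pointwise isometry $\Vert A^\star\Vert_{\mathcal L(\ell^2)}=\Vert A\Vert_{\mathcal L(\ell^2)}$ combined with \eqref{eqq} for the equality of $BMO^1_\alpha$ norms. The only cosmetic differences are that the paper runs Step 1 through the self-adjointness of $P_\alpha$ and the reproducing property rather than invoking Lemma \ref{lem32} symmetrically, and in Step 2 it pairs with $e,h$ \emph{before} applying Hille's Theorem, which neatly sidesteps the conjugate-linearity issue you flag.
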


\begin{proof}
For all $f, g$ in the dense space $H^\infty ({\mathbb B}_n, \ell^2)$ of $A^2_\alpha (\mathbb B_n, \ell^2),$ we have:

\begin{eqnarray*}
\langle f, \left (T_b\right )^\star g\rangle_{A^2_\alpha (\mathbb B_n,  \ell^2)}&=&\langle T_b f, g\rangle_{A^2_\alpha (\mathbb B_n, \ell^2)}.
=\int_{{\mathbb B}_n} \langle T_b f(z), g(z)\rangle_{\ell^2}{d\nu}_\alpha (z).
\end{eqnarray*}

In view of Lemma \ref{lem32}, we get:

\begin{eqnarray*}
\langle f, \left (T_b\right )^\star g\rangle_{A^2_\alpha (\mathbb B_n, \ell^2)}
&=&\int_{{\mathbb B}_n} \langle b(w)(f(w)), g(w)\rangle_{\ell^2}d\nu_{\alpha}(w)\\
&=&\int_{{\mathbb B}_n} \langle f(w), (b(w))^\star_{\mathcal L(\ell^2)}g(w)\rangle_{\ell^2}d\nu_{\alpha}(w)\\
&=&\int_{{\mathbb B}_n} \langle P_\alpha f(w), (b(w))^\star_{\mathcal L(\ell^2)}g(w)\rangle_{\ell^2}d\nu_{\alpha}(w)\\
&=&\langle P_\alpha f, M_{(b(\cdot))^\star_{\mathcal L(\ell^2)}}g\rangle_{L^2_\alpha ({\mathbb B}_n, \ell^2)}=\langle f, P_\alpha M_{(b(\cdot))^\star_{\mathcal L(\ell^2)}}g\rangle_{A^2_\alpha (\mathbb B_n, \ell^2)}\\
&=&\langle f, T_{(b(\cdot))^\star_{\mathcal L(\ell^2)}}g\rangle_{A^2_\alpha (\mathbb B_n, \ell^2)},
\end{eqnarray*}

since the Bergman projector $P_\alpha$ reproduces $f$ and it is self-adjoint on $L^2_\alpha (\mathbb B_n, \ell^2).$ 
An identification gives the announced result:
$$\left (T_b\right )^\star=T_{(b(\cdot))^\star_{\mathcal L(\ell^2)}}.$$
Next, we have

\begin{eqnarray*}
\langle \widetilde {(b(\cdot))^\star_{\mathcal L(\ell^2)}}(z)(e), h\rangle_{\ell^2}
&=&\left \langle \int_{{\mathbb B}_n} \left \vert k_z^\alpha (w)\right \vert^2 (b(w))^\star_{\mathcal L(\ell^2)}(e)d\nu_{\alpha}(w), h\right\rangle_{\ell^2}\\
&=&\int_{{\mathbb B}_n} \left \vert k_z^\alpha (w)\right \vert^2 \langle (b(w))^\star_{\mathcal L(\ell^2)} (e), h\rangle_{\ell^2} d\nu_{\alpha}(w)\\
&=&\int_{{\mathbb B}_n} \left \vert k_z^\alpha (w)\right \vert^2 \langle  e, b(w)(h)\rangle_{\ell^2} d\nu_{\alpha}(w)\\
&=&\langle e, \int_{{\mathbb B}_n} \left \vert k_z^\alpha (w)\right \vert^2 b(w)(h)d\nu_{\alpha} (w)\rangle_{\ell^2}\\
&=&\langle e, \widetilde b(z)(h)\rangle_{\ell^2}. 
\end{eqnarray*}

For the second equality, we applied Hille's Theorem (Theorem \ref{thm11}). We have thus proved (\ref{eqq}).
Moreover, from (\ref{eqq}), we obtain:
\begin{eqnarray*}
\left \Vert (b(\varphi_z (w)))^\star_{\mathcal L(\ell^2)}-\widetilde {(b(\cdot))^\star_{\mathcal L(\ell^2)}}(z)\right \Vert_{\mathcal L(\ell^2)}
&=&\sup \limits_{\Vert e\Vert_{\ell^2}=1=\Vert h\Vert_{\ell^2}} \left \vert \langle e, \left (b(\varphi_z (w))-\widetilde b(z)\right )(h)\rangle_{\ell^2}\right \vert\\
&=&\left \Vert b\circ \varphi_z (\cdot)-\widetilde b(z)\right \Vert_{\mathcal L(\ell^2)}.
\end{eqnarray*}
Hence $\left \Vert (b(\cdot))^\star_{\mathcal L(\ell^2)}\right \Vert_{BMO^1_{\alpha} ({\mathbb B}_n, \mathcal L(\ell^2))}=\left \Vert b\right \Vert_{BMO^1_{\alpha} ({\mathbb B}_n, \mathcal L(\ell^2))}.$
\end{proof}

Replacing the symbol $b$ by $(b(\cdot))^\star_{\mathcal L(\ell^2)}$ in Proposition \ref{thm45}, we reach to the following theorem. 

\begin{prop}\label{thm48}
Let $b\in BMO^1_{\alpha} ({\mathbb B}_n, \mathcal L(\ell^2))$ be such that $\widetilde b\in L^\infty ({\mathbb B}_n, \mathcal L(\ell^2)).$ We assume that   
$$\lim \limits_{d\rightarrow \infty} \left \Vert \widetilde{\left \Vert {\left ((b(\cdot))^\star_{\mathcal L(\ell^2)}\right )_{(d)}}\right \Vert_{\mathcal L(\ell^2)}}\right \Vert_{L^\infty ({\mathbb B}_n, \mathbb C)}=0.$$
Then
$$\lim \limits_{d\rightarrow \infty} \left \Vert M_{I_{(d)}}T_b \right \Vert_{\mathcal L(A^2_\alpha (\mathbb B_n, \ell^2))}=0.$$
\end{prop}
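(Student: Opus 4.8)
The plan is to reduce Proposition \ref{thm48} to Proposition \ref{thm45} applied to the adjoint symbol $c:=(b(\cdot))^\star_{\mathcal L(\ell^2)}$, and then to transfer the resulting estimate back to $M_{I_{(d)}}T_b$ by an adjoint argument, exactly mirroring Subsection \ref{ssec42}.

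First I would verify that Proposition \ref{thm45} is applicable to $c$. By Lemma \ref{lem47}, $c\in BMO^1_\alpha(\mathbb B_n,\mathcal L(\ell^2))$ with the same norm as $b$, and $(T_b)^\star=T_c$. It remains to check that $\widetilde c\in L^\infty(\mathbb B_n,\mathcal L(\ell^2))$: identity (\ref{eqq}) says $\langle\widetilde c(z)(e),h\rangle_{\ell^2}=\langle e,\widetilde b(z)(h)\rangle_{\ell^2}$ for all $e,h\in\ell^2$, i.e. $\widetilde c(z)=(\widetilde b(z))^\star$, so $\|\widetilde c(z)\|_{\mathcal L(\ell^2)}=\|\widetilde b(z)\|_{\mathcal L(\ell^2)}$ and hence $\|\widetilde c\|_{L^\infty(\mathbb B_n,\mathcal L(\ell^2))}=\|\widetilde b\|_{L^\infty(\mathbb B_n,\mathcal L(\ell^2))}<\infty$. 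Moreover $c_{(d)}=\left((b(\cdot))^\star_{\mathcal L(\ell^2)}\right)_{(d)}$, so the hypothesis of the present proposition is precisely the hypothesis of Proposition \ref{thm45} for the symbol $c$, namely $\lim_{d\to\infty}\left\|\widetilde{\left\|c_{(d)}(\cdot)\right\|_{\mathcal L(\ell^2)}}\right\|_{L^\infty(\mathbb B_n,\mathbb C)}=0$.

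Next I would apply Proposition \ref{thm45} to $c$ to obtain $\lim_{d\to\infty}\left\|T_c M_{I_{(d)}}\right\|_{\mathcal L(A^2_\alpha(\mathbb B_n,\ell^2))}=0$, and then transfer this to $M_{I_{(d)}}T_b$ by duality. As recalled before Lemma \ref{lem47}, $M_{I_{(d)}}$ is self-adjoint on $A^2_\alpha(\mathbb B_n,\ell^2)$, so $\left(M_{I_{(d)}}T_b\right)^\star=(T_b)^\star M_{I_{(d)}}=T_c M_{I_{(d)}}$. Since the operator norm on the Hilbert space $A^2_\alpha(\mathbb B_n,\ell^2)$ is invariant under adjunction, $\left\|M_{I_{(d)}}T_b\right\|_{\mathcal L(A^2_\alpha(\mathbb B_n,\ell^2))}=\left\|T_c M_{I_{(d)}}\right\|_{\mathcal L(A^2_\alpha(\mathbb B_n,\ell^2))}\to 0$ as $d\to\infty$, which is the assertion.

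There is no serious obstacle: the proof is a formal symmetrization of Subsection \ref{ssec42} via adjoints. The only point requiring care is confirming that the adjoint symbol $c$ satisfies the standing hypotheses $c\in BMO^1_\alpha(\mathbb B_n,\mathcal L(\ell^2))$ and $\widetilde c\in L^\infty(\mathbb B_n,\mathcal L(\ell^2))$, so that Proposition \ref{thm45} genuinely applies — and that is supplied by Lemma \ref{lem47} together with identity (\ref{eqq}).
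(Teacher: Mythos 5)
Your proposal is correct and follows essentially the same route as the paper: the authors obtain Proposition \ref{thm48} by replacing $b$ with $(b(\cdot))^\star_{\mathcal L(\ell^2)}$ in Proposition \ref{thm45}, relying on Lemma \ref{lem47} and the self-adjointness of $M_{I_{(d)}}$ exactly as you do. Your write-up merely makes explicit the verification that $\widetilde{c}=(\widetilde{b}(\cdot))^\star$ is bounded and the norm-preservation under adjoints, which the paper leaves implicit.
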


\section{A method of reduction to the finite dimension}\label{sec5}
\subsection*{The subspace $\mathcal S^{(d)}$ of $A^2_\alpha (\mathbb B_n, \ell^2)$}\label{ssec51}
For every positive integer $d,$ we define the subspace $\mathcal S^{(d)}$ of $A^2_\alpha (\mathbb B_n, \ell^2)$ as follows:
$$\mathcal S^{(d)}:=\left \{f\in A^2_\alpha (\mathbb B_n, \ell^2): \langle f, e_j \rangle_{\ell^2}\equiv 0 \quad \forall j\geq d+1   \right \}.$$
We define the mapping 
$$
\begin{array}{lccl}
\mathcal I^{(d)} :&\mathcal S^{(d)}&\rightarrow &A^2_\alpha ({\mathbb B}_n, \mathbb C^d)\\
&f&\mapsto &f^{(d)}
\end{array}
$$
where $f^{(d)}\in A^2_\alpha ({\mathbb B}_n, \mathbb C^d)$ is defined by $\langle f^{(d)}, e_j\rangle_{\mathbb C^d}=\langle f, e_j\rangle_{\ell^2}$ for every integer $j$ such that $1\leq j\leq d.$

The proof of the following lemma is left as an exercise.
\begin{lem}\label{lem51}
\begin{enumerate}
\item
The mapping $\mathcal I^{(d)}$ is an isometric isomorphism with inverse operator $\mathcal J^{(d)}$ defined by
$$
\begin{array}{clcr}
\mathcal J^{(d)}:&A^2_\alpha ({\mathbb B}_n, \mathbb C^d)&\rightarrow &\mathcal S^{(d)}\\
&g=\sum_{i=1}^d g_ie_i&\mapsto &f=\sum_{j=1}^\infty f_je_j,
\end{array}
$$
with $f_j=g_j$ if $1\leq j\leq d$ and $f_j\equiv 0$ otherwise.
\item
The map
$$
\begin{array}{clcr}
\mathcal I^{(d)}\circ M_{I^{(d)}}:&A^2_\alpha (\mathbb B_n, \ell^2)&\rightarrow A^2_\alpha ({\mathbb B}_n, \mathbb C^d)\\
&f&\mapsto (M_{I^{(d)}} f)^{(d)}
\end{array}
$$
is onto.
\end{enumerate}
\end{lem}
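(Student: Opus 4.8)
The plan is to verify each assertion by direct computation, the key structural fact being that $\mathcal S^{(d)}$ consists precisely of the functions in $A^2_\alpha(\mathbb B_n,\ell^2)$ whose $\ell^2$-expansion is supported in the first $d$ coordinates, so that the $L^2_\alpha$-norm of such a function is unchanged when we discard the (vanishing) tail.

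First I would check that $\mathcal I^{(d)}$ is well defined, i.e. that $f^{(d)}\in A^2_\alpha(\mathbb B_n,\mathbb C^d)$ for $f\in\mathcal S^{(d)}$. Each scalar coordinate $f_j:=\langle f(\cdot),e_j\rangle_{\ell^2}$ is holomorphic, being the composition of the holomorphic $\ell^2$-valued function $f$ with the continuous linear functional $\langle\,\cdot\,,e_j\rangle_{\ell^2}$; hence $f^{(d)}=\sum_{j=1}^{d}f_je_j$ is $\mathbb C^d$-valued holomorphic, and, recalling that $\|g\|_{A^2_\alpha(\mathbb B_n,\ell^2)}^2=\sum_{j=1}^{\infty}\int_{\mathbb B_n}|\langle g(z),e_j\rangle_{\ell^2}|^2\,d\nu_\alpha(z)$,
$$\|f^{(d)}\|_{A^2_\alpha(\mathbb B_n,\mathbb C^d)}^2=\sum_{j=1}^{d}\int_{\mathbb B_n}|f_j(z)|^2\,d\nu_\alpha(z)=\sum_{j=1}^{\infty}\int_{\mathbb B_n}|f_j(z)|^2\,d\nu_\alpha(z)=\|f\|_{A^2_\alpha(\mathbb B_n,\ell^2)}^2,$$
the middle equality being valid because $f_j\equiv0$ for $j\ge d+1$, which is the defining property of $\mathcal S^{(d)}$. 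Thus $\mathcal I^{(d)}$ is a well-defined linear isometry.

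Next I would show that $\mathcal J^{(d)}$ is a well-defined two-sided inverse of $\mathcal I^{(d)}$. Given $g=\sum_{i=1}^{d}g_ie_i\in A^2_\alpha(\mathbb B_n,\mathbb C^d)$, the function $f=\sum_{j=1}^{\infty}f_je_j$ with $f_j=g_j$ for $1\le j\le d$ and $f_j\equiv0$ otherwise is a finite sum of the $\ell^2$-valued holomorphic functions $g_je_j$, hence holomorphic; it visibly lies in $\mathcal S^{(d)}$ and has the same norm as $g$, so $\mathcal J^{(d)}(g)\in\mathcal S^{(d)}$. Comparing coordinates gives $\mathcal I^{(d)}\circ\mathcal J^{(d)}=\mathrm{id}$ on $A^2_\alpha(\mathbb B_n,\mathbb C^d)$ and $\mathcal J^{(d)}\circ\mathcal I^{(d)}=\mathrm{id}$ on $\mathcal S^{(d)}$; combined with the isometry property established above, $\mathcal I^{(d)}$ is an isometric isomorphism with inverse $\mathcal J^{(d)}$, which is assertion (1).

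For assertion (2), given $g\in A^2_\alpha(\mathbb B_n,\mathbb C^d)$ set $f:=\mathcal J^{(d)}(g)\in\mathcal S^{(d)}\subset A^2_\alpha(\mathbb B_n,\ell^2)$. Since the $\ell^2$-components of $f$ of index $\ge d+1$ vanish, $M_{I^{(d)}}f=f$, whence $(M_{I^{(d)}}f)^{(d)}=f^{(d)}=\mathcal I^{(d)}(\mathcal J^{(d)}(g))=g$; thus $\mathcal I^{(d)}\circ M_{I^{(d)}}$ is onto. No genuine obstacle is expected in this lemma — it is essentially bookkeeping — the one point deserving a word being the holomorphy claim (coordinate functions of $\ell^2$-valued holomorphic functions are scalar holomorphic, and finite $\ell^2$-combinations of scalar holomorphic functions are $\ell^2$-valued holomorphic), which is standard in the theory of vector-valued Bergman spaces.
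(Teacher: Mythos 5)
Your proof is correct and is exactly the routine verification the authors had in mind: the paper explicitly leaves this lemma as an exercise, and your argument (coordinate functionals preserve holomorphy, the norm identity $\Vert f\Vert^2_{A^2_\alpha(\mathbb B_n,\ell^2)}=\sum_j\int_{\mathbb B_n}\vert\langle f(z),e_j\rangle_{\ell^2}\vert^2\,d\nu_\alpha(z)$ truncates without loss on $\mathcal S^{(d)}$, and $M_{I^{(d)}}$ acts as the identity on $\mathcal S^{(d)}$ so that $\mathcal J^{(d)}(g)$ is an explicit preimage for surjectivity) supplies the omitted details completely.
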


Let $d$ be a positive integer. The space $\mathcal L(\mathbb C^d)$ consists of 
$d\times d$ matrices with complex entries. We denote by $L^\infty(\mathbb B_n,\mathcal L(\mathbb C^d))$ the space of functions $\varphi: \mathbb B_n \rightarrow \mathcal L(\mathbb C^d)$ such that the function $z\mapsto \left \Vert \varphi (z) \right \Vert_{\mathcal L(\mathbb C^d)}$ is in $L^\infty \left (\mathbb B_n, \mathbb C\right ).$ Note that it is not particularly important which norm is used on $\mathcal L(\mathbb C^d)$  since $\mathbb C^d$ is finite-dimensional.

\begin{defn}\label{def52}
Let $\varphi :\mathbb B_n\rightarrow \mathcal L\left(\ell^2\right )$ and $d=1, 2, \dots.$ We associate the mapping $\varphi^{\llcorner (d)\lrcorner}:{\mathbb B}_n\rightarrow \mathcal L(\mathbb C^d)$ as follows: for all $1\leq k, j\leq d,$ we have
\begin{equation}\label{starstar}
\langle \varphi^{\llcorner (d)\lrcorner}(w)(e_k), e_j\rangle_{\mathbb C^d}=\langle \varphi(w)(\widetilde{e_k}), \widetilde{e_j}\rangle_{\ell^2},
\end{equation}
where $\widetilde{e_k}=(e_k, 0)$ in $\ell^2.$
We shall abuse the notation $\widetilde{e_k}\equiv e_k.$
\end{defn}

We show the following lemma.

\begin{lem}\label{lem53}
Let $d=1, 2, \dots.$
\begin{enumerate}
\item
For every $b \in L^1_\alpha \left ({\mathbb B}_n, \mathcal L(\ell^2)\right ),$ we have $b^{\llcorner (d)\lrcorner}\in L^1_\alpha \left ({\mathbb B}_n, \mathcal L(\mathbb C^d)\right )$ with the estimate
$$\left \Vert b^{\llcorner (d)\lrcorner}\right \Vert_{L^1_\alpha \left ({\mathbb B}_n, \mathcal L(\mathbb C^d)\right )}\leq \left \Vert b\right \Vert_{L^1_\alpha \left ({\mathbb B}_n, \mathcal L(\ell^2)\right )}.$$

Moreover, we have the identity $\widetilde{b^{\llcorner (d)\lrcorner}}=\left (\widetilde b\right)^{\llcorner (d)\lrcorner}$ and if $\widetilde b \in L^\infty \left (\mathbb B_n, \mathcal L(\ell^2)\right ),$ the following estimate holds
\begin{equation}\label{estd}
\left \Vert \widetilde{b^{\llcorner (d)\lrcorner}} \right \Vert_{L^\infty \left (\mathbb B_n, \mathcal L (\mathbb C^d)\right )}\leq \left \Vert \widetilde b \right \Vert_{L^\infty \left (\mathbb B_n, \mathcal L(\ell^2)\right )}.
\end{equation}
\item
For every $b\in BMO^1_\alpha \left ({\mathbb B}_n, \mathcal L(\ell^2)\right ),$ we have $b^{\llcorner (d)\lrcorner}\in BMO^1_\alpha \left ({\mathbb B}_n, \mathcal L(\mathbb C^d)\right )$ with the estimate
$$\left \Vert b^{\llcorner (d)\lrcorner}\right \Vert_{ BMO^1_\alpha \left ({\mathbb B}_n, \mathcal L(\mathbb C^d)\right )}\leq \left \Vert b\right \Vert_{ BMO^1_\alpha \left ({\mathbb B}_n, \mathcal L(\ell^2)\right )}.$$
\end{enumerate}
\end{lem}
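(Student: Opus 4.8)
\textbf{Proof plan for Lemma \ref{lem53}.}
The overall strategy is to transfer everything from the infinite-dimensional matrix function $b$ to its finite $d\times d$ truncation $b^{\llcorner (d)\lrcorner}$ by exploiting the fact that, at each point $w$, the operator $b^{\llcorner (d)\lrcorner}(w)$ is obtained from $b(w)$ by compressing to the span of $e_1,\dots,e_d$; in particular $b^{\llcorner (d)\lrcorner}(w) = M_{I^{(d)}}b(w)M_{I^{(d)}}$ read as an operator on $\mathbb C^d$, so its operator norm is at most $\Vert b(w)\Vert_{\mathcal L(\ell^2)}$. The three assertions all follow from this pointwise norm domination together with linearity of the Bochner integral (Hille's Theorem, Theorem \ref{thm11}).

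For part (1), I would first observe that $\Vert b^{\llcorner (d)\lrcorner}(w)\Vert_{\mathcal L(\mathbb C^d)}\le \Vert b(w)\Vert_{\mathcal L(\ell^2)}$ for every $w$, which gives $b^{\llcorner (d)\lrcorner}\in L^1_\alpha({\mathbb B}_n,\mathcal L(\mathbb C^d))$ and the claimed $L^1$-norm estimate upon integrating. For the identity $\widetilde{b^{\llcorner (d)\lrcorner}}=(\widetilde b)^{\llcorner (d)\lrcorner}$, I would test against the basis: for $1\le k,j\le d$, write $\langle\widetilde{b^{\llcorner (d)\lrcorner}}(z)(e_k),e_j\rangle_{\mathbb C^d}$ as $\int_{{\mathbb B}_n}|k_z^\alpha(w)|^2\langle b^{\llcorner (d)\lrcorner}(w)(e_k),e_j\rangle_{\mathbb C^d}\,d\nu_\alpha(w)$, apply Hille's Theorem to pull the bounded functional inside, use the defining relation \eqref{starstar} to rewrite the integrand as $\langle b(w)(e_k),e_j\rangle_{\ell^2}$, and recognize the result as $\langle\widetilde b(z)(e_k),e_j\rangle_{\ell^2}=\langle(\widetilde b)^{\llcorner (d)\lrcorner}(z)(e_k),e_j\rangle_{\mathbb C^d}$ — this is essentially the argument of Lemma \ref{lem21}(1) restricted to a finite block. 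The estimate \eqref{estd} then follows by applying the norm-domination already observed (with $\widetilde b$ in place of $b$) to the identity just proved: $\Vert\widetilde{b^{\llcorner (d)\lrcorner}}(z)\Vert_{\mathcal L(\mathbb C^d)}=\Vert(\widetilde b)^{\llcorner (d)\lrcorner}(z)\Vert_{\mathcal L(\mathbb C^d)}\le\Vert\widetilde b(z)\Vert_{\mathcal L(\ell^2)}$, then taking the supremum over $z$.

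For part (2), I would compute $b^{\llcorner (d)\lrcorner}\circ\varphi_z-\widetilde{b^{\llcorner (d)\lrcorner}}(z)$. Since $(b\circ\varphi_z)^{\llcorner (d)\lrcorner}=b^{\llcorner (d)\lrcorner}\circ\varphi_z$ (the truncation commutes with composition on the right, as it only affects the target) and $\widetilde{b^{\llcorner (d)\lrcorner}}(z)=(\widetilde b(z))^{\llcorner (d)\lrcorner}$ by part (1), the difference equals $(b\circ\varphi_z-\widetilde b(z))^{\llcorner (d)\lrcorner}$. Applying the pointwise norm-domination inside the $L^1_\alpha$ integral gives $\int_{{\mathbb B}_n}\Vert b^{\llcorner (d)\lrcorner}\circ\varphi_z(w)-\widetilde{b^{\llcorner (d)\lrcorner}}(z)\Vert_{\mathcal L(\mathbb C^d)}\,d\nu_\alpha(w)\le\int_{{\mathbb B}_n}\Vert b\circ\varphi_z(w)-\widetilde b(z)\Vert_{\mathcal L(\ell^2)}\,d\nu_\alpha(w)$, and taking the supremum over $z\in{\mathbb B}_n$ yields the $BMO^1_\alpha$-norm estimate.

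The only mildly delicate point — the rest being routine bookkeeping — is to justify rigorously that $\Vert b^{\llcorner (d)\lrcorner}(w)\Vert_{\mathcal L(\mathbb C^d)}\le\Vert b(w)\Vert_{\mathcal L(\ell^2)}$, i.e. that compression by the orthogonal projection $M_{I^{(d)}}$ does not increase the operator norm; this is the standard fact $\Vert PAQ\Vert\le\Vert A\Vert$ for orthogonal projections $P,Q$, here with $P=Q=M_{I^{(d)}}$, and it uses that $\widetilde{e_k}=(e_k,0)\in\ell^2$ are genuine unit vectors so that a unit vector in $\mathbb C^d$ corresponds under the identification to a unit vector in the range of $M_{I^{(d)}}$. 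I expect no real obstacle; the argument is a straightforward finite-block localization of Lemma \ref{lem21}.
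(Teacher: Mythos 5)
Your proposal is correct and follows essentially the same route as the paper: the pointwise norm domination $\Vert b^{\llcorner (d)\lrcorner}(w)\Vert_{\mathcal L(\mathbb C^d)}\leq \Vert b(w)\Vert_{\mathcal L(\ell^2)}$ (which the paper establishes by writing the operator norm as a supremum over finitely supported unit vectors and enlarging to all of $\ell^2$), the entrywise computation of $\widetilde{b^{\llcorner (d)\lrcorner}}=(\widetilde b)^{\llcorner (d)\lrcorner}$ via Hille's Theorem, and the deduction of part (2) from part (1) by linearity of the truncation. Your write-up of part (2) is in fact more explicit than the paper's, which merely asserts that it "follows easily from assertion (1)."
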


\begin{proof}
\begin{enumerate}
\item
We have:
\begin{eqnarray*}
\left \Vert b^{\llcorner (d)\lrcorner}\right \Vert_{L^1\left ({\mathbb B}_n, \mathcal L(\mathbb C^d)\right )}
&=&\int_{\mathbb B_n} \left \Vert b^{\llcorner (d)\lrcorner})(w)\right \Vert_{\mathcal L(\mathbb C^d)}d\nu_\alpha (w)\\
&=&\int_{\mathbb B_n} \sup \limits_{\left \Vert e\right \Vert_{\mathbb C^d}=\left \Vert h\right \Vert_{\mathbb C^d}=1}
\left \vert \langle b^{\llcorner (d)\lrcorner}(w) (e), h\rangle_{\mathbb C^d}\right \vert d\nu_\alpha (w)\\
&=&\int_{\mathbb B_n} \sup \limits_{\sum_{k=1}^d \vert \lambda_k\vert^2=\sum_{j=1}^d \vert \mu_k\vert^2=1} \left \vert \sum_{k,j=1}^d \lambda_k\overline{\mu_j}
\langle b^{\llcorner (d)\lrcorner}(w) (e_k), e_j\rangle_{\mathbb C^d}\right \vert d\nu_\alpha (w)\\
&\leq& \int_{\mathbb B_n} \sup \limits_{\sum_{k=1}^\infty \vert \lambda_k\vert^2=\sum_{j=1}^\infty \vert \mu_k\vert^2=1} \left \vert\sum_{k,j=1}^\infty \lambda_k\overline{\mu_j} 
\langle b(w)(e_k), e_j\rangle_{\ell^2}\right \vert d\nu_\alpha (w)\\
&=&\int_{\mathbb B_n} \sup \limits_{\left \Vert e\right \Vert_{\ell^2}=\left \Vert h\right \Vert_{\ell^2}=1}
\left \vert \langle b(w)(e), h\rangle_{\ell^2}\right \vert d\nu_\alpha (w)\\
&=&\int_{\mathbb B_n} \left \Vert b(w)\right \Vert_{\mathcal L(\ell^2)}d\nu_\alpha (w)=\left \Vert b\right \Vert_{L^1_\alpha \left ({\mathbb B}_n, \mathcal L(\ell^2)\right )}.
\end{eqnarray*}

For all $k, j\in \{1,\cdots,d\}$ and $z\in \mathbb B_n,$ we have:
\begin{eqnarray*}
\langle \widetilde {b^{\llcorner (d)\lrcorner}}(z)e_k, e_j\rangle)_{\mathbb C^d}
&=&\int_{\mathbb B_n} \vert k_z^\alpha (w)\vert^2 \langle b^{\llcorner (d)\lrcorner}(w)e_k, e_j\rangle_{\mathbb C^d}d\nu_\alpha (w) \\
&=&\int_{\mathbb B_n} \vert k_z^\alpha (w)\vert^2 \langle  b(w)e_k, e_j\rangle_{\ell^2}d\nu_\alpha (w) \\
&=&\langle \widetilde {b}(z)e_k, e_j\rangle)_{\ell^2}.
\end{eqnarray*}

The estimate (\ref{estd}) follows easily.
\item
The assertion (2) follows easily from the assertion (1).
\end{enumerate}
\end{proof}

For $b\in L^1_\alpha \left ({\mathbb B}_n, \mathcal L(\ell^2)\right ),$ we call $T_{b^{\llcorner (d)\lrcorner}}$ the Toeplitz operator with symbol $b^{\llcorner (d)\lrcorner}$ densely defined on $A^2_\alpha \left ({\mathbb B}_n, \mathbb C^d\right ).$ We deduce the following corollary:

\begin{cor}\label{cor54}
Let $b\in BMO^1_{\alpha} \left ({\mathbb B}_n, \mathcal L(\ell^2)\right)$ be such that $\widetilde b\in L^\infty ({\mathbb B}_n, \mathcal L(\ell^2)).$ Then $T_{b^{\llcorner (d)\lrcorner}}$ is bounded on $A^2_\alpha \left ({\mathbb B}_n, \mathbb C^d\right ).$
\end{cor}

For every $f=\sum_{k=1}^\infty f_k e_k\in H^\infty (\mathbb B_n, \ell^2)$ and every $z\in {\mathbb B}_n,$ we have:
\begin{eqnarray*}
M_{I^{(d)}}T_b M_{I^{(d)}} f(z)
&=&\sum_{j=1}^d \langle T_b\left (\sum_{k=1}^d f_ke_k \right )(z), e_j \rangle_{\ell^2} e_j\\
&=&\sum_{j=1}^d \langle \int_{{\mathbb B}_n} b(w)\left (\sum_{k=1}^d f_k(w) e_k \right )\overline{K_z^\alpha (w)}d\nu_{\alpha}(w), e_j \rangle_{\ell^2} e_j\\
&=&\sum_{j=1}^d \sum_{k=1}^d \int_{{\mathbb B}_n} f_k(w)\langle b(w)(e_k), e_j\rangle_{\ell^2}\overline{K_z^\alpha(w)}d\nu_{\alpha}(w)e_j.
\end{eqnarray*}

We next consider the Toeplitz operator $T_{b^{\llcorner (d)\lrcorner}}$ with symbol $b^{\llcorner (d)\lrcorner}$ defined on $A^2_\alpha ({\mathbb B}_n, \mathbb C^d).$  
For every $g=\sum_{k=1}^d g_k e_k\in A^2_\alpha ({\mathbb B}_n, \mathbb C^d),$ we have:
\begin{eqnarray}\label{tbd}
T_{b^{\llcorner (d)\lrcorner}}  g(z)
&=&\sum_{j=1}^d \langle T_{b^{\llcorner (d)\lrcorner}}\left (\sum_{k=1}^d g_ke_k \right )(z), e_j \rangle_{\mathbb C^d} e_j \nonumber\\
&=&\sum_{j=1}^d \langle \int_{{\mathbb B}_n} {b^{\llcorner (d)\lrcorner}}(w)\left (\sum_{k=1}^d g_k(w) e_k \right )\overline{K_z^\alpha (w)}d\nu_{\alpha}(w), e_j \rangle_{\mathbb C^d} e_j \nonumber\\
&=&\sum_{j=1}^d \sum_{k=1}^d\langle \int_{{\mathbb B}_n} g_k(w){b^{\llcorner (d)\lrcorner}}(w)(e_k)\overline{K_z^\alpha(w)}d\nu_{\alpha}(w), e_j \rangle_{\mathbb C^d} e_j \nonumber\\
&=&\sum_{j=1}^d \sum_{k=1}^d \int_{{\mathbb B}_n} g_k(w)\langle b^{\llcorner (d)\lrcorner}(w)(e_k), e_j\rangle_{\mathbb C^d}\overline{K_z^\alpha (w)}d\nu_{\alpha}(w)e_j \nonumber\\
&=&\sum_{j=1}^d \sum_{k=1}^d \int_{{\mathbb B}_n} g_k(w)\langle b(w)(e_k), e_j\rangle_{\ell^2}\overline{K_z^\alpha (w)}d\nu_{\alpha}(w)e_j,
\end{eqnarray}
where the latter equality follows from (\ref{starstar}).
Taking into account the assertion (2) of Lemma \ref{lem51}, we have proved the following lemma.

\begin{lem}\label{lem55}
For every $f\in A^2_\alpha (\mathbb B_n, \ell^2),$ the following equality holds.
\begin{equation}\label{Tb}
M_{I^{(d)}}T_b M_{I^{(d)}}f=\mathcal J^{(d)}T_{b^{\llcorner (d)\lrcorner}}\left (M_{I^{(d)}}f\right )^{(d)}.
\end{equation}
In other words, 
\begin{equation}\label{IdJd}
M_{I^{(d)}}T_b M_{I^{(d)}}=\mathcal J^{(d)}T_{b^{\llcorner (d)\lrcorner}}\mathcal I^{(d)}M_{I^{(d)}}.
\end{equation}
\end{lem}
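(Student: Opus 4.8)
The plan is to establish \eqref{Tb} first for $f$ in the dense subspace $H^\infty(\mathbb B_n,\ell^2)$ of $A^2_\alpha(\mathbb B_n,\ell^2)$ (Proposition \ref{pro12}), and then to extend it to all of $A^2_\alpha(\mathbb B_n,\ell^2)$ by continuity. For $f\in H^\infty(\mathbb B_n,\ell^2)$ the identity is essentially contained in the two explicit computations preceding the statement. Writing $f=\sum_{k=1}^\infty f_ke_k$, the first of these computations gives
$$M_{I^{(d)}}T_bM_{I^{(d)}}f(z)=\sum_{j=1}^d\sum_{k=1}^d\int_{\mathbb B_n}f_k(w)\langle b(w)(e_k),e_j\rangle_{\ell^2}\overline{K_z^\alpha(w)}\,d\nu_\alpha(w)\,e_j,$$
while \eqref{tbd} gives, for every $g=\sum_{k=1}^d g_ke_k\in A^2_\alpha(\mathbb B_n,\mathbb C^d)$,
$$T_{b^{\llcorner(d)\lrcorner}}g(z)=\sum_{j=1}^d\sum_{k=1}^d\int_{\mathbb B_n}g_k(w)\langle b(w)(e_k),e_j\rangle_{\ell^2}\overline{K_z^\alpha(w)}\,d\nu_\alpha(w)\,e_j.$$

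First I would observe that, by the very definition of $\mathcal I^{(d)}$, the element $(M_{I^{(d)}}f)^{(d)}=\mathcal I^{(d)}M_{I^{(d)}}f$ of $A^2_\alpha(\mathbb B_n,\mathbb C^d)$ has coordinates $g_k=f_k$ for $1\le k\le d$. Substituting this $g$ into the second display and comparing it with the first, one sees that $M_{I^{(d)}}T_bM_{I^{(d)}}f$ lies in $\mathcal S^{(d)}$ (only the basis vectors $e_1,\dots,e_d$ occur) and that its image under $\mathcal I^{(d)}$ is exactly $T_{b^{\llcorner(d)\lrcorner}}(M_{I^{(d)}}f)^{(d)}$. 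Applying $\mathcal J^{(d)}=(\mathcal I^{(d)})^{-1}$, which is available from Lemma \ref{lem51}, then yields \eqref{Tb} for $f\in H^\infty(\mathbb B_n,\ell^2)$, and this is precisely \eqref{IdJd} on that subspace.

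To conclude, I would check that both sides of \eqref{IdJd} are bounded operators on $A^2_\alpha(\mathbb B_n,\ell^2)$: the left-hand side because $T_b$ is bounded by Theorem \ref{thm15} (as $\widetilde b\in L^\infty(\mathbb B_n,\mathcal L(\ell^2))$) and $M_{I^{(d)}}$ is a contraction, and the right-hand side because $T_{b^{\llcorner(d)\lrcorner}}$ is bounded on $A^2_\alpha(\mathbb B_n,\mathbb C^d)$ by Corollary \ref{cor54} while $\mathcal I^{(d)}$ and $\mathcal J^{(d)}$ are isometric isomorphisms (Lemma \ref{lem51}). Since the two bounded operators $M_{I^{(d)}}T_bM_{I^{(d)}}$ and $\mathcal J^{(d)}T_{b^{\llcorner(d)\lrcorner}}\mathcal I^{(d)}M_{I^{(d)}}$ agree on the dense subspace $H^\infty(\mathbb B_n,\ell^2)$, they are equal, which is \eqref{IdJd}; and \eqref{Tb} is its pointwise-in-$f$ restatement.

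The argument is almost entirely bookkeeping. The only points that need a little care are the interchange of the finite sums $\sum_{k=1}^d$ with the Bergman integral in the definitions of the Toeplitz operators (harmless here, since the sums are finite and $f\in H^\infty$) and keeping the identifications $\mathcal I^{(d)},\mathcal J^{(d)}$ straight so that the $\mathbb C^d$-valued and $\ell^2$-valued formulas are matched on the nose. I do not expect a genuine obstacle beyond this.
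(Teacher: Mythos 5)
Your proposal is correct and follows essentially the same route as the paper: the paper proves the lemma precisely by the two displayed computations of $M_{I^{(d)}}T_bM_{I^{(d)}}f(z)$ and $T_{b^{\llcorner(d)\lrcorner}}g(z)$ for $f\in H^\infty(\mathbb B_n,\ell^2)$ and then invokes Lemma \ref{lem51} to match them. Your added remarks on the density/boundedness extension only make explicit what the paper leaves implicit.
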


We next prove the following proposition.

\begin{prop}\label{pro56}
The following two assertions are equivalent.
\begin{enumerate}
\item
$M_{I^{(d)}}T_b M_{I^{(d)}}$ is bounded (resp. compact) on $A^2_\alpha (\mathbb B_n, \ell^2);$
\item
$T_{b^{\llcorner (d)\lrcorner}}$ is bounded (resp. compact) on $A^2_\alpha ({\mathbb B}_n, \mathbb C^d).$
\end{enumerate}
Moreover, the operator norms $\left \Vert M_{I^{(d)}}T_b M_{I^{(d)}}\right \Vert_{\mathcal L(A^2_\alpha (\mathbb B_n, \ell^2))}$ and $\left \Vert T_{b^{\llcorner (d)\lrcorner}}\right \Vert_{\mathcal L\left (A^2_\alpha ({\mathbb B}_n, \mathbb C^d) \right )}$ are equal.
\end{prop}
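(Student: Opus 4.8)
The key structural fact is the factorization identity \eqref{IdJd} from Lemma \ref{lem55}, namely
$$M_{I^{(d)}}T_b M_{I^{(d)}}=\mathcal J^{(d)}T_{b^{\llcorner (d)\lrcorner}}\mathcal I^{(d)}M_{I^{(d)}},$$
together with Lemma \ref{lem51}: $\mathcal I^{(d)}$ is an isometric isomorphism of $\mathcal S^{(d)}$ onto $A^2_\alpha({\mathbb B}_n,\mathbb C^d)$ with inverse $\mathcal J^{(d)}$, and $\mathcal I^{(d)}\circ M_{I^{(d)}}$ maps $A^2_\alpha({\mathbb B}_n,\ell^2)$ onto $A^2_\alpha({\mathbb B}_n,\mathbb C^d)$. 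I would also record that $M_{I^{(d)}}$ is a norm-one orthogonal projection of $A^2_\alpha({\mathbb B}_n,\ell^2)$ onto $\mathcal S^{(d)}$, and that $\mathcal I^{(d)}M_{I^{(d)}}=\mathcal I^{(d)}M_{I^{(d)}}M_{I^{(d)}}$, so that $\mathcal J^{(d)}=M_{I^{(d)}}\mathcal J^{(d)}$ regarded as a map back into $A^2_\alpha({\mathbb B}_n,\ell^2)$. The whole proof is then an exercise in pushing boundedness and compactness through these maps; nothing deep is needed beyond the algebraic identity already established.

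For the implication (2) $\Rightarrow$ (1): if $T_{b^{\llcorner (d)\lrcorner}}$ is bounded (resp. compact) on $A^2_\alpha({\mathbb B}_n,\mathbb C^d)$, then $M_{I^{(d)}}T_b M_{I^{(d)}}$ is a composition $\mathcal J^{(d)}\circ T_{b^{\llcorner (d)\lrcorner}}\circ(\mathcal I^{(d)}M_{I^{(d)}})$ of bounded operators in which one factor is bounded (resp. compact); since the class of compact operators is a two-sided ideal, the composition is bounded (resp. compact) on $A^2_\alpha({\mathbb B}_n,\ell^2)$. For the converse (1) $\Rightarrow$ (2): from \eqref{IdJd} and the fact that $M_{I^{(d)}}$ acts as the identity on $\mathcal S^{(d)}$ while $\mathcal I^{(d)}$ is a genuine inverse of $\mathcal J^{(d)}$, I would solve for $T_{b^{\llcorner (d)\lrcorner}}$: applying $\mathcal I^{(d)}$ on the left and $\mathcal J^{(d)}$ on the right of \eqref{IdJd}, and using $\mathcal I^{(d)}\mathcal J^{(d)}=\mathrm{Id}_{A^2_\alpha({\mathbb B}_n,\mathbb C^d)}$, one gets
$$T_{b^{\llcorner (d)\lrcorner}}=\mathcal I^{(d)}\bigl(M_{I^{(d)}}T_b M_{I^{(d)}}\bigr)\mathcal J^{(d)},$$
where on the right $\mathcal J^{(d)}$ is viewed as mapping $A^2_\alpha({\mathbb B}_n,\mathbb C^d)\to\mathcal S^{(d)}\subset A^2_\alpha({\mathbb B}_n,\ell^2)$. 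Again this exhibits $T_{b^{\llcorner (d)\lrcorner}}$ as a bounded (resp. compact) operator composed on both sides with isometries, hence bounded (resp. compact) on $A^2_\alpha({\mathbb B}_n,\mathbb C^d)$.

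For the norm equality, I would combine the two displayed identities with the fact that $\mathcal I^{(d)}$, $\mathcal J^{(d)}$ are isometries (between the relevant spaces) and $\|M_{I^{(d)}}\|\le 1$. From $T_{b^{\llcorner (d)\lrcorner}}=\mathcal I^{(d)}(M_{I^{(d)}}T_b M_{I^{(d)}})\mathcal J^{(d)}$ one gets $\|T_{b^{\llcorner (d)\lrcorner}}\|\le\|M_{I^{(d)}}T_b M_{I^{(d)}}\|$, and from $M_{I^{(d)}}T_b M_{I^{(d)}}=\mathcal J^{(d)}T_{b^{\llcorner (d)\lrcorner}}\mathcal I^{(d)}M_{I^{(d)}}$ one gets the reverse inequality $\|M_{I^{(d)}}T_b M_{I^{(d)}}\|\le\|T_{b^{\llcorner (d)\lrcorner}}\|$ (here $\mathcal I^{(d)}M_{I^{(d)}}$ has norm at most one and $\mathcal J^{(d)}$ is an isometry), so the two norms coincide.

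The only mildly delicate point — and the place where I would be careful — is the bookkeeping of domains and codomains: $\mathcal I^{(d)}$ and $\mathcal J^{(d)}$ are literally defined on $\mathcal S^{(d)}$ and $A^2_\alpha({\mathbb B}_n,\mathbb C^d)$ respectively, so to multiply them against operators on all of $A^2_\alpha({\mathbb B}_n,\ell^2)$ one must consistently pre-compose $\mathcal I^{(d)}$ with $M_{I^{(d)}}$ (which lands in $\mathcal S^{(d)}$) and regard $\mathcal J^{(d)}$ as taking values in the subspace $\mathcal S^{(d)}\subset A^2_\alpha({\mathbb B}_n,\ell^2)$. Once the identities $\mathcal I^{(d)}\mathcal J^{(d)}=\mathrm{Id}$, $\mathcal J^{(d)}\mathcal I^{(d)}M_{I^{(d)}}=M_{I^{(d)}}$, and $M_{I^{(d)}}^2=M_{I^{(d)}}$ are in hand, the argument is purely formal; I expect no genuine obstacle, only the need to state these compositions unambiguously.
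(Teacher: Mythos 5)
Your proof is correct, and it rests on the same two ingredients as the paper's: the factorization \eqref{IdJd} from Lemma \ref{lem55} and the isometry facts of Lemma \ref{lem51}. Where you genuinely diverge is in the execution. For the compactness equivalence the paper argues sequentially: it takes $g_m$ converging weakly to $0$ in $A^2_\alpha (\mathbb B_n, \mathbb C^d)$, lifts to $f_m$ with $g_m=\left (M_{I^{(d)}}f_m\right )^{(d)}$ via the surjectivity in Lemma \ref{lem51}, and verifies by a direct computation that $M_{I^{(d)}}f_m$ converges weakly to $0$ in $A^2_\alpha (\mathbb B_n, \ell^2)$ before applying \eqref{Tb}. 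You instead invoke the two-sided ideal property of the compact operators applied to the pair of factorizations $M_{I^{(d)}}T_bM_{I^{(d)}}=\mathcal J^{(d)}T_{b^{\llcorner (d)\lrcorner}}\mathcal I^{(d)}M_{I^{(d)}}$ and $T_{b^{\llcorner (d)\lrcorner}}=\mathcal I^{(d)}\left (M_{I^{(d)}}T_bM_{I^{(d)}}\right )\mathcal J^{(d)}$. That second identity is never written in the paper (which instead uses the surjectivity of $f\mapsto \left (M_{I^{(d)}}f\right )^{(d)}$ to extract the inequality $\Vert T_{b^{\llcorner (d)\lrcorner}}\Vert \leq \Vert M_{I^{(d)}}T_bM_{I^{(d)}}\Vert$), but you derive it correctly from $\mathcal I^{(d)}\mathcal J^{(d)}=\mathrm{Id}$ and $M_{I^{(d)}}\mathcal J^{(d)}=\mathcal J^{(d)}$, and once it is on the table both the compactness equivalence and the norm equality are immediate; your route is shorter and the paper's weak-convergence bookkeeping buys nothing extra here. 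Your caveat about domains and codomains is exactly the right one, and your resolution --- always pre-compose $\mathcal I^{(d)}$ with $M_{I^{(d)}}$, and regard $\mathcal J^{(d)}$ as landing in $\mathcal S^{(d)}\subset A^2_\alpha (\mathbb B_n, \ell^2)$, on which $M_{I^{(d)}}$ acts as the identity --- is sound.
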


\begin{proof}
\begin{enumerate}
\item[(i)]
Assume that $M_{I^{(d)}}T_b M_{I^{(d)}}$ is bounded on $A^2_\alpha (\mathbb B_n, \ell^2).$ Notice that on $A^2_\alpha (\mathbb B_n, \ell^2),$ the following identity holds: $M_{I^{(d)}}\circ M_{I^{(d)}}=M_{I^{(d)}}.$  In view of (\ref{Tb}), for every $f\in A^2_\alpha (\mathbb B_n, \ell^2),$ since $\mathcal J^{(d)}$ is an isometry, we have:
\begin{eqnarray*}
\left \Vert T_{b^{\llcorner (d)\lrcorner}}\left (M_{I^{(d)}}f\right )^{(d)}\right \Vert_{A^2_\alpha ({\mathbb B}_n, \mathbb C^d)}
&=&\left \Vert \mathcal J^{(d)}T_{b^{\llcorner (d)\lrcorner}}\left (M_{I^{(d)}}f\right )^{(d)}\right \Vert_{A^2_\alpha (\mathbb B_n, \ell^2)}\\
&=&\left \Vert M_{I^{(d)}}T_b M_{I^{(d)}}f\right \Vert_{A^2_\alpha (\mathbb B_n, \ell^2)}\\
&=&\left \Vert M_{I^{(d)}}T_b M_{I^{(d)}}M_{I^{(d)}}f\right \Vert_{A^2_\alpha (\mathbb B_n, \ell^2)}\\
&\leq& \left \Vert M_{I^{(d)}}T_b M_{I^{(d)}}\right \Vert_{\mathcal L(A^2_\alpha (\mathbb B_n, \ell^2))}\left \Vert M_{I^{(d)}}f\right \Vert_{A^2_\alpha (\mathbb B_n, \ell^2)}\\
&\leq & \left \Vert M_{I^{(d)}}T_b M_{I^{(d)}}\right \Vert_{\mathcal L(A^2_\alpha (\mathbb B_n, \ell^2))}\left \Vert \left (M_{I^{(d)}}f\right )^{(d)}\right \Vert_{A^2_\alpha ({\mathbb B}_n, \mathbb C^d)},
\end{eqnarray*}
by assertion (1) of Lemma \ref{lem51}.
Since by assertion (2) of Lemma \ref{lem51}, the map
$$
\begin{array}{clcr}
A^2_\alpha (\mathbb B_n, \ell^2)&\rightarrow A^2_\alpha ({\mathbb B}_n, \mathbb C^d)\\
f&\mapsto (M_{I^{(d)}} f)^{(d)}
\end{array}
$$
is onto, this implies that for every $g\in A^2_\alpha ({\mathbb B}_n, \mathbb C^d),$ we have:
$$\left \Vert T_{b^{\llcorner (d)\lrcorner}} g\right \Vert_{A^2_\alpha ({\mathbb B}_n, \mathbb C^d)}\leq \left \Vert M_{I^{(d)}}T_b M_{I^{(d)}}\right \Vert_{\mathcal L(A^2_\alpha (\mathbb B_n, \ell^2))}\left \Vert g\right \Vert_{A^2_\alpha ({\mathbb B}_n, \mathbb C^d)}.$$
In other words,
$$\left \Vert T_{b^{\llcorner (d)\lrcorner}}\right \Vert_{\mathcal L(A^2_\alpha ({\mathbb B}_n, \mathbb C^d))}\leq \left \Vert M_{I^{(d)}}T_b M_{I^{(d)}}\right \Vert_{\mathcal L(A^2_\alpha (\mathbb B_n, \ell^2))}.$$
Conversely, 
the announced conclusion then follows from the identity (\ref{IdJd}), since $\mathcal I^{(d)}$ and $\mathcal J^{(d)}$ are isometries and $M_{I^{(d)}}$ is bounded on $A^2_\alpha (\mathbb B_n, \ell^2)$ with norm 1. 
\item[(ii)]
Assume first that the operator $M_{I^{(d)}}T_b M_{I^{(d)}}$ is compact on $A^2_\alpha (\mathbb B_n, \ell^2).$ This means that we have the implication:
\begin{center}
$\{f_m\}\rightarrow 0$ weakly on $A^2_\alpha (\mathbb B_n, \ell^2)  \Rightarrow \left \Vert M_{I^{(d)}}T_b M_{I^{(d)}}f_m\right \Vert_{A^2_\alpha (\mathbb B_n,  \ell^2)}\rightarrow 0 \quad (m\rightarrow \infty).$
\end{center}
To deduce the compactness of $T_{b^{\llcorner (d)\lrcorner}}$ on $A^2_\alpha ({\mathbb B}_n, \mathbb C^d),$ we need to show the following implication:
\begin{center}
$\{g_m\}\rightarrow 0$ weakly on $A^2_\alpha ({\mathbb B}_n, \mathbb C^d) \Rightarrow \left \Vert T_{b^{\llcorner (d)\lrcorner}} g_m\right \Vert_{A^2_\alpha ({\mathbb B}_n, \mathbb C^d)}\rightarrow 0 \quad (m\rightarrow \infty).$
\end{center}
We assume that $\{g_m\}\rightarrow 0$ weakly on $A^2_\alpha({\mathbb B}_n, \mathbb C^d) \quad (m\rightarrow \infty).$ By assertion (2) of Lemma \ref{lem51}, for every   positive integer $m,$ there exists $f_m \in A^2_\alpha (\mathbb B_n,  \ell^2)$ such that $g_m=\left (M_{I^{(d)}}f_m\right )^{(d)}.$  Let us prove that $M_{I^{(d)}} f_m\rightarrow 0$ weakly on $A^2_\alpha (\mathbb B_n,  \ell^2) \quad (m\rightarrow \infty).$ 
For every $\varphi \in A^2_\alpha (\mathbb B_n,  \ell^2),$ we have:

\begin{eqnarray*}
\langle M_{I^{(d)}} f_m, \varphi \rangle_{A^2_\alpha (\mathbb B_n,  \ell^2)}
&=&\int_{{\mathbb B}_n} \langle M_{I^{(d)}} f_m, \varphi \rangle_{\ell^2}{d\nu}_\alpha\\
&=&\int_{{\mathbb B}_n} \langle M_{I^{(d)}} f_m, M_{I^{(d)}}\varphi \rangle_{\ell^2}{d\nu}_\alpha\\
&=&\int_{{\mathbb B}_n} \langle \left (M_{I^{(d)}} f_m\right )^{(d)}, \left (M_{I^{(d)}}\varphi\right )^{(d)} \rangle_{\mathbb C^d}{d\nu}_\alpha\\
&=&\int_{{\mathbb B}_n} \langle g_m, \left (M_{I^{(d)}}\varphi\right )^{(d)} \rangle_{\mathbb C^d}{d\nu}_\alpha\\
&=&\langle g_m, \left (M_{I^{(d)}}\varphi\right )^{(d)} \rangle_{A^2_\alpha ({\mathbb B}_n, \mathbb C^d)} \rightarrow 0 \quad (m\rightarrow \infty).
\end{eqnarray*}

In view of the hypothesis, by the identity (\ref{Tb}), we obtain that
$$\left \Vert T_{b^{\llcorner (d)\lrcorner}} g_m\right \Vert_{A^2_\alpha ({\mathbb B}_n, \mathbb C^d)}=\left \Vert M_{I^{(d)}}T_b M_{I^{(d)}} f_m\right \Vert_{A^2_\alpha (\mathbb B_n,  \ell^2)}\rightarrow 0 \quad (m\rightarrow \infty).$$
We used the fact that $\mathcal I^{(d)}$ is an isometry.\\
Conversely, the compactness of $M_{I^{(d)}}T_b M_{I^{(d)}}$ on $A^2_\alpha (\mathbb B_n,  \ell^2)$ follows from that of $T_{b^{\llcorner (d)\lrcorner}}$ on $A^2_\alpha (\mathbb B_n,  \mathbb C^d)$ in view of the identity (\ref{IdJd}).
\end{enumerate}
\end{proof}

We summarize the results of Section \ref{sec4} and Section \ref{sec5} in the following theorem.

\begin{thm}\label{thm57}
Let $b\in BMO^1_{\alpha} ({\mathbb B}_n, \mathcal L(\ell^2))$ be such that $\widetilde b\in L^\infty ({\mathbb B}_n, \mathcal L(\ell^2)).$ We  assume that
$$\lim \limits_{d\rightarrow \infty} \left \Vert \widetilde{\left \Vert b_{(d)}(\cdot)\right \Vert_{\mathcal L(\ell^2)}}\right \Vert_{L^\infty ({\mathbb B}_n, \mathbb C)}=0$$
and 
$$\lim \limits_{d\rightarrow \infty} \left \Vert \widetilde{\left \Vert {\left ((b(\cdot))^\star_{\mathcal L(\ell^2)}\right )_{(d)}}\right \Vert_{\mathcal L(\ell^2)}}\right \Vert_{L^\infty ({\mathbb B}_n, \mathbb C)}=0.$$
We further assume that $T_{b^{\llcorner (d)\lrcorner}}$ is compact on $A^2_\alpha({\mathbb B}_n, \mathbb C^d)$ for $d$ large. Then $T_b$ is compact on $A^2_\alpha (\mathbb B_n,  \ell^2).$
\end{thm}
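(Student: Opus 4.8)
The plan is to combine the essential-norm estimate of Remark \ref{rem41}/Proposition \ref{pro42} with the finite-dimensional reduction of Proposition \ref{pro56}, so that all the hard analytic content is already packaged in the earlier results. First I would invoke Proposition \ref{thm45}: since $b\in BMO^1_\alpha({\mathbb B}_n,\mathcal L(\ell^2))$ with $\widetilde b\in L^\infty$ and the first hypothesis $\lim_{d\to\infty}\bigl\Vert\widetilde{\Vert b_{(d)}(\cdot)\Vert_{\mathcal L(\ell^2)}}\bigr\Vert_{L^\infty({\mathbb B}_n,\mathbb C)}=0$ holds, we get $\lim_{d\to\infty}\Vert T_bM_{I_{(d)}}\Vert_{\mathcal L(A^2_\alpha(\mathbb B_n,\ell^2))}=0$. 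Then, using Lemma \ref{lem47} (so that $(T_b)^\star=T_{(b(\cdot))^\star_{\mathcal L(\ell^2)}}$ with $(b(\cdot))^\star_{\mathcal L(\ell^2)}\in BMO^1_\alpha$ and $\widetilde{(b(\cdot))^\star_{\mathcal L(\ell^2)}}\in L^\infty$ by the second part of Lemma \ref{lem47}) together with Proposition \ref{thm48} and the second hypothesis, I obtain $\lim_{d\to\infty}\Vert M_{I_{(d)}}T_b\Vert_{\mathcal L(A^2_\alpha(\mathbb B_n,\ell^2))}=0$. At this point both of the decay hypotheses needed in Proposition \ref{pro42} are verified.

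Next I would bring in the finite-dimensional reduction. By Lemma \ref{lem53}(2), $b^{\llcorner(d)\lrcorner}\in BMO^1_\alpha({\mathbb B}_n,\mathcal L(\mathbb C^d))$ and by Lemma \ref{lem53}(1) its Berezin transform $\widetilde{b^{\llcorner(d)\lrcorner}}=(\widetilde b)^{\llcorner(d)\lrcorner}$ lies in $L^\infty({\mathbb B}_n,\mathcal L(\mathbb C^d))$; so $T_{b^{\llcorner(d)\lrcorner}}$ is a bounded operator on $A^2_\alpha({\mathbb B}_n,\mathbb C^d)$ (Corollary \ref{cor54}) to which the compactness hypothesis applies. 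By assumption $T_{b^{\llcorner(d)\lrcorner}}$ is compact on $A^2_\alpha({\mathbb B}_n,\mathbb C^d)$ for all large $d$; Proposition \ref{pro56} then transports this compactness to $M_{I^{(d)}}T_bM_{I^{(d)}}$ on $A^2_\alpha(\mathbb B_n,\ell^2)$ for all large $d$ (this is exactly the equivalence of the two "compact" assertions via the identity (\ref{IdJd}), $\mathcal I^{(d)},\mathcal J^{(d)}$ being isometries and $M_{I^{(d)}}$ bounded).

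Finally I would feed these three facts into Proposition \ref{pro42}: the two operator-norm limits supply the hypothesis $\lim_{d\to\infty}\Vert T_bM_{I_{(d)}}\Vert=\lim_{d\to\infty}\Vert M_{I_{(d)}}T_b\Vert=0$, and the compactness of $M_{I^{(d)}}T_bM_{I^{(d)}}$ for $d$ large supplies the remaining hypothesis; Proposition \ref{pro42} then yields that $T_b$ is compact on $A^2_\alpha(\mathbb B_n,\ell^2)$, which is the claim. Alternatively, one can short-circuit the last step by using the chain of inequalities in Remark \ref{rem41} directly: $\Vert T_b\Vert_e\le\Vert T_bM_{I_{(d)}}\Vert+\Vert M_{I^{(d)}}T_bM_{I^{(d)}}\Vert_e+\Vert M_{I_{(d)}}T_b\Vert$, where the middle term is $0$ for $d$ large by the compactness just established, and the two outer terms tend to $0$ as $d\to\infty$; letting $d\to\infty$ gives $\Vert T_b\Vert_e=0$, i.e. $T_b$ is compact.

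Since every ingredient is an already-proved statement in the excerpt, the proof is essentially a matter of citing Propositions \ref{thm45}, \ref{thm48}, \ref{pro56}, \ref{pro42} and Lemmas \ref{lem47}, \ref{lem53} in the right order; I do not foresee a genuine obstacle. The one point deserving a sentence of care is the passage through the adjoint: one must check that applying Proposition \ref{thm45} to the symbol $(b(\cdot))^\star_{\mathcal L(\ell^2)}$ is legitimate, i.e. that this symbol again satisfies the standing hypotheses $BMO^1_\alpha$ plus bounded Berezin transform — but this is precisely the content of Lemma \ref{lem47} (equal $BMO^1_\alpha$ norm, and $\widetilde{(b(\cdot))^\star_{\mathcal L(\ell^2)}}(z)$ is the $\ell^2$-adjoint of $\widetilde b(z)$, hence bounded) — and that the quantity controlled by the second hypothesis is exactly $\widetilde{\Vert((b(\cdot))^\star_{\mathcal L(\ell^2)})_{(d)}(\cdot)\Vert_{\mathcal L(\ell^2)}}$, which matches the statement of Proposition \ref{thm48} verbatim.
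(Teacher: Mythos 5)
Your proposal is correct and follows exactly the route the paper intends: the theorem is stated as a summary of Sections 4 and 5, and your chain — Proposition \ref{thm45} and Proposition \ref{thm48} (via Lemma \ref{lem47}) to get the two operator-norm limits, Proposition \ref{pro56} to transfer compactness from $T_{b^{\llcorner (d)\lrcorner}}$ to $M_{I^{(d)}}T_bM_{I^{(d)}}$, and Proposition \ref{pro42} (equivalently the essential-norm chain of Remark \ref{rem41}) to conclude — is precisely the argument. Your closing remark about verifying that $(b(\cdot))^\star_{\mathcal L(\ell^2)}$ satisfies the standing hypotheses via Lemma \ref{lem47} is the right point of care and is handled as the paper does.
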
 

\section{A class of examples of compact Toeplitz operators on $A^2_\alpha (\mathbb B_n, \ell^2)$ with symbols in $BMO^1_{\alpha} ({\mathbb B}_n, \mathcal L(\ell^2))$ }\label{sec6}
Let $\tau \in BMO^1_{\alpha} ({\mathbb B}_n, \mathbb C)$ be such that $\widetilde \tau \in L^\infty ({\mathbb B}_n, \mathbb C).$ We define a function $b\in BMO^1_{\alpha} ({\mathbb B}_n, \mathcal L(\ell^2))$ by
$$b(z)(e_i)=\frac {\tau (z)}{2^i}e_i \quad (z\in {\mathbb B}_n, i=1, 2, \cdots).$$
Associated to $b$ is the diagonal Toeplitz operator $T_b.$ We state the following theorem.

\begin{thm}\label{thm61}
Assume that $\widetilde \tau (z)\rightarrow 0$ as $z\rightarrow \partial {\mathbb B}_n.$ Then the Toeplitz operator $T_b$ is compact on $A^2_\alpha (\mathbb B_n,  \ell^2).$
\end{thm}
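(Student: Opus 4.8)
The plan is to verify the four sufficient conditions of Theorem \ref{thm113} for the diagonal symbol $b(z)(e_i)=2^{-i}\tau(z)e_i$. First observe that $b\in BMO^1_\alpha(\mathbb B_n,\mathcal L(\ell^2))$ with $\widetilde b\in L^\infty(\mathbb B_n,\mathcal L(\ell^2))$: indeed $b = \operatorname{diag}(2^{-i})\cdot\tau$, so for each fixed $z,w$ the operator $b\circ\varphi_z(w)-\widetilde b(z)$ is the diagonal operator with entries $2^{-i}(\tau\circ\varphi_z(w)-\widetilde\tau(z))$, whose operator norm is $\sup_i 2^{-i}|\tau\circ\varphi_z(w)-\widetilde\tau(z)| = \tfrac12|\tau\circ\varphi_z(w)-\widetilde\tau(z)|$. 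Integrating and taking the supremum over $z$ gives $\|b\|_{BMO^1_\alpha} = \tfrac12\|\tau\|_{BMO^1_\alpha}<\infty$; similarly $\widetilde b(z)$ is the diagonal operator with entries $2^{-i}\widetilde\tau(z)$, so $\|\widetilde b(z)\|_{\mathcal L(\ell^2)}=\tfrac12|\widetilde\tau(z)|$ is bounded. This establishes condition (i). Because $b(z)$ is self-adjoint as an operator on $\ell^2$ (diagonal with real-structure entries $2^{-i}\tau(z)$ — more precisely $(b(z))^\star_{\mathcal L(\ell^2)}$ is the diagonal operator with entries $2^{-i}\overline{\tau(z)}$), condition (iii) reduces to the same estimate as condition (ii) applied to $\overline\tau$, which has the same $BMO^1_\alpha$ and Berezin-boundedness properties as $\tau$; so it suffices to handle (ii).

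For condition (ii), compute $b_{(d)}$: by definition $b_{(d)}(w)(e_i)=0$ for $i\le d$ and $b_{(d)}(w)(e_i)=2^{-i}\tau(w)e_i$ for $i\ge d+1$. Thus $b_{(d)}(w)$ is the diagonal operator supported on indices $>d$ with entries $2^{-i}\tau(w)$, and
$$\|b_{(d)}(w)\|_{\mathcal L(\ell^2)} = \sup_{i\ge d+1} 2^{-i}|\tau(w)| = 2^{-(d+1)}|\tau(w)|.$$
Applying the Berezin transform (which is positivity-preserving and linear in the scalar factor),
$$\widetilde{\|b_{(d)}(\cdot)\|_{\mathcal L(\ell^2)}}(z) = 2^{-(d+1)}\,\widetilde{|\tau(\cdot)|}(z),$$
and since $\tau\in BMO^1_\alpha$ with $\widetilde\tau\in L^\infty$ one knows $|\tau|\in L^1_\alpha$ has bounded Berezin transform (this is a standard consequence, e.g. via Lemma \ref{lem31} and the fact that $BMO^1_\alpha$ functions have $\widetilde{|\cdot|}$ bounded). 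Hence
$$\left\|\widetilde{\|b_{(d)}(\cdot)\|_{\mathcal L(\ell^2)}}\right\|_{L^\infty(\mathbb B_n,\mathbb C)} = 2^{-(d+1)}\left\|\widetilde{|\tau(\cdot)|}\right\|_{L^\infty} \xrightarrow[d\to\infty]{} 0,$$
giving (ii); the identical argument with $\overline\tau$ in place of $\tau$ gives (iii).

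It remains to verify condition (iv): for every positive integer $d$,
$$\sup_{e\in\mathbb C^d,\ \|e\|=1}\ \lim_{z\to\partial\mathbb B_n}\left\|T_{b^{\llcorner(d)\lrcorner}}(k_z^\alpha e)\right\|_{A^2_\alpha(\mathbb B_n,\mathbb C^d)}=0.$$
Here $b^{\llcorner(d)\lrcorner}(w)$ is, by Definition \ref{def52}, the $d\times d$ diagonal matrix $\operatorname{diag}(2^{-1}\tau(w),\dots,2^{-d}\tau(w))$, so $T_{b^{\llcorner(d)\lrcorner}}$ acts componentwise as the scalar Toeplitz operator $2^{-j}T^{\mathbb C}_\tau$ on the $j$-th coordinate. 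Therefore $T_{b^{\llcorner(d)\lrcorner}}(k_z^\alpha e)$ has $A^2_\alpha(\mathbb B_n,\mathbb C^d)$-norm bounded by $\|T^{\mathbb C}_\tau k_z^\alpha\|_{A^2_\alpha(\mathbb B_n,\mathbb C)}\,\|e\|_{\mathbb C^d}$ (using $\sum_j 4^{-j}|e_j|^2\le \|e\|^2$). By Lemma \ref{lem33}(2) (scalar case) this equals $\|(T^{\mathbb C}_\tau)^z(1)\|_{A^2_\alpha}$, and by Lemma \ref{lem33}(3) and Remark \ref{rem16}, the quantity $\|(T^{\mathbb C}_\tau)^z(1)\|^2_{A^2_\alpha}$ is controlled by $\widetilde{|T^{\mathbb C}_\tau 1|^2}$-type averages that tend to $0$ as $|z|\to1$ precisely because $\widetilde\tau(z)\to0$; more directly, $\widetilde{T^{\mathbb C}_\tau}(z)=\widetilde\tau(z)\to0$ forces $\|(T^{\mathbb C}_\tau)^z(1)\|_{A^2_\alpha}\to0$ for a scalar Toeplitz operator with $BMO^1_\alpha$ symbol — this is exactly the scalar precursor of condition (2)$\Rightarrow$ compactness and is available from the cited results. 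Thus (iv) holds for every $d$.

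The main obstacle I anticipate is making the last step — condition (iv) — fully rigorous: one must be careful that the convergence $\widetilde\tau(z)\to0$ genuinely yields $\|(T^{\mathbb C}_\tau)^z 1\|_{A^2_\alpha}\to0$, which uses that $T^{\mathbb C}_\tau$ has a $BMO^1_\alpha$ symbol so that the Berezin transform controls the norm locally (via Theorem \ref{thm15}/Remark \ref{rem16} applied to $\tau\circ\varphi_z$ and the identity $(T_\tau)^z=T_{\tau\circ\phi_z}$ from Lemma \ref{lem33}(1)). Once these four conditions are in hand, Theorem \ref{thm113} delivers the compactness of $T_b$ on $A^2_\alpha(\mathbb B_n,\ell^2)$ immediately. $\Box$
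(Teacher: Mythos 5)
Your proposal is correct and follows essentially the same route as the paper: verify the four conditions of Theorem \ref{thm113}/Theorem \ref{thm57} by exploiting the diagonal structure of $b$ to reduce each one to the scalar symbol $\tau$, and then invoke the known scalar result (Zorboska, Agbor--Tchoundja, Zhang--Liu--Lu) that a $BMO^1_\alpha$ symbol with vanishing Berezin transform at the boundary yields a compact scalar Toeplitz operator, which gives condition (iv). The only cosmetic differences are that you compute $\left \Vert b_{(d)}(w)\right \Vert_{\mathcal L(\ell^2)}=2^{-(d+1)}|\tau(w)|$ exactly where the paper uses the cruder bound $\bigl(\sum_{j>d}4^{-j}\bigr)^{1/2}|\tau(w)|$, and that the boundedness of $\widetilde{|\tau|}$ should be cited from Remark \ref{rem16} (with $E=F=\mathbb C$) rather than Lemma \ref{lem31}.
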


\begin{proof}
In view of Theorem \ref{thm57}, we shall check the following four facts:
\begin{enumerate}
\item
$b\in BMO^1_{\alpha} ({\mathbb B}_n, \mathcal L(\ell^2))$ and $\widetilde b\in L^\infty ({\mathbb B}_n, \mathcal L(\ell^2));$
\item
$$\lim \limits_{d\rightarrow \infty} \left \Vert \widetilde{\left \Vert b_{(d)}(\cdot)\right \Vert_{\mathcal L(\ell^2)}}\right \Vert_{L^\infty ({\mathbb B}_n, \mathbb C)}=0;$$
\item 
$$\lim \limits_{d\rightarrow \infty} \left \Vert \widetilde{\left \Vert {\left ((b(\cdot))^\star_{\mathcal L(\ell^2)}\right )_{(d)}}\right \Vert_{\mathcal L(\ell^2)}}\right \Vert_{L^\infty ({\mathbb B}_n, \mathbb C)}=0;$$
\item
$T_{b^{\llcorner (d)\lrcorner}}$ is compact on $A^2_\alpha({\mathbb B}_n, \mathbb C^d)$ for every positive integer $d.$  
\end{enumerate}
\begin{enumerate}
\item
We first show that $\widetilde b\in L^\infty ({\mathbb B}_n, \mathcal L(\ell^2)).$ For every $e=\sum_{i=1}^\infty \lambda_i e_i$ such that $\sum_{i=1}^\infty \left \vert \lambda_i\right \vert^2=1$ and every $z\in {\mathbb B}_n,$ we have:

\begin{eqnarray*}
\widetilde b(z)(e)
&=&\int_{{\mathbb B}_n} \left \vert k_z^\alpha (w)\right \vert^2 b(w)(e)d\nu_{\alpha}(w)\\
&=&\sum_{i=1}^\infty \frac{\lambda_i}{2^i}\left (\int_{{\mathbb B}_n} \left \vert k_z^\alpha (w)\right \vert^2 \tau(w)e_i d\nu_{\alpha}(w) \right )\\
&=&\sum_{i=1}^\infty \frac{\lambda_i}{2^i} \widetilde \tau (z)e_i.
\end{eqnarray*}

So 
\begin{eqnarray*}
\sup \limits_{z\in {\mathbb B}_n}\sup \limits_{\left \Vert e\right \Vert_{\ell^2}=1}\left \Vert \widetilde b(z)(e)\right \Vert_{\ell^2}
&=&\sup \limits_{z\in {\mathbb B}_n}\sup \limits_{\sum_{i=1}^\infty \vert \lambda_i\vert^2=1} \left \vert \widetilde \tau (z)\right \vert \left (\sum_{i=1}^\infty \left (\frac {\vert \lambda_i\vert}{2^i}\right )^2\right )^{\frac 12} \\
&\leq& \sup \limits_{z\in {\mathbb B}_n} \left \vert \widetilde \tau (z)\right \vert.
\end{eqnarray*}

The conclusion follows from the boundedness of the scalar function $\widetilde \tau.$\\
We next show that $b\in BMO^1_{\alpha} ({\mathbb B}_n, \mathcal L(\ell^2)).$ For every $e=\sum_{i=1}^\infty \lambda_i e_i$ such that $\sum_{i=1}^\infty \vert \lambda_i\vert^2=1$ and all $z, w\in {\mathbb B}_n,$ 
we have:
$$\left (b\circ \varphi_z (w)-\widetilde b(z)\right )(e)=\sum_{i=1}^\infty \lambda_i \left (b\circ \varphi_z (w)-\widetilde b(z) \right )\left (e_i\right ).$$
So
\begin{eqnarray*}
\left \Vert b\circ \varphi_z (w)-\widetilde b(z)\right \Vert_{\mathcal L(\ell^2)}
&=&\sup \limits_{\left \Vert e\right \Vert_{\ell^2}=1} \left \Vert \left (b\circ \varphi_z (w)-\widetilde b(z)\right )(e)\right \Vert_{\ell^2}\\
&=&\sup \limits_{\sum_{i=1}^\infty \left \vert \lambda_i\right \vert^2=1}\left \Vert\sum_{i=1}^\infty  \frac {\lambda_i}{2^i} \left (\tau \circ \varphi_z (w)-\widetilde \tau (z) \right )e_i \right \Vert_{\ell^2}\\
&=&\sup \limits_{\sum_{i=1}^\infty \left \vert \lambda_i\right \vert^2=1}\left \{\left \vert \tau \circ \varphi_z (w)-\widetilde \tau (z)\right \vert \left (\sum_{i=1}^\infty \left  (\frac {\vert \lambda_i\vert}{2^i}\right )^2\right )^{\frac 12}\right \}\\
&\leq& \left \vert \tau \circ \varphi_z (w)-\widetilde \tau (z)\right \vert.
\end{eqnarray*}
The hypothesis $\tau \in BMO^1_{\alpha} ({\mathbb B}_n, \mathbb C)$ implies the announced conclusion.
\item
We now prove that 
$$\lim \limits_{d\rightarrow \infty} \left \Vert \widetilde{\left \Vert b_{(d)}(\cdot)\right \Vert_{\mathcal L(\ell^2)}}\right \Vert_{L^\infty ({\mathbb B}_n, \mathbb C)}=0.$$
By Lemma \ref{lem46}, we obtain:
\begin{eqnarray*}
\widetilde{\left \Vert b_{(d)}(\cdot)\right \Vert_{\mathcal L(\ell^2)}}(z)
&=&\int_{\mathbb B_n}\left \vert k_z^\alpha (w)\right \vert^2 \sup \limits_{\sum_{i=1}^\infty |\lambda_i|^2=1} \left (\sum_{j=1}^\infty \left \vert \sum_{i=d+1}^\infty \lambda_i\langle b(w)(e_i), e_j\rangle_{\ell^2}\right \vert^2 \right )^{\frac 12}d\nu_\alpha (w)\\
&=&\int_{\mathbb B_n}\left \vert k_z^\alpha (w)\right \vert^2 \sup \limits_{\sum_{i=1}^\infty |\lambda_i|^2=1} \left (\sum_{j=1}^\infty \left \vert \sum_{i=d+1}^\infty \lambda_i\langle \frac {\tau(w)}{2^i}e_i, e_j\rangle_{\ell^2}\right \vert^2 \right )^{\frac 12}d\nu_\alpha (w)\\
&=&\int_{\mathbb B_n}\left \vert k_z^\alpha (w)\right \vert^2 \sup \limits_{\sum_{i=1}^\infty |\lambda_i|^2=1} \left (\sum_{j=d+1}^\infty \left \vert \lambda_j\frac {\tau(w)}{2^j})\right \vert^2 \right )^{\frac 12}d\nu_\alpha (w)\\
&=&\int_{\mathbb B_n}\left \vert k_z^\alpha (w)\right \vert^2 \left \vert \tau (w)\right \vert d\nu_\alpha (w)\times \sup \limits_{\sum_{i=1}^\infty |\lambda_i|^2=1}\left (\sum_{j=d+1}^\infty \frac {\left \vert \lambda_j\right \vert^2}{2^{2j}} \right )^{\frac 12}\\
&\leq &
\left \Vert \widetilde {\left \vert \tau (\cdot)\right \vert}\right \Vert_{L^\infty (\mathbb B_n, \mathbb C)}\left (\sum_{j=d+1}^\infty \frac 1{2^{2j}} \right )^{\frac 12}.
\end{eqnarray*}

The conclusion follows from the convergence of the series $\sum \frac 1{2^{2j}}$ and the boundedness of $\widetilde {\left \vert \tau (\cdot)\right \vert}.$ The latter follows from the assumptions $\tau \in BMO^1_{\alpha} ({\mathbb B}_n, \mathbb C)$ and $\widetilde \tau \in L^\infty ({\mathbb B}_n, \mathbb C)$ according to Remark \ref{rem16} in the particular case $E=F=\mathbb C.$ 
\item 
We next have to prove that
$$\lim \limits_{d\rightarrow \infty} \left \Vert \widetilde{\left \Vert {\left ((b(\cdot))^\star_{\mathcal L(\ell^2)}\right )_{(d)}}\right \Vert_{\mathcal L(\ell^2)}}\right \Vert_{L^\infty ({\mathbb B}_n, \mathbb C)}=0.$$
The proof is the same as for (2) when $b$ is replaced by $(b(\cdot))^\star_{\mathcal L(\ell^2)}$ 
using the equality 
$$\langle (b(w))^\star_{\mathcal L(\ell^2)}(e_i), e_j\rangle_{\ell^2}=\langle (e_i, b(w)e_j\rangle_{\ell^2}=\frac {\overline{\tau (w)}}{2^j}\langle e_i, e_j\rangle_{\ell^2}.$$

\item 
We finally show that $T_{b^{\llcorner (d)\lrcorner}}$ is compact on $A^2_\alpha({\mathbb B}_n, \mathbb C^d)$ for every positive integer $d.$ For every $g\in A^2_\alpha({\mathbb B}_n, \mathbb C^d),$ it follows from \ref{tbd} that
\begin{eqnarray*}
T_{b^{\llcorner (d)\lrcorner}} g(z)
&=&\sum_{j=1}^d \sum_{k=1}^d \int_{{\mathbb B}_n} \langle g(w), e_k\rangle_{\mathbb C^d}\langle b(w)(e_k), e_j\rangle_{\ell^2}\overline {K_z^\alpha (w)}d\nu_{\alpha}(w)e_j\\
&=&\sum_{j=1}^d \sum_{k=1}^d \int_{{\mathbb B}_n} \langle g(w), e_k\rangle_{\mathbb C^d}\tau(w)\frac 1{2^k}\langle e_k, e_j\rangle_{\mathbb C^d}\overline {K_z^\alpha (w)}d\nu_{\alpha}(w)e_j\\
&=&\sum_{j=1}^d \frac 1{2^j} \int_{{\mathbb B}_n} \langle g(w), e_j\rangle_{\mathbb C^d}\tau (w)\overline {K_z^\alpha (w)}d\nu_{\alpha}(w)e_j\\
&=&\sum_{j=1}^d \frac 1{2^j}T_\tau^{\mathbb C} \left(\langle g(\cdot), e_j\rangle_{\mathbb C^d}\right)(z)e_j.
\end{eqnarray*}

It then suffices to prove that the operator
$$
\begin{array}{ccl}
A^2_\alpha({\mathbb B}_n, \mathbb C^d)&\rightarrow & A^2_\alpha ({\mathbb B}_n, \mathbb C)\\
g&\mapsto & T_\tau^{\mathbb C} \left (\langle g(\cdot), e_j\rangle_{\mathbb C^d}\right )
\end{array}
$$
is compact for every $j=1, 2,\cdots, d.$ Indeed, this operator is the composition of the linear continuous operator
$$
\begin{array}{ccl}
A^2_\alpha({\mathbb B}_n, \mathbb C^d)&\rightarrow & A^2_\alpha ({\mathbb B}_n, \mathbb C)\\
g&\mapsto & \langle g(\cdot), e_j\rangle_{\mathbb C^d}
\end{array}
$$
and the linear compact operator
$$
\begin{array}{ccl}
A^2_\alpha({\mathbb B}_n, \mathbb C)
&\rightarrow & A^2_\alpha ({\mathbb B}_n, \mathbb C)\\
\varphi&\mapsto & T_\tau^{\mathbb C} \varphi.
\end{array}
$$
The latter operator is compact in view of Theorem 1 of Zorboska \cite{Z2003} for $n=1$ and Corollary 4.4 of \cite{AT2010} for $n\geq 2$ both for $\alpha=0.$ Cf. \cite{ZLL2011} for all $\alpha\textgreater -1.$
\end{enumerate}
\end{proof}

\section{Sufficient conditions for the compactness of the Toeplitz operator $T_b$ with a symbol $b$ in $BMO^1_{\alpha} ({\mathbb B}_n, \mathcal L(\mathbb C^d))$}\label{sec7}
Throughout the section, $d$ denotes a fixed positive integer.

\subsection{Proof of Theorem 1.10}\label{ssec71}
The $d-$dimensional Toeplitz algebra $\mathcal T^{d}_\alpha$ was defined in Definition \ref{def19}.

We first recall the following theorem, which is Corollary 3.6 of Sadeghi-Zorboska \cite{SZ2020}.
\begin{thm}\label{thm71}
Let $b \in BMO^1_{\alpha} ({\mathbb B}_n, \mathbb C)$ be such that $\widetilde b \in L^\infty ({\mathbb B}_n, \mathbb C).$ Then the Toeplitz operator $T_{b}$ belongs to the $1-$dimensional Toeplitz algebra $\mathcal T^{1}_\alpha.$ 
\end{thm}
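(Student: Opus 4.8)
The plan is to show that $T_b$ is a \emph{sufficiently localized} operator — in the scalar avatar of Definition~\ref{def117}, where $\ell^2$, the inner sum over $i$ and the unit vectors $e_i,e_j$ collapse to $\mathbb C$ and to the constant function $1$ — and then to invoke the by now classical fact, in the circle of ideas of Su\'arez, Xia, Mitkovski--Wick and Rahm, that every sufficiently localized operator on $A^2_\alpha(\mathbb B_n,\mathbb C)$ lies in the operator-norm closed algebra generated by the Toeplitz operators with bounded symbols, i.e.\ in $\mathcal T^1_\alpha$. Since $b$ is scalar, $\left(T_b\right)^\star=T_{\bar b}$ with $\bar b\in BMO^1_\alpha(\mathbb B_n,\mathbb C)$ and $\widetilde{\bar b}=\overline{\widetilde b}\in L^\infty$, so condition \eqref{3.1} for $T_b^\star$ is exactly condition \eqref{3.2} for $T_{\bar b}$; it therefore suffices to establish \eqref{3.2} for every symbol satisfying the hypotheses.

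For this, I would use Lemma~\ref{lem33}(1), $\left(T_b\right)^z=T_{b\circ\varphi_z}$, and evaluate on the constant function $1$: then $\left(T_b\right)^z 1=P_\alpha\!\left(b\circ\varphi_z\right)$. Since $P_\alpha$ fixes constants, $P_\alpha(b\circ\varphi_z)=P_\alpha\!\left(b\circ\varphi_z-\widetilde b(z)\right)+\widetilde b(z)$, and the scalar form of \eqref{3.2} follows as soon as
$$\sup_{z\in\mathbb B_n}\left\|P_\alpha\!\left(b\circ\varphi_z-\widetilde b(z)\right)\right\|_{L^p_\alpha(\mathbb B_n,\mathbb C)}<\infty\qquad\text{for some }p>\tfrac{n+2+2\alpha}{1+\alpha},$$
the constant term contributing at most $\|\widetilde b\|_{L^\infty}$ since $\nu_\alpha$ is a probability measure. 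I would bound the left-hand side by combining two standard ingredients: (i) the John--Nirenberg self-improvement of $BMO$ in the Bergman metric, giving that $BMO^1_\alpha(\mathbb B_n,\mathbb C)$ coincides with its natural $L^p$-analogue with comparable seminorms, whence $\sup_z\|b\circ\varphi_z-\widetilde b(z)\|_{L^p_\alpha}\lesssim_{p,\alpha}\|b\|_{BMO^1_\alpha}$; and (ii) the Forelli--Rudin/Schur estimate that the absolute Bergman operator, which sends $g$ to the function $u\mapsto\int_{\mathbb B_n}|g(w)|\,|K^\alpha_u(w)|\,d\nu_\alpha(w)$ and dominates $|P_\alpha g|$ pointwise, is bounded on $L^p_\alpha(\mathbb B_n,\mathbb C)$ for every $p>1$. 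Since $\tfrac{n+2+2\alpha}{1+\alpha}>1$, one picks $p$ in the admissible range, and chaining (i) and (ii) bounds the displayed supremum by a constant times $\|b\|_{BMO^1_\alpha}$. This yields \eqref{3.2}, hence \eqref{3.1}, hence sufficient localization of $T_b$, and finally $T_b\in\mathcal T^1_\alpha$.

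The hard part is ingredient (i): one must make sure that the integrability self-improvement is valid in the present weighted Bergman-metric setting \emph{with the uniform control} $\sup_z\|b\circ\varphi_z-\widetilde b(z)\|_{L^p_\alpha}\lesssim\|b\|_{BMO^1_\alpha}$; this is exactly the point where the hypothesis $b\in BMO^1_\alpha$, and not merely $b\in L^1_\alpha$ with bounded Berezin transform, is genuinely used. The remaining inputs are either soft — the closure properties of $\mathcal T^1_\alpha$ and the passage from sufficiently localized operators to the Toeplitz algebra — or a routine kernel computation.

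Finally, this scalar statement is precisely what Theorem~\ref{thm110} reduces to: writing $\beta$ through its entries $\beta_{kj}(\cdot)=\langle\beta(\cdot)e_k,e_j\rangle_{\mathbb C^d}$, the (immediate) $\mathbb C^d$-analogue of Lemma~\ref{lem21} gives $\beta_{kj}\in BMO^1_\alpha(\mathbb B_n,\mathbb C)$ with $\widetilde{\beta_{kj}}=(\widetilde\beta)_{kj}\in L^\infty$, so $T_{\beta_{kj}}\in\mathcal T^1_\alpha$ by the above; a direct computation on $H^\infty(\mathbb B_n,\mathbb C^d)$ gives $T_\beta=\sum_{k,j=1}^d R_j\,T_{\beta_{kj}}\,L_k$, where $L_k f=\langle f,e_k\rangle_{\mathbb C^d}$ and $R_j g=g\,e_j$, and replacing $T_{\beta_{kj}}$ by finite sums of finite products of scalar Toeplitz operators $T_\psi$ ($\psi\in L^\infty$) turns each summand into a finite sum of finite products of the $\mathbb C^d$-valued Toeplitz operators $T_{\psi I}$ post-composed with the Toeplitz operator of the constant matrix-unit symbol $E_{jk}\in\mathcal L(\mathbb C^d)$; passing to the operator-norm limit (the maps $L_k,R_j$ being bounded) places each $R_j\,T_{\beta_{kj}}\,L_k$, and hence $T_\beta$, in $\mathcal T^d_\alpha$.
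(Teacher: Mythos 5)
The paper does not actually prove this statement: Theorem~\ref{thm71} is quoted verbatim as Corollary~3.6 of Sadeghi--Zorboska \cite{SZ2020}, so there is no internal proof to compare against. Your overall strategy --- establish that $T_b$ is a (sufficiently/weakly) localized operator and invoke the Xia-type theorem that localized operators on $A^2_\alpha(\mathbb B_n,\mathbb C)$ lie in $\mathcal T^1_\alpha$ --- is indeed the strategy of the cited source, and it is consistent both with Section~\ref{sec13} of the paper (which records Xia's implication for $d=1$) and with the paper's own proof of Theorem~\ref{thm119}. Your final paragraph on passing from the scalar case to Theorem~\ref{thm110} also matches Subsection~\ref{ssec71} in substance.

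However, your ingredient (i) is not merely ``the hard part'': as stated it is false, and this is a genuine gap. The space $BMO^1_\alpha$ of Definition~\ref{def13} is the \emph{global}, Berezin-transform based BMO, not the local BMO over Bergman-metric balls, and John--Nirenberg self-improvement does not hold for it: the spaces defined by $\sup_{z}\Vert b\circ\varphi_z-\widetilde b(z)\Vert_{L^p_\alpha}<\infty$ strictly shrink as $p$ increases. Concretely, taking well-separated Bergman balls $D_j$ centered at $a_j\to\partial\mathbb B_n$ with $\int_{D_j}\vert k^\alpha_{a_j}\vert^2\,d\nu_\alpha\simeq\epsilon_j\to 0$ and $b=\sum_j\epsilon_j^{-1}\chi_{D_j}$, one gets $\widetilde{\vert b\vert}$ bounded (hence $b\in BMO^1_\alpha$ with $\widetilde b\in L^\infty$) while $\Vert b\circ\varphi_{a_j}-\widetilde b(a_j)\Vert_{L^p_\alpha}^p\gtrsim\epsilon_j^{1-p}\to\infty$ for every $p>1$; so the estimate $\sup_z\Vert b\circ\varphi_z-\widetilde b(z)\Vert_{L^p_\alpha}\lesssim\Vert b\Vert_{BMO^1_\alpha}$ fails and the chain (i)$+$(ii) collapses. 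The conclusion you actually need, namely $\sup_z\Vert P_\alpha(b\circ\varphi_z-\widetilde b(z))\Vert_{A^p_\alpha}<\infty$ for large $p$, is nevertheless true, but it must be obtained by exploiting the smoothing of the projection rather than the integrability of the symbol: use the Li--Luecking theorem that $P_\alpha$ maps $BMO^1_\alpha$ boundedly into the Bloch space together with the embedding of the Bloch space into every $A^p_\alpha$ --- precisely the two assertions of Theorem~\ref{thm104}, combined with the M\"obius invariance $\Vert b\circ\varphi_z\Vert_{BMO^1_\alpha}\leq\Vert b\Vert_{BMO^1_\alpha}$ as in Lemma~\ref{lem102}. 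With that substitution your argument goes through, modulo the (paper-endorsed) passage from this localization property to membership in $\mathcal T^1_\alpha$ via \cite{X2015}.
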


We next prove Theorem \ref{thm110}.
From (\ref{tbd}), we recall that
\begin{eqnarray}\label{Tbd1}
T_{\beta}  g(z)&=&\sum_{j=1}^d \sum_{k=1}^d \int_{{\mathbb B}_n} g_k(w)\beta_{k, j} (w)\overline{K_z^\alpha (w)}d\nu_{\alpha} (w)e_j \nonumber\\
&=&\sum_{j=1}^d \sum_{k=1}^d T^{\mathbb C}_{\beta_{k, j}} g_k(z)e_j.
\end{eqnarray}
For all positive integers $k,j\in \left \{1,\cdots, d\right \},$ we associate with the scalar symbol $\beta_{k, j}$ a new symbol $B_{k, j}: {\mathbb B}_n\rightarrow \mathcal L(\mathbb C^d)$ as follows.
$$
\langle B_{k, j}(w) (e_l), e_m\rangle_{\mathbb C^d}=
\left \{
\begin{array}{clcr}
\beta_{k, j}(w) &\rm if &l=j, \hskip 1truemm m=k\\
0&\rm otherwise&
\end{array}
\right..
$$
So for every $g\in A^2_\alpha({\mathbb B}_n, \mathbb C^d)$ and all positive integers $k, l, m,$ we have:
\begin{eqnarray*}
\langle B_{k, j}(w) (g(w)), e_m\rangle_{\mathbb C^d}
&=&\left\langle B_{k, j}(w) \left(\sum_{l=1}^d g_l(w)e_l\right), e_m\right\rangle_{\mathbb C^d}\\
&=&\sum_{l=1}^d g_l(w)\langle B_{k, j}(w) (e_l), e_m\rangle_{\mathbb C^d}\\
&=&g_j(w)\beta_{k, j}(w)\delta_{m, k}\\
&=&g_j(w)\beta_{k, j}(w)\langle e_k, e_m\rangle_{\mathbb C^d}.
\end{eqnarray*}

Then $B_{k, j}(w) (g(w))=g_j(w)\beta_{k, j}(w)e_k.$
Hence:
\begin{equation}\label{omega}
T_{B_{k, j}} g(z)= \int_{{\mathbb B}_n} B_{k, j}(w) (g(w))\overline{K_z^\alpha(w)}d\nu_{\alpha}(w)=T^{\mathbb C}_{\beta_{k, j}} g_j(z)e_k
\end{equation}
and from (\ref{Tbd1}), we get
$$
T_{\beta}  g(z)=\sum_{j=1}^d \sum_{k=1}^d T_{B_{k, j}} g(z).
$$
It then suffices to show that each Toeplitz operator $T_{B_{k, j}}$ belongs to the $d-$dimensional Toeplitz algebra $\mathcal T^{d}_\alpha.$ In view of Lemma \ref{lem21} and the previous theorem (Corollary 3.6 of Sadeghi-Zorboska \cite{SZ2020}), the scalar Toeplitz operator $T_{\beta_{k, j}}^{\mathbb C},$ whose symbol $\beta_{k, j}$ belongs to $BMO^1_{\alpha} ({\mathbb B}_n, \mathbb C)$ and is such that $\widetilde{\beta_{k, j}}\in L^\infty (\mathbb B_n, \mathbb C),$ lies in the scalar Toeplitz algebra $\mathcal T^1_\alpha,$ i.e. for every $\epsilon >0,$ there exist positive integers $N_{k, j}$ and $M_{k, j},$ and scalar Toeplitz operators $T^{\mathbb C}_{a^{l, m}}$ with (scalar) bounded functions $a^{l, m}, 1\leq l\leq L_{k, j}, 1\leq m\leq M_{k, j},$ such that
\begin{equation}\label{t1}
\left \Vert T_{\beta_{k, j}}^{\mathbb C}-\sum_{m=1}^{M_{k, j}}\prod_{l=1}^{L_{k, j}} T^{\mathbb C}_{a^{l, m}}\right \Vert_{\mathcal L(A^2_\alpha({\mathbb B}_n, \mathbb C))} <\epsilon.
\end{equation}
In other words, for every $g\in A^2_\alpha({\mathbb B}_n, \mathbb C^d)$ such that $\left \Vert g\right \Vert_{A^2_\alpha({\mathbb B}_n, \mathbb C^d)}=1,$ 
we have:
\begin{eqnarray}\label{Td}
&&\left \Vert T_{\beta_{k, j}}^{\mathbb C}g_k-\sum_{m=1}^{M_{k, j}}\prod_{l=1}^{L_{k, j}} T^{\mathbb C}_{a^{l, m}}g_k\right \Vert_{A^2_\alpha({\mathbb B}_n, \mathbb C)}\nonumber\\
&=&\left (\int_{{\mathbb B}_n} \left \vert T_{\beta_{k, j}}^{\mathbb C}g_k(z)-\sum_{m=1}^{M_{k, j}}\prod_{l=1}^{L_{k, j}} T^{\mathbb C}_{a^{l, m}}g_k(z)\right \vert^2 {d\nu}_\alpha (z)\right )^{\frac 12}\nonumber\\
&=& \left (\int_{{\mathbb B}_n} \left \Vert T_{\beta_{k, j}}^{\mathbb C}g_k(z)e_j-\sum_{m=1}^{M_{k, j}}\prod_{l=1}^{L_{k, j}} T^{\mathbb C}_{a^{l, m}}g_k(z)e_j\right \Vert_{\mathbb C^d}^2{d\nu}_\alpha (z)\right )^{\frac 12} \nonumber\\
&=&\left (\int_{{\mathbb B}_n} \left \Vert T_{B_{k, j}}g(z)-\sum_{m=1}^{M_{k, j}}\prod_{l=1}^{L_{k, j}} T^{\mathbb C}_{a^{l, m}}g_k(z)e_j\right \Vert_{\mathbb C^d}^2{d\nu}_\alpha (z)\right )^{\frac 12}\nonumber\\
&<&\epsilon
\end{eqnarray}
by (\ref{t1}). We used (\ref{omega}) to get the last but one equality.\\
For two scalar bounded symbols $a^1$ and $a^2$ on ${\mathbb B}_n,$ let us rewrite the composition product $T^{\mathbb C}_{a^1}\circ T^{\mathbb C}_{a^2}$ of the two associated Toeplitz operators $T^{\mathbb C}_{a^1}$ and $T^{\mathbb C}_{a^2}$ in terms of the composition product of two Toeplitz operators with $\mathcal L (\mathbb C^d)-$ valued symbols. We associate with $a^2$ the symbol $A_{k, j}^2: {\mathbb B}_n\rightarrow \mathcal L(\mathbb C^d)$ defined as follows.
$$
\langle A_{k, j}^2(u) (e_l), e_m\rangle_{\mathbb C^d}=\left \{
\begin{array}{clcr}
a^2(u) &\rm if &l=k, \hskip 1truemm m=j\\
0&\rm otherwise
\end{array}
\right.
$$

For $g\in A^2_\alpha({\mathbb B}_n, \mathbb C^d)$ and $u\in {\mathbb B}_n,$ 
we have:
\begin{eqnarray*}
\langle A_{k, j}^2(u) (g(u)), e_m\rangle_{\mathbb C^d}
&=&\sum_{l=1}^d g_l(u)\langle A_{k, j}^2(u) (e_l), e_m\rangle_{\mathbb C^d}\\
&=&g_k(u)a^2(u)\delta_{j, m}\\
&=&g_k(u)a^2(u)\langle e_j, e_m\rangle_{\mathbb C^d}.
\end{eqnarray*}

This implies that $A_{k, j}^2(u) (g(u))=g_k(u)a^2(u)e_j.$
Then for $w \in {\mathbb B}_n,$ we obtain:
\begin{eqnarray}\label{akj}
T_{A_{k, j}^2} g(w)
&=&\int_{{\mathbb B}_n} A_{k, j}^2(u)(g(u))\overline{K_w^\alpha(u)}{d\nu}_\alpha (u) \nonumber\\
&=&\int_{{\mathbb B}_n} a^2(u)g_k (u)\overline{K_w^\alpha (u)}e_j{d\nu}_\alpha (u) \nonumber\\
&=&T^{\mathbb C}_{a^2}g_k(w)e_j.
\end{eqnarray}
Hence for $z\in {\mathbb B}_n,$ the following holds:
\begin{eqnarray*}
\left (T^{\mathbb C}_{a^1}\circ T^{\mathbb C}_{a^2}\right )g_k (z)e_j
&=& T^{\mathbb C}_{a^1}\left (T^{\mathbb C}_{a^2}g_k (\cdot)\right )(z)e_j\\
&=&\int_{{\mathbb B}_n} \left (\int_{{\mathbb B}_n} a^2(u)g_k (u)\overline{K_w^\alpha(u)}{d\nu}_\alpha (u)e_j\right )a^1(w)\overline{K_z^\alpha (w)}d\nu_{\alpha}(w)\\
&=&\int_{{\mathbb B}_n} T_{A_{k, j}^2} g(w)a^1(w)\overline{K_z^\alpha (w)}d\nu_{\alpha}(w),
\end{eqnarray*}
We similarly denote by $A_{j}^1: {\mathbb B}_n\rightarrow \mathcal L(\mathbb C^d)$ the symbol defined as follows.
$$
\langle A_{j}^1(w) (e_l), e_i\rangle_{\mathbb C^d}=
\left \{
\begin{array}{clcr}
a^1(w) &\rm if &l=i=j\\
0&\rm otherwise
\end{array}
\right..
$$
So for every $G\in A^2_\alpha({\mathbb B}_n, \mathbb C^d)$ and every $i\in \{1,\cdots, d\},$ we have:
\begin{eqnarray}\label{aj}
\langle A_{j}^1(w) (G(w)), e_i\rangle_{\mathbb C^d}
&=&\sum_{l=1}^d G_l (w)\langle A_{j}^1(w) (e_l), e_i\rangle_{\mathbb C^d} \nonumber\\
&=&G_j(w)\langle A_{j}^1(w) (e_j), e_i\rangle_{\mathbb C^d} \nonumber\\
&=&G_j(w)a^1(w)\delta_{i, j} \nonumber\\
&=&G_j(w)a^1(w)\langle e_j, e_i\rangle_{\mathbb C^d}.
\end{eqnarray}
This implies that
\begin{equation}\label{aj}
A_{j}^1(w) (G(w))=G_j(w)a^1(w)e_j.
\end{equation}
Next, for $g\in A^2_\alpha({\mathbb B}_n, \mathbb C^d),$ from (\ref{akj}) we obtain that
\begin{eqnarray*}
\left (T_{A_{j}^1}\circ T_{A_{k, j}^2}\right ) g(z)
&=&T_{A_{j}^1}\left (T_{A_{k, j}^2} g(\cdot) \right )(z)\\
&=&T_{A_{j}^1}\left (T^{\mathbb C}_{a^2} g_k(\cdot)e_j\right )(z).
\end{eqnarray*}

This implies that
\begin{eqnarray*}
\left (T_{A_{j}^1}\circ T_{A_{k, j}^2}\right ) g(z)
&=&\int_{{\mathbb B}_n} A_{j}^1(w) \left (T^{\mathbb C}_{a^2}g_k(w)e_j\right )\overline{K_z^\alpha (w)}d\nu_{\alpha}(w)\\
&=&\int_{{\mathbb B}_n} T^{\mathbb C}_{a^2}g_k (w)a^1(w)e_j\overline{K_z^\alpha (w)}d\nu_{\alpha}(w)\\
&=&\left (T^{\mathbb C}_{a^1}\circ T^{\mathbb C}_{a^2}\right )g_k (z)e_j.
\end{eqnarray*}
The last but one equality follows from (\ref{aj}).
We finally point out that the symbols $A_{k, j}^2$ and $A_{j}^1$ belong to $L^\infty \left ({\mathbb B}_n, \mathcal L(\mathbb C^d)\right ).$ Generalizing this process to the more general composition product $\prod_{l=1}^{L_{k, j}} T^{\mathbb C}_{a^{l, m}},$ from (\ref{Td}) we reach to the conclusion that the Toeplitz operator $T_{B_{k, j}}$ lies in the  $d-$dimensional Toeplitz algebra $\mathcal T^d_\alpha.$
\vskip 3truemm

\subsection{Proof of Theorem \ref{thm111}}\label{ssec72}
As the particular case $E=\mathbb C^d$ in Definition \ref{def17}, we recall the definition of the Berezin transform of a continuous linear operator on $A^2_\alpha (\mathbb B_n, \mathbb C^d).$

 \begin{defn}\label{def72}
Let $S\in \mathcal L(A^2_\alpha (\mathbb B_n, \mathbb C^d)).$ The Berezin transform $\widetilde S$ is the $\mathcal L(\mathbb C^d)$-valued function $$\langle \widetilde S (z)e, h\rangle_{\mathbb C^d}=\langle S\left (k_z^\alpha e\right ), k_z^\alpha h\rangle_{A^2_\alpha (\mathbb B_n, \mathbb C^d)}.$$
\end{defn}

As the particular case $E=\mathbb C^d$ in Definition \ref{def18}, we recall the definition of the transformation $S\mapsto S^z\quad (z\in \mathbb B_n)$ on $\mathcal L(A^2_\alpha (\mathbb B_n, \mathbb C^d)).$

\begin{defn}\label{def73}
For every $z\in \mathbb B_n,$ a unitary operator $U_z$ is defined on $A^2_\alpha (\mathbb B_n, \mathbb C^d)$ by 
$$\left (U_z f\right )(w)=\left (f\circ \phi_z\right )(w)k_z^\alpha (w) \quad (f\in A^2_\alpha (\mathbb B_n, \mathbb C^d), w\in \mathbb B_n).$$
For every operator $S\in \mathcal L(A^2_\alpha (\mathbb B_n, \mathbb C^d)),$ we define the operator $S^z \in \mathcal L(A^2_\alpha (\mathbb B_n, \mathbb C^d))$ by
$$S^z:=U_zSU_z.$$ 
\end{defn}


The following lemma is the particular case $E=F=\mathbb C^d$ in Lemma \ref{lem33}.

\begin{lem}\label{lem74}
Let $b \in BMO^1_{\alpha} ({\mathbb B}_n, \mathcal L(\mathbb C^d))$ be such that $\widetilde b \in L^\infty ({\mathbb B}_n, \mathcal L(\mathbb C^d)).$ For every $z\in \mathbb B_n,$ the following three identities hold.
\begin{enumerate}
\item
$\left (T_b\right )^z=T_{b\circ \phi_z};$
\item for every $e\in \mathbb C^d,$ we have
$\left \Vert \left (T_b\right )^z(e)\right \Vert_{A^2_\alpha({\mathbb B}_n, \mathbb C^d)}=\left \Vert T_b (k_z^\alpha e)\right \Vert_{A^2_\alpha({\mathbb B}_n, \mathbb C^d)}$;
\item
$\widetilde{T_b}=\widetilde b.$
\end{enumerate}
\end{lem}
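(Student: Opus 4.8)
The statement is the special case $E = F = \mathbb{C}^d$ of Lemma \ref{lem33}, so the plan is essentially to point this out. First I would verify that the hypotheses match: requiring $b \in BMO^1_\alpha(\mathbb{B}_n, \mathcal{L}(\mathbb{C}^d))$ with $\widetilde b \in L^\infty(\mathbb{B}_n, \mathcal{L}(\mathbb{C}^d))$ is exactly the hypothesis of Lemma \ref{lem33} when $E = F = \mathbb{C}^d$. Second, I would note that the unitary $U_z$ on $A^2_\alpha(\mathbb{B}_n, \mathbb{C}^d)$, the map $S \mapsto S^z = U_z S U_z$, and the Berezin transform $\widetilde{T_b}$ occurring in the statement are, by Definitions \ref{def72} and \ref{def73}, precisely the instances for $E = \mathbb{C}^d$ of the objects introduced in Definitions \ref{def18} and \ref{def17}. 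After these two observations the three identities of Lemma \ref{lem33} specialize verbatim, and there is nothing more to do.

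For completeness I would also recall why the three identities hold in this setting. For (1): since $\mathbb{C}^d$ is a Hilbert space, $U_z$ is a self-adjoint unitary on $L^2_\alpha(\mathbb{B}_n, \mathbb{C}^d)$ that maps $A^2_\alpha(\mathbb{B}_n, \mathbb{C}^d)$ onto itself, hence commutes with the Bergman projector $P_\alpha$; writing $T_b = P_\alpha M_b$ with $M_b$ the operator of multiplication by $b$, one gets $(T_b)^z = U_z P_\alpha M_b U_z = P_\alpha(U_z M_b U_z) = P_\alpha M_{b\circ\phi_z} = T_{b\circ\phi_z}$, where $U_z M_b U_z = M_{b\circ\phi_z}$ reduces to $\phi_z \circ \phi_z = \mathrm{id}$ together with the scalar identity $k_z^\alpha(\phi_z(w))\,k_z^\alpha(w) = 1$; one also uses that $b\circ\phi_z$ again lies in $BMO^1_\alpha(\mathbb{B}_n, \mathcal{L}(\mathbb{C}^d))$ with bounded Berezin transform $\widetilde{b\circ\phi_z} = \widetilde b\circ\phi_z$, by the standard invariance of these quantities under $\mathrm{Aut}(\mathbb{B}_n)$, so that $T_{b\circ\phi_z}$ is indeed a well-defined bounded operator. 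For (2): the constant function $e$ satisfies $(U_z e)(w) = k_z^\alpha(w)\,e$, so $(T_b)^z(e) = U_z T_b(k_z^\alpha e)$, and the asserted equality of norms follows at once because $U_z$ is an isometry. For (3): by Definition \ref{def72} and Lemma \ref{lem32} applied with $f = k_z^\alpha e$ and $g = k_z^\alpha h$ (both in $H^\infty(\mathbb{B}_n, \mathbb{C}^d)$), together with Hille's Theorem (Theorem \ref{thm11}), one computes $\langle \widetilde{T_b}(z)e, h\rangle_{\mathbb{C}^d} = \int_{\mathbb{B}_n} |k_z^\alpha(w)|^2\,\langle b(w)(e), h\rangle_{\mathbb{C}^d}\,d\nu_\alpha(w) = \langle \widetilde b(z)e, h\rangle_{\mathbb{C}^d}$ for all $e, h \in \mathbb{C}^d$, i.e. $\widetilde{T_b} = \widetilde b$.

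There is no genuine obstacle here: the content is pure bookkeeping, the only point deserving a line of care being that the change-of-variables and kernel identities invoked inside Lemma \ref{lem33} are insensitive to the dimension of $E$ (the scalar factor $k_z^\alpha$ and the automorphism $\phi_z$ act identically on each coordinate), and that passing from $b$ to $b\circ\phi_z$ keeps the symbol in the class for which the Toeplitz operator is bounded — both of which are already built into Lemma \ref{lem33}.
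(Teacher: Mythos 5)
Your proposal is correct and matches the paper exactly: the paper offers no proof of this lemma beyond declaring it "the particular case $E=F=\mathbb C^d$ in Lemma \ref{lem33}," which is precisely your first paragraph. The additional sketch you supply of the three identities (the commutation $U_zP_\alpha=P_\alpha U_z$, the kernel identity $k_z^\alpha(\phi_z(w))k_z^\alpha(w)=1$, the computation of $(U_ze)(w)=k_z^\alpha(w)e$, and the use of Lemma \ref{lem32} for the Berezin transform) is accurate and consistent with the argument the paper delegates to \cite{R2016}.
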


We go over again the proof of Theorem \ref{thm111}. 

\begin{enumerate}
\item[$(1)\Rightarrow (2)$]
It was shown in \cite{R2014} that for every $e\in \mathbb C^d,$ the family $\{k^\alpha_z e: z\in \mathbb B_n\}$ converges weakly to zero on $A^2_\alpha({\mathbb B}_n, \mathbb C^d)$ when $|z|\rightarrow 1.$ Next, if $S$ is compact, then $\lim \limits_{|z|\rightarrow 1} \left \Vert S(k^\alpha_z e)\right \Vert_{A^2_\alpha({\mathbb B}_n, \mathbb C^d)}=0.$ The conclusion follows because $\left \Vert S(k^\alpha_z e)\right \Vert_{A^2_\alpha({\mathbb B}_n, \mathbb C^d)}=\left \Vert S^z e\right \Vert_{A^2_\alpha({\mathbb B}_n, \mathbb C^d)}.$
\item[$(2)\Rightarrow (3)$]
From Definition \ref{def72}, 
we get:
\begin{eqnarray*}
\left \Vert \widetilde S(z)\right \Vert_{\mathcal L\left (\mathbb C^d \right )}
&=&\sup \limits_{e, h\in \ell^2, \left \Vert e\right \Vert_{\mathbb C^d}=\left \Vert h\right \Vert_{\mathbb C^d}=1} \left \vert \langle \widetilde S (z)e, h\rangle_{\mathbb C^d}\right \vert\\
&\leq &\sup \limits_{e, h\in \mathbb C^d, \left \Vert e\right \Vert_{\mathbb C^d}=\left \Vert h\right \Vert_{\mathbb C^d}=1} \left \Vert S\left (k_z^\alpha e\right )\right \Vert_{A^2_\alpha (\mathbb B_n, \mathbb C^d)} \left \Vert k_z^\alpha h\right \Vert_{A^2_\alpha (\mathbb B_n, \mathbb C^d)}.
\end{eqnarray*}

The conclusion follows because $\left \Vert k_z^\alpha h\right \Vert_{A^2_\alpha (\mathbb B_n, \mathbb C^d)}=1.$
\item[$(3)\Rightarrow (1)$]
It was shown in \cite{RW2015} that $S$ is compact on $A^2_\alpha (\mathbb B_n, \mathbb C^d)$ under the assumptions $S\in \mathcal T^d_\alpha$ and $\lim \limits_{|z|\rightarrow 1} \left \Vert \widetilde S(z)\right \Vert_{\mathcal L\left (\mathbb C^d \right )}=0.$
\end{enumerate}

\subsection{Proof of Theorem 1.13}\label{ssec73}
Applying Lemma \ref{lem53}  and Corollary \ref{cor112},  we deduce the following corollary.
\begin{cor}\label{cor75}
Let $b \in BMO^1_{\alpha} ({\mathbb B}_n, \mathcal L(\ell^2))$ be such that $\widetilde b \in L^\infty ({\mathbb B}_n, \mathcal L(\ell^2)).$ Then the following three assertions are equivalent.
\begin{enumerate}
\item
$T_{b^{\llcorner (d)\lrcorner}}$ is compact on $A^2({\mathbb B}_n, \mathbb C^d);$
\item
$\widetilde{b^{\llcorner (d)\lrcorner}} (z)\rightarrow 0$ as $z\rightarrow \partial {\mathbb B}_n;$
\item For every $e\in \mathbb C^d$, we have
$\left \Vert T_{b^{\llcorner (d)\lrcorner}}\left (k_z^\alpha e \right )\right \Vert_{A^2({\mathbb B}_n, \mathbb C^d)}\rightarrow 0$ as $z\rightarrow \partial {\mathbb B}_n.$
\end{enumerate}
\end{cor}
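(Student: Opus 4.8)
The plan is to obtain Corollary~\ref{cor75} directly from the finite-dimensional statement Corollary~\ref{cor112}, applied to the symbol $b^{\llcorner (d)\lrcorner}\colon{\mathbb B}_n\to\mathcal L(\mathbb C^d)$, once the hypotheses of that corollary have been verified for this symbol.

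First I would record that $b^{\llcorner (d)\lrcorner}$ satisfies the standing assumptions of Corollary~\ref{cor112}. Indeed, assertion (2) of Lemma~\ref{lem53} gives $b^{\llcorner (d)\lrcorner}\in BMO^1_\alpha \left({\mathbb B}_n, \mathcal L(\mathbb C^d)\right)$, while assertion (1) of the same lemma provides the identity $\widetilde{b^{\llcorner (d)\lrcorner}}=\left(\widetilde b\right)^{\llcorner (d)\lrcorner}$ together with the estimate~(\ref{estd}); combined with the hypothesis $\widetilde b\in L^\infty ({\mathbb B}_n, \mathcal L(\ell^2))$ the latter yields $\widetilde{b^{\llcorner (d)\lrcorner}}\in L^\infty \left({\mathbb B}_n, \mathcal L(\mathbb C^d)\right)$. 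Thus the pair consisting of the positive integer $d$ and the symbol $\beta:=b^{\llcorner (d)\lrcorner}$ fulfils the hypotheses of Corollary~\ref{cor112}.

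Next I would simply invoke Corollary~\ref{cor112} with $\beta=b^{\llcorner (d)\lrcorner}$. It delivers the equivalence of the three statements: (i) $T_{b^{\llcorner (d)\lrcorner}}$ is compact on $A^2_\alpha ({\mathbb B}_n, \mathbb C^d)$; (ii) $\left\Vert T_{b^{\llcorner (d)\lrcorner}}\left(k_z^\alpha e\right)\right\Vert_{A^2_\alpha ({\mathbb B}_n, \mathbb C^d)}\to 0$ as $z\to\partial{\mathbb B}_n$ for every $e\in\mathbb C^d$; and (iii) $\lim_{|z|\to 1}\left\Vert\widetilde{b^{\llcorner (d)\lrcorner}}(z)\right\Vert_{\mathcal L(\mathbb C^d)}=0$. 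In the ordering used in Corollary~\ref{cor75}, (i) is assertion (1), (ii) is assertion (3), and (iii) is assertion (2); for the last identification one only needs that $\mathbb C^d$ is finite-dimensional, so that the statements $\widetilde{b^{\llcorner (d)\lrcorner}}(z)\to 0$ and $\left\Vert\widetilde{b^{\llcorner (d)\lrcorner}}(z)\right\Vert_{\mathcal L(\mathbb C^d)}\to 0$ coincide.

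I do not expect any genuine obstacle here: the whole content of the corollary is the passage of the hypotheses from $b$ to $b^{\llcorner (d)\lrcorner}$, which is exactly Lemma~\ref{lem53}, followed by the already-established Corollary~\ref{cor112}. The only point deserving an explicit remark is the harmless reconciliation of the two ways of phrasing assertion (2), immediate because $\dim\mathbb C^d<\infty$.
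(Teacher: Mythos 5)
Your proposal is correct and follows exactly the paper's route: the authors derive Corollary~\ref{cor75} precisely by transferring the hypotheses from $b$ to $b^{\llcorner (d)\lrcorner}$ via Lemma~\ref{lem53} and then invoking Corollary~\ref{cor112}. Your extra remark about reconciling the two phrasings of assertion (2) in finite dimension is harmless detail the paper leaves implicit.
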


Combining this corollary with Theorem \ref{thm57}  and Lemma \ref{lem74}, we get Theorem \ref{thm113}, which provides a four-fold sufficient condition for the compactness of $T_b.$

\section{The Toeplitz algebra $\mathcal T_{L^\infty_{fin}}.$ The proof of Proposition \ref{pro116}}\label{sec8}
Referring to section 2 of \cite{R2016}, the weighted Bergman space $A^2_\alpha (\mathbb B_n, \ell^2)$ is a strongly $\ell^2-$valued Bergman-type space on $\mathbb B_n.$  So all the results of this article are valid on this setting.

The Toeplitz algebra $\mathcal T_{L^\infty_{fin}}$ was defined in Definition \ref{def114}.

\subsection{Proof of Proposition \ref{pro116}}\label{ssec81}

By Lemma \ref{lem55}, we have the equality:
$$
M_{I^{(d)}}T_b M_{I^{(d)}}f=\mathcal J^{(d)}T_{b^{\llcorner (d)\lrcorner}}\left (M_{I^{(d)}}f\right )^{(d)}
$$
for every $f\in A^2_\alpha (\mathbb B_n,  \ell^2).$ It follows from Lemma \ref{lem53} and Theorem \ref{thm110} that the operator
$T_{b^{\llcorner (d)\lrcorner}}=\lim \limits_{m\rightarrow \infty} T_m$  in the $\mathcal L\left (A^2_\alpha (\mathbb B_n,  \mathbb C^d\right ))$-norm, where   $T_m=\sum_{l=1}^{L_m} \prod_{j=1}^{J_m} T_{A_{j, l}},$ with  $A_{j, l}\in L^\infty (\mathbb B_n, \mathcal L(\mathbb C^d)).$  First, for all positive integers $j, l$ and $f\in A^2_\alpha (\mathbb B_n, \ell^2),$ we have
\begin{equation}\label{equ81}
	\mathcal J^{(d)}T_{A_{j, l}}\left (M_{I^{(d)}}f\right )^{(d)}=T_{B_{j, l}}(f),
\end{equation}
where $B_{j, l}\in L^\infty_{fin}.$ The mapping $B_{j, l}: \mathbb B_n \rightarrow \mathcal L(\ell^2)$ is given by
$$
\langle B_{j, l}(w)(e_l), e_m\rangle_{\ell^2}=\left \{
\begin{array}{clcr}
	\langle A_{j, l}(w)(e_l), e_m\rangle_{\mathbb C^d}&\rm if&1\leq l, m\leq d\\
	0&\rm otherwise
\end{array}
\right.
.
$$
Secondly, since
\begin{eqnarray*}
	\mathcal J^{(d)}T_{b^{\llcorner (d)\lrcorner}}\left (M_{I^{(d)}}f\right )^{(d)}
	&=&\mathcal J^{(d)}\lim \limits_{J, L\rightarrow \infty} \sum_{l=1}^L \prod_{j=1}^J T_{A_{j, l}}\left (M_{I^{(d)}}f\right )^{(d)}\\
	&=&\lim \limits_{J, L\rightarrow \infty} \sum_{l=1}^L \mathcal J^{(d)}T_{A_{1, l}}T_{A_{2, l}}\cdots T_{A_{J, l}}\left (M_{I^{(d)}}f\right )^{(d)}
\end{eqnarray*}

(the limit is in the operator norm in $\mathcal L(A^2_\alpha (\mathbb B_n, \ell^2)),$ 
it suffices to show that the operator defined on $A^2_\alpha (\mathbb B_n, \ell^2)$ by
$$f\mapsto \mathcal J^{(d)}T_{A_{1, l}}T_{A_{2, l}}\cdots T_{A_{J, l}}\left (M_{I^{(d)}}f\right )^{(d)}\in \mathcal S^{(d)}$$
belongs to $\mathcal T_{L^\infty_{fin}}.$ 
	We have $\left (M_{I^{(d)}}f\right )^{(d)}\in A^2_\alpha \left (\mathbb B_n, \mathbb C^d\right )$ and hence $F:=T_{A_{2, l}}\cdots T_{A_{J, l}}\left (M_{I^{(d)}}f\right )^{(d)}\in A^2_\alpha \left (\mathbb B_n, \mathbb C^d\right ).$ 
	So $F=\left (M_{I^{(d)}}G\right )^{(d)}$ with $G=\mathcal J^{(d)} F.$  From (\ref{equ81}), we get:
	\begin{eqnarray*}
		\mathcal J^{(d)}T_{A_{1, l}}F
		&=&\mathcal J^{(d)}T_{A_{1, l}}\left (M_{I^{(d)}}G\right )^{(d)}\\
		&=&T_{B_{1, l}}(G)\\
		&=&T_{B_{1, l}}\mathcal J^{(d)} T_{A_{2, l}}\cdots T_{A_{J, l}}\left (M_{I^{(d)}}f\right )^{(d)}.
	\end{eqnarray*}
	
	It follows that :
	\begin{eqnarray*}
		\mathcal J^{(d)}T_{A_{1, l}}F
		&=&T_{B_{1, l}}\cdots T_{B_{J-1, l}}\mathcal J^{(d)}T_{A_{J, l}}\left (M_{I^{(d)}}f\right )^{(d)}\\
		&=&T_{B_{1, l}}T_{B_{2, l}}\cdots T_{B_{J, l}}f.
	\end{eqnarray*}

	This completes the proof of Proposition \ref{pro116}.

\subsection{A corollary}\label{ssec82}
From Proposition \ref{pro116}, Theorem \ref{thm115}, Proposition \ref{thm45} and Theorem \ref{thm110}, we easily deduce the following corollary.
\begin{cor}\label{cor81}
Let $b\in BMO^1_\alpha ({\mathbb B}_n, \mathcal L(\ell^2))$ be such that $\widetilde b\in L^\infty ({\mathbb B}_n, \mathcal L(\ell^2)).$ Suppose further that 
$$\lim \limits_{d\rightarrow \infty} \left \Vert \widetilde{\left \Vert b_{(d)}\right \Vert_{\mathcal L(\ell^2)}}\right \Vert_{L^\infty ({\mathbb B}_n, \mathbb C)}=
\lim \limits_{d\rightarrow \infty} \left \Vert \widetilde{\left \Vert {\left ((b(\cdot))^\star_{\mathcal L(\ell^2)}\right )_{(d)}}\right \Vert_{\mathcal L(\ell^2)}}\right \Vert_{L^\infty ({\mathbb B}_n, \mathbb C)}=0.$$
If 
\begin{equation}\label{iv}
\sup \limits_{e\in \mathbb C^d, \left \Vert e\right \Vert_{\mathbb C^d}=1} \lim \limits_{z\rightarrow \partial {\mathbb B}_n} \left \Vert T_b\left (k^\alpha_ze \right )\right \Vert_{A^2_\alpha({\mathbb B}_n, \ell^2)}=0
\end{equation}
for every positive integer $d,$ then $T_b$ is compact.
\end{cor}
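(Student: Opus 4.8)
The plan is to show that $T_b$ itself lies in the Toeplitz algebra $\mathcal T_{L^\infty_{fin}}$ and then to quote Theorem \ref{thm115}. Thus the whole argument is simply an assembly of Propositions \ref{thm45} and \ref{thm48}, Proposition \ref{pro116}, and Theorem \ref{thm115}, with Theorem \ref{thm110} entering only through the proof of Proposition \ref{pro116}.

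First I would convert the two Berezin-transform hypotheses into operator-norm decay of the off-diagonal truncations of $T_b$. Proposition \ref{thm45}, applied to $b$, turns $\lim_{d\to\infty}\bigl\Vert\widetilde{\left\Vert b_{(d)}(\cdot)\right\Vert_{\mathcal L(\ell^2)}}\bigr\Vert_{L^\infty(\mathbb B_n,\mathbb C)}=0$ into $\lim_{d\to\infty}\left\Vert T_bM_{I_{(d)}}\right\Vert_{\mathcal L(A^2_\alpha(\mathbb B_n,\ell^2))}=0$. For the second half I would invoke Proposition \ref{thm48} — which is exactly Proposition \ref{thm45} applied to the adjoint symbol $(b(\cdot))^\star_{\mathcal L(\ell^2)}$, legitimate because Lemma \ref{lem47} gives $(T_b)^\star=T_{(b(\cdot))^\star_{\mathcal L(\ell^2)}}$, places $(b(\cdot))^\star_{\mathcal L(\ell^2)}$ in $BMO^1_\alpha(\mathbb B_n,\mathcal L(\ell^2))$ with the same norm as $b$, and via \eqref{eqq} keeps $\widetilde{(b(\cdot))^\star_{\mathcal L(\ell^2)}}$ bounded — so that the second hypothesis produces $\lim_{d\to\infty}\left\Vert M_{I_{(d)}}T_b\right\Vert_{\mathcal L(A^2_\alpha(\mathbb B_n,\ell^2))}=0$ (here one uses that $M_{I_{(d)}}$ is self-adjoint on $A^2_\alpha(\mathbb B_n,\ell^2)$, so $\left\Vert M_{I_{(d)}}T_b\right\Vert=\left\Vert(T_b)^\star M_{I_{(d)}}\right\Vert$).

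With both limits in hand, Proposition \ref{pro116} applies in full: its first assertion already gives $M_{I^{(d)}}T_bM_{I^{(d)}}\in\mathcal T_{L^\infty_{fin}}$ for every $d$ (this is where Lemma \ref{lem53}, Lemma \ref{lem55} and Theorem \ref{thm110} are needed, since Theorem \ref{thm110} is precisely what expresses $T_{b^{\llcorner(d)\lrcorner}}$ as an operator-norm limit of finite sums of finite products of Toeplitz operators with $L^\infty(\mathbb B_n,\mathcal L(\mathbb C^d))$ symbols, which after the identification of Lemma \ref{lem55} become $L^\infty_{fin}$ Toeplitz operators on $A^2_\alpha(\mathbb B_n,\ell^2)$), and its second assertion then upgrades this to $T_b\in\mathcal T_{L^\infty_{fin}}$. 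Finally, hypothesis \eqref{iv} is word for word the hypothesis of Theorem \ref{thm115} for $S=T_b$, so Theorem \ref{thm115} yields that $T_b$ is compact. I do not expect a genuine obstacle here; the only step requiring a moment's care is the reduction to the adjoint symbol used to derive $\lim_{d\to\infty}\left\Vert M_{I_{(d)}}T_b\right\Vert=0$, namely checking that $(b(\cdot))^\star_{\mathcal L(\ell^2)}$ still meets the standing hypotheses so that Proposition \ref{thm45} (equivalently Proposition \ref{thm48}) may be applied to it.
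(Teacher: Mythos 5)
Your proposal is correct and follows essentially the same route as the paper, which deduces Corollary \ref{cor81} precisely by combining Proposition \ref{thm45} (and its adjoint version, Proposition \ref{thm48}) to get $\lim_{d\to\infty}\left\Vert T_bM_{I_{(d)}}\right\Vert=\lim_{d\to\infty}\left\Vert M_{I_{(d)}}T_b\right\Vert=0$, then Proposition \ref{pro116} to place $T_b$ in $\mathcal T_{L^\infty_{fin}}$, and finally Theorem \ref{thm115} to conclude compactness from hypothesis \eqref{iv}. Your remarks on the role of Lemma \ref{lem47} in justifying the passage to the adjoint symbol, and on Theorem \ref{thm110} entering only through Proposition \ref{pro116}, match the paper's logic.
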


\begin{rem}\label{rem82}
Under the hypotheses (ii) and (iii) of Theorem \ref{thm113}, the assertion (iv) of this theorem and  the last condition (\ref{iv})
of Corollary \ref{cor81} amount to the same, since $T_{b^{\llcorner (d)\lrcorner}}\simeq M_{I^{(d)}}T_bM_{I^{(d)}}$ according to Proposition \ref{pro56}, and 
$$\lim \limits_{d\rightarrow \infty} \left \Vert T_b-M_{I^{(d)}}T_bM_{I^{(d)}}\right \Vert_{\mathcal L\left (A^2_\alpha({\mathbb B}_n, \ell^2)\right )}=0.$$
\end{rem}

\section{Theorem \ref{thm71} (Corollary 3.6 of Sadeghi-Zorboska) without extra conditions does not extend to the infinite dimension}\label{sec9}
In this section, we prove the following theorem.
\begin{thm}\label{thm91}
The hypotheses $b\in BMO^1_{\alpha} ({\mathbb B}_n, \mathcal L(\ell^2))$ and $\widetilde {b} \in L^\infty ({\mathbb B}_n, \mathcal L(\ell^2))$ do not imply that $T_b\in \mathcal T_{L^\infty_{fin}}.$
\end{thm}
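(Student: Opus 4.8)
The plan is to exhibit an explicit counterexample and then invoke Theorem~\ref{thm115} in contrapositive form. Fix a real number $r$ with $0<r<1$, set $\tau=\chi_{r\mathbb{B}_n}$, and let $b=\tau I$ be the operator-valued symbol of Subsection~\ref{ssec22}, where $I$ denotes the identity of $\mathcal{L}(\ell^2)$. I will check that $b$ satisfies both hypotheses of the theorem, that $T_b$ is bounded but not compact on $A^2_\alpha(\mathbb{B}_n,\ell^2)$, and that nonetheless
$$\sup_{e\in\mathbb{C}^d,\ \|e\|_{\mathbb{C}^d}=1}\ \lim_{z\to\partial\mathbb{B}_n}\ \|T_b(k_z^\alpha e)\|_{A^2_\alpha(\mathbb{B}_n,\ell^2)}=0 \qquad\text{for every positive integer }d.$$
Granting these, applying Theorem~\ref{thm115} to $S=T_b$ would force $T_b$ to be compact, a contradiction; hence $T_b\notin\mathcal{T}_{L^\infty_{fin}}$, which is the assertion.

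The first facts are immediate consequences of earlier results. Since $\tau=\chi_{r\mathbb{B}_n}$ is a bounded scalar function it belongs to $BMO^1_\alpha(\mathbb{B}_n,\mathbb{C})$, and $\widetilde\tau$ is bounded because $0\le\widetilde\tau(z)\le\|k_z^\alpha\|_{A^2_\alpha}^2=1$; hence by Lemma~\ref{lem22} we have $b\in BMO^1_\alpha(\mathbb{B}_n,\mathcal{L}(\ell^2))$ and $\widetilde b\in L^\infty(\mathbb{B}_n,\mathcal{L}(\ell^2))$, so both hypotheses hold. By Corollary~\ref{cor23}, $T_b$ is bounded, and by Corollary~\ref{cor25} (the instance $\tau=\chi_{r\mathbb{B}_n}$ of Proposition~\ref{pro24}), $T_b$ is not compact since $r>0$.

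It remains to verify the boundary condition. Arguing exactly as in the computation leading to $(\ref{eq:26})$, for any vector $e$ in the span of $e_1,\dots,e_d$ and any $z\in\mathbb{B}_n$ one has
$$T_b(k_z^\alpha e)(w)=\Big(\int_{\mathbb{B}_n}\tau(u)\,k_z^\alpha(u)\,\overline{K_w^\alpha(u)}\,d\nu_\alpha(u)\Big)e=(T_\tau^{\mathbb{C}}k_z^\alpha)(w)\,e,$$
whence $\|T_b(k_z^\alpha e)\|_{A^2_\alpha(\mathbb{B}_n,\ell^2)}=\|T_\tau^{\mathbb{C}}k_z^\alpha\|_{A^2_\alpha(\mathbb{B}_n,\mathbb{C})}\,\|e\|_{\ell^2}$, which is independent of $d$. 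Now $\widetilde\tau(z)=\int_{r\mathbb{B}_n}|k_z^\alpha(w)|^2\,d\nu_\alpha(w)$, and since $|k_z^\alpha(w)|^2\le(1-r)^{-2(n+1+\alpha)}(1-|z|^2)^{n+1+\alpha}$ for $w\in r\mathbb{B}_n$, we get $\widetilde\tau(z)\to0$ as $|z|\to1$. By the compactness criterion for scalar Toeplitz operators with $BMO^1_\alpha$ symbols --- Theorem~1 of Zorboska \cite{Z2003} for $n=1$, Corollary~4.4 of \cite{AT2010} and \cite{ZLL2011} for $n\ge2$ --- the operator $T_\tau^{\mathbb{C}}$ is compact on $A^2_\alpha(\mathbb{B}_n,\mathbb{C})$. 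Since $k_z^\alpha\to0$ weakly in $A^2_\alpha(\mathbb{B}_n,\mathbb{C})$ as $|z|\to1$ (cf. \cite{R2014}), compactness gives $\|T_\tau^{\mathbb{C}}k_z^\alpha\|_{A^2_\alpha(\mathbb{B}_n,\mathbb{C})}\to0$, and the displayed limit follows for every $d$.

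I do not anticipate a genuine obstacle; the proof is a recombination of results already proved in the paper, the only point requiring care being the choice of $\tau$. It must be nonzero, so that $T_{\tau I}$ fails to be compact by Proposition~\ref{pro24}, yet have Berezin transform vanishing at the boundary, so that $T_\tau^{\mathbb{C}}$ is compact and the hypothesis of Theorem~\ref{thm115} is met. Any bounded, compactly supported, not identically zero scalar symbol would serve; $\chi_{r\mathbb{B}_n}$ with $0<r<1$ is simply the most convenient.
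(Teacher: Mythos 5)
Your proposal is correct and follows essentially the same route as the paper: the same counterexample $b=\chi_{r\mathbb{B}_n}I$, non-compactness from Proposition~\ref{pro24}/Corollary~\ref{cor25}, verification that $\left \Vert T_b(k_z^\alpha e)\right \Vert_{A^2_\alpha (\mathbb B_n, \ell^2)}\rightarrow 0$ via the scalar reduction of Lemma~\ref{lem92}, and the contradiction with Theorem~\ref{thm115}. The only cosmetic difference is that you derive the boundary decay from compactness of $T_\tau^{\mathbb C}$ plus weak convergence of $k_z^\alpha$, whereas the paper cites the equivalence $\widetilde \tau (z)\rightarrow 0 \Leftrightarrow \left \Vert T_\tau^{\mathbb C}k_z^\alpha\right \Vert \rightarrow 0$ from \cite{ZLL2011} directly.
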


In other words, without extra conditions, Theorem \ref{thm71} (\cite[Corollary 3.6]{SZ2020}) for dimension one and Theorem \ref{thm110} in finite dimension do not extend to the infinite dimension.

\begin{proof}
Our counterexample is given by a symbol of the form $b=\tau I,$ where $\tau \in BMO^1_{\alpha} ({\mathbb B}_n, \mathbb C)$ is such that $\widetilde \tau \in L^\infty ({\mathbb B}_n, \mathbb C).$ We shall need the following lemma.

\begin{lem}\label{lem92}
Let $\tau \in L^1_\alpha \left (\mathbb B_n, \mathbb C\right ).$ For every $z\in {\mathbb B}_n$ and every $e\in \ell^2$ such that $\left \Vert e\right \Vert_{\ell^2}=1,$ the following identities hold.
\begin{enumerate}
\item
$T_{\tau I}(k_z^\alpha e)(\zeta)=\left (T^{\mathbb C}_{\tau}k_z^\alpha (\zeta) \right )e \quad (\zeta \in {\mathbb B}_n);$
\item
$\left \Vert T_{\tau I}(k_z^\alpha e)\right \Vert_{A^2_\alpha (\mathbb B_n,  \ell^2)}=\left \Vert T^{\mathbb C}_{\tau}k_z^\alpha\right \Vert_{A^2_\alpha ({\mathbb B}_n, \mathbb C)},$
\end{enumerate}
where $T_\tau^{\mathbb C}$ denotes the scalar Toeplitz operator with scalar symbol $\tau.$
\end{lem}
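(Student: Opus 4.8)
The plan is to reduce both identities to the scalar computation already carried out in the proof of Proposition~\ref{pro24}, equation~(\ref{eq:26}), the only difference being that the scalar factor is now the bounded holomorphic function $k_z^\alpha$ rather than a generic $f\in H^\infty(\mathbb B_n,\mathbb C)$. First I would record that for fixed $z\in\mathbb B_n$ one has $k_z^\alpha\in H^\infty(\mathbb B_n,\mathbb C)$: since $|1-\langle w,z\rangle|\geq 1-|z|$ for every $w\in\mathbb B_n$, the function $k_z^\alpha$ is bounded on $\mathbb B_n$. Consequently $k_z^\alpha e\in H^\infty(\mathbb B_n,\ell^2)$, so $T_{\tau I}(k_z^\alpha e)$ is defined by Definition~\ref{def14} (note $\tau I\in L^1_\alpha(\mathbb B_n,\mathcal L(\ell^2))$ because $\|(\tau I)(\cdot)\|_{\mathcal L(\ell^2)}=|\tau(\cdot)|\in L^1_\alpha$), and for fixed $z,\zeta\in\mathbb B_n$ the integrand $\tau(w)k_z^\alpha(w)\overline{K_\zeta^\alpha(w)}$ is bounded by a constant multiple of $|\tau(w)|$, so $T_\tau^{\mathbb C}k_z^\alpha(\zeta)$ is a well-defined complex number for every $\zeta$.

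For assertion~(1), I would simply unwind Definition~\ref{def14}: for $\zeta\in\mathbb B_n$,
$$T_{\tau I}(k_z^\alpha e)(\zeta)=\int_{\mathbb B_n}(\tau I)(w)\bigl(k_z^\alpha(w)e\bigr)\overline{K_\zeta^\alpha(w)}\,d\nu_\alpha(w)=\int_{\mathbb B_n}\tau(w)k_z^\alpha(w)\overline{K_\zeta^\alpha(w)}\,e\,d\nu_\alpha(w).$$
Applying Hille's Theorem (Theorem~\ref{thm11}) to the bounded linear map $\mathbb C\to\ell^2$, $\lambda\mapsto\lambda e$, I can pull the fixed vector $e$ out of the Bochner integral, obtaining
$$T_{\tau I}(k_z^\alpha e)(\zeta)=\left(\int_{\mathbb B_n}\tau(w)k_z^\alpha(w)\overline{K_\zeta^\alpha(w)}\,d\nu_\alpha(w)\right)e=\bigl(T_\tau^{\mathbb C}k_z^\alpha(\zeta)\bigr)e,$$
exactly as in~(\ref{eq:26}).

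Assertion~(2) then follows by squaring norms: using the definition of the $A^2_\alpha(\mathbb B_n,\ell^2)$ norm together with part~(1),
$$\bigl\|T_{\tau I}(k_z^\alpha e)\bigr\|_{A^2_\alpha(\mathbb B_n,\ell^2)}^2=\int_{\mathbb B_n}\bigl\|\bigl(T_\tau^{\mathbb C}k_z^\alpha(\zeta)\bigr)e\bigr\|_{\ell^2}^2\,d\nu_\alpha(\zeta)=\|e\|_{\ell^2}^2\int_{\mathbb B_n}\bigl|T_\tau^{\mathbb C}k_z^\alpha(\zeta)\bigr|^2\,d\nu_\alpha(\zeta),$$
and since $\|e\|_{\ell^2}=1$ the right-hand side equals $\|T_\tau^{\mathbb C}k_z^\alpha\|_{A^2_\alpha(\mathbb B_n,\mathbb C)}^2$; taking square roots completes the argument (with the understanding that both sides are simultaneously finite or infinite). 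There is no genuine obstacle here: the only points deserving a line of justification are the membership $k_z^\alpha\in H^\infty(\mathbb B_n,\mathbb C)$, needed to legitimately apply $T_{\tau I}$, and the use of Hille's Theorem to move $e$ through the Bochner integral — both routine.
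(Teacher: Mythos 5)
Your proposal is correct and follows essentially the same route as the paper: the paper's proof of assertion (1) simply invokes equation (\ref{eq:26}) with $f=k_z^\alpha$, and assertion (2) is obtained by squaring the $A^2_\alpha(\mathbb B_n,\ell^2)$ norm and using $\|e\|_{\ell^2}=1$, exactly as you do. The extra justifications you supply (boundedness of $k_z^\alpha$ and Hille's Theorem to extract $e$ from the Bochner integral) are routine details the paper leaves implicit.
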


\begin{proof}

\begin{enumerate}
\item
	The proof uses (\ref{eq:26}) with $f(w)=k^\alpha_z(w)$.

\item
It follows from (1) that
\begin{eqnarray*}
\left \Vert T_{\tau I}(k_z^\alpha e)\right \Vert_{A^2_\alpha (\mathbb B_n,  \ell^2)}^2
&=&\int_{{\mathbb B}_n} \left \vert T^{\mathbb C}_{\tau}k_z^\alpha\right \vert^2{d\nu}_\alpha\\
&=&\left \Vert T^{\mathbb C}_{\tau}k_z^\alpha\right \Vert_{A^2_\alpha({\mathbb B}_n, \mathbb C)}^2.
\end{eqnarray*}

\end{enumerate}
\end{proof}

We next prove the theorem. We take $\tau=\chi_{r\mathbb B_n}$ with $r\in (0, 1).$ For this function, we have $\widetilde \tau (z)\rightarrow 0 \quad (z\rightarrow \partial \mathbb B_n)$ or equivalently in view of \cite{ZLL2011}, $\lim \limits_{z\rightarrow \partial {\mathbb B}_n} \left \Vert T^{\mathbb C}_{\tau}k_z^\alpha\right \Vert_{A^2_\alpha({\mathbb B}_n, \mathbb C)}=0,$ which by assertion (2) of Lemma \ref{lem92}, is equivalent to
$\lim \limits_{z\rightarrow \partial {\mathbb B}_n} \left \Vert T_{\tau I}(k_z^\alpha e)\right \Vert_{A^2_\alpha (\mathbb B_n,  \ell^2)}^2=0.$ Suppose on the contrary that $T_b\in \mathcal T_{L^\infty_{fin}}.$ Then by Theorem \ref{thm115}, $T_b,$ with $b=\tau I,$ must be compact on $A^2_\alpha (\mathbb B_n,  \ell^2).$ We have reached to a contradiction because according to Proposition \ref{pro24} and Corollary \ref{cor25}, this is not the case unless $\tau$ vanishes identically on $\mathbb B_n.$ 
\end{proof}
	\section{Sufficiently localized operators}\label{sec10}
\subsection{The Space $BMO^1_{\alpha} ({\mathbb B}_n, \mathcal L(\ell^2, \ell^1\cap \ell^2))$}\label{ssec101}
Two Banach spaces $E$ and $F$ are said to be compatible if they are continuously embedded in a Hausdorff topological vector space $Z.$ In this case, equipped with the norm $\left \Vert x\right \Vert :=\max \left \{||x||_E, ||x||_F\right \},$
the vector space $E\cap F$ is a Banach space. An example is given by the Banach sequence spaces $\ell^1$ and $\ell^2$ which are continuously embedded  in the  Hausdorff topological vector space consisting of infinite complex sequences, endowed with the convergence in measure (the counting measure). Equipped with the norm $\left \Vert x\right \Vert :=\max \left \{||x||_{\ell^1}, ||x||_{\ell^2}\right \},$ the space $\ell^1\cap \ell^2$ is then a Banach space.

\begin{lem}\label{lem101}
The following identity holds.
$$BMO^1_{\alpha} \left ({\mathbb B}_n, \mathcal L(\ell^2, \ell^1\cap \ell^2)\right )= BMO^1_{\alpha} \left ({\mathbb B}_n, \mathcal L(\ell^2)\right )\cap BMO^1_{\alpha}\left ({\mathbb B}_n, \mathcal L(\ell^2, \ell^1)\right )$$
with equivalence of norms.
\end{lem}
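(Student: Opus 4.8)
The plan is to prove the set-theoretic equality together with the norm equivalence by unwinding the definition of $BMO^1_\alpha$ in each of the three spaces and using the definition of the intersection norm $\left \Vert x\right \Vert = \max\left \{\left \Vert x\right \Vert_{\ell^1}, \left \Vert x\right \Vert_{\ell^2}\right \}$ on $\ell^1\cap\ell^2$. First I would observe that a function $b:{\mathbb B}_n\rightarrow \mathcal L(\ell^2,\ell^1\cap\ell^2)$ is \emph{the same data} as a pair of functions $b:{\mathbb B}_n\rightarrow \mathcal L(\ell^2,\ell^2)$ and $b:{\mathbb B}_n\rightarrow \mathcal L(\ell^2,\ell^1)$ that happen to agree pointwise (this uses that $\ell^1\cap\ell^2\hookrightarrow\ell^2$ and $\ell^1\cap\ell^2\hookrightarrow\ell^1$ continuously, with the relevant operator norm being comparable to the max of the two operator norms: for $T\in\mathcal L(\ell^2,\ell^1\cap\ell^2)$ one has $\left \Vert T\right \Vert_{\mathcal L(\ell^2,\ell^1\cap\ell^2)} = \sup_{\left \Vert x\right \Vert_{\ell^2}=1}\max\left \{\left \Vert Tx\right \Vert_{\ell^1},\left \Vert Tx\right \Vert_{\ell^2}\right \} $, which lies between $\max\left \{\left \Vert T\right \Vert_{\mathcal L(\ell^2,\ell^1)},\left \Vert T\right \Vert_{\mathcal L(\ell^2,\ell^2)}\right \}$ and twice that quantity). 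So the pointwise norm identity $\left \Vert T\right \Vert_{\mathcal L(\ell^2,\ell^1\cap\ell^2)}\simeq \max\left \{\left \Vert T\right \Vert_{\mathcal L(\ell^2,\ell^1)},\left \Vert T\right \Vert_{\mathcal L(\ell^2,\ell^2)}\right \}$ is the computational heart of the lemma.

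Next I would compute the Berezin transform in the three settings and check consistency. By the definition in Definition \ref{def13}, $\widetilde b(z)=\int_{{\mathbb B}_n}\left \vert k_z^\alpha(w)\right \vert^2 b(w)\,d\nu_\alpha(w)$, a Bochner integral valued in whichever Banach space is under consideration; Hille's Theorem (Theorem \ref{thm11}), applied to the continuous inclusion maps $\mathcal L(\ell^2,\ell^1\cap\ell^2)\hookrightarrow\mathcal L(\ell^2,\ell^2)$ and $\mathcal L(\ell^2,\ell^1\cap\ell^2)\hookrightarrow\mathcal L(\ell^2,\ell^1)$, shows that the $\mathcal L(\ell^2,\ell^1\cap\ell^2)$-valued Berezin transform of $b$ maps to the $\mathcal L(\ell^2,\ell^2)$- and $\mathcal L(\ell^2,\ell^1)$-valued Berezin transforms of $b$. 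Hence $\widetilde b$ is well-defined simultaneously in all three senses and agrees. (One small point: for the integral defining $\widetilde b$ to make sense as an $\mathcal L(\ell^2,\ell^1\cap\ell^2)$-valued Bochner integral we need $b\in L^1_\alpha({\mathbb B}_n,\mathcal L(\ell^2,\ell^1\cap\ell^2))$; but the $BMO^1_\alpha$ hypothesis in each space gives integrability there, and by the norm comparison of the first step, $L^1_\alpha$-integrability in the $\mathcal L(\ell^2,\ell^1\cap\ell^2)$ sense is equivalent to $L^1_\alpha$-integrability in both the $\mathcal L(\ell^2,\ell^2)$ and the $\mathcal L(\ell^2,\ell^1)$ senses, so there is no circularity.)

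Then I would combine these. For fixed $z\in{\mathbb B}_n$, by the pointwise norm comparison applied to $T = b\circ\varphi_z(w)-\widetilde b(z)$,
\[
\left \Vert b\circ\varphi_z - \widetilde b(z)\right \Vert_{L^1_\alpha({\mathbb B}_n,\mathcal L(\ell^2,\ell^1\cap\ell^2))}
\simeq \max\left \{\left \Vert b\circ\varphi_z - \widetilde b(z)\right \Vert_{L^1_\alpha({\mathbb B}_n,\mathcal L(\ell^2))},\ \left \Vert b\circ\varphi_z - \widetilde b(z)\right \Vert_{L^1_\alpha({\mathbb B}_n,\mathcal L(\ell^2,\ell^1))}\right \},
\]
where I am using that the same $\widetilde b(z)$ serves as the subtracted constant in all three norms (the previous paragraph). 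Taking the supremum over $z\in{\mathbb B}_n$ and using that $\sup_z\max\left \{A(z),B(z)\right \}=\max\left \{\sup_z A(z),\sup_z B(z)\right \}$, I get
\[
\left \Vert b\right \Vert_{BMO^1_\alpha({\mathbb B}_n,\mathcal L(\ell^2,\ell^1\cap\ell^2))}\simeq \max\left \{\left \Vert b\right \Vert_{BMO^1_\alpha({\mathbb B}_n,\mathcal L(\ell^2))},\ \left \Vert b\right \Vert_{BMO^1_\alpha({\mathbb B}_n,\mathcal L(\ell^2,\ell^1))}\right \},
\]
which is finite if and only if both norms on the right are finite; this is exactly the claimed identity of spaces with equivalence of norms.

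The main obstacle, such as it is, is purely bookkeeping: making sure that ``the Berezin transform'' means the same element of $\ell^1\cap\ell^2$-operators regardless of which ambient space one computes it in, so that in the $BMO^1_\alpha$ seminorm one is subtracting \emph{one} constant and not two unrelated ones. This is handled cleanly by Hille's Theorem as above. A secondary subtlety is that $\mathcal L(\ell^2,\ell^1\cap\ell^2)$ need not be separable, so one should either invoke the version of the Bochner theory that does not require separability (as the paper implicitly does throughout, e.g. in Lemma \ref{lem53}) or note that the relevant integrands take values in a separable subspace determined by $b$; in either case no genuine difficulty arises.
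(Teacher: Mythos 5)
Your proposal is correct and follows essentially the same route as the paper: first establish the pointwise operator-norm equivalence $\left\Vert T\right\Vert_{\mathcal L(\ell^2,\ell^1\cap\ell^2)}\simeq\max\left\{\left\Vert T\right\Vert_{\mathcal L(\ell^2)},\left\Vert T\right\Vert_{\mathcal L(\ell^2,\ell^1)}\right\}$ (with the same constants, via $\max\{a,b\}\le a+b\le 2\max\{a,b\}$), then transfer it through the $L^1_\alpha$ integral and the supremum over $z$ to the $BMO^1_\alpha$ seminorms. Your additional remark that Hille's Theorem applied to the continuous inclusions guarantees the three Berezin transforms coincide, so that a single constant $\widetilde b(z)$ is subtracted in all three seminorms, is a point the paper leaves implicit, but it does not change the argument.
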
 

\begin{proof}
\begin{enumerate}
\item
We first show that $\mathcal L(\ell^2, \ell^1\cap \ell^2)= \mathcal L(\ell^2)\cap \mathcal L(\ell^2, \ell^1)$ with equivalence of norms. For $\Phi \in \mathcal L(\ell^2, \ell^1\cap \ell^2),$ we have
\begin{eqnarray*}
\left \Vert \Phi\right \Vert_{\mathcal L(\ell^2, \ell^1\cap \ell^2)}
&=&\sup \limits_{\left \Vert e\right \Vert_{\ell^2}=1} \left \Vert \Phi(e)\right \Vert_{\ell^1\cap \ell^2}\\
&=&\sup \limits_{\left \Vert e\right \Vert_{\ell^2}=1} \max \left \{\left \Vert \Phi(e)\right \Vert_{\ell^1}, \left \Vert \Phi(e)\right \Vert_{\ell^2} \right \}\\
&\geq& \sup \limits_{\left \Vert e\right \Vert_{\ell^2}=1} \left \Vert \Phi(e)\right \Vert_{\ell^r}\qquad (r=1, 2)\\
&=&\left \Vert \Phi\right \Vert_{\mathcal L(\ell^2, \ell^r)} \qquad (r=1, 2). 
\end{eqnarray*}

So
\begin{eqnarray*}
\left \Vert \Phi\right \Vert_{\mathcal L(\ell^2, \ell^1\cap \ell^2)}
&\geq& \max \left \{\left \Vert \Phi\right \Vert_{\mathcal L(\ell^2)},\left \Vert \Phi\right \Vert_{\mathcal L(\ell^2, \ell^1)} \right \}\\
&=&\left \Vert \Phi\right \Vert_{\mathcal L(\ell^2)\cap  \mathcal L(\ell^2, \ell^1)}. 
\end{eqnarray*}

Conversely, for $\Phi \in \mathcal L(\ell^2)\cap \mathcal L(\ell^2, \ell^1),$ we have
\begin{eqnarray*}
\left \Vert \Phi\right \Vert_{\mathcal L(\ell^2)\cap  \mathcal L(\ell^2, \ell^1)}
&=&\max \left \{\left \Vert \Phi\right \Vert_{\mathcal L(\ell^2)},\left \Vert \Phi\right \Vert_{\mathcal L(\ell^2, \ell^1)} \right \}\\
&=&\max \left \{\sup \limits_{\left \Vert e\right \Vert_{\ell^2}=1} \left \Vert \Phi(e)\right \Vert_{\ell^2}, \sup \limits_{\left \Vert e\right \Vert_{\ell^2}=1} \left \Vert \Phi(e)\right \Vert_{\ell^1}    \right \}\\
&\geq& \frac 12\left \{\sup \limits_{\left \Vert e\right \Vert_{\ell^2}=1} \left \Vert \Phi(e)\right \Vert_{\ell^2}+ \sup \limits_{\left \Vert e\right \Vert_{\ell^2}=1} \left \Vert \Phi(e)\right \Vert_{\ell^1} \right \}\\
&\geq & \frac 12 \sup \limits_{\left \Vert e\right \Vert_{\ell^2}=1} \left \{\left \Vert \Phi(e)\right \Vert_{\ell^2}+\left \Vert \Phi(e)\right \Vert_{\ell^1} \right \}\\
&\geq & \frac 12 \sup \limits_{\left \Vert e\right \Vert_{\ell^2}=1} \max \left \{\left \Vert \Phi(e)\right \Vert_{\ell^2},\left \Vert \Phi(e)\right \Vert_{\ell^1} \right \}\\
&=&\frac 12 \sup \limits_{\Vert e\Vert_{\ell^2}=1} \left \Vert \Phi(e)\right \Vert_{\ell^1\cap \ell^2}= \frac 12 \left \Vert \Phi\right \Vert_{\mathcal L(\ell^2, \ell^1\cap \ell^2)}.
\end{eqnarray*}

\item
We next show that
 $$BMO^1_{\alpha} \left ({\mathbb B}_n, \mathcal L(\ell^2, \ell^1\cap \ell^2)\right )= BMO^1_{\alpha} \left ({\mathbb B}_n, \mathcal L(\ell^2)\right )\cap BMO^1_{\alpha}\left ({\mathbb B}_n, \mathcal L(\ell^2, \ell^1)\right )$$
with equivalence of norms. For $b\in BMO^1_{\alpha} \left (\mathbb B_n, \mathcal L(\ell^2, \ell^1\cap \ell^2)\right ),$ in view of the definition and the first part of the proof, we have, for every $r=1, 2.$ :
\begin{eqnarray*}
\left \Vert b\right \Vert_{BMO^1_{\alpha} \left({\mathbb B}_n, \mathcal L(\ell^2, \ell^1\cap \ell^2\right ))}
&=&\sup_{z\in \mathbb B_n} \int_{{\mathbb B}_n} \left \Vert b\circ \varphi_z (w)-\tilde b(z)\right \Vert_{\mathcal L(\ell^2, \ell^1\cap \ell^2)}d\nu_{\alpha}(w) \\
&\geq & \sup_{z\in \mathbb B_n} \int_{\mathbb B_n} \left\Vert b\circ \varphi_z (w)-\tilde{b}(z)\right\Vert_{\mathcal L(\ell^2, \ell^r)}d\nu_{\alpha}(w) \\
&=&\left \Vert b\right \Vert_{BMO^1_{\alpha} \left(\mathbb B_n, \mathcal L(\ell^2, \ell^r)\right)}
\end{eqnarray*}
We deduce that
$\left \Vert b\right \Vert_{BMO^1_{\alpha} \left ({\mathbb B}_n, \mathcal L(\ell^2, \ell^1\cap \ell^2\right ))}\geq \left \Vert b\right \Vert_{BMO^1_{\alpha} \left ({\mathbb B}_n, \mathcal L(\ell^2)\right )\cap BMO^1_{\alpha}\left ({\mathbb B}_n, \mathcal L(\ell^2, \ell^1)\right )}.$

Conversely, for $b\in BMO^1_{\alpha} \left ({\mathbb B}_n, \mathcal L(\ell^2)\right )\cap BMO^1_{\alpha}\left ({\mathbb B}_n, \mathcal L(\ell^2, \ell^1)\right ),$ 
we have:
\begin{eqnarray*}
&&\left \Vert b\right \Vert_{BMO^1_{\alpha} \left ({\mathbb B}_n, \mathcal L(\ell^2)\right )\cap BMO^1_{\alpha}\left ({\mathbb B}_n, \mathcal L(\ell^2, \ell^1)\right )} \\
&=& \max \left \lbrace\Vert b \Vert_{BMO^1_{\alpha} \left ({\mathbb B}_n, \mathcal L(\ell^2)\right )}, \left \Vert b\right \Vert_{BMO^1_{\alpha} \left({\mathbb B}_n, \mathcal L(\ell^2, \ell^1) \right)}\right\rbrace \\
&=&\max \left \{\sup \limits_{z\in \mathbb B_n} \int_{\mathbb B_n} \left \Vert b\circ \varphi_z (w)-\tilde b(z)\right \Vert_{\mathcal L(\ell^2)}d\nu_{\alpha}(w), \sup \limits_{z\in \mathbb B_n} \int_{\mathbb B_n} \left \Vert b\circ \varphi_z (w)-\tilde b(z)\right \Vert_{\mathcal L(\ell^2, \ell^1)}d\nu_{\alpha}(w)\right \} \\
&\geq & \frac 12\left \{\sup \limits_{z\in {\mathbb B}_n} \int_{{\mathbb B}_n} \left \Vert b\circ \varphi_z (w)-\tilde b(z)\right \Vert_{\mathcal L(\ell^2)}d\nu_\alpha (z)+ \sup \limits_{z\in {\mathbb B}_n} \int_{{\mathbb B}_n} \left \Vert b\circ \varphi_z (w)-\tilde b(z)\right \Vert_{\mathcal L(\ell^2, \ell^1)}d\nu_{\alpha}(w)\right \} \\
&\geq & \frac 12\left \{\sup \limits_{z\in {\mathbb B}_n} \left \{\int_{{\mathbb B}_n} \left (\left \Vert b\circ \varphi_z (w)-\tilde b(z)\right \Vert_{\mathcal L(\ell^2)}+ \left \Vert b\circ \varphi_z (w)-\tilde b(z)\right \Vert_{\mathcal L(\ell^2, \ell^1)}\right )d\nu_{\alpha}(w)\right \}\right \} \\
&\geq & \frac 12 \sup \limits_{z\in {\mathbb B}_n} \int_{{\mathbb B}_n} \max \left \{\left \Vert b\circ \varphi_z (w)-\tilde b(z)\right \Vert_{\mathcal L(\ell^2)}, \left \Vert b\circ \varphi_z (w)-\tilde b(z)\right \Vert_{\mathcal L(\ell^2, \ell^1)} \right \}d\nu_{\alpha}(w)\\
&=& \frac 12\sup \limits_{z\in {\mathbb B}_n} \int_{{\mathbb B}_n} \left \Vert b\circ \varphi_z (w)-\tilde b(z)\right \Vert_{\mathcal L(\ell^2)\cap \mathcal L(\ell^2, \ell^1)}d\nu_{\alpha}(w) \\
&\geq & \frac 14\sup \limits_{z\in {\mathbb B}_n} \int_{{\mathbb B}_n} \left \Vert b\circ \varphi_z (w)-\tilde b(z)\right \Vert_{\mathcal L(\ell^2, \ell^1\cap \ell^2)}d\nu_{\alpha}(w)\\
&=&\frac 14\left \Vert b\right \Vert_{BMO^1_{\alpha} \left ({\mathbb B}_n, \mathcal L(\ell^2, \ell^1\cap \ell^2\right ))}.
\end{eqnarray*}

For the last inequality, we used the first part of the proof.
\end{enumerate}
\end{proof}		

\subsection{Sufficiently localized operators. Proof of Theorem \ref{thm119}}\label{ssec102}
Sufficiently localized operators were defined in Definition \ref{def117}. Here, we consider the case where $E=F=\ell^2.$
The left-hand side of (\ref{3.2}) can be written as follows:

\begin{eqnarray*}
\left (\int_{{\mathbb B}_n} \left (\sum_{i=1}^\infty \left \vert \langle S^z e_j(u), e_i\rangle_{\ell^2}\right \vert \right )^p{d\nu}_\alpha (u)   \right )^{\frac 1p}
&=&\left (\int_{{\mathbb B}_n} \left \Vert S^z e_j)(u)\right \Vert_{\ell^1}^p{d\nu}_\alpha (u)   \right )^{\frac 1p}\\
&=&\left \Vert S^z e_j)\right \Vert_{L^p_\alpha ({\mathbb B}_n, \ell^1)}.
\end{eqnarray*}
		
In particular, for $b\in L^1_\alpha (\mathbb B_n, \mathcal L(\ell^2)),$ according to assertion (1) of Lemma \ref{lem33}, the Toeplitz operator $T_b$ satisfies the condition (\ref{3.2}) if
$$\sup \limits_{j} \hskip 1truemm \sup \limits_{z\in {\mathbb B}_n} \left \Vert T_{b\circ \varphi_z}(e_j)\right \Vert_{A^p_\alpha ({\mathbb B}_n, \ell^1)}<\infty$$
for some real number $p\textgreater \frac {n+2+2\alpha}{1+\alpha}.$

We next prove Theorem \ref{thm119}. We shall show that under the hypotheses, $T_b$ satisfies the conditions (\ref{3.2}) and (\ref{3.1}), in Definition \ref{def117}, for every positive number $p.$

Let $p$ be an arbitrary positive number. For condition (\ref{3.2}), we prove more precisely  that for every positive integer $j$ and every $z\in {\mathbb B}_n,$ the following estimate holds. 
$$\left \Vert T_{b\circ \varphi_z} e_j\right \Vert_{L^p_\alpha ({\mathbb B}_n, \ell^1)}\lesssim ||| b|||_{BMO^1_{\alpha} ({\mathbb B}_n, \mathcal L(\ell^2, \ell^1))},$$
where $||| b|||_{BMO^1_{\alpha} ({\mathbb B}_n, \mathcal L(\ell^2, \ell^1))}=\Vert \widetilde{b}(0)\Vert_{\mathcal L(\ell^2, \ell^1)} + \Vert b\Vert_{BMO^1_{\alpha} ({\mathbb B}_n, \mathcal L(\ell^2, \ell^1))}. $
We shall need the following lemma.
\begin{lem}\label{lem102}
\begin{enumerate}
\item[(1)]
For every $b \in L^1_\alpha ({\mathbb B}_n, \mathcal L(\ell^2, \ell^1))$ and all $z, w\in {\mathbb B}_n,$ we have the equality:
$$\widetilde{b\circ \varphi_z} (w)=\widetilde b\circ \varphi_z (w).$$
\item[(2)]
Let $b \in BMO^1_{\alpha} ({\mathbb B}_n, \mathcal L(\ell^2, \ell^1)).$ For all $z\in {\mathbb B}_n$ and $j=1, 2, \cdots,$ we have $(b\circ \varphi_z)(e_j)\in BMO^1_{\alpha} ({\mathbb B}_n, \ell^1).$ More precisely, the following estimate holds.
$$\left \Vert (b\circ \varphi_z)(e_j)\right \Vert_{BMO^1_{\alpha} ({\mathbb B}_n, \ell^1)}\leq \left \Vert b\right \Vert_{BMO^1_{\alpha} ({\mathbb B}_n, \mathcal L(\ell^2, \ell^1))}.$$
\end{enumerate}
\end{lem}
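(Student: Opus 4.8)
The plan is to derive both parts from three standard facts about the weighted measure $d\nu_\alpha$, all of which pass to the Bochner (operator-valued) setting unchanged because the $\mathcal L(\ell^2,\ell^1)$-valued integrand only ever meets scalar weights: (i) the change-of-variables formula $\int_{{\mathbb B}_n} g(\varphi_a(w))\,d\nu_\alpha(w)=\int_{{\mathbb B}_n} g(w)\,|k_a^\alpha(w)|^2\,d\nu_\alpha(w)$ for $a\in{\mathbb B}_n$; (ii) the invariance of $d\nu_\alpha$ under the unitary group of $\mathbb C^n$ together with the elementary identity $|k_c^\alpha(U^\ast v)|^2=|k_{Uc}^\alpha(v)|^2$ (a consequence of $\langle U^\ast v,c\rangle=\langle v,Uc\rangle$ and $|Uc|=|c|$); and (iii) the decomposition $\varphi_z\circ\varphi_a=U\varphi_c$ of a product of two symmetries into a unitary and a single symmetry, with $c=\varphi_a(z)$ and $U(c)=\varphi_z(a)$. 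Since $b\in BMO^1_{\alpha}({\mathbb B}_n,\mathcal L(\ell^2,\ell^1))$ forces $b\in L^1_\alpha({\mathbb B}_n,\mathcal L(\ell^2,\ell^1))$, fact (i) with the scalar weight $|k_z^\alpha|^2$, which is bounded on ${\mathbb B}_n$ for fixed $z$, shows $b\circ\varphi_z\in L^1_\alpha({\mathbb B}_n,\mathcal L(\ell^2,\ell^1))$, so every Berezin transform below is well defined; for part (1) only $b\in L^1_\alpha$ is needed and the same remark applies.

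For assertion (1): starting from $\widetilde{b\circ\varphi_z}(w)=\int_{{\mathbb B}_n}|k_w^\alpha(u)|^2\,b(\varphi_z(u))\,d\nu_\alpha(u)$, I would apply (i) with $a=w$ (read from right to left) to absorb the weight, obtaining $\int_{{\mathbb B}_n} b(\varphi_z(\varphi_w(s)))\,d\nu_\alpha(s)$. Inserting (iii) with $a=w$, say $\varphi_z\circ\varphi_w=U\varphi_c$, $c=\varphi_w(z)$, $U(c)=\varphi_z(w)$, this becomes $\int_{{\mathbb B}_n} b(U\varphi_c(s))\,d\nu_\alpha(s)$; peeling off $\varphi_c$ by (i) again gives $\int_{{\mathbb B}_n} b(U(t))\,|k_c^\alpha(t)|^2\,d\nu_\alpha(t)$, and then the change of variables $v=U(t)$ together with (ii) turns this into $\int_{{\mathbb B}_n} b(v)\,|k_{Uc}^\alpha(v)|^2\,d\nu_\alpha(v)=\widetilde b(Uc)=\widetilde b(\varphi_z(w))$, which is the claimed identity.

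For assertion (2): put $g:=(b\circ\varphi_z)(e_j)$, that is, the $\ell^1$-valued function $w\mapsto b(\varphi_z(w))(e_j)$; it belongs to $L^1_\alpha({\mathbb B}_n,\ell^1)$ because $\|g(\cdot)\|_{\ell^1}\le\|(b\circ\varphi_z)(\cdot)\|_{\mathcal L(\ell^2,\ell^1)}$. Applying Hille's Theorem (Theorem \ref{thm11}) to the bounded linear operator $\mathcal L(\ell^2,\ell^1)\ni T\mapsto T(e_j)\in\ell^1$ gives $\widetilde g(a)=\widetilde{b\circ\varphi_z}(a)(e_j)=\widetilde b(\varphi_z(a))(e_j)$, the last step by assertion (1). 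Then for each fixed $a\in{\mathbb B}_n$ one has the pointwise bound $\|g(\varphi_a(w))-\widetilde g(a)\|_{\ell^1}=\|\bigl(b(\varphi_z(\varphi_a(w)))-\widetilde b(\varphi_z(a))\bigr)(e_j)\|_{\ell^1}\le\|b(\varphi_z(\varphi_a(w)))-\widetilde b(\varphi_z(a))\|_{\mathcal L(\ell^2,\ell^1)}$, using $\|e_j\|_{\ell^2}=1$. Integrating in $w$ and running exactly the chain of substitutions from part (1) — now with (iii) as stated, $\varphi_z\circ\varphi_a=U\varphi_c$, $c=\varphi_a(z)$, $U(c)=\varphi_z(a)$ — converts $\int_{{\mathbb B}_n}\|b(\varphi_z(\varphi_a(w)))-\widetilde b(\varphi_z(a))\|_{\mathcal L(\ell^2,\ell^1)}\,d\nu_\alpha(w)$ into $\int_{{\mathbb B}_n}\|b(\varphi_{Uc}(s))-\widetilde b(Uc)\|_{\mathcal L(\ell^2,\ell^1)}\,d\nu_\alpha(s)\le\|b\|_{BMO^1_{\alpha}({\mathbb B}_n,\mathcal L(\ell^2,\ell^1))}$. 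Taking the supremum over $a$ yields $\|(b\circ\varphi_z)(e_j)\|_{BMO^1_{\alpha}({\mathbb B}_n,\ell^1)}\le\|b\|_{BMO^1_{\alpha}({\mathbb B}_n,\mathcal L(\ell^2,\ell^1))}$, in particular $(b\circ\varphi_z)(e_j)\in BMO^1_{\alpha}({\mathbb B}_n,\ell^1)$.

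The computations are elementary; the only points needing care are orientation-of-substitution bookkeeping — keeping straight whether $w=\varphi_a(z)$ or $z=\varphi_a(w)$ is meant, and where the unitary $U$ sits in $\varphi_z\circ\varphi_a=U\varphi_c$ — and correctly transporting the scalar weight across the unitary via $\langle U^\ast v,c\rangle=\langle v,Uc\rangle$ and $|Uc|=|c|$. One should also check that Hille's Theorem is applied to a genuinely Bochner-integrable map, which it is, being $b\circ\varphi_z$ times a bounded scalar weight. Apart from that, the argument is a faithful transplant of the scalar Möbius-invariance of $BMO^1_{\alpha}$ to the operator-valued, two-space situation.
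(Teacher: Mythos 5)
Your proof is correct and follows essentially the same route as the paper's: both rest on the M\"obius change-of-variables formula, the decomposition of $\varphi_z\circ\varphi_a$ into a unitary composed with a single symmetry (the paper puts the unitary on the right, $\varphi_z\circ\varphi_w=\varphi_{\varphi_z(w)}\circ U$, you on the left, which costs you one extra transport of the unitary across the weight via $|k_c^\alpha(U^\ast v)|=|k_{Uc}^\alpha(v)|$), and Hille's theorem to extract the $e_j$-component. The only cosmetic difference is that in part (1) the paper invokes the kernel identity $|k_w^\alpha(\varphi_z(\zeta))|\,|k_z^\alpha(\zeta)|=|k_{\varphi_z(w)}^\alpha(\zeta)|$ directly, which you in effect rederive from your facts (i)--(iii).
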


\begin{proof}[Proof of the lemma]
\begin{enumerate}
\item[(1)]
The point is the identity $\left \vert k_w\left (\varphi_z(\zeta)\right )\right \vert \left \vert k_z(\zeta)\right \vert=\left \vert k_{\varphi_z (w)} (\zeta)\right \vert$ \cite{ZLL2011} (cf. also \cite[p. 122]{D2024}) and the change of variables formula \cite[Proposition 1.13]{Z2005}:
$$\int_{\mathbb B_n} b\circ \varphi_a (z)d\nu_\alpha (z)=\int_{\mathbb B_n} b(z)\frac {(1-|a|^2)^{n+1+\alpha}}{\left \vert 1-\langle z, a\rangle\right \vert^{2(n+1+\alpha)}}d\nu_\alpha (z)$$
\item[(2)]
We easily check that $\varphi_z \circ \varphi_w=\varphi_{\varphi_z (w)}\circ U,$ where $U$ is a unitary transformation of $\mathbb C^n.$ Combining with (1), we obtain:

\begin{eqnarray*}
&&\left \Vert  \left (\left (b\circ \varphi_z\right )\circ \varphi_w-\widetilde {b\circ \varphi_z} (w)\right )(e_j)\right \Vert_{L^1_\alpha ({\mathbb B}_n, \ell^1)}\\
&=&\int_{{\mathbb B}_n} \left \Vert \left (b\circ \varphi_{\varphi_z (w)}(u)-\widetilde b\circ \varphi_z (w) \right )(e_j)\right \Vert_{\ell^1}{d\nu}_\alpha (u)\\
&\leq & \int_{{\mathbb B}_n} \left \Vert b\circ \varphi_{\varphi_z (w)}(u)-\widetilde b\circ \varphi_z (w)\right \Vert_{\mathcal L(\ell^2, \ell^1)}{d\nu}_\alpha (u)\\
&=&\left \Vert b\circ \varphi_{\varphi_z (w)}-\widetilde b\circ \varphi_z (w)\right \Vert_{L^1_\alpha \left ({\mathbb B}_n, \mathcal L(\ell^2, \ell^1)\right )}\\
&\leq & \left \Vert b\right \Vert_{BMO^1_{\alpha} ({\mathbb B}_n, \mathcal L(\ell^2, \ell^1))}<\infty.
\end{eqnarray*}
Taking the supremum with respect to $w\in {\mathbb B}_n$ gives the announced estimate.
\end{enumerate}
\end{proof}

We recall the following definition.

\begin{defn}\label{def103}
A holomorphic function $f: \mathbb B_n\rightarrow \ell^1$ belongs to the Bloch space $\mathfrak B({\mathbb B}_n, \ell^1)$ if
$$\left \Vert f\right \Vert_{\mathfrak B({\mathbb B}_n, \ell^1)}:=\sup \limits_{z\in \mathbb B_n} \left (1-|z|^2\right )\left \Vert \nabla f(z)\right \Vert_{\left (\ell^1\right )^n} \textless \infty,$$
where $\nabla$ is the complex gradient in $\mathbb C^n,$ i.e.
$$\nabla f=\left (\frac {\partial}{\partial z_1}f,\cdots, \frac {\partial}{\partial z_n}f \right ),$$
and $\left (\ell^1\right )^n=\ell^1\times \cdots \times \ell^1 \hskip 2 truemm (n \hskip 2truemm \rm{times}).$
\end{defn}




We recall the following vector-valued generalizations  due to Defo \cite{DT2024, D2024}. The second one is a generalization of a theorem of Li and Luecking \cite{LL1994}.
\begin{thm}\label{thm104}
	The following assertions hold.
\begin{enumerate}
\item[(1)]
The Bloch space $\mathfrak B({\mathbb B}_n, \ell^1)$ embeds continuously in $L^p_\alpha ({\mathbb B}_n, \ell^1)$ for every positive exponent $p,$ in the sense that, for every $g\in \mathfrak B({\mathbb B}_n, \ell^1),$ the following estimate holds
$$\left \Vert g\right\Vert_{A^p_\alpha(\mathbb B_n, \ell^1)}\lesssim \left \Vert g\right\Vert_{\mathfrak B ({\mathbb B}_n, \ell^1)}.$$
\item[(2)]
The Bergman projector $P_\alpha$ extends to a continuous operator from $BMO^1_{\alpha} ({\mathbb B}_n, \ell^1)$ to the Bloch space $\mathfrak B({\mathbb B}_n, \ell^1),$ in the sense that, for every $f\in  BMO^1_\alpha ({\mathbb B}_n, \ell^1),$ the following estimate holds
$$\left \Vert P_\alpha f\right\Vert_{\mathfrak B({\mathbb B}_n, \ell^1)}\lesssim \left \Vert f\right\Vert_{ BMO^1_\alpha ({\mathbb B}_n, \ell^1)}.$$
\end{enumerate}
\end{thm}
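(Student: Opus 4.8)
The plan is to transplant to the $\ell^1$-valued setting the two classical scalar facts — the logarithmic growth of Bloch functions and the Li--Luecking description of the Bloch space as the image of $BMO^1_\alpha$ under the Bergman projection — the only genuinely new ingredient being Hille's Theorem (Theorem \ref{thm11}), which lets one commute $P_\alpha$, complex differentiation and Bochner integration, so that every scalar pointwise estimate is read verbatim with $|\cdot|$ replaced by $\|\cdot\|_{\ell^1}$.

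For assertion (1), I would first establish the pointwise growth bound. Given a holomorphic $g:\mathbb B_n\to\ell^1$ with $\|g\|_{\mathfrak B(\mathbb B_n,\ell^1)}<\infty$ and $z\in\mathbb B_n$, write $g(z)-g(0)=\int_0^1\tfrac{d}{dt}g(tz)\,dt$ (the integrand $t\mapsto\tfrac{d}{dt}g(tz)=\sum_{k=1}^n z_k\tfrac{\partial g}{\partial z_k}(tz)$ being a continuous $\ell^1$-valued function on $[0,1]$) and estimate
$$\|g(z)-g(0)\|_{\ell^1}\le\int_0^1\big\|\tfrac{d}{dt}g(tz)\big\|_{\ell^1}\,dt\lesssim\int_0^1|z|\,\|\nabla g(tz)\|_{(\ell^1)^n}\,dt\le\|g\|_{\mathfrak B(\mathbb B_n,\ell^1)}\int_0^1\frac{|z|\,dt}{1-t^2|z|^2}\lesssim\|g\|_{\mathfrak B(\mathbb B_n,\ell^1)}\log\frac{e}{1-|z|^2}.$$
Since $\int_{\mathbb B_n}\big(\log\tfrac{e}{1-|z|^2}\big)^p\,d\nu_\alpha(z)<\infty$ for every $p>0$ and every $\alpha>-1$, this gives $\|g\|_{A^p_\alpha(\mathbb B_n,\ell^1)}\lesssim\|g(0)\|_{\ell^1}+\|g\|_{\mathfrak B(\mathbb B_n,\ell^1)}$; the term $\|g(0)\|_{\ell^1}$ is absorbed into (or, with the seminorm of Definition \ref{def103}, simply added to) the Bloch norm, in accordance with the normalization $|||\cdot|||$ used later in the proof of Theorem \ref{thm119}.

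For assertion (2), let $f\in BMO^1_\alpha(\mathbb B_n,\ell^1)\subset L^1_\alpha(\mathbb B_n,\ell^1)$ and $g=P_\alpha f$. By Hille's Theorem $g$ is $\ell^1$-valued holomorphic, and differentiation under the integral sign (again via Hille's Theorem together with dominated convergence) gives, for $k=1,\dots,n$,
$$\frac{\partial g}{\partial z_k}(z)=(n+1+\alpha)\int_{\mathbb B_n}\frac{\overline{w_k}}{(1-\langle z,w\rangle)^{n+2+\alpha}}\,f(w)\,d\nu_\alpha(w).$$
Since $P_\alpha$ reproduces constants, $\tfrac{\partial}{\partial z_k}P_\alpha c\equiv 0$ for every $c\in\ell^1$, hence $\int_{\mathbb B_n}\overline{w_k}(1-\langle z,w\rangle)^{-(n+2+\alpha)}\,d\nu_\alpha(w)=0$ and one may subtract $c=\widetilde f(z)$ from $f(w)$ inside the integral. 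Taking $\ell^1$-norms, applying the change of variables $w=\varphi_z(u)$ with the Möbius identities $1-\langle z,\varphi_z(u)\rangle=(1-|z|^2)/(1-\langle z,u\rangle)$ and $d\nu_\alpha(\varphi_z(u))=|k_z^\alpha(u)|^2\,d\nu_\alpha(u)$, together with $|\varphi_z(u)_k|<1$, reduces assertion (2) to the uniform estimate
$$\sup_{z\in\mathbb B_n}(1-|z|^2)\,\|\nabla g(z)\|_{(\ell^1)^n}\lesssim\sup_{z\in\mathbb B_n}\int_{\mathbb B_n}\frac{\|(f\circ\varphi_z)(u)-\widetilde f(z)\|_{\ell^1}}{|1-\langle u,z\rangle|^{\,n+\alpha}}\,d\nu_\alpha(u).$$

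The main obstacle is to dominate this last quantity by $\|f\|_{BMO^1_\alpha(\mathbb B_n,\ell^1)}$: the weight $|1-\langle u,z\rangle|^{-(n+\alpha)}$ is unbounded and cannot simply be discarded, even though $n+\alpha<n+1+\alpha$ makes it locally integrable uniformly in $z$. Here one argues as in Li--Luecking, passing to the transported derivative $\nabla(g\circ\varphi_z)(0)$ (the Bergman metric being $\varphi_z$-invariant) and splitting the integral: on a Bergman metric ball $D(0,r)$ of fixed radius one has $|1-\langle u,z\rangle|\asymp 1-|z|^2$, so the weight is comparable to a constant and the contribution is $\lesssim\|f\circ\varphi_z-\widetilde f(z)\|_{L^1_\alpha(\mathbb B_n,\ell^1)}\le\|f\|_{BMO^1_\alpha(\mathbb B_n,\ell^1)}$; on the complement of $D(0,r)$, a Cauchy/subharmonicity estimate for $\nabla(g\circ\varphi_z)(0)$ against the oscillation of $f\circ\varphi_z$, combined with the Forelli--Rudin integral estimates, absorbs the remaining kernel growth — this being exactly the scalar argument, with every bound read in the $\ell^1$-norm. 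Taking the supremum over $z\in\mathbb B_n$ then yields $\|P_\alpha f\|_{\mathfrak B(\mathbb B_n,\ell^1)}\lesssim\|f\|_{BMO^1_\alpha(\mathbb B_n,\ell^1)}$, which is assertion (2).
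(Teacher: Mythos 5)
The paper offers no proof of this theorem to compare against: it is explicitly recalled from \cite{DT2024} and \cite{D2024} as a vector-valued generalization of the Li--Luecking theorem. Judged on its own, your proof of assertion (1) is complete and correct: the radial integration giving $\|g(z)-g(0)\|_{\ell^1}\lesssim \|g\|_{\mathfrak B(\mathbb B_n,\ell^1)}\log\frac{e}{1-|z|^2}$ transfers verbatim to the Bochner setting, the logarithm lies in every $L^p_\alpha(\mathbb B_n,\mathbb C)$, and you rightly flag that with the seminorm of Definition \ref{def103} the term $\|g(0)\|_{\ell^1}$ has to be carried separately, consistently with the normalization $|||\cdot|||$ used in the proof of Theorem \ref{thm119}.

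Assertion (2) is where there is a genuine gap. Your reduction --- differentiating under the integral via Hille's theorem, subtracting $\widetilde f(z)$ because $P_\alpha$ reproduces constants, and changing variables to arrive at
$\sup_{z}\int_{\mathbb B_n}\|f\circ\varphi_z(u)-\widetilde f(z)\|_{\ell^1}\,|1-\langle u,z\rangle|^{-(n+\alpha)}\,d\nu_\alpha(u)$
--- is correct, and since $n+\alpha<n+1+\alpha$ the Forelli--Rudin estimates give $\int_{\mathbb B_n}|1-\langle u,z\rangle|^{-(n+\alpha)}d\nu_\alpha(u)\lesssim 1$ uniformly in $z$. But precisely because the kernel is unbounded near $u=z/|z|$, the single inequality $\|f\circ\varphi_z-\widetilde f(z)\|_{L^1_\alpha(\mathbb B_n,\ell^1)}\le\|f\|_{BMO^1_\alpha(\mathbb B_n,\ell^1)}$ (the BMO condition at the one base point $z$) cannot close the estimate; one must invoke the condition at all base points, typically via the Lipschitz-type bound $\|\widetilde f(a)-\widetilde f(z)\|_{\ell^1}\lesssim(1+\beta(a,z))\|f\|_{BMO^1_\alpha}$ in the Bergman metric $\beta$, a lattice decomposition into Bergman balls on which $|1-\langle u,z\rangle|$ is essentially constant, and a Forelli--Rudin summation with a logarithmic factor. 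Your sketch replaces this --- the actual content of the Li--Luecking argument --- with the phrase that a ``Cauchy/subharmonicity estimate \dots absorbs the remaining kernel growth,'' which is not an argument; and the one concrete claim you make about the near part, that $|1-\langle u,z\rangle|\asymp 1-|z|^2$ on $D(0,r)$, is stated in the wrong variables (for $u\in D(0,r)$ one has $|1-\langle u,z\rangle|\asymp 1$; it is for $w=\varphi_z(u)\in D(z,r)$ that $|1-\langle w,z\rangle|\asymp 1-|z|^2$). So the decisive estimate of assertion (2) is missing.
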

From assertion (1) of Theorem \ref{thm104}, we get:
\begin{eqnarray*}
\left \Vert T_{b\circ \varphi_z}(e_k)\right \Vert_{A^p_\alpha (\mathbb B_n, \ell^1)}
&=&\left \Vert P_\alpha \left (\left (b\circ \varphi_z\right ) (e_k)\right )\right \Vert_{A^p_\alpha ({\mathbb B}_n, \ell^1)}\\
&\lesssim & \left \Vert P_\alpha \left (\left (b\circ \varphi_z\right )(e_k)\right ) \right \Vert_{\mathfrak B ({\mathbb B}_n, \ell^1)}\\
&\lesssim &\left \Vert \left (b\circ \varphi_z\right )(e_k)\right \Vert_{BMO^1_{\alpha} ({\mathbb B}_n, \ell^1)}
\end{eqnarray*}

The latter inequality follows from assertion (2) of Theorem \ref{thm104}. By assertion (2) of Lemma \ref{lem102}, we conclude that
$$\left \Vert T_{b\circ \varphi_z} (e_k)\right \Vert_{L^p_\alpha ({\mathbb B}_n, \ell^1)}\lesssim \left \Vert b\right \Vert_{BMO^1_{\alpha} ({\mathbb B}_n, \mathcal L(\ell^2, \ell^1))}.$$

To prove that $T_b$ satisfies condition (\ref{3.1}) for every positive number $p,$ we recall that according to Lemma \ref{lem47}, if $b \in BMO^1_{\alpha} (\mathbb B_n, \mathcal L(\ell^2))$ is such that $\widetilde b \in L^\infty (\mathbb B_n, \mathcal L(\ell^2)),$ then $T_b^\star=T_{\left (b(\cdot)\right )^\star_{\mathcal L(\ell^2)}}.$ We then apply the first part of the proof replacing $T_b$ by $T_b^\star=T_{\left (b(\cdot)\right)^\star_{\mathcal L(\ell^2)}}.$

\section{The classes of examples of Toeplitz operators from Subsection \ref{ssec22}  and Section \ref{sec5} revisited}\label{sec11}

\subsection{Working again the class of examples from Subsection \ref{ssec22}}\label{ssec111}
The symbol $b=\tau I$ studied in Section \ref{sec2} and Section \ref{sec9} does not belong to $BMO^1_\alpha ({\mathbb B}_n, \mathcal L(\ell^2, \ell^1))$ unless $\tau$ is constant a.e., and its associated Toeplitz operator $T_b$ does not satisfy the estimate $\limsup \limits_{d\rightarrow \infty} \left \Vert T_b M_{I_{(d)}}\right \Vert_{\mathcal L(A^2_\alpha (\mathbb B_n,  \ell^2))}=0$ unless the symbol $\tau$ vanishes identically. Indeed, on the one hand, for all $z, w\in {\mathbb B}_n,$ we have:

\begin{eqnarray*}
\left \Vert b\circ \varphi_z (w)-\widetilde b(z)\right \Vert_{\mathcal L(\ell^2, \ell^1))}&=&\sup \limits_{\left \Vert e\right \Vert_{\ell^2}=1} \left \Vert \left (b\circ \varphi_z(w)-\widetilde b(z)\right )(e)\right \Vert_{\ell^1}\\
&=&\left \vert \tau \circ \varphi_z-\widetilde \tau(z)\right \vert \cdot \sup \limits_{\left \Vert e\right \Vert_{\ell^2}=1} \left \Vert e\right \Vert_{\ell^1}.
\end{eqnarray*}
Taking $e$ such that for every positive integer $i,$ we have $\langle e, e_i\rangle_{\ell^2}=\frac {\sqrt 6}{\pi}\frac 1i,$ the following holds: $e\in \ell^2$ because $\left \Vert e\right \Vert_{\ell^2}^2=\frac 6{\pi^2}\sum_{i=1}^\infty \frac 1{i^2}=1,$ while $\left \Vert e\right \Vert_{\ell^1}=\frac {\sqrt 6}{\pi}\sum_{i=1}^\infty \frac 1i=\infty.$ So $ \sup \limits_{\left \Vert e\right \Vert_{\ell^2}=1} \left \Vert e\right \Vert_{\ell^1}=\infty.$ The only way forward is that for every $z\in {\mathbb B}_n,$ we have $\tau \circ \varphi_z(w)=\widetilde \tau(z)$ for almost every $w\in \mathbb B_n.$ In particular, for $z=0,$ we have $\tau (w)=\widetilde \tau (0)$ for almost every $w\in {\mathbb B}_n.$ In other words, $\tau$ is constant a.e.\\
On the other hand, for every scalar function $f\in H^\infty (\mathbb B_n, \mathbb C)$ such that $\left \Vert f\right \Vert_{A^2_\alpha (\mathbb B_n, \mathbb C)}=1,$ the  $\ell^2$-valued function $f(z)e_{d+1}$ belongs to $A^2_\alpha (\mathbb B_n,  \ell^2)$ with norm 1. We have $M_{I_{(d)}}\left (fe_{d+1}\right )=fe_{d+1}$ and so
$$T_b M_{I_{(d)}} \left (fe_{d+1}\right )(z)=\int_{{\mathbb B}_n} \tau (w)f(w)e_{d+1}\overline{K_z^\alpha (w)}d\nu_\alpha (w)=\left (T^{\mathbb C}_\tau f \right ) (z)e_{d+1}.$$
Then $\left \Vert T_b M_{I_{(d)}} f\right \Vert_{A^2_\alpha (\mathbb B_n,  \ell^2)}=\left \Vert T^{\mathbb C}_\tau f \right \Vert_{A^2_\alpha({\mathbb B}_n, \mathbb C)}.$ 
This implies that 
$$\left \Vert T_b M_{I_{(d)}}\right \Vert_{\mathcal L(A^2_\alpha (\mathbb B_n,  \ell^2))}\geq \left \Vert T^{\mathbb C}_\tau f\right \Vert_{A^2_\alpha({\mathbb B}_n, \mathbb C)}.$$
 Now, assume that $\limsup \limits_{d\rightarrow \infty} \left \Vert T_b M_{I_{(d)}}\right \Vert_{\mathcal L(A^2_\alpha (\mathbb B_n,  \ell^2))}=0.$ Then $\left \Vert T^{\mathbb C}_\tau f \right \Vert_{A^2_\alpha({\mathbb B}_n, \mathbb C)}=0$ for every scalar function $f\in H^\infty (\mathbb B_n, \mathbb C).$ So the (scalar) bounded Toeplitz operator $T^{\mathbb C}_\tau$ is the zero operator and hence, as shown in the end of the proof of Proposition \ref{pro24},  $\tau \equiv 0$ a.e. on $\mathbb B_n.$  

\subsection{Working again the class of examples from Section \ref{sec5}}\label{ssec112}
We show that the symbol $b$ given in  Section \ref{sec6} also fulfills the hypotheses of Corollary \ref{cor120}. The hypotheses $b \in BMO^1_{\alpha} ({\mathbb B}_n, \mathcal L(\ell^2)), \widetilde b \in L^\infty ({\mathbb B}_n, \mathcal L(\ell^2))$ and $\lim \limits_{d\rightarrow \infty} \left \Vert \widetilde{\left \Vert b_{(d)}(\cdot)\right \Vert_{\mathcal L(\ell^2)}}\right \Vert_{L^\infty ({\mathbb B}_n)}=0
$ were already proved in Theorem \ref{thm61}. It remains to show the following three assertions:
\begin{enumerate}
\item
$b \in BMO^1_{\alpha} ({\mathbb B}_n, \mathcal L(\ell^2, \ell^1));$
\item
$(b(\cdot))^\star_{\mathcal L(\ell^2)}\in BMO^1_{\alpha} ({\mathbb B}_n, \mathcal L(\ell^2, \ell^1));$
\item
$\sup \limits_{e\in \mathbb C^d, \left \Vert e\right \Vert_{\mathbb C^d}=1} \lim \limits_{z\rightarrow \partial {\mathbb B}_n} \left \Vert T_b\left (k_z^\alpha e \right )\right \Vert_{A^2_\alpha (\mathbb B_n,  \ell^2)}=0$ for every positive integer $d.$
\end{enumerate} 
\begin{enumerate}
\item
We first show that $b \in BMO^1_\alpha ({\mathbb B}_n, \mathcal L(\ell^2, \ell^1)).$ We have

\begin{eqnarray*}
\left \Vert b\circ \varphi_z (w)-\widetilde b(z)\right \Vert_{\mathcal L(\ell^2, \ell^1)}
&=&\sup \limits_{\left \Vert e\right \Vert_{\ell^2}=1} \left \Vert \left (b\circ \varphi_z (w)-\widetilde b(z)\right )(e)\right \Vert_{\ell^1}\\
&=&\sup \limits_{\sum_{i=1}^\infty \left \vert \lambda_i\right \vert^2=1}\sum_{j=1}^\infty  \left \vert \sum_{i=1}^\infty \lambda_i \langle  \left (b \circ \varphi_z (w)-\widetilde b (z) \right )(e_i), e_j\rangle_{\ell^2}\right \vert\\
&=&\sup \limits_{\sum_{i=1}^\infty \left \vert \lambda_i\right \vert^2=1}\sum_{j=1}^\infty  \left \vert \sum_{i=1}^\infty \lambda_i \frac {\tau \circ \varphi_z (w)-\widetilde \tau (z)}{2^i}\langle  e_i, e_j\rangle_{\ell^2}\right \vert\\
&=&\sup \limits_{\sum_{i=1}^\infty \left \vert \lambda_i\right \vert^2=1}\sum_{j=1}^\infty  \left \vert  \lambda_j \frac {\tau \circ \varphi_z (w)-\widetilde \tau (z)}{2^j}\right \vert\\
&=& \left \vert \tau \circ \varphi_z (w)-\widetilde \tau (z)\right \vert\sup \limits_{\sum_{i=1}^\infty \left \vert \lambda_i\right \vert^2=1}   \sum_{j=1}^\infty\frac {\left \vert \lambda_j \right \vert}{2^j}\\
&\leq& \left \vert \tau \circ \varphi_z (w)-\widetilde \tau (z)\right \vert.
\end{eqnarray*}

For the latter inequality, we used the inequality $\left \vert \lambda_j \right \vert \leq 1.$
The hypothesis $\tau \in BMO^1_{\alpha} ({\mathbb B}_n, \mathbb C)$ then implies the announced conclusion.
\item
We next show that $(b(\cdot))^\star_{\mathcal L(\ell^2)}\in BMO^1_{\alpha} ({\mathbb B}_n, \mathcal L(\ell^2, \ell^1)).$ We first determine $(b(\cdot))^\star_{\mathcal L(\ell^2)}.$ For all $e, h\in \ell^2$ and for every positive integer $i,$ we have:
\begin{eqnarray*}
\langle e_i, (b(w))^\star_{\mathcal L(\ell^2)} (h)\rangle_{\ell^2}
&=&\langle b(w) (e_i), h\rangle_{\ell^2}\\
&=&\frac {\tau (w)}{2^i} \langle e_i, h\rangle_{\ell^2}\\
&=&\frac {\tau (w)}{2^i}h_i,
\end{eqnarray*}

if $h=\sum_{i=1}^\infty h_ie_i.$ We conclude that $(b(w))^\star_{\mathcal L(\ell^2)} (h)=\overline{\tau (w)}\sum_{i=1}^\infty \frac {h_i}{2^i}e_i.$ In particular, $(b(w))^\star_{\mathcal L(\ell^2)} (e_i)=\frac {\overline{\tau (w)}}{2^i}e_i$ for every positive integer $i.$ Comparing with the definition of $b$ at the beginning of Section \ref{sec6}:
$$b(w)(e_i)=\frac {\tau (w)}{2^i}e_i$$ 
for every positive integer $i,$ we easily obtain from (1) that $$(b(\cdot))^\star_{\mathcal L(\ell^2)}\in BMO^1_{\alpha} ({\mathbb B}_n, \mathcal L(\ell^2, \ell^1)).$$
\item
We finally show that $\sup \limits_{e\in \mathbb C^d, \left \Vert e\right \Vert_{\mathbb C^d}=1} \lim \limits_{z\rightarrow \partial {\mathbb B}_n} \left \Vert T_b\left (k_z^\alpha e \right )\right \Vert_{A^2_\alpha (\mathbb B_n,  \ell^2)}=0$ for every positive integer $d.$ For $e=\sum_{j=1}^d \lambda_j e_j,$ we have:
\begin{eqnarray*}
T_b\left (k_z^\alpha e\right )(\zeta)
&=&\int_{{\mathbb B}_n} b(w)(e)k_z^\alpha (w)\overline{K_\zeta^\alpha (w)}d\nu_{\alpha}(w)\\
&=&\int_{{\mathbb B}_n} \sum_{j=1}^d \frac {\lambda_j}{2^j} \tau (w)e_j k_z^\alpha (w)\overline{K_\zeta^\alpha (w}d\nu_{\alpha}(w)\\
&=&\sum_{j=1}^d \frac {\lambda_j}{2^j} \left (\int_{{\mathbb B}_n} \tau (w) k_z^\alpha (w)\overline{K_\zeta^\alpha (w)}d\nu_{\alpha}(w)\right )e_j\\
&=&\sum_{j=1}^d \frac {\lambda_j}{2^j} T^{\mathbb C}_\tau k_z^\alpha (\zeta) e_j.
\end{eqnarray*}

It follows that
$$
\left \Vert T_b\left (k_z^\alpha e \right )\right \Vert_{A^2_\alpha (\mathbb B_n,  \ell^2)}=\left (\sum_{j=1}^d \frac {\left \vert\lambda_j\right \vert^2}{2^{2j}} \right )^\frac 12 \left \Vert T^{\mathbb C}_\tau k_z^\alpha \right \Vert_{A^2_\alpha({\mathbb B}_n, \mathbb C)}.
$$
The scalar Toeplitz operator $T^{\mathbb C}_\tau$ is compact since $\widetilde \tau (z)\rightarrow 0$ as $z\rightarrow \partial {\mathbb B}_n.$ In this case, we obtain that $\lim \limits_{z\rightarrow \partial {\mathbb B}_n} \left \Vert T^{\mathbb C}_\tau k_z^\alpha\right \Vert_{A^2_\alpha({\mathbb B}_n, \mathbb C)}=0:$  this implies that 
$$\lim \limits_{z\rightarrow \partial {\mathbb B}_n} \left \Vert T_b\left (k_z^\alpha e \right )\right \Vert_{A^2_\alpha (\mathbb B_n,  \ell^2)}=0.$$
\end{enumerate}

\section{Characterization of general compact operators via the Berezin transform}\label{sec12}
In this  section, we state a corollary which should have been stated by Rahm at the end of his paper \cite{R2016}. 


Extending Definition \ref{def72} to the infinite dimension and in view of Definition \ref{def17} in the case $E=F=\ell^2$, we state the following definition.

 \begin{defn}\label{def122}
Let $S\in \mathcal L(A^2_\alpha (\mathbb B_n, \ell^2)).$ The Berezin transform $\widetilde S$ is the $\mathcal L(\ell^2)$-valued function defined by
$$\langle \widetilde S (z)e, h\rangle_{\ell^2}=\langle S\left (k_z^\alpha e\right ), k_z^\alpha h\rangle_{A^2_\alpha (\mathbb B_n, \ell^2)}.$$
\end{defn}

Combining \cite[Theorem 4.1, Lemma 4.5]{R2016}, we obtain the following theorem in our particular context of standard Bergman spaces on the unit ball $\mathbb B_n$ of $\mathbb C^n.$

\begin{thm}\label{thm123}
Let $S\in \mathcal T_{L^\infty_{fin}}.$ The following two assertions are equivalent.
\begin{enumerate}
\item
$$\lim \limits_{z\rightarrow \partial \mathbb B_n} \hskip 2truemm \left \vert \langle \widetilde S(z)e_j, e_i\rangle_{\ell^2}\right \vert=0$$
for all positive integers $j, i;$
\item
$$\lim \limits_{z\rightarrow \partial \mathbb B_n} \left \Vert S^z f\right \Vert_{A^2_\alpha \left (\mathbb B_n, \ell^2\right )}=0$$
for every $f\in A^2_\alpha \left (\mathbb B_n, \ell^2\right ).$
\end{enumerate}
\end{thm}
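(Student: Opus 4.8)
The plan is to reduce the statement to the two results of Rahm cited in the theorem, after recording a couple of elementary identities and disposing of the easy implication. I would first note that, by Definition~\ref{def18} applied with $E=\ell^2$, the operator $U_z$ is a self-adjoint unitary on $A^2_\alpha(\mathbb B_n,\ell^2)$ (self-adjointness being implicit, since $\phi_z$ is an involution) and satisfies $U_z(e_j)=k_z^\alpha e_j$, where $e_j$ is identified with the constant function $w\mapsto e_j$. Combining this with Definition~\ref{def122} gives, for all positive integers $i,j$ and all $z\in\mathbb B_n$,
$$\langle\widetilde S(z)e_j,e_i\rangle_{\ell^2}=\langle S(k_z^\alpha e_j),k_z^\alpha e_i\rangle_{A^2_\alpha(\mathbb B_n,\ell^2)}=\langle U_z S U_z e_j,e_i\rangle_{A^2_\alpha(\mathbb B_n,\ell^2)}=\langle S^z e_j,e_i\rangle_{A^2_\alpha(\mathbb B_n,\ell^2)}.$$
Granting this, the implication $(2)\Rightarrow(1)$ is immediate: applying $(2)$ to the constant function $f=e_j$ and then using the Cauchy--Schwarz inequality together with $\|e_i\|_{A^2_\alpha(\mathbb B_n,\ell^2)}=1$ (recall $\nu_\alpha$ is a probability measure), we get $|\langle\widetilde S(z)e_j,e_i\rangle_{\ell^2}|\le\|S^z e_j\|_{A^2_\alpha(\mathbb B_n,\ell^2)}\to 0$ as $z\to\partial\mathbb B_n$.

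For the substantive implication $(1)\Rightarrow(2)$, I would invoke the general theory of \cite{R2016}. As recorded at the beginning of Section~\ref{sec8}, $A^2_\alpha(\mathbb B_n,\ell^2)$ is a strongly $\ell^2$-valued Bergman-type space in the sense of \cite{R2016}, so the results of that paper apply verbatim. The argument then runs in two steps. First, \cite[Lemma~4.5]{R2016} upgrades the boundary vanishing of the individual matrix entries $\langle\widetilde S(z)e_j,e_i\rangle_{\ell^2}$ to the assertion that $S^z\to 0$ in the weak operator topology as $z\to\partial\mathbb B_n$, i.e. $\langle S^z f,g\rangle_{A^2_\alpha(\mathbb B_n,\ell^2)}\to 0$ for every $f,g$ in a dense subspace of $A^2_\alpha(\mathbb B_n,\ell^2)$ (for instance, finite sums $\sum_j p_j e_j$ with $p_j$ holomorphic polynomials). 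The content here is precisely that the matrix entries of $\widetilde S(z)$ only test $S^z$ against the finitely supported constant functions, whereas weak-operator convergence requires testing against the full dense set; the Bergman-type-space machinery of \cite{R2016} bridges this gap, using that $\|S^z\|=\|S\|$ for every $z$ (since $U_z$ is unitary). Second, \cite[Theorem~4.1]{R2016} promotes this weak-operator convergence to strong convergence: for $S\in\mathcal T_{L^\infty_{fin}}$ the family $\{S^z\}_{z\in\mathbb B_n}$ is uniformly localized --- its ``mass'' is, uniformly in $z$, concentrated in a fixed compact subset of $\mathbb B_n$ and in a fixed finite-dimensional block of $\ell^2$, up to an error that can be made arbitrarily small --- and on such a family weak-operator and strong-operator convergence coincide. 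Hence $\|S^z f\|_{A^2_\alpha(\mathbb B_n,\ell^2)}\to 0$ for every $f$, which is $(2)$.

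The main obstacle is not a genuinely new analytic difficulty but a matter of faithful transcription. One must check that the hypothesis $S\in\mathcal T_{L^\infty_{fin}}$ places us exactly within the scope of \cite[Lemma~4.5]{R2016} and \cite[Theorem~4.1]{R2016}, and translate the notation of \cite{R2016} --- Berezin transform of an operator on an abstract Bergman-type space, families indexed by a separated lattice or by the boundary approach --- into the concrete language of $A^2_\alpha(\mathbb B_n,\ell^2)$ employed throughout the present paper. Once this dictionary is in place --- which is essentially the content of the remark opening Section~\ref{sec8} --- the theorem follows by combining the two cited results with the elementary direction established above, and there is nothing further to prove.
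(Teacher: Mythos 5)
Your proposal is correct and follows essentially the same route as the paper, which itself offers no independent argument beyond the sentence that the theorem is obtained by combining \cite[Theorem 4.1, Lemma 4.5]{R2016} in the setting of $A^2_\alpha(\mathbb B_n,\ell^2)$ as a strongly $\ell^2$-valued Bergman-type space. Your additional spelling-out of the identity $\langle\widetilde S(z)e_j,e_i\rangle_{\ell^2}=\langle S^z e_j,e_i\rangle_{A^2_\alpha(\mathbb B_n,\ell^2)}$ and of the elementary implication $(2)\Rightarrow(1)$ via Cauchy--Schwarz is a harmless (and welcome) elaboration of what the paper leaves implicit.
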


In \cite[Theorem 4.4]{R2016}, Rahm proved the following theorem.

\begin{thm}\label{thm124}
Let $S\in \mathcal L(A^2_\alpha \left (\mathbb B_n, \ell^2\right )).$ The following two assertions are equivalent.
\begin{enumerate}
\item
$S$ is compact;
\item
$\lim \limits_{z\rightarrow \partial \mathbb B_n} \hskip 2truemm \left \Vert\widetilde S(z)\right \Vert_{\mathcal L(\ell^2)}=0$ and $S\in \mathcal T_{L^\infty_{fin}}.$
\end{enumerate}
\end{thm}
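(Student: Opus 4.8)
\emph{Proposed proof.} The plan is to prove the two implications separately, leaning on the results of Rahm recalled above. \emph{$(2)\Rightarrow(1)$.} Assume $S\in\mathcal{T}_{L^\infty_{fin}}$ and $\|\widetilde S(z)\|_{\mathcal{L}(\ell^2)}\to 0$ as $z\to\partial\mathbb{B}_n$. Then $|\langle\widetilde S(z)e_j,e_i\rangle_{\ell^2}|\le\|\widetilde S(z)\|_{\mathcal{L}(\ell^2)}\to 0$ for all positive integers $i,j$, so assertion (1) of Theorem~\ref{thm123} holds and, by that theorem, $\|S^z f\|_{A^2_\alpha(\mathbb{B}_n,\ell^2)}\to 0$ as $z\to\partial\mathbb{B}_n$ for \emph{every} $f\in A^2_\alpha(\mathbb{B}_n,\ell^2)$. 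Applying this to the constant function with value $e_j$ and using that $U_z$ is isometric and sends that constant function to $k_z^\alpha e_j$, we get $\|S(k_z^\alpha e_j)\|_{A^2_\alpha}\to 0$ for each $j$. Fixing a positive integer $d$, for a unit vector $e=\sum_{j=1}^d\lambda_je_j\in\mathbb{C}^d$ one has $S(k_z^\alpha e)=\sum_{j=1}^d\lambda_j S(k_z^\alpha e_j)$ (as $k_z^\alpha$ is scalar-valued), hence $\|S(k_z^\alpha e)\|_{A^2_\alpha}\le\sum_{j=1}^d\|S(k_z^\alpha e_j)\|_{A^2_\alpha}$, a quantity independent of $e$ that tends to $0$ as $z\to\partial\mathbb{B}_n$. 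Thus $\sup_{e\in\mathbb{C}^d,\,\|e\|_{\mathbb{C}^d}=1}\lim_{z\to\partial\mathbb{B}_n}\|S(k_z^\alpha e)\|_{A^2_\alpha(\mathbb{B}_n,\ell^2)}=0$ for every $d$, and Theorem~\ref{thm115} yields that $S$ is compact.

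\emph{$(1)\Rightarrow(2)$, the Berezin part.} Suppose $S$ is compact. For $g\in A^2_\alpha(\mathbb{B}_n,\ell^2)$ the reproducing property gives $\langle k_z^\alpha e,g\rangle_{A^2_\alpha}=(1-|z|^2)^{(n+1+\alpha)/2}\langle e,g(z)\rangle_{\ell^2}$, so $|\langle k_z^\alpha e,g\rangle_{A^2_\alpha}|\le(1-|z|^2)^{(n+1+\alpha)/2}\|g(z)\|_{\ell^2}$, which tends to $0$ as $z\to\partial\mathbb{B}_n$ uniformly over $\|e\|_{\ell^2}=1$ (using the standard fact that $(1-|z|^2)^{(n+1+\alpha)/2}\|g(z)\|_{\ell^2}\to 0$, which follows from the sub-mean-value inequality for the plurisubharmonic function $w\mapsto\|g(w)\|_{\ell^2}^2$ together with absolute continuity of the integral). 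Hence the norm-bounded family $\{k_z^\alpha e:\|e\|_{\ell^2}=1\}$ tends to $0$ weakly, uniformly in $e$, as $z\to\partial\mathbb{B}_n$; compactness of $S$ then forces $\sup_{\|e\|_{\ell^2}=1}\|S(k_z^\alpha e)\|_{A^2_\alpha}\to 0$ (if not, extract $z_m\to\partial\mathbb{B}_n$ and unit vectors $e_m\in\ell^2$ with $\|S(k_{z_m}^\alpha e_m)\|\ge\varepsilon$; the sequence $k_{z_m}^\alpha e_m$ is bounded and tends weakly to $0$, so $\|S(k_{z_m}^\alpha e_m)\|\to 0$, a contradiction). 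Since $\|\widetilde S(z)\|_{\mathcal{L}(\ell^2)}=\sup_{\|e\|_{\ell^2}=\|h\|_{\ell^2}=1}|\langle S(k_z^\alpha e),k_z^\alpha h\rangle_{A^2_\alpha}|\le\sup_{\|e\|_{\ell^2}=1}\|S(k_z^\alpha e)\|_{A^2_\alpha}$ (because $\|k_z^\alpha h\|_{A^2_\alpha}=1$), it follows that $\|\widetilde S(z)\|_{\mathcal{L}(\ell^2)}\to 0$.

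\emph{$(1)\Rightarrow(2)$, the algebra part.} It remains to check $S\in\mathcal{T}_{L^\infty_{fin}}$. As $\mathcal{T}_{L^\infty_{fin}}$ is a norm-closed subalgebra of $\mathcal{L}(A^2_\alpha(\mathbb{B}_n,\ell^2))$ and $S$ is a norm limit of finite-rank operators, it is enough to show that every rank-one operator $f\mapsto\langle f,g\rangle_{A^2_\alpha}h$ with $g,h\in A^2_\alpha(\mathbb{B}_n,\ell^2)$ lies in $\mathcal{T}_{L^\infty_{fin}}$; and since replacing $g,h$ by $M_{I^{(d)}}g,M_{I^{(d)}}h$ converges to this operator in operator norm as $d\to\infty$, we may assume $g=\sum_{j=1}^d g_je_j$ and $h=\sum_{k=1}^d h_ke_k$ with $g_j,h_k\in A^2_\alpha(\mathbb{B}_n,\mathbb{C})$. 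Then the operator is the finite sum over $1\le j,k\le d$ of $R_{jk}\colon f\mapsto\langle f_j,g_j\rangle_{A^2_\alpha(\mathbb{B}_n,\mathbb{C})}h_ke_k$, where $f_j=\langle f,e_j\rangle_{\ell^2}$. Using the matrix units $E_{jk}\in L^\infty_{fin}$ (defined by $\langle E_{jk}(w)e_l,e_m\rangle_{\ell^2}=\delta_{lj}\delta_{mk}$) one computes $T_{E_{jk}}f=f_je_k$ and, for $u\in L^\infty(\mathbb{B}_n,\mathbb{C})$, $T_{uE_{kk}}f=(T^{\mathbb{C}}_uf_k)e_k$; consequently $D\mapsto\bigl(f\mapsto(Df_k)e_k\bigr)$ is an isometric algebra homomorphism from $\mathcal{L}(A^2_\alpha(\mathbb{B}_n,\mathbb{C}))$ into $\mathcal{L}(A^2_\alpha(\mathbb{B}_n,\ell^2))$ carrying the scalar Toeplitz algebra $\mathcal{T}^1_\alpha$ into $\mathcal{T}_{L^\infty_{fin}}$. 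The scalar rank-one operator $\varphi\mapsto\langle\varphi,g_j\rangle_{A^2_\alpha(\mathbb{B}_n,\mathbb{C})}h_k$ belongs to $\mathcal{T}^1_\alpha$ — this is the classical fact that the compact operators on the scalar Bergman space lie in its Toeplitz algebra — so composing the associated block operator with $T_{E_{jk}}$ writes $R_{jk}$ as a product of two members of $\mathcal{T}_{L^\infty_{fin}}$, whence $S\in\mathcal{T}_{L^\infty_{fin}}$.

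\emph{Main obstacle.} The delicate point is the algebra half of $(1)\Rightarrow(2)$: it rests on the classical (but non-elementary) inclusion of the compacts in the scalar Bergman--Toeplitz algebra $\mathcal{T}^1_\alpha$, and on carefully transporting that inclusion to the $\ell^2$-valued setting through the matrix units $E_{jk}$ and the block operators $T_{uE_{kk}}$ (which is exactly the mechanism already used in the proof of Proposition~\ref{pro116}). By comparison, the Berezin estimate and the implication $(2)\Rightarrow(1)$ are routine assemblies of Theorems~\ref{thm123} and~\ref{thm115} with the reproducing-kernel computations above.
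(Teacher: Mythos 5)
Your proposal is correct, and it is worth noting that the paper itself offers no proof of this statement: Theorem~\ref{thm124} is quoted verbatim from Rahm \cite[Theorem 4.4]{R2016}, so there is nothing in-text to compare against. Your reconstruction assembles the right ingredients. For $(2)\Rightarrow(1)$, passing from $\Vert \widetilde S(z)\Vert_{\mathcal L(\ell^2)}\to 0$ to the entrywise decay, invoking Theorem~\ref{thm123} to get $\Vert S^z f\Vert\to 0$ for all $f$, specializing to the constant functions $e_j$ (so that $\Vert S^z e_j\Vert=\Vert S(k_z^\alpha e_j)\Vert$ by unitarity of $U_z$), and summing over $j\le d$ to verify the hypothesis of Theorem~\ref{thm115} is exactly the intended chain. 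The Berezin half of $(1)\Rightarrow(2)$ is the standard uniform weak-nullity of $\{k_z^\alpha e:\Vert e\Vert_{\ell^2}=1\}$ and is fine. The only substantive external input is in the algebra half: the inclusion of the scalar compacts in $\mathcal T^1_\alpha$, which you correctly isolate as the non-elementary step. That fact does hold (e.g.\ because $\mathcal T^1_\alpha$ is a self-adjoint, irreducible, norm-closed algebra containing a nonzero compact semi-commutator $T_fT_g-T_{fg}$, hence all compacts), and your transport of it to $\mathcal T_{L^\infty_{fin}}$ via the matrix units $E_{jk}$ and the diagonal-block embedding $D\mapsto (f\mapsto (Df_k)e_k)$ is the same mechanism the paper uses in the proof of Proposition~\ref{pro116}, so the finite-rank reduction closes the argument. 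If you wanted to stay entirely within the paper's toolbox you could instead simply cite Rahm's Theorem 4.4 for $(1)\Rightarrow(2)$, but as a self-contained argument what you wrote is sound.
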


To obtain the proof of the announced corollary, we shall use the following atomic decomposition theorem for functions in vector-valued Bergman spaces \cite[Theorem 2.3.5]{V2017}.

\begin{thm}\label{thm125}
	Let $\{a_j\}_{j=1}^\infty $ be a $r$-lattice in $\mathbb B_n$. If $f\in A^2_\alpha (\mathbb B_n, \ell^2),$ there is   a double sequence $\{\lambda_{i, j}\}_{i, j=1}^\infty $ of complex numbers such that
	$$f=\sum_{j=1}^\infty \sum_{i=1}^\infty \lambda_{i, j}k_{a_j}^\alpha e_i,$$
	where the convergence of the double series is with respect to the $A^2_\alpha (\mathbb B_n, \ell^2)$-norm and 
	\begin{equation}\label{est}
		\sum_{j=1}^\infty \sum_{i=1}^\infty  \left \vert \lambda_{i, j}\right \vert^2\lesssim \left \Vert f\right \Vert^2_{A^2_\alpha (\mathbb B_n, \ell^2)}.
	\end{equation}
\end{thm}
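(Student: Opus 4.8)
The plan is to deduce this vector‑valued statement from the classical scalar atomic decomposition, applied coordinatewise. Recall that writing $f_i(z):=\langle f(z),e_i\rangle_{\ell^2}$ one has $f_i\in A^2_\alpha(\mathbb B_n,\mathbb C)$, that $f=\sum_{i\ge1}f_ie_i$ in the $A^2_\alpha(\mathbb B_n,\ell^2)$-norm, and (as in the proof of Proposition \ref{pro24}) that $\|f\|^2_{A^2_\alpha(\mathbb B_n,\ell^2)}=\sum_{i\ge1}\|f_i\|^2_{A^2_\alpha(\mathbb B_n,\mathbb C)}$. Fix the $r$‑lattice $\{a_j\}$, with $r$ taken small enough for the scalar theory to apply (which is all that is needed here). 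The classical scalar atomic decomposition for the weighted Bergman space furnishes a constant $C=C(n,\alpha,r)$, \emph{independent of the function}, such that every $g\in A^2_\alpha(\mathbb B_n,\mathbb C)$ admits a norm‑convergent expansion $g=\sum_{j\ge1}c_j(g)\,k_{a_j}^\alpha$ with $\sum_{j\ge1}|c_j(g)|^2\le C\|g\|^2_{A^2_\alpha(\mathbb B_n,\mathbb C)}$. Applying this to each $g=f_i$ and setting $\lambda_{i,j}:=c_j(f_i)$ yields
\[
\sum_{i\ge1}\sum_{j\ge1}|\lambda_{i,j}|^2\;\le\;C\sum_{i\ge1}\|f_i\|^2_{A^2_\alpha(\mathbb B_n,\mathbb C)}\;=\;C\|f\|^2_{A^2_\alpha(\mathbb B_n,\ell^2)},
\]
which is the estimate (\ref{est}).

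The one point that genuinely uses the vector structure is the convergence of the \emph{double} series to $f$, and I would handle it via the Bessel property of the normalized kernels along any $r$‑lattice: there is $B=B(n,\alpha,r)$ with $\|\sum_j d_j k_{a_j}^\alpha\|^2_{A^2_\alpha(\mathbb B_n,\mathbb C)}\le B\sum_j|d_j|^2$ for every $\{d_j\}\in\ell^2(\mathbb N)$ (Schur's test on the Gram matrix $\langle k_{a_j}^\alpha,k_{a_l}^\alpha\rangle$, or subharmonicity plus the bounded overlap of the balls $D(a_j,r)$). Since, for $\{d_{i,j}\}\in\ell^2(\mathbb N^2)$, one has $\|\sum_{i,j}d_{i,j}k_{a_j}^\alpha e_i\|^2_{A^2_\alpha(\mathbb B_n,\ell^2)}=\sum_i\|\sum_j d_{i,j}k_{a_j}^\alpha\|^2_{A^2_\alpha(\mathbb B_n,\mathbb C)}\le B\sum_{i,j}|d_{i,j}|^2$, the synthesis map $\{d_{i,j}\}\mapsto\sum_{i,j}d_{i,j}k_{a_j}^\alpha e_i$ is bounded from $\ell^2(\mathbb N^2)$ into $A^2_\alpha(\mathbb B_n,\ell^2)$; hence the double series with coefficients $\{\lambda_{i,j}\}\in\ell^2(\mathbb N^2)$ converges unconditionally in norm. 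Taking the coordinate $\langle\,\cdot\,,e_i\rangle_{\ell^2}$ is a bounded map $A^2_\alpha(\mathbb B_n,\ell^2)\to A^2_\alpha(\mathbb B_n,\mathbb C)$ and commutes with the synthesis map, so the $i$‑th coordinate of $\sum_{i,j}\lambda_{i,j}k_{a_j}^\alpha e_i$ equals $\sum_j\lambda_{i,j}k_{a_j}^\alpha=f_i$; since all coordinates agree with those of $f$, the sum is $f$.

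A more self‑contained route is to run the classical atomic‑decomposition argument directly in $A^2_\alpha(\mathbb B_n,\ell^2)$: fix a Borel tiling $\{D_j\}$ subordinate to the lattice, define the approximate reproducing operator $(Tf)(z)=\sum_j\nu_\alpha(D_j)f(a_j)K_{a_j}^\alpha(z)$ via the reproducing formula recalled in the introduction, show $\|I-T\|<1$ on $A^2_\alpha(\mathbb B_n,\ell^2)$ for $r$ small, invert, and then split each coefficient vector $c_j\in\ell^2$ over the basis $\{e_i\}$. The content of this argument is \emph{norm‑blind}: the kernel oscillation $|K_w^\alpha(z)-K_{a_j}^\alpha(z)|$ over $D_j$, the sub‑mean‑value bound $\|f(a_j)\|^2_{\ell^2}\lesssim(1-|a_j|^2)^{-(n+1+\alpha)}\int_{D(a_j,r)}\|f(w)\|^2_{\ell^2}\,d\nu_\alpha(w)$ (valid because $\|f(\cdot)\|^2_{\ell^2}=\sum_i|f_i|^2$ is plurisubharmonic), and the bounded overlap of lattice balls are all unaffected by replacing $|\cdot|$ with $\|\cdot\|_{\ell^2}$. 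Accordingly, I expect the only genuine bookkeeping obstacle — in either route — to be confirming that passing from $\mathbb C$ to $\ell^2$ costs nothing in these standard kernel and overlap estimates, together with the harmless normalization that $r$ be taken small.
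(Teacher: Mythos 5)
Your proof is correct. Note first that the paper itself offers no proof of this statement: it is quoted verbatim from \cite[Theorem 2.3.5]{V2017}, where it is presumably established by running the standard atomic-decomposition machinery (approximate reproducing operator over a Borel tiling, Neumann series inversion for $r$ small) directly in the $\ell^2$-valued setting --- essentially your second, ``self-contained'' route. Your primary route is genuinely different and, to my mind, cleaner for this particular Hilbert-space-valued case: since $\|f\|^2_{A^2_\alpha(\mathbb B_n,\ell^2)}=\sum_i\|f_i\|^2_{A^2_\alpha(\mathbb B_n,\mathbb C)}$, the scalar atomic decomposition (with $b=n+1+\alpha$, which is admissible for $p=2$ since $n+1+\alpha>n+\frac{1+\alpha}{2}$) applied to each coordinate $f_i$ with a constant uniform in the function immediately yields both the coefficient estimate and, via the scalar Bessel bound for the separated lattice, the unconditional norm convergence of the double series and the identification of its limit with $f$ through the coordinate projections. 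The two normalizations you make --- taking $r$ small enough for the scalar theory, and noting that the constant is function-independent --- are exactly the points left implicit in the paper's statement, so flagging them is appropriate rather than a defect. The only thing your argument does not deliver that the direct vector-valued route would is a version for Banach-space targets where the norm is not a coordinatewise $\ell^2$-sum; but for the $\ell^2$-valued space actually at issue here, nothing is lost.
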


Combining Theorem \ref{thm123}, Theorem \ref{thm124} with Theorem \ref{thm118} \cite[Theorem 3.4, Corollary 3.5]{R2016}, we obtain the announced corollary.

\begin{cor}
Let $S\in \mathcal L(A^2_\alpha \left (\mathbb B_n, \ell^2\right )).$ The following four assertions are equivalent.
\begin{enumerate}
\item 
$\lim \limits_{z\rightarrow \partial \mathbb B_n} \hskip 2truemm \left \Vert\widetilde S(z)\right \Vert_{\mathcal L(\ell^2)}=0$ and $S\in \mathcal T_{L^\infty_{fin}}.$
\item  For any $f\in  A^2_\alpha (\mathbb B_n, \ell^2)$, we have
$ \lim \limits_{z\rightarrow \partial \mathbb B_n} \hskip 2truemm \left \Vert S^z f\right \Vert_{A^2_\alpha \left (\mathbb B_n, \ell^2\right )}=0$  and $S\in \mathcal T_{L^\infty_{fin}}.$
\item
$S$ is the limit in the operator norm of a sequence $\{S_m\}_{m=1}^\infty$ of sufficiently localized operators which satisfy the following two conditions.
\begin{enumerate}
\item[(i)]
$\lim \limits_{d\rightarrow \infty} \left \Vert S_m M_{I_{(d)}}\right \Vert_{\mathcal L(A^2_\alpha \left (\mathbb B_n, \ell^2\right ))}=0$ for every positive integer $m;$
\item[(ii)]
$\lim \limits_{m\rightarrow \infty}\sup \limits_{e\in \mathbb C^d, \left \Vert e\right \Vert_{\mathbb C^d}=1} \limsup \limits_{z\rightarrow \partial \mathbb B_n} \hskip 2truemm \left \Vert S_m (k_z^\alpha e)\right \Vert_{A^2_\alpha \left (\mathbb B_n, \ell^2\right )}=0,$ \quad  for every positive integer $d.$

\end{enumerate}
\item
$S$ is compact on $A^2_\alpha \left (\mathbb B_n, \ell^2\right ).$
\end{enumerate}
\end{cor}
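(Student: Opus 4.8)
The plan is to establish $(1)\Leftrightarrow(4)$, the two implications $(2)\Leftrightarrow(4)$, and the two implications $(3)\Leftrightarrow(4)$, each step reducing to one of the quoted results of Rahm. First, $(1)\Leftrightarrow(4)$ is exactly Theorem~\ref{thm124}. For $(4)\Rightarrow(2)$, if $S$ is compact then Theorem~\ref{thm124} gives $S\in\mathcal{T}_{L^\infty_{fin}}$ together with $\lim_{z\to\partial\mathbb{B}_n}\|\widetilde S(z)\|_{\mathcal L(\ell^2)}=0$; since $|\langle\widetilde S(z)e_j,e_i\rangle_{\ell^2}|\le\|\widetilde S(z)\|_{\mathcal L(\ell^2)}$, the pointwise decay hypothesis of Theorem~\ref{thm123} holds, and Theorem~\ref{thm123} then yields $\|S^zf\|_{A^2_\alpha(\mathbb{B}_n,\ell^2)}\to0$ for every $f$, which is $(2)$. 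For $(2)\Rightarrow(4)$, I would take $f$ to be the constant function $e_j$ in $(2)$: since $U_ze_j=k_z^\alpha e_j$ and $U_z$ is unitary, $\|S^ze_j\|_{A^2_\alpha}=\|S(k_z^\alpha e_j)\|_{A^2_\alpha}\to0$ for each $j$, hence for $e=\sum_{j=1}^d\lambda_je_j$ with $\|e\|_{\mathbb{C}^d}=1$ one gets $\|S(k_z^\alpha e)\|\le\sqrt d\,\max_{1\le j\le d}\|S(k_z^\alpha e_j)\|\to0$, so that $\sup_{e\in\mathbb{C}^d,\|e\|=1}\lim_{z\to\partial\mathbb{B}_n}\|S(k_z^\alpha e)\|=0$ for every $d$; as $S\in\mathcal{T}_{L^\infty_{fin}}$ by $(2)$, Theorem~\ref{thm115} shows $S$ is compact. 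At this point $(1)$, $(2)$ and $(4)$ are equivalent.

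For $(4)\Rightarrow(3)$: by $(1)$ we have $S\in\mathcal{T}_{L^\infty_{fin}}$, so there are operators $S_m=\sum_{l=1}^{L_m}\prod_{j=1}^{J_m}T_{u_{j,l}}$ with $u_{j,l}\in L^\infty_{fin}$ and $\|S-S_m\|_{\mathcal L(A^2_\alpha(\mathbb{B}_n,\ell^2))}\to0$. Each $T_{u_{j,l}}$ is sufficiently localized: since $u_{j,l}$ has only finitely many nonzero scalar entries, $(T_{u_{j,l}})^ze_k$ is, in the $\ell^2$-direction, supported on finitely many basis vectors, with coefficients of the form $P_\alpha$ applied to a bounded scalar function whose sup-norm is dominated by $\|u_{j,l}\|$ uniformly in $z$, so conditions~\eqref{3.2}--\eqref{3.1} of Definition~\ref{def117} follow from boundedness of $P_\alpha$ on $L^p_\alpha$ for $p>\frac{n+2+2\alpha}{1+\alpha}$ (the adjoint being again an $L^\infty_{fin}$-Toeplitz operator). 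Since the class of sufficiently localized operators is a subalgebra of $\mathcal L(A^2_\alpha(\mathbb{B}_n,\ell^2))$ (see \cite{R2016}), each $S_m$ is sufficiently localized. Condition~(i) holds because $T_uM_{I_{(d)}}=T_{u_{(d)}}$ by Proposition~\ref{pro43} and $u_{(d)}\equiv0$ once $d$ exceeds all indices occurring in the entries of the $u_{j,l}$, so in fact $S_mM_{I_{(d)}}=0$ for $d$ large. Condition~(ii) holds because $\|S_m(k_z^\alpha e)-S(k_z^\alpha e)\|\le\|S_m-S\|$, while $S$ compact and $k_z^\alpha e\rightharpoonup0$ as $z\to\partial\mathbb{B}_n$ force $\|S(k_z^\alpha e)\|\to0$, whence $\sup_{e\in\mathbb{C}^d,\|e\|=1}\limsup_{z\to\partial\mathbb{B}_n}\|S_m(k_z^\alpha e)\|\le\|S_m-S\|\to0$.

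For $(3)\Rightarrow(4)$: each $S_m$ is sufficiently localized with $\limsup_{d\to\infty}\|S_mM_{I_{(d)}}\|=0$ by~(i), so Theorem~\ref{thm118}(a) gives $\|S_m\|_e\simeq\sup_{\|f\|_{A^2_\alpha}=1}\limsup_{z\to\partial\mathbb{B}_n}\|S_m^zf\|$. Following the argument underlying \cite[Theorem~3.4, Corollary~3.5]{R2016}, one would expand an arbitrary unit vector $f$ by the atomic decomposition of Theorem~\ref{thm125}, $f=\sum_{i,j}\lambda_{i,j}k_{a_j}^\alpha e_i$ with $\sum_{i,j}|\lambda_{i,j}|^2\lesssim1$, use that $U_z$ sends each normalized kernel $k_{a_j}^\alpha$ to a unimodular multiple of $k_{\varphi_z(a_j)}^\alpha$, and then invoke the sufficient localization of $S_m$ together with~(i) to obtain, for every $d$,
$$\sup_{\|f\|_{A^2_\alpha}=1}\limsup_{z\to\partial\mathbb{B}_n}\|S_m^zf\|\ \lesssim\ \sup_{e\in\mathbb{C}^d,\|e\|_{\mathbb{C}^d}=1}\limsup_{z\to\partial\mathbb{B}_n}\|S_m(k_z^\alpha e)\|\ +\ C\,\|S_mM_{I_{(d)}}\|_{\mathcal L(A^2_\alpha(\mathbb{B}_n,\ell^2))}.$$
Letting $d\to\infty$ (using~(i)) and then $m\to\infty$ (using~(ii)) gives $\|S_m\|_e\to0$; since the essential norm is continuous for the operator norm and $S=\lim_mS_m$ in that norm, $\|S\|_e=0$, i.e.\ $S$ is compact, closing the cycle.

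The step I expect to be the main obstacle is $(3)\Rightarrow(4)$: one must pass from the essential-norm formula of Theorem~\ref{thm118}(a), whose right-hand side runs over all unit vectors of $A^2_\alpha(\mathbb{B}_n,\ell^2)$, to the genuinely finite-dimensional quantity in hypothesis~(ii). This is precisely where the interplay of sufficient localization (to control the action on a single atom), the atomic decomposition of Theorem~\ref{thm125} (to discretize the spatial variable), and the $\ell^2$-tail control~(i) is needed, amounting to a transcription of Rahm's argument to the present setting. A secondary, routine point is the verification in $(4)\Rightarrow(3)$ that $L^\infty_{fin}$-Toeplitz operators, and hence their finite sums of finite products, are sufficiently localized.
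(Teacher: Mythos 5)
Your proposal follows essentially the same architecture as the paper: everything is reduced to Rahm's results (Theorems \ref{thm115}, \ref{thm118}, \ref{thm123}, \ref{thm124}) plus an atomic-decomposition argument for the hard implication. Two organizational differences are worth noting. First, you close the loop $(2)\Rightarrow(4)$ directly by taking $f$ to be the constant $e_j$ and invoking Theorem \ref{thm115}; the paper instead goes $(2)\Rightarrow(3)\Rightarrow(4)$, so your route gives the equivalence of $(1)$, $(2)$, $(4)$ more cheaply. Second, in $(4)\Rightarrow(3)$ you sketch why each $T_u$ with $u\in L^\infty_{fin}$ is sufficiently localized and why $S_mM_{I_{(d)}}=0$ for $d$ large, whereas the paper simply cites the proof of \cite[Corollary 3.5]{R2016} for the existence of the approximating sequence; both are acceptable, and your derivation of condition (ii) from $\left\Vert S_m(k_z^\alpha e)\right\Vert\leq \left\Vert S(k_z^\alpha e)\right\Vert+\left\Vert S_m-S\right\Vert$ is exactly the paper's.

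The one place where your writeup falls short of a proof is $(3)\Rightarrow(4)$, which you yourself flag as the main obstacle. The summary inequality you display,
$$\sup_{\Vert f\Vert=1}\limsup_{z\rightarrow\partial\mathbb B_n}\Vert S_m^zf\Vert\ \lesssim\ \sup_{e\in\mathbb C^d,\Vert e\Vert=1}\limsup_{z\rightarrow\partial\mathbb B_n}\Vert S_m(k_z^\alpha e)\Vert+C\Vert S_mM_{I_{(d)}}\Vert,$$
is not justified as stated: after the atomic decomposition $f=\sum_{i,j}\lambda_{i,j}k_{a_j}^\alpha e_i$ the spatial index $j$ runs over the whole lattice, and the $\ell^1$ sum $\sum_{i,j}\vert\lambda_{i,j}\vert$ is not controlled by $\left(\sum\vert\lambda_{i,j}\vert^2\right)^{1/2}$, so one cannot pass to the supremum over single atoms with a uniform constant. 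The paper's device is to truncate to a \emph{finite} sum of atoms plus a remainder $g$ with $\Vert g\Vert<\epsilon$, estimate the finite coefficient sum by Cauchy--Schwarz (picking up a factor $\sqrt{IJ}$ that is cancelled by applying condition (ii) at level $\epsilon/\sqrt{IJ}$), and only then invoke Theorem \ref{thm118}(a) and the norm convergence $S_m\to S$ to conclude $\Vert S\Vert_e\lesssim(\Vert S\Vert+2)\epsilon$. You point at ``a transcription of Rahm's argument,'' which is indeed what is needed, but as written this step is a sketch rather than a proof, and the displayed inequality should be replaced by the truncation argument.
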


\begin{proof}
$(1)\Rightarrow (2)$ Apply Theorem \ref{thm123}.\\
$(2)\Rightarrow (3)$
For a general operator $S\in \mathcal L(A^2_\alpha (\mathbb B_n,  \ell^2)),$ Rahm \cite{R2016} showed in the proof of his Corollary 3.5 (Theorem \ref{thm115} above) that the assumption $S\in \mathcal T_{L^\infty_{fin}}$ implies that $S$ is the limit in the operator norm of a sequence $\{S_m\}_{m=1}^\infty$ of sufficiently localized operators which satisfy the condition
$\lim \limits_{d\rightarrow \infty} \left \Vert S_m M_{I_{(d)}}\right \Vert_{\mathcal L(A^2_\alpha \left (\mathbb B_n, \ell^2\right ))}=0$
for all positive integers $m.$  Moreover, for all  positive integers $d$ and $m,$  every $e\in \mathbb C^d$ such that $\left \Vert e\right \Vert_{\mathbb C^d}=1,$ and every $z\in \mathbb B_n,$  
we have:
\begin{eqnarray*}
	\left \Vert S_m (k_z^\alpha e)\right \Vert_{A^2_\alpha \left (\mathbb B_n, \ell^2\right )}
	&\leq & \left \Vert S(k_z^\alpha e) \right \Vert_{A^2_\alpha \left (\mathbb B_n, \ell^2\right )}+\left \Vert \left (S_m-S\right )(k_z^\alpha e)\right \Vert_{A^2_\alpha \left (\mathbb B_n, \ell^2\right )}\\
	&\leq & \left \Vert S(k_z^\alpha e)\right \Vert_{A^2_\alpha \left (\mathbb B_n, \ell^2\right )}+\left \Vert S_m-S \right \Vert_{\mathcal L(A^2_\alpha \left (\mathbb B_n, \ell^2\right ))}.
\end{eqnarray*}
Since $\lim \limits_{m\rightarrow \infty} \left \Vert S_m-S \right \Vert_{\mathcal L(A^2_\alpha \left (\mathbb B_n, \ell^2\right ))}=0,$ then for every $\epsilon \textgreater 0,$ there exists a positive integer $m_0=m_0 (\epsilon)$ such that for every positive integer $m\textgreater m_0,$ the following implication holds.
\begin{equation}\label{equ121}
	m\textgreater m_0 \Rightarrow \left \Vert S_m-S \right \Vert_{\mathcal L(A^2_\alpha \left (\mathbb B_n, \ell^2\right ))} \textless \epsilon.
\end{equation}
Then for every $m\textgreater m_0,$ for every positive integer $d$  and  every  $e\in \mathbb C^d$ such that $\left \Vert e\right \Vert_{\mathbb C^d}=1,$  
 we obtain:
$$\limsup \limits_{z\rightarrow \partial \mathbb B_n} \hskip 2truemm \left \Vert S_m (k_z^\alpha e)\right \Vert_{A^2_\alpha \left (\mathbb B_n, \ell^2\right )}\leq \limsup \limits_{z\rightarrow \partial \mathbb B_n} \left \Vert S(k_z^\alpha e)\right \Vert_{A^2_\alpha \left (\mathbb B_n, \ell^2\right )}+\epsilon=0+\epsilon=\epsilon,$$
because in view of $(2),$ we have 
$\lim\limits_{z\rightarrow \partial \mathbb B_n} \left \Vert S(k_z^\alpha e)\right \Vert_{A^2_\alpha \left (\mathbb B_n, \ell^2\right )}=0.$ 
We conclude that
$$\sup \limits_{e\in \mathbb C^d, \left \Vert e\right \Vert_{\mathbb C^d}=1} \limsup \limits_{z\rightarrow \partial \mathbb B_n} \hskip 2truemm \left \Vert S_m (k_z^\alpha e)\right \Vert_{A^2_\alpha \left (\mathbb B_n, \ell^2\right )}\leq \epsilon$$
for every $m\textgreater m_0$ and for every positive integer $d.$  
In other words, 
$$\lim \limits_{m\rightarrow \infty}\sup \limits_{e\in \mathbb C^d, \left \Vert e\right \Vert_{\mathbb C^d}=1} \limsup \limits_{z\rightarrow \partial \mathbb B_n} \hskip 2truemm \left \Vert S_m (k_z^\alpha e)\right \Vert_{A^2_\alpha \left (\mathbb B_n, \ell^2\right )}=0$$ for every positive integer $d.$

$(3)\Rightarrow (4)$ 
According to Theorem \ref{thm125}, for every $\epsilon > 0,$ there are a finite double subsequence $\{\lambda_{i,j},\hskip 1truemm i=1,\cdots,I;\hskip 1truemm j=1,\cdots, J\},$ a finite subsequence  $\{a_j: j=1,\cdots, J \}$ of points of $\mathbb B_n$ and a function $g\in A^2_\alpha (\mathbb B_n,  \ell^2)$ such that
$$f=\sum_{j=1}^J \sum_{i=1}^I \lambda_{i, j} k_{a_j}^\alpha e_i+g,$$
with 
$\left \Vert g\right \Vert_{A^2_\alpha (\mathbb B_n,  \ell^2)}< \epsilon.$ Then for every $z\in \mathbb B_n,$ 
we have
$$S_m U_z f=\sum_{j=1}^J \sum_{i=1}^I\lambda_{i,j} S_m U_z (k_{a_j}^\alpha e_i)+S_m U_z g$$
and consequently
\begin{eqnarray*}
	\left \Vert S_m U_z f\right \Vert_{A^2_\alpha (\mathbb B_n,  \ell^2)}
	&\leq & \sum_{j=1}^J \sum_{i=1}^I \left \vert \lambda_{i,j} \right \vert \left \Vert S_m U_z (k_{a_j}^\alpha e_i)\right \Vert_{A^2_\alpha (\mathbb B_n,  \ell^2)}+\left \Vert S_m\right \Vert_{\mathcal L(A^2_\alpha (\mathbb B_n,  \ell^2))} \left \Vert g\right \Vert_{A^2_\alpha (\mathbb B_n,  \ell^2)}\\
	&<&  \sum_{j=1}^J \sum_{i=1}^I \left \vert \lambda_{i,j} \right \vert \left \Vert S_m U_z(k_{a_j}^\alpha e_i)\right \Vert_{A^2_\alpha (\mathbb B_n,  \ell^2)}+\left \Vert S_m\right \Vert_{\mathcal L(A^2_\alpha (\mathbb B_n,  \ell^2))}\epsilon.
\end{eqnarray*}
.

We record the following identity. For $z, a\in \mathbb B_n,$ there exists a complex unimodular number $\gamma_{z, a}$ such that
$$U_z k_a^\alpha = \gamma_{z, a} k^\alpha_{\varphi_z (a)}.$$
More explicitly, $\gamma_{z, a}=\frac{\left \vert 1-\langle z, a\rangle\right \vert}{1-\langle z, a\rangle}.$
In view of (\ref{equ121}), 
we take $m$ so large so that $\left \Vert S_m\right \Vert_{\mathcal L(A^2_\alpha (\mathbb B_n,  \ell^2))}\leq 2\left \Vert S\right \Vert_{\mathcal L(A^2_\alpha (\mathbb B_n,  \ell^2))}.$ 
Hence
$$\left \Vert S_m U_z f\right \Vert_{A^2_\alpha (\mathbb B_n,  \ell^2)}\leq 2\left \Vert S\right \Vert_{\mathcal L(A^2_\alpha (\mathbb B_n,  \ell^2))}\epsilon + \sum_{j=1}^J \sum_{i=1}^I \left \vert \lambda_{i,j} \right \vert \left \Vert S_m( k_{\varphi_z (a_j)}^\alpha e_i)\right \Vert_{A^2_\alpha (\mathbb B_n,  \ell^2)}.$$
We point out that for every $a\in \mathbb B_n, \hskip 2truemm \varphi_z (a) \rightarrow \partial \mathbb B_n$ as $z \rightarrow \partial \mathbb B_n.$ In view of assertion (ii) of (3), for all $i\in \{1,\cdots,I\}$ and  $j\in \{1,\cdots, J\},$ there exists a positive integer $m_0$ such that
$$\limsup \limits_{z \rightarrow \partial \mathbb B_n} \left \Vert S_m (k_{\varphi_z (a_j)}^\alpha e_i)\right \Vert_{A^2_\alpha (\mathbb B_n,  \ell^2)}\textless \frac \epsilon{\sqrt{IJ}}$$
for all $m\textgreater m_0.$
We then obtain 
$$\limsup \limits_{z \rightarrow \partial \mathbb B_n}\left \Vert S_m U_z f\right \Vert_{A^2_\alpha (\mathbb B_n,  \ell^2)}\leq 2\left \Vert S\right \Vert_{\mathcal L(A^2_\alpha (\mathbb B_n,  \ell^2))}\epsilon +\frac \epsilon{\sqrt{IJ}}\sum_{j=1}^J \sum_{i=1}^I  \left \vert \lambda_{i, j}\right \vert.
$$
By the Schwarz inequality, we obtain:
$$\sum_{j=1}^J \sum_{i=1}^I  \left \vert \lambda_{i, j}\right \vert\leq \left (\sum_{j=1}^\infty \sum_{i=1}^\infty  \left \vert \lambda_{i, j}\right \vert^2 \right )^{\frac 12}\sqrt{IJ}.$$
So for every $f\in A^2_\alpha (\mathbb B_n,  \ell^2),$ we get
$$\limsup \limits_{z \rightarrow \partial \mathbb B_n}\left \Vert S_m U_z f\right \Vert_{A^2_\alpha (\mathbb B_n,  \ell^2))}\leq 2\left \Vert S\right \Vert_{\mathcal L(A^2_\alpha (\mathbb B_n,  \ell^2))}\epsilon +\left (\sum_{j=1}^\infty \sum_{i=1}^\infty  \left \vert \lambda_{i, j}\right \vert^2 \right )^{\frac 12}\epsilon.$$
We next assume that $\left \Vert f\right \Vert^2_{A^2_\alpha (\mathbb B_n, \ell^2)}=1.$ It follows from (\ref{est}) that
$$\left (\sum_{j=1}^\infty \sum_{i=1}^\infty  \left \vert \lambda_{i, j}\right \vert^2\right )^{\frac 12}\lesssim 1.$$
In view of assertion i) of Theorem \ref{thm118}, this implies that
$$\left \Vert S_m\right \Vert_e\simeq \sup \limits_{f\in A^2_\alpha (\mathbb B_n,  \ell^2): \left \Vert f\right \Vert_{A^2_\alpha (\mathbb B_n,  \ell^2)}=1} \limsup \limits_{z\rightarrow \partial \mathbb B_n} \left \Vert S_m^z f\right \Vert_{A^2_\alpha (\mathbb B_n,  \ell^2)}\lesssim \left (\left \Vert S\right \Vert_{\mathcal L(A^2_\alpha (\mathbb B_n,  \ell^2))}+1\right )\epsilon$$
for $m$ large enough ($m> m_0).$ For such a $m,$ we have
\begin{eqnarray*}
	\left \Vert S\right \Vert_e
	&\leq& \left \Vert S_m\right \Vert_e+\left \Vert S_m-S\right \Vert_{\mathcal L(A^2_\alpha (\mathbb B_n,  \ell^2))}\\
	&\lesssim &\left (\left \Vert S\right \Vert_{\mathcal L(A^2_\alpha (\mathbb B_n,  \ell^2))}+1\right )\epsilon+\epsilon\\
	&\lesssim & \left (\left \Vert S\right \Vert_{\mathcal L(A^2_\alpha (\mathbb B_n,  \ell^2))}+2\right )\epsilon
\end{eqnarray*}
by (\ref{equ121}) and the previous estimate.
Since $\epsilon \textgreater 0$ is arbitrary, we conclude that $\left \Vert S\right \Vert_e=0,$ i.e. $S$ is compact on $A^2_\alpha (\mathbb B_n, \ell^2).$\\

$(4)\Rightarrow (1)$
Apply the implication $(1)\Rightarrow (2)$ of Theorem \ref{thm124}.
\end{proof}

\section{An open question}\label{sec13}
For $d=1,$ Xia \cite{X2015} proved the reverse implication: if $S$ is sufficiently localized (and even weakly localized), then $S\in \mathcal T^1_\alpha.$ It would be interesting to extend this result  to the infinite dimension.  More specifically, for Toeplitz operators, if $S=T_b$ such that $b, \left (b(\cdot)\right )^\star_{\mathcal L(\ell^2)} \in BMO^1_{\alpha} ({\mathbb B}_n, \mathcal L(\ell^2, \ell^1\cap \ell^2))$  and $\widetilde b \in L^\infty ({\mathbb B}_n, \mathcal L(\ell^2)),$ does one have $T_b\in \mathcal T_{L^\infty_{fin}}$ whenever $\lim \limits_{d\rightarrow \infty} \left \Vert T_bM_{I(d)}\right \Vert_{\mathcal L(A^2_\alpha (\mathbb B_n,  \ell^2))}=0?$ This would also be an infinite dimensional generalization of the result of Sadeghi-Zorboska (Theorem \ref{thm110} above).

\section*{Acknowledgement} The authors wish to express their gratitude to Robert Rahm for his assistance in arranging the proof of Proposition \ref{pro24}.


\begin{thebibliography}{10}
\bibitem{AT2010}
D. Agbor and E. Tchoundja,
\newblock \textit{Toeplitz operators with $L^1$ symbols on Bergman spaces in the unit ball of $\mathbb C^n$},
\newblock {Adv. Pure Appl. Math. {\bf 2} (2010), 65-88}.
\bibitem{AB2003}
J. L. Arregui and O. Blasco,
\newblock \textit{Bergman and Bloch spaces of vector-valued functions},
\newblock {Math. Nachr. {\bf 261/262} (2003), 3-22}.

\bibitem{BDTW2021} D. Bekolle, H. O. Defo, E. Tchoundja and B. Wick, \textit{ Little Hankel operators between vector-valued Bergman spaces on the unit ball}, Integral Equations and Operator Theory {\bf 93:28} (2021), 46 pp.

\bibitem{B1933}
S. Bochner, 
\newblock \textit{Integrationen von Funktionen deren Werte die Elemente eines Vektoraumes sind}, 
\newblock {Fund. Math. {\bf {20}} 1 (1933), 262-276}.

\bibitem{D2024}
H. O. Defo,
\newblock \textit {Hankel and Toeplitz operators on vector-valued Bergman spaces on the unit ball},
\newblock {Th\`ese de Doctorat/PhD, Universit\'e de Yaound\'e I (2024)}.

\bibitem{DT2024}
H. O. Defo and E. L. Tchoundja,
\newblock \textit {Toeplitz operators with operator-valued $BMO^1$ symbols between Banach-valued Bergman spaces,} J. Math. Ana. Appl. {\bf 547}, (2025) 129313. 

\bibitem{D2001}
J. Droniou,
\newblock \textit {Int\'egrales et Espaces de Sobolev \`a Valeurs Vectorielles}, 
\newblock {2001  (hal-01382368)}. 
\bibitem{HKZ00} H. Hedenmalm, B. Korenblum and K. Zhu, \newblock \textit{Theory of Bergman spaces}, Graduate Texts in Mathematics, Springer-Verlag, New-York-Inc 2000. 
\bibitem{LL1994}
H. Li and D. Luecking,
\newblock \textit {BMO on strongly pseudoconvex domains: Hankel operators, duality and $\overline{\partial}-$estimates}, 
\newblock {Trans. Amer. Math. Soc. {\bf {346}} (1994), 661-691}. 
\bibitem{V2017}
R. V. Oliver,
\newblock \textit {Hankel Operators on vector-valued Bergman spaces}, Programa de Doctorat en Matem\`atiques. Departament de Mathematica i Informatica, Universidad de Barcelona  (2017).

\bibitem{R2016}
R. Rahm,
\newblock \textit{The Essential Norm of Operators on $\ell^2-$Valued Bergman-Type Function Spaces},
\newblock {Complex Anal. Oper. Theory (2016) {\bf 10}: 69--96.} 

\bibitem{R2014}
R. Rahm,
\newblock \textit{Compact operators on vector-valued Bergman spaces via the Berezin transform},
\newblock {arXiv: 1407.5244v1 [math.CA] 20 Jul. 2014}.

\bibitem{RW2015}
R. Rahm and B. D. Wick,
\newblock \textit {The essential norm of operators on the Bergman space of vector-valued functions on the unit ball}, 
\newblock { Proceedings of the Seventh Conference on Function Spaces at SIUE (2015), 249-281.}

\bibitem{SZ2020}
M. Sadeghi and N. Zorboska,
\newblock \textit {Localization, Carleson measure and BMO Toeplitz operators on the Bergman space}, 
\newblock {J. Math. Anal. Appl. {\bf {485}} (2) (2020), 123829}.


\bibitem{X2015}
J. Xia,
\newblock \textit {Localization and the Toeplitz algebra on the Bergman space}, 
\newblock{J. Funct. Anal. {\bf {269}} (3) (2015), 781-814}.

\bibitem{Z2005}
K. Zhu,
\newblock \textit {Spaces of holomorphic functions in the unit ball},
\newblock{Graduate texts in Mathematics, vol. {\bf 226}, Springer, New York (2005).}

\bibitem{ZLL2011}
K. Zhang,  C. M. Liu and Y. F. Lu, 
\newblock \textit { Toeplitz operators with BMO symbols on the weighted Bergman
space of the unit ball}, 
\newblock{Acta. Math. Sin.-English Ser. {\bf {27}}, 2129 (2011)}.

\bibitem{Z2003}
N. Zorboska,
\newblock \textit {Toeplitz operators with BMO symbols and the Berezin transform}, 
\newblock {Int. J. Math. Sci. {\bf {46}} (2003), 2929--2945}.


 \end{thebibliography}
\end{document}